\newtheorem{theorem}{Theorem}[section]
\newtheorem{lemma}[theorem]{Lemma}
\newtheorem{proposition}[theorem]{Proposition}
\newtheorem{corollary}[theorem]{Corollary}
\theoremstyle{plain}
\theoremstyle{definition}
\newtheorem{definition}[theorem]{Definition}
\newtheorem{remark}[theorem]{Remark}
\numberwithin{equation}{section}
\renewcommand{\labelenumi}{\textup{(\theenumi)}}
\newcommand{\Homeo}{\operatorname{Homeo}}
\newcommand{\id}{\operatorname{id}}
\newcommand{\Coker}{\operatorname{Coker}}
\newcommand{\Ker}{\operatorname{Ker}}
\newcommand{\Ad}{\operatorname{Ad}}
\newcommand{\K}{\mathcal{K}}
\newcommand{\N}{\mathbb{N}}
\newcommand{\Z}{\mathbb{Z}}
\newcommand{\T}{\mathbb{T}}
\newcommand{\bbR}{\mathbb{R}}
\newcommand{\Zp}{{\mathbb{Z}}_+}
\title{Asymptotic continuous orbit  equivalence of Smale spaces 
and Ruelle  algebras }
\author{Kengo Matsumoto \\
Department of Mathematics \\
Joetsu University of Education \\
Joetsu, 943-8512, Japan
}
\begin{document}
\date{}
\maketitle


\def\det{{{\operatorname{det}}}}

\begin{abstract}
In the first part of the paper, we introduce notions of 
 asymptotic continuous orbit equivalence 
and asymptotic conjugacy in Smale spaces 
and characterize them in terms of their asymptotic Ruelle algebras with their dual actions.
In the second part, we introduce a groupoid $C^*$-algebra which is an extended version 
of the asymptotic Ruelle algebra from a Smale space
and study the extended Ruelle algebras from the view points of Cuntz--Krieger algebras. 
As a result, the asymptotic Ruelle algebra is realized as a fixed point algebra
of the extended Ruelle algebra under certain circle action.
\end{abstract}

{\it Mathematics Subject Classification}:
 Primary 37D20, 46L35.

{\it Keywords and phrases}:
Hyperbolic dynamics,
Smale spaces,
Ruelle algerbas, groupoid, zeta function,
continuous orbit equivalence,
shifts of finite type,  Cuntz--Krieger algebras,

\def\R{{\mathcal{R}}}
\def\RT{\tilde{\mathcal{R}}}

\def\OA{{{\mathcal{O}}_A}}
\def\OB{{{\mathcal{O}}_B}}
\def\RA{{{\mathcal{R}}_A^{s,u}}}
\def\RAC{{{\mathcal{R}}_A^\circ}}
\def\RAR{{{\mathcal{R}}_A^\rho}}
\def\RtA{{{\mathcal{R}}_{A^t}}}
\def\RB{{{\mathcal{R}}_B}}
\def\FA{{{\mathcal{F}}_A}}
\def\FB{{{\mathcal{F}}_B}}
\def\FA{{{\mathcal{F}}_A}}
\def\FtA{{{\mathcal{F}}_{A^t}}}
\def\FB{{{\mathcal{F}}_B}}
\def\DtA{{{\mathcal{D}}_{A^t}}}
\def\FDA{{{\frak{D}}_A^{s,u}}}
\def\FDtA{{{\frak{D}}_{A^t}}}
\def\FFA{{{\frak{F}}_A^{s,u}}}
\def\FFtA{{{\frak{F}}_{A^t}}}
\def\FFB{{{\frak{F}}_B}}
\def\OZ{{{\mathcal{O}}_Z}}
\def\V{{\mathcal{V}}}
\def\E{{\mathcal{E}}}
\def\OtA{{{\mathcal{O}}_{A^t}}}
\def\SOA{{{\mathcal{O}}_A}\otimes{\mathcal{K}}}
\def\SOB{{{\mathcal{O}}_B}\otimes{\mathcal{K}}}
\def\SOZ{{{\mathcal{O}}_Z}\otimes{\mathcal{K}}}
\def\SOtA{{{\mathcal{O}}_{A^t}\otimes{\mathcal{K}}}}
\def\DA{{{\mathcal{D}}_A}}
\def\DB{{{\mathcal{D}}_B}}
\def\DZ{{{\mathcal{D}}_Z}}

\def\SDA{{{\mathcal{D}}_A}\otimes{\mathcal{C}}}
\def\SDB{{{\mathcal{D}}_B}\otimes{\mathcal{C}}}
\def\SDZ{{{\mathcal{D}}_Z}\otimes{\mathcal{C}}}
\def\SDtA{{{\mathcal{D}}_{A^t}\otimes{\mathcal{C}}}}
\def\BC{{{\mathcal{B}}_C}}
\def\BD{{{\mathcal{B}}_D}}
\def\OAG{{\mathcal{O}}_{A^G}}
\def\OBG{{\mathcal{O}}_{B^G}}
\def\O2{{{\mathcal{O}}_2}}
\def\D2{{{\mathcal{D}}_2}}

\newcommand{\mathP}{\mathcal{P}}
\def\OAG{{\mathcal{O}}_{A^G}}
\def\DAG{{\mathcal{D}}_{A^G}}
\def\OBG{{\mathcal{O}}_{B^G}}

\def\OAGP{{\mathcal{O}}_{A^{{[\mathP]}}}} 
\def\DAGP{{\mathcal{D}}_{A^{{[\mathP]}}}} 
\def\OAGHP{{\mathcal{O}}_{{Z^{[\mathP]}}}} 
\def\DAGHP{{\mathcal{D}}_{{Z^{[\mathP]}}}} 
\def\AGP{A^{{[\mathP]}}} 
\def\AGHP{{Z^{[\mathP]}}} 
\def\OAGIP{{\mathcal{O}}_{A_{[\mathP]}}} 
\def\DAGIP{{\mathcal{D}}_{A_{[\mathP]}}} 
\def\OAGHIP{{\mathcal{O}}_{{Z_{[\mathP]}}}} 
\def\DAGHIP{{\mathcal{D}}_{{Z_{[\mathP]}}}} 
\def\AGIP{A_{[\mathP]}} 
\def\AGHIP{{Z_{[\mathP]}}}
\def\CKT{{\mathcal{T}}}
\def\Max{{{\operatorname{Max}}}}
\def\Per{{{\operatorname{Per}}}}
\def\PerB{{{\operatorname{PerB}}}}
\def\Homeo{{{\operatorname{Homeo}}}}
\def\HA{{{\frak H}_A}}
\def\HB{{{\frak H}_B}}
\def\HSA{{H_{\sigma_A}(X_A)}}
\def\Out{{{\operatorname{Out}}}}
\def\Aut{{{\operatorname{Aut}}}}
\def\Ad{{{\operatorname{Ad}}}}
\def\Inn{{{\operatorname{Inn}}}}
\def\Int{{{\operatorname{Int}}}}
\def\Min{{{\operatorname{Min}}}}
\def\interior{{{\operatorname{int}}}}
\def\det{{{\operatorname{det}}}}
\def\exp{{{\operatorname{exp}}}}
\def\cobdy{{{\operatorname{cobdy}}}}
\def\Ker{{{\operatorname{Ker}}}}
\def\ind{{{\operatorname{ind}}}}
\def\id{{{\operatorname{id}}}}
\def\supp{{{\operatorname{supp}}}}
\def\co{{{\operatorname{co}}}}
\def\Sco{{{\operatorname{Sco}}}}
\def\U{{{\mathcal{U}}}}
\def\Porb{{{\operatorname{P}_{\operatorname{orb}}}}}



\begin{enumerate}
\renewcommand{\labelenumi}{\arabic{enumi}}
\item
 Introduction
\item
 Smale spaces and their groupoids
\item
 Asymptotic continuous orbit equivalence
\item
Asymptotic periodic orbits of Smale spaces
\item
 Asymptotic  Ruelle algebras $\R_\phi^a$ with dual actions
\item
Asymptotic conjugacy 
\item
Extended Ruelle algebras $\R_\phi^{s,u}$


\item
Asymptotic continuous orbit equivalence in topological Markov shifts
\item
 Approach from Cuntz--Krieger algebras
\item
K-theory for the asymptotic Ruelle algebras for full shifts
\item
Concluding remarks



\end{enumerate}
\section{Introduction}
D. Ruelle has initiated a study of a basic class of hyperbolic dynamical systems,
called Smale spaces, from a view point of noncommutative operator algebras in \cite{Ruelle1}, \cite{Ruelle2}. 
Smale spaces are, roughly speaking,  hyperbolic dynamical systems
with local product structure.
His definition of Smale space was  motivated by the work of 
S. Smale \cite{Smale}, R. Bowen \cite{Bowen2}, \cite{Bowen3} and others. 
Two-sided subshifts of finite type are typical examples of Smale spaces.
Ruelle  has introduced non commutative algebras
from Smale spaces and studied equilibrium states on them.
After the Ruelle's papers,
 Ian F. Putnam \cite{Putnam1}, \cite{Putnam2}, \cite{Putnam3},
\cite{Putnam4},
 Putnam--Spielberg \cite{PutSp}
and Kaminker--Putnam--Spielberg
\cite{KamPutSpiel}
(cf. K. Thomsen \cite{Thomsen}, etc.) 
have investigated more detail on various kinds of  $C^*$-algebras
associated to
 Smale spaces from the view points of groupoids and structure theory of $C^*$-algebras. 
For a Smale space $(X,\phi)$,  
Putnam has considered the following six kinds of $C^*$-algebras written in \cite{Putnam1}, \cite{Putnam2}
\begin{equation}
S(X,\phi), \quad
U(X,\phi), \quad
A(X,\phi), \quad
S(X,\phi)\rtimes\Z, \quad
U(X,\phi)\rtimes\Z, \quad
A(X,\phi)\rtimes\Z. \label{eq:sixalg}
\end{equation}
The symbols $S, U, A$
correspond to stable, unstable, asymptotic equivalence relations, respectively.  
The second three algebras in the above six algebras
are crossed products of the first three algebras
by $\Z$-actions defined from automorphisms induced by $\phi$,
respectively.
Putnam has written the second three algebras 
as $R_s, R_u, R_a$, respectively and call them
the stable Ruelle algebra, the unstable Ruelle algebra, the asymptotic Ruelle algebra,
respectively (\cite{Putnam2}).
In this paper, we write them as
$\R_\phi^s, \R_\phi^u,\R_\phi^a$,
respectively to emphasize the original homeomorphism $\phi$.
 He pointed out that 
if $(X,\phi)$ is a shift of finite type defined by an irreducible square matrix 
$A$ with entries in $\{0,1\}$, the algebras 
$S(X,\phi)\rtimes\Z$
and $U(X,\phi)\rtimes\Z$
are isomorphic to
the stabilized Cuntz--Krieger algebras
$\SOA$ and $\SOtA$, respectively,
where $\K$ 
denotes the $C^*$-algebra of compact operators on 
a separable infinite dimensional Hilbert space.
Putnam--Spielberg \cite{PutSp}
(cf. Killough--Putnam \cite{KilPut})
also constructed another kinds of $C^*$-algebras
$S(X,\phi,P), U(X,\phi, P)$
and their crossed products
$S(X,\phi,P)\rtimes\Z, U(X,\phi,P)\rtimes\Z
$
from a $\phi$-invariant subset $P\subset X$ of periodic points 
by using \'etale groupoids defined by restricting stable, unstable equivalence
relations to $P$, respectively.
Although there are many different choices of $P$,
they are all Morita equivalent to
$
S(X,\phi),
U(X,\phi)
$
and
$
S(X,\phi)\rtimes\Z,
U(X,\phi)\rtimes\Z,
$
respectively.
In the present paper,  
we will not deal with these $C^*$-algebras
$S(X,\phi,P), U(X,\phi, P), S(X,\phi,P)\rtimes\Z, U(X,\phi,P)\rtimes\Z.
$

In this paper we will mainly focus on the algebra $\R_\phi^a$,
the last one in \eqref{eq:sixalg}.
By Putnam \cite{Putnam1}, 
 the algebra $\R_\phi^a$ is realized as 
the groupoid $C^*$-algebra
$C^*(G_\phi^a\rtimes\Z)$
of an  \'etale groupoid $G_\phi^a\rtimes\Z$.
Its unit space $(G_\phi^a\rtimes\Z)^\circ$
is identified with the original space $X$.
We naturally identify $C(X)$ with a subalgebra of $\R_\phi^a$.
A Smale space $(X,\phi)$ is said to be {\it asymptotically essentially free}\/
if the interior of the set of $n$-asymptotic periodic points
$\{x \in X \mid (\phi^n(x), x) \in G_\phi^a \}$
is empty for every $n\in \Z$ with $n\ne 0$.
If $(X,\phi)$ is irreducible and $X$ is not any finite set,
$(X,\phi)$ is  asymptotically essentially free (Lemma \ref{lem:irreessfree}).
We know that  $(X,\phi)$ is  asymptotically essentially free
if and only if the \'etale groupoid 
$G_\phi^a\rtimes \Z$ is essentially principal
(Lemma \ref{lem:essprin}).
Hence if  $(X,\phi)$ is irreducible and the space $X$ is infinite,
 the  $C^*$-algebra
 $\R_\phi^a$ is simple
(Proposition \ref{prop:simple}),
and  the $C^*$-subalgebra 
$C(X)$ is maximal abelian in $\R_\phi^a$.
Since 
$C^*(G_\phi^a\rtimes\Z)$ is canonically isomorphic to
the crossed product
$C^*(G_\phi^a)\rtimes\Z$
of the groupoid $C^*$-algebra
$C^*(G_\phi^a)$, which is the $C^*$-algebra $A(X,\phi)$
the third one in \eqref{eq:sixalg},
 by the integer group $\Z$ coming from the original transformation
$\phi$ on $X$,
the algebra  $\R_\phi^a$ has the dual action written $\rho^\phi_t$
of the circle group $\T = {\mathbb{R}}/\Z$.
Throughout the paper we assume that the space $X$ is infinite.

In the first part of this paper, 
we introduce a notion of asymptotic continuous orbit equivalence in Smale spaces, 
which will be defined in Section 2.
Roughly speaking, 
two Smale spaces are asymptotically continuous orbit equivalent  
if they are continuous orbit equivalent up to asymptotic equivalence.
We will show that  asymptotic continuous orbit equivalence in Smale spaces
is equivalent to their associated \'etale groupoids being isomorphic.
It corresponds to the fact that continuous orbit equivalence in one-sided topological Markov shifts is equivalent to their associated \'etale groupoids being isomorphic  
(cf. \cite{MMKyoto}, \cite{MatuiPLMS}, \cite{Matui2015}).
If two
Smale spaces  $(X,\phi)$ and $(Y,\psi)$ are 
asymptotically continuous orbit equivalent, 
written
$(X, \phi) \underset{ACOE}{\sim}(Y, \psi),$ 
then 
there exists a homeomorphism
$h:X\longrightarrow Y$
having certain  continuous homomorphisms
$c_\phi: G_\phi^a\rtimes\Z\longrightarrow \Z$
and
$c_\psi: G_\psi^a\rtimes\Z\longrightarrow \Z$.
The continuous homomorphisms define  unitary representations
$U_t(c_\phi)$ on  $l^2(G_\phi^a\rtimes\Z)$
and
$U_t(c_\psi)$ on  $l^2(G_\psi^a\rtimes\Z)$
of $\T$
which give rise to  actions
$\Ad(U_t(c_\phi))$ on  $\R_\phi^a$ of $\T$
and
$\Ad(U_t(c_\psi))$ on  $\R_\psi^a$ of $\T$,
respectively.
In Section 3 and Section 5,
we will show the following.

\begin{theorem}[{Theorem \ref{thm:main1} and Theorem \ref{thm:main3}}]
\label{thm:1.1}
Let
 $(X,\phi)$ and $(Y,\psi)$ be irreducible Smale spaces.
Then the following assertions are equivalent:
\begin{enumerate}
\renewcommand{\theenumi}{\roman{enumi}}
\renewcommand{\labelenumi}{\textup{(\theenumi)}}
\item
$(X, \phi) $ and $(Y, \psi)$
are asymptotically continuous orbit equivalent.
\item
The groupoids 
$G_{\phi}^{a} \rtimes \Z$ and $G_{\psi}^{a} \rtimes \Z$
are isomorphic as \'etale groupoids.
\item
There exists an isomorphism $\Phi:\R_\phi^a \longrightarrow \R_\psi^a$
of $C^*$-algebras such that 
$\Phi(C(X)) = C(Y)$
and 
$$
\Phi\circ \rho^\phi_t = \Ad(U_t(c_\psi))\circ \Phi,
\qquad
\Phi\circ\Ad(U_t(c_\phi)) =\rho^\psi_t\circ \Phi
\quad
\text{ for }
t \in \T
$$
for some continuous homomorphisms
$c_\phi: G_\phi^a\rtimes\Z\longrightarrow \Z$
and
$c_\psi: G_\psi^a\rtimes\Z\longrightarrow \Z$.
\end{enumerate}
\end{theorem}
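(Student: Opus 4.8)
The plan is to prove the cycle of implications $(i)\Rightarrow(ii)\Rightarrow(iii)\Rightarrow(i)$, since this is the natural route for an equivalence relating a dynamical condition, a groupoid isomorphism, and an operator-algebraic statement with symmetry.

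For $(i)\Rightarrow(ii)$: starting from an asymptotic continuous orbit equivalence $h:X\to Y$ with its associated continuous cocycle-type functions, I would build an explicit map $\Psi:G_\phi^a\rtimes\Z\to G_\psi^a\rtimes\Z$ on the level of groupoid elements. The homeomorphism $h$ sends the unit space $X$ onto $Y$; on a general element one pushes forward along $h$ and corrects the $\Z$-coordinate using the continuous homomorphisms $c_\phi,c_\psi$ coming from the definition of asymptotic continuous orbit equivalence (these encode how orbits are matched up to asymptotic equivalence). The content here is essentially the same bookkeeping as in the one-sided topological Markov shift case (\cite{MMKyoto}, \cite{MatuiPLMS}, \cite{Matui2015}): one checks that $\Psi$ is a well-defined continuous groupoid homomorphism, that it respects the multiplication and inversion, that it is a bijection with continuous inverse (obtained symmetrically from $h^{-1}$), and that it is a homeomorphism onto its image so as to be an isomorphism of \'etale groupoids. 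The key point that makes this go through is that asymptotic continuous orbit equivalence is \emph{by definition} a continuous orbit equivalence "modulo asymptotic equivalence," so the orbit-matching data is exactly what is needed to match the two transformation-type groupoids.

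For $(ii)\Rightarrow(iii)$: a groupoid isomorphism $\Psi:G_\phi^a\rtimes\Z\to G_\psi^a\rtimes\Z$ induces a $C^*$-algebra isomorphism $\Phi=\Psi_*:\R_\phi^a\to\R_\psi^a$ carrying $C(X)=C((G_\phi^a\rtimes\Z)^\circ)$ onto $C(Y)$, since $\Psi$ restricts to a homeomorphism of unit spaces. The new feature compared with the purely topological setting is the pair of intertwining relations with the dual circle actions. Here I would use that the dual action $\rho^\phi$ is implemented by the canonical $\Z$-grading cocycle $G_\phi^a\rtimes\Z\to\Z$ (projection onto the $\Z$-coordinate), while $\Ad(U_t(c_\phi))$ is implemented by the cocycle $c_\phi$; under $\Psi$ the grading cocycle of the source groupoid corresponds, up to the relation coming from the asymptotic continuous orbit equivalence, to $c_\psi$ on the target, and $c_\phi$ corresponds to the grading cocycle on the target. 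Thus the functorial behaviour of cocycle-induced circle actions under groupoid isomorphisms yields exactly the two displayed intertwining identities. This step is mostly formal once the cocycle correspondences from $(i)\Rightarrow(ii)$ are recorded.

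For $(iii)\Rightarrow(i)$: this is the reconstruction direction and, I expect, the main obstacle. Given $\Phi:\R_\phi^a\to\R_\psi^a$ with $\Phi(C(X))=C(Y)$, restricting to the maximal abelian subalgebras (which are maximal abelian precisely because the groupoids are essentially principal, using irreducibility and that $X$ is infinite, via Lemma~\ref{lem:essprin} and the surrounding discussion) gives a homeomorphism $h:X\to Y$. The harder part is to extract from $\Phi$ together with the two intertwining relations enough structure to recover an actual asymptotic continuous orbit equivalence, i.e.\ to produce the continuous orbit-matching functions and verify the defining conditions. The strategy is: the intertwining with $\rho^\phi$ and $\Ad(U_t(c_\psi))$ forces $\Phi$ to respect the spectral subspaces, so normalizers of $C(X)$ in $\R_\phi^a$ implementing unit shifts in the $\Z$-direction map to normalizers in $\R_\psi^a$ carrying prescribed $c_\psi$-weights, and conversely; reading off how these normalizers act on the spectra of the abelian subalgebras gives the required orbit equivalence together with its cocycle data. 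One must be careful that the groupoid of germs reconstructed from $(\R_\phi^a, C(X))$ is genuinely $G_\phi^a\rtimes\Z$ — this uses essential principality so that the germ groupoid of the Cartan-type inclusion $C(X)\subset\R_\phi^a$ is recovered intrinsically (Renault-type reconstruction) — and then translate the analytic intertwining into the combinatorial orbit-equivalence relations. Once $(ii)$ is effectively re-derived from $(iii)$ in this way, $(i)$ follows by running the argument of $(ii)\Rightarrow(i)$, which is again the orbit-equivalence analogue of the one-sided Markov shift reconstruction.
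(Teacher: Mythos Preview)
Your proposal is correct and follows essentially the same route as the paper: the explicit groupoid map for $(i)\Rightarrow(ii)$, the functoriality of cocycle-induced circle actions (the paper's Proposition~\ref{prop:5.1} together with the identities $d_\psi\circ\varphi_h=c_\phi$ and $c_\psi\circ\varphi_h=d_\phi$) for $(ii)\Rightarrow(iii)$, and Renault's Cartan reconstruction for $(iii)\Rightarrow(i)$. One simplification worth noting: in the paper the implication $(iii)\Rightarrow(i)$ uses \emph{only} the condition $\Phi(C(X))=C(Y)$ together with essential principality and \cite[Proposition~4.11]{Renault2} to recover the groupoid isomorphism, so the intertwining relations are not needed for that direction and your anticipated ``main obstacle'' dissolves.
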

In Section 4, we will prove that 
stably asymptotic continuous orbit equivalence of Smale spaces
preserves their periodic orbits, so that 
their zeta functions are related to each other by the associated cocycle functions
(Theorem \ref{thm:zeta}).

In Section 5, 
we  study
asymptotic continuous orbit equivalence in Smale spaces 
in terms of the dual actions of the associated Ruelle algebras.

In Section 6, we will introduce  a notion of asymptotic conjugacy 
between Smale spaces
$(X,\phi)$ and $(Y,\psi)$, written
$(X, \phi) \underset{a}{\cong}(Y, \psi).$ 
Roughly speaking, 
two Smale spaces are asymptotically conjugate 
if they are topologically conjugate up to asymptotic equivalences.
It is stronger than asymptotic continuous orbit equivalence but weaker than 
topological conjugacy.
The notion of asymptotic conjugacy in this paper is not the same as the notion of 
eventual conjugacy which is used in dynamical systems (cf.
\cite[Definition 7.7.14]{LM}).
Let 
$d_\phi: G_\phi^a\rtimes\Z\longrightarrow \Z$
and
$d_\psi: G_\psi^a\rtimes\Z\longrightarrow \Z$
be the continuous homomorphisms of \'etale groupoids defined by
\begin{equation*}
d_\phi(x,n,z) =n, \qquad d_\psi(y,m,w) =m 
\quad
\text{ for }
(x,n,z) \in G_\phi^a\rtimes\Z, \,
(y,m,w) \in G_\psi^a\rtimes\Z.
\end{equation*}
We will characterize  asymptotic conjugacy in terms of 
the Ruelle algebras with their dual actions
in the following way. 
\begin{theorem}[{Theorem \ref{thm:mains6}}]
\label{thm:1.2}
Let
$(X,\phi)$ and $(Y,\psi)$
be irreducible Smale spaces.
Then the following assertions are equivalent. 
\begin{enumerate}
\renewcommand{\theenumi}{\roman{enumi}}
\renewcommand{\labelenumi}{\textup{(\theenumi)}}
\item
$(X,\phi)$ and $(Y,\psi)$
are asymptotically conjugate.
\item There exists an isomorphism
$\varphi: G_\phi^a\rtimes\Z\longrightarrow G_\psi^a\rtimes\Z$ 
of \'etale groupoids
such that 
$d_\psi\circ\varphi = d_\phi$.
\item
There exists an isomorphism $\Phi: \R_\phi^a \longrightarrow \R_\psi^a$
of $C^*$-algebras such that 
$\Phi(C(X)) = C(Y)$
and 
$$
\Phi\circ \rho^\phi_t = \rho^\psi_t \circ \Phi
\quad
\text{ for }
t \in \T.
$$
\end{enumerate}
\end{theorem}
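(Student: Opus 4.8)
The plan is to establish the two equivalences (i) $\Leftrightarrow$ (ii) and (ii) $\Leftrightarrow$ (iii): the first is a matter of translating the dynamics of Section 6 into the language of groupoids, the second is the reconstruction of an \'etale groupoid from its $C^*$-algebra together with its canonical abelian subalgebra, now carried out while keeping track of the grading homomorphisms $d_\phi,d_\psi$. Throughout, the hypothesis that $(X,\phi)$ and $(Y,\psi)$ are irreducible with $X,Y$ infinite is used (via Lemma \ref{lem:irreessfree} and Lemma \ref{lem:essprin}) to guarantee that $G_\phi^a\rtimes\Z$ and $G_\psi^a\rtimes\Z$ are essentially principal and that $C(X),C(Y)$ are maximal abelian in $\R_\phi^a,\R_\psi^a$; this is what makes the reconstruction step available and the induced groupoid isomorphism unique.

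For (i) $\Rightarrow$ (ii): let $h:X\to Y$ be a homeomorphism realizing an asymptotic conjugacy as defined in Section 6, so that $h$ carries $\phi$-asymptotic pairs to $\psi$-asymptotic pairs and intertwines $\phi$ and $\psi$ up to asymptotic equivalence, compatibly with the groupoid topologies. Define $\varphi:G_\phi^a\rtimes\Z\to G_\psi^a\rtimes\Z$ by $\varphi(x,n,z)=(h(x),n,h(z))$. Then $\varphi$ is well defined precisely because $h$ respects asymptotic equivalence and intertwines $\phi,\psi$ up to it; it visibly preserves range, source and multiplication, it is a homeomorphism for the \'etale topologies, and applying the same formula to $h^{-1}$ produces its inverse. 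Since the middle coordinate is untouched, $d_\psi\circ\varphi=d_\phi$. Conversely, for (ii) $\Rightarrow$ (i): restrict $\varphi$ to unit spaces to obtain a homeomorphism $h:=\varphi|_{(G_\phi^a\rtimes\Z)^\circ}:X\to Y$. The graph $\{(x,1,\phi(x)):x\in X\}$ of $\phi$ is an open bisection of $G_\phi^a\rtimes\Z$, so its image under $\varphi$ is an open bisection of $G_\psi^a\rtimes\Z$ all of whose middle coordinates equal $1$ (because $d_\psi\circ\varphi=d_\phi$), with source map $h$ and range map $h\circ\phi$; membership in $G_\psi^a\rtimes\Z$ then forces $(\psi(h(x)),h(\phi(x)))\in G_\psi^a$ for every $x\in X$, and the analogous statement for $h^{-1}$ follows from $\varphi^{-1}$. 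Moreover $\varphi$ restricts to a groupoid isomorphism $d_\phi^{-1}(0)\to d_\psi^{-1}(0)$, i.e. $G_\phi^a\to G_\psi^a$, covering $h$, which supplies the remaining compatibility with asymptotic equivalence. Hence $h$ is an asymptotic conjugacy, i.e. $(X,\phi)\underset{a}{\cong}(Y,\psi)$.

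For (ii) $\Rightarrow$ (iii): the \'etale groupoid isomorphism $\varphi$ induces, by functoriality of the groupoid $C^*$-algebra, an isomorphism $\Phi:\R_\phi^a\to\R_\psi^a$ determined on compactly supported continuous functions by $\Phi(f)=f\circ\varphi^{-1}$; since $\varphi$ maps the unit space onto the unit space, $\Phi$ restricts to the isomorphism $C(X)\to C(Y)$ induced by $h$, so $\Phi(C(X))=C(Y)$. Under the identification $\R_\phi^a=C^*(G_\phi^a\rtimes\Z)\cong C^*(G_\phi^a)\rtimes\Z$, the dual action is given on functions by $(\rho^\phi_t f)(\gamma)=e^{2\pi i t\, d_\phi(\gamma)}f(\gamma)$, and similarly for $\rho^\psi_t$ with $d_\psi$; as $d_\phi=d_\psi\circ\varphi$, a one-line computation yields $\Phi\circ\rho^\phi_t=\rho^\psi_t\circ\Phi$ on compactly supported functions, hence on all of $\R_\phi^a$.

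The implication (iii) $\Rightarrow$ (ii) is the crux. From $\Phi(C(X))=C(Y)$, essential principality and maximal abelianness give, by the reconstruction of essentially principal \'etale groupoids from their $C^*$-algebras and canonical abelian subalgebras used already in the proof of Theorem \ref{thm:main1}, a unique \'etale groupoid isomorphism $\varphi:G_\phi^a\rtimes\Z\to G_\psi^a\rtimes\Z$ inducing $\Phi$; what must be extracted from $\Phi\circ\rho^\phi_t=\rho^\psi_t\circ\Phi$ is the equality $d_\psi\circ\varphi=d_\phi$. The key observation is that $d_\phi$ is recovered from the dual action as a degree on normalizers: $G_\phi^a\rtimes\Z$ is covered by open bisections of the form $S_v$, where $v\in\R_\phi^a$ is a normalizer of $C(X)$ with $\rho^\phi_t(v)=e^{2\pi i n t}v$ for all $t\in\T$, and on $S_v$ one has $d_\phi\equiv n$; since $\Phi$ intertwines the dual actions it carries such a homogeneous normalizer of degree $n$ to a homogeneous normalizer $\Phi(v)$ of $C(Y)$ of the same degree, with $\varphi(S_v)=S_{\Phi(v)}$, so $d_\psi\equiv n$ on $\varphi(S_v)$, and these bisections cover the groupoid. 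This last point is where I expect the real difficulty to lie: in Theorem \ref{thm:main1} the dual action of one Ruelle algebra is matched only with a \emph{cocycle-twisted} dual action of the other, which pins down the $\Z$-coordinate of the groupoid isomorphism only up to a coboundary, whereas here one must show that the \emph{untwisted} circle action $\rho^\phi$ encodes the canonical homomorphism $d_\phi$ \emph{completely} and that this encoding survives the reconstruction; by contrast the equivalence (i) $\Leftrightarrow$ (ii) and the implication (ii) $\Rightarrow$ (iii) are straightforward once the Section 6 definition of asymptotic conjugacy is fixed.
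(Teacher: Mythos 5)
Your proposal is correct. The equivalence (i) $\Leftrightarrow$ (ii) and the implication (ii) $\Rightarrow$ (iii) run essentially as in the paper, which obtains them by specializing Theorem \ref{thm:main1} and Proposition \ref{prop:5.1} to the case $c_\phi=d_\phi$, $c_\psi=d_\psi$; your direct formulas $\varphi(x,n,z)=(h(x),n,h(z))$ and $(\rho^\phi_tf)(\gamma)=e^{2\pi\sqrt{-1}\,d_\phi(\gamma)t}f(\gamma)$ are exactly what that specialization produces. Where you genuinely diverge is the crux (iii) $\Rightarrow$ (ii). The paper (Proposition \ref{prop:main6.1}) does this in two separate steps: first it observes that $(\R_\phi^a)^{\rho^\phi}=C^*(G_\phi^a)$, so equivariance forces $\Phi(C^*(G_\phi^a))=C^*(G_\psi^a)$ and Lemma \ref{lem:groupoidC*6.2} gives $\varphi(G_\phi^a)=G_\psi^a$, i.e.\ $d_1\equiv d_2\equiv 0$; then it pins down $c_1\equiv 1$ by an explicit computation with the implementing unitary $U_\phi$ of \eqref{eq:Uphi}, using maximal abelianness of $C(X)$ to write $\Phi(U_\phi)=\Phi_h(U_\phi f_0)$ for a unitary $f_0\in C(X)$ and comparing $\rho^\phi_t(U_\phi)=\exp(2\pi\sqrt{-1}t)U_\phi$ with $\Ad(U_t(c_\phi))$ to extract $c_\phi(x,n,z)-c_\phi(\phi(x),n-1,z)=1$. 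You instead read $d_\phi$ off the circle action all at once: the degree-$n$ spectral subspace of $\rho^\phi$ is the closed span of functions supported on the clopen set $d_\phi^{-1}(n)$, so homogeneous normalizers of $C(X)$ of degree $n$ have open support in $d_\phi^{-1}(n)$, such bisections cover the groupoid, and the reconstruction of \cite[Proposition 4.11]{Renault2} carries $S_v$ to $S_{\Phi(v)}$ with degree preserved by equivariance, giving $d_\psi\circ\varphi=d_\phi$ and subsuming both $d_i\equiv 0$ (degree $0$) and $c_i\equiv 1$ (the degree-$1$ bisection $\{(x,1,\phi(x))\}$). This is the graded diagonal-preserving-isomorphism principle familiar from the Cuntz--Krieger and graph-algebra literature; it is cleaner and more conceptual, while the paper's unitary computation is more elementary and self-contained. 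The only points you should make explicit are that $\varphi(S_v)=S_{\Phi(v)}$ is built into Renault's reconstruction, and that in (ii) $\Rightarrow$ (i) the continuity clauses of the definition of asymptotic conjugacy (the functions $k_{1,n}$, $m_1$, etc.\ of Remark \ref{remark3.3}) follow from continuity of $\varphi$ exactly as in the proof of Theorem \ref{thm:main1}; both are routine.
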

The asymptotic Ruelle algebra $\R_\phi^a$
has a translation invariant  faithful tracial state coming from
maximal measure called the Bowen measure on $X$.
Hence the algebra $\R_\phi^a$ is never purely infinite. 
In Section 7, we will introduce a unital, purely infinite version of $\R_\phi^a$.
The introduced $C^*$-algebra is denoted by $\R_\phi^{s,u}$, 
called the extended asymptotic Ruelle algebra.
 It has  a natural $\T^2$-action denoted by $\rho^{s,u}_\phi$.
 The fixed point algebra  $(\R_\phi^{s,u})^{\delta^\phi}$ 
of $\R_\phi^{s,u}$
under the diagonal $\T$-action
defined by $\delta^\phi_z = \rho^{s,u}_{\phi,(z,z)}, \, z \in \T$
 is isomorphic to the original
asymptotic Ruelle algebra $\R_\phi^a$
(Theorem \ref{thm:7.3}).

In Section 8, 9, 
we will apply the above discussions to shifts of finite type, which we call
topological Markov shifts, from the view point of Cuntz--Krieger algebras.
For  an irreducible and not permutation square matrix $A$ with entries in $\{0,1\},$
let us denote by
 $(\bar{X}_A, \bar{\sigma}_A)$
the associated two-sided topological Markov shift.
The dynamical system is a typical example of Smale space
as in \cite{Putnam1}, \cite{Putnam2}, \cite{Ruelle1}.
Consider 
the asymptotic Ruelle algebra
$\R_{\bar{\sigma}_A}^a$
and the extended asymptotic Ruelle algebra
$\R_{\bar{\sigma}_A}^{s,u}$
 for the topological Markov shift $(\bar{X}_A, \bar{\sigma}_A)$,
respectively.
Let $\rho^{A^t}$ and $\rho^A$ be the gauge actions on the Cuntz--Krieger algebras
$\OtA$ and $\OA$, respectively, 
where $A^t$ is the transpose of the matrix $A$.
We put the diagonal gauge action
$\delta^A_r= \rho^{A^t}_r\otimes\rho^A_r, r \in \T$
on $\OtA\otimes\OA$.
\begin{theorem}[{Theorem \ref{thm:RAGR} and Corollary \ref{cor:mains10.9}}]
Let  $(\bar{X}_A, \bar{\sigma}_A)$ 
be the Smale space of the two-sided topological Markov shift
defined by irreducible non-permutation matrix $A$ with entries in $\{0,1\}$.
Then there exists a projection $E_A$ in the tensor product $C^*$-algebra
$\OtA\otimes\OA$ such that 
$\delta^A_r(E_A) = E_A$ for all $r \in \T$
and 
the extended asymptotic Ruelle algebra 
$\R_{\bar{\sigma}_A}^{s,u}$
is isomorphic to the $C^*$-algebra  
$E_A(\OtA\otimes\OA)E_A$ denoted by $\R_{A}^{s,u}.$
The asymptotic Ruelle algebra 
$\R_{\bar{\sigma}_A}^a$ is isomorphic to the fixed point algebra 
${(\R_{A}^{s,u})}^{\delta^{A}}$
of $\R_{A}^{s,u}$ under the diagonal gauge action
$\delta^A.$
\end{theorem}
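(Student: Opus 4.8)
The plan is to realize both algebras in the statement as groupoid $C^*$-algebras and to match the underlying étale groupoids together with their canonical cocycles. Recall first that the Cuntz--Krieger algebras $\OA$ and $\OtA$ are the $C^*$-algebras of the Deaconu--Renault groupoids of the one-sided shifts $(X_A,\sigma_A)$ and $(X_{A^t},\sigma_{A^t})$, say $G_{\sigma_A}=\{(x,k,y)\mid \sigma_A^m x=\sigma_A^n y,\ k=m-n\}$ and similarly $G_{\sigma_{A^t}}$, and that the gauge actions $\rho^A$, $\rho^{A^t}$ are the dual actions of the $\Z$-valued cocycles $(x,k,y)\mapsto k$. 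Hence $\OtA\otimes\OA=C^*(G_{\sigma_{A^t}}\times G_{\sigma_A})$, and $\delta^A_r=\rho^{A^t}_r\otimes\rho^A_r$ is the dual action of the $\Z$-valued cocycle $((x',k',y'),(x,k,y))\mapsto k'+k$ on the product groupoid. On the other side, by its construction in Section 7 the extended Ruelle algebra $\R_{\bar{\sigma}_A}^{s,u}$ is the $C^*$-algebra of an étale groupoid carrying a $\Z^2$-valued cocycle whose dual action is $\rho^{s,u}_{\bar{\sigma}_A}$, and $\delta^{\bar{\sigma}_A}_z=\rho^{s,u}_{\bar{\sigma}_A,(z,z)}$ is dual to the diagonal $\Z$-valued cocycle.

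Next I would produce the projection $E_A$. Identify $\xi=(\xi_n)_{n\in\Z}\in\bar{X}_A$ with the pair $\big((\xi_{-1},\xi_{-2},\dots),(\xi_0,\xi_1,\dots)\big)$: the reversed negative tail is a one-sided $A^t$-path, the positive tail is a one-sided $A$-path, and the only surviving constraint is $A(\xi_{-1},\xi_0)=1$ at the junction. This exhibits $\bar{X}_A$ as the clopen subset $Y_A=\{(x,y)\in X_{A^t}\times X_A\mid A(x_0,y_0)=1\}$ of the unit space of $G_{\sigma_{A^t}}\times G_{\sigma_A}$, clopen because the defining condition depends only on $x_0$ and $y_0$. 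Let $E_A$ be the characteristic function of $Y_A$. Then $E_A$ is a projection lying in the commutative subalgebra $\DtA\otimes\DA$ of $\OtA\otimes\OA$, and, because the $\T^2$-gauge action fixes $\DA$ and $\DtA$ pointwise, it fixes $E_A$; in particular $\delta^A_r(E_A)=E_A$ for all $r\in\T$ and the $\T^2$-action restricts to the corner. Moreover $E_A(\OtA\otimes\OA)E_A=C^*\big((G_{\sigma_{A^t}}\times G_{\sigma_A})|_{Y_A}\big)$, a corner by a projection of this kind being the $C^*$-algebra of the reduction of the groupoid to the clopen set $Y_A$.

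The main step is to identify the reduced product groupoid $(G_{\sigma_{A^t}}\times G_{\sigma_A})|_{Y_A}$, as an étale groupoid with its $\Z^2$-valued cocycle, with the defining groupoid of $\R_{\bar{\sigma}_A}^{s,u}$ with its $\Z^2$-valued cocycle. The dictionary is: advancing along the positive tail of a two-sided sequence (an arrow of $G_{\sigma_A}$ of degree $k$) is the stable coordinate; advancing along the reversed negative tail (an arrow of $G_{\sigma_{A^t}}$ of degree $k'$) is the unstable coordinate; arrows with $k=k'=0$ are precisely asymptotic equivalence of two-sided sequences; and the homeomorphism $\bar{\sigma}_A$ acts on the pair of tails as $(\sigma_{A^t}^{-1},\sigma_A)$, hence as an arrow of bidegree $(-1,1)$ — which is exactly why it is the diagonal, and not the anti-diagonal, circle action that will matter. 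Checking that this dictionary defines an isomorphism of étale groupoids intertwining the two $\Z^2$-cocycles is where the real work lies, and the bookkeeping of the lags/degrees (in particular the reversed sign in the unstable direction) is the step I expect to be the main obstacle; one also uses here that $A$ is irreducible and not a permutation, so that $(\bar{X}_A,\bar{\sigma}_A)$ is an irreducible Smale space with infinite underlying space and the groupoid machinery of Sections 2 and 7 applies. The outcome is an isomorphism $\Phi\colon\R_{\bar{\sigma}_A}^{s,u}\longrightarrow E_A(\OtA\otimes\OA)E_A=:\R_A^{s,u}$ with $\Phi\circ\rho^{s,u}_{\bar{\sigma}_A,(r,s)}=(\rho^{A^t}_r\otimes\rho^A_s)\circ\Phi$ for all $(r,s)\in\T^2$.

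Finally, the statement about the asymptotic Ruelle algebra is a formal consequence. By Theorem \ref{thm:7.3}, $\R_{\bar{\sigma}_A}^a$ is isomorphic to the fixed point algebra $(\R_{\bar{\sigma}_A}^{s,u})^{\delta^{\bar{\sigma}_A}}$ under $\delta^{\bar{\sigma}_A}_z=\rho^{s,u}_{\bar{\sigma}_A,(z,z)}$. Setting $r=s=z$ in the intertwining relation above shows that $\Phi$ carries $\delta^{\bar{\sigma}_A}_z$ to $\delta^A_z=\rho^{A^t}_z\otimes\rho^A_z$ restricted to $\R_A^{s,u}$; since an isomorphism intertwining group actions maps fixed point algebra onto fixed point algebra, $\Phi$ restricts to an isomorphism of $(\R_{\bar{\sigma}_A}^{s,u})^{\delta^{\bar{\sigma}_A}}$ onto $(\R_A^{s,u})^{\delta^A}$. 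Composing with the isomorphism of Theorem \ref{thm:7.3} gives $\R_{\bar{\sigma}_A}^a\cong(\R_A^{s,u})^{\delta^A}$, completing the proof.
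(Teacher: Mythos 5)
Your proposal is correct and follows essentially the same route as the paper: your projection (the characteristic function of the clopen junction set $Y_A$) is exactly the paper's $E_A=\sum_j T_jT_j^*\otimes S_j^*S_j$, and your identification of the reduction $(G_{\sigma_{A^t}}\times G_{\sigma_A})|_{Y_A}$ with $G_A^{s,u}\rtimes\Z^2$ is precisely the paper's subgroupoid $G_{A^t}\times_A G_A$ and the map $((x',n',y'),(x,n,y))\mapsto(\pi(x',x),-n,n',\pi(y',y))$, after which the corollary is deduced from Theorem \ref{thm:7.3} exactly as you do. The one step you defer --- the sign bookkeeping in the groupoid isomorphism --- is handled in the paper by the explicit choice $p=-n$, $q=n'$, matching the dictionary you describe.
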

For the two-sided topological Markov shift $(\bar{X}_A, \bar{\sigma}_A),$
 we denote by 
$\R_{A}^{s,u}$ 
the extended asymptotic Ruelle algebra 
$\R_{\bar{\sigma}_A}^{s,u},$
which is identified with
the $C^*$-algebra  
$E_A(\OtA\otimes\OA)E_A,$
and by 
$\R_A^a$ the asymptotic Ruelle algebra 
$\R_{\bar{\sigma}_A}^a,$
which is identified with  the fixed point algebra 
of $E_A(\OtA\otimes\OA)E_A$ under the diagonal gauge action
$\delta^A$ by the above theorem.

In Section 10,
we will present 
the K-groups of the asymptotic 
Ruelle algebras $\R_\phi^a$ for some topological Markov shifts.
In Putnam \cite{Putnam1} and  Killough--Putnam \cite{KilPut}, 
the K-theory formula for the  asymptotic 
Ruelle algebras $\R_A^a$ for the topological Markov shift
$(\bar{X}_A,\bar{\sigma}_A)$
has been provided.
We will use Putnam's formula in \cite{Putnam1} 
to compute the K-groups of the $C^*$-algebra
$\R_A^a$ 
for 
the $N\times N$ matrix
$
A=
\begin{bmatrix} 
1& \cdots & 1 \\
\vdots &   & \vdots \\
1& \cdots & 1 \\
\end{bmatrix}
$
with all entries being $1$'s,
so that the topological Markov shift
$(\bar{X}_A,\bar{\sigma}_A)$
is the full  $N$-shift.
Let us denote by
$\R_N^a$ the asymptotic Ruelle algebra $\R_A^a$ 
for the matrix $A$.
The $C^*$-algebra $\R_N^a$ is 
a simple A$\mathbb{T}$-algebra of real rank zero 
with a unique tracial state written $\tau_N$.
 We will show that 
$K_0(\R_N^a) \cong K_1(\R_N^a) \cong \Z[\frac{1}{N}]$
(Proposition \ref{prop:K}) and
$\tau_{N*}(K_0(\R_N^a)) =\Z[\frac{1}{N}]$
(Lemma \ref{lem:dimensionrange}).
We then see that 
two algebras 
$\R_N^a$ and $\R_M^a$ are isomorphic if and only if 
$\Z[\frac{1}{N}] =\Z[\frac{1}{M}]$
(Proposition \ref{prop:classRuelle}).  
As a result, we know that 
the two-sided full 2-shift and the two-sided full 3-shift are 
not asymptotically continuous orbit equivalent
(Corollary \ref{coro:acoefull}).

 In Section 11,
 we refer to differences among  
asymptotic continuous orbit equivalence,   asymptotic  conjugacy and  topological conjugacy of Smale spaces, and present an open question related to their differences.

Throughout the paper,
we denote by $\Zp$ and $\N$ the set of nonnegative integers
and the set of positive integers, respectively.

\section{ Smale spaces and their groupoids}

Let $\phi$ be a homeomorphism on a compact metric space $(X, d)$ with metric $d$. 
Let us recall the definition of Smale space following 
D. Ruelle \cite[7.1]{Ruelle1} and I. F, Putnam \cite[Section 1]{Putnam1}.
Our notations are slightly changed from Ruelle's ones and Putnam's ones.
  For $\epsilon>0$, 
we set
\begin{equation*}
\Delta_{\epsilon} := \{(x,y) \in X\times X\mid d(x,y) <\epsilon \}.
\end{equation*}
Suppose that there exist $\epsilon_0>0$ and a continuous map
\begin{equation*}
[\cdot, \cdot]: (x,y) \in \Delta_{\epsilon_0} \longrightarrow [x,y]\in X  
\end{equation*}
having the following three properties called ({\bf SS1}):
\begin{enumerate}
\renewcommand{\theenumi}{\roman{enumi}}
\renewcommand{\labelenumi}{\textup{(\theenumi)}}
\item
$
[x,x] = x
$ for $ x \in X, $
\item
$
[[x,y],z] =[ x, [y,z]] = [x,z]  
$
 for
$ (x, y), (y,z), (x,z), ([x,y],z), ( x, [y,z]) \in \Delta_{\epsilon_0},$
\item 
$[\phi(x), \phi(y)] =\phi([x,y]) 
$
for
$ (x, y), (\phi(x),\phi(y)) \in \Delta_{\epsilon_0}.
$
\end{enumerate}
Put for $0<\epsilon \le \epsilon_0$
\begin{align}
X^s(x,\epsilon) & = \{ y \in X \mid [y,x] = y, \, d(x,y) <\epsilon \}, \\
X^u(x,\epsilon) & = \{ y \in X \mid [x,y] = y, \, d(x,y) <\epsilon \}.
\end{align}
We further require 
that there exists  $0<\lambda_0<1$ such that 
the following two properties called ({\bf SS2}) hold:
\begin{align}
d(\phi(y), \phi(z)) \le \lambda_0 d(y,z) \qquad \text{ for } y,z \in X^s(x,\epsilon), \label{eq:2.2.3}\\
d(\phi^{-1}(y), \phi^{-1}(z)) \le \lambda_0 d(y,z) \qquad \text{ for } y,z \in X^u(x,\epsilon).
\end{align}
\begin{definition}[{Ruelle \cite[7.1]{Ruelle1}}] \label{def:smalespace}
A Smale space is a topological dynamical system
$(X,\phi)$ of a homeomorphism $\phi$ on a compact metric space $X$
with a bracket $[\cdot, \cdot]$ satisfying ({\bf SS1}) and ({\bf SS2})
for suitable $\epsilon_0, \lambda_0$.
\end{definition}
By Ruelle \cite[7.1]{Ruelle1}, Putnam \cite[Section 1]{Putnam1},  
there exists $\epsilon_1$ with $0<\epsilon_1<\epsilon_0$
such that for any $\epsilon$ satisfying
$0<\epsilon <\epsilon_1$, the equalities  
\begin{align}
X^s(x,\epsilon) & = \{ y \in X \mid d(\phi^n(x), \phi^n(y))< \epsilon \text{ for all }
n=0,1,2,\dots \}, \\
X^u(x,\epsilon) & = \{ y \in X \mid d(\phi^n(x), \phi^n(y))< \epsilon \text{ for all }
n=0,-1,-2,\dots \}
\end{align}
hold.
\begin{lemma}[{Putnam \cite[Section 1]{Putnam1}, Ruelle \cite[7.1]{Ruelle1}}]
For $x,y \in X$ with $(x,y) \in \Delta_{\epsilon_0}$ 
and
$d(x,[y,x]), d(y,[y,x]) <\epsilon_1,$
\begin{equation*}
\{ [y,x]\} = X^u(y,\epsilon_1) \cap X^s(x,\epsilon_1).
\end{equation*}
Hence for $0<\epsilon< \epsilon_1$ and $x,y,z \in X$,
the equality
$[y,x]=z$ holds if and only if
$$
d(\phi^{-n}(y), \phi^{-n}(z))< \epsilon, \qquad
d(\phi^n(x), \phi^n(z))< \epsilon \quad \text{ for all }
n=0,1,2,\dots.
$$
\end{lemma}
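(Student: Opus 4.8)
The real content here is the first displayed equality; once it is available, the ``Hence'' clause drops out by feeding it into the two orbit characterisations of $X^{s}(x,\epsilon)$ and $X^{u}(x,\epsilon)$ recalled immediately above, together with the monotonicity $X^{s}(x,\epsilon)\subseteq X^{s}(x,\epsilon_{1})$, $X^{u}(x,\epsilon)\subseteq X^{u}(x,\epsilon_{1})$ valid for $\epsilon<\epsilon_{1}$. So the plan is first to prove $\{[y,x]\}=X^{u}(y,\epsilon_{1})\cap X^{s}(x,\epsilon_{1})$ using only the bracket axioms (\textbf{SS1}), and then to assemble the equivalence. Since $d(x,[y,x]),d(y,[y,x])<\epsilon_{1}<\epsilon_{0}$ and $(x,y)\in\Delta_{\epsilon_{0}}$ by hypothesis, every bracket written below will have both of its arguments in $\Delta_{\epsilon_{0}}$; keeping this legitimacy under control is essentially the only thing one must watch.

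Write $z=[y,x]$. For the inclusion $\subseteq$ I would check directly that $z$ lies in each of the two sets. Applying clause (ii) of (\textbf{SS1}) to the triple $(y,x,x)$ and then clause (i),
\[
[z,x]=[[y,x],x]=[y,[x,x]]=[y,x]=z ,
\]
and $d(x,z)<\epsilon_{1}$, so $z\in X^{s}(x,\epsilon_{1})$; symmetrically, clause (ii) applied to $(y,y,x)$ together with clause (i),
\[
[y,z]=[y,[y,x]]=[[y,y],x]=[y,x]=z ,
\]
and $d(y,z)<\epsilon_{1}$, so $z\in X^{u}(y,\epsilon_{1})$. For the reverse inclusion (uniqueness), I would take any $w\in X^{u}(y,\epsilon_{1})\cap X^{s}(x,\epsilon_{1})$, so that $[y,w]=w$ and $[w,x]=w$ with $d(y,w),d(x,w)<\epsilon_{1}$, and apply clause (ii) of (\textbf{SS1}) to $(y,w,x)$:
\[
w=[y,w]=[y,[w,x]]=[y,x]=z .
\]
This gives $X^{u}(y,\epsilon_{1})\cap X^{s}(x,\epsilon_{1})=\{[y,x]\}$, the first assertion.

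For the ``Hence'' clause, fix $0<\epsilon<\epsilon_{1}$ (we may also assume $2\epsilon_{1}\le\epsilon_{0}$, shrinking $\epsilon_{1}$ if necessary, which leaves the recalled characterisations intact). If $z$ satisfies the two-sided proximity condition, then $d(x,z),d(y,z)<\epsilon$, hence $d(x,y)<2\epsilon\le\epsilon_{0}$, so $[y,x]$ is defined; by the recalled characterisations $z\in X^{s}(x,\epsilon)\subseteq X^{s}(x,\epsilon_{1})$ and $z\in X^{u}(y,\epsilon)\subseteq X^{u}(y,\epsilon_{1})$, and the uniqueness just proved forces $z=[y,x]$. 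Conversely, if $z=[y,x]$ and $x,y,z$ are mutually $\epsilon$-close, the membership direction places $z$ in $X^{s}(x,\epsilon)\cap X^{u}(y,\epsilon)$, and unfolding the orbit descriptions of these two sets is precisely the asserted condition on $\phi^{n}(x),\phi^{n}(z)$ and $\phi^{-n}(y),\phi^{-n}(z)$.

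I expect the only genuine obstacle to be the domain bookkeeping in the three displayed chains: one must confirm that every pair occurring as an argument of $[\cdot,\cdot]$ --- for instance $(z,x)$, $(y,z)$, $([y,w],x)=(w,x)$, $(y,[w,x])=(y,w)$ --- lies in $\Delta_{\epsilon_{0}}$, which is exactly why $\epsilon_{1}$ is taken strictly below $\epsilon_{0}$. No analytic input is needed; in particular the contraction constant $\lambda_{0}$ of (\textbf{SS2}) plays no role here beyond what already entered the orbit characterisations of $X^{s}$ and $X^{u}$ recalled just before the statement.
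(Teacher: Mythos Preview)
The paper does not supply its own proof of this lemma; it is stated with attribution to Putnam and Ruelle and used as background. Your argument is the standard one and is correct: the bracket-axiom manipulations $[[y,x],x]=[y,x]$ and $[y,[y,x]]=[y,x]$ give the inclusion, and the collapse $w=[y,w]=[y,[w,x]]=[y,x]$ via (\textbf{SS1})(ii) gives uniqueness, with the domain checks exactly as you note. The only comment is that the ``Hence'' clause in the paper is stated somewhat loosely (it tacitly assumes $x,y,z$ are close enough for $[y,x]$ to be defined and to lie in the $\epsilon$-local sets), and your handling of this---shrinking $\epsilon_{1}$ so that $2\epsilon_{1}\le\epsilon_{0}$ and reading off the orbit descriptions---is an appropriate way to make it precise.
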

This means that the bracket $[\cdot, \cdot]$ is determined by the original dynamics
$(X, d,\phi)$ if it exists.
The following lemma is also useful.
\begin{lemma}[{Putnam \cite[Section 1]{Putnam1}, Ruelle \cite[7.1]{Ruelle1}}]
\label{lem:2.1}
For any $\epsilon$ with $0 <\epsilon\le \epsilon_0$ and $x \in X$, we have
\begin{enumerate}
\renewcommand{\theenumi}{\roman{enumi}}
\renewcommand{\labelenumi}{\textup{(\theenumi)}}
\item
$\phi(X^s(x,\epsilon)) \subset X^s(\phi(x),\epsilon)). $
\item
$\phi^{-1}(X^u(x,\epsilon)) \subset X^u(\phi^{-1}(x),\epsilon)). $
\end{enumerate}
\end{lemma}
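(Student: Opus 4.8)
The plan is to prove (i); statement (ii) then follows by a symmetric argument (or by replacing $\phi$ with $\phi^{-1}$ and exchanging the roles of the stable and unstable sets, noting that the bracket for $\phi^{-1}$ is related to that for $\phi$). First I would fix $\epsilon$ with $0<\epsilon\le\epsilon_0$ and take $y\in X^s(x,\epsilon)$. By definition this means $[y,x]=y$ and $d(x,y)<\epsilon$. The goal is to show $\phi(y)\in X^s(\phi(x),\epsilon)$, i.e. $[\phi(y),\phi(x)]=\phi(y)$ and $d(\phi(x),\phi(y))<\epsilon$.

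The second condition is the easy one: since $y\in X^s(x,\epsilon)$ and $\epsilon\le\epsilon_0$, we may shrink attention (by the $\epsilon_1$-characterization, or directly from ({\bf SS2})) so that \eqref{eq:2.2.3} applies, giving $d(\phi(x),\phi(y))\le\lambda_0 d(x,y)<\lambda_0\epsilon<\epsilon$. Strictly, to invoke \eqref{eq:2.2.3} one wants $x,y$ to lie in a common $X^s(\cdot,\epsilon)$, which holds here since $y\in X^s(x,\epsilon)$ and $x\in X^s(x,\epsilon)$ trivially; one should just make sure the contraction estimate is being quoted with the correct uniform $\lambda_0$ from ({\bf SS2}). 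For the bracket condition, the key is property ({\bf SS1})(iii): $[\phi(x),\phi(y)]=\phi([x,y])$ whenever both $(x,y)$ and $(\phi(x),\phi(y))$ lie in $\Delta_{\epsilon_0}$. Here $(x,y)\in\Delta_{\epsilon_0}$ since $d(x,y)<\epsilon\le\epsilon_0$, and $(\phi(x),\phi(y))\in\Delta_{\epsilon_0}$ by the distance estimate just obtained. So $[\phi(y),\phi(x)]$: one should be careful about the order of arguments in the bracket. From $[y,x]=y$ and ({\bf SS1})(iii) applied with the pair $(y,x)$ I get $[\phi(y),\phi(x)]=\phi([y,x])=\phi(y)$, which is exactly the defining relation for $\phi(y)\in X^s(\phi(x),\epsilon)$ once combined with $d(\phi(x),\phi(y))<\epsilon$.

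The only genuine subtlety — and the step I would watch most carefully — is the applicability of ({\bf SS1})(iii) and ({\bf SS2}) at the chosen scale $\epsilon$: these axioms are stated for pairs in $\Delta_{\epsilon_0}$, and one must confirm that all the pairs appearing ($(y,x)$ and $(\phi(y),\phi(x))$) stay inside $\Delta_{\epsilon_0}$. This is automatic from $d(x,y)<\epsilon\le\epsilon_0$ and the contraction $d(\phi(y),\phi(x))<\lambda_0\epsilon<\epsilon_0$, but it is worth spelling out so that no scale hypothesis is silently used. With that verified, statement (ii) follows by the mirror-image argument: for $y\in X^u(x,\epsilon)$ we have $[x,y]=y$, and applying ({\bf SS1})(iii) to $\phi^{-1}$ together with the contraction $d(\phi^{-1}(y),\phi^{-1}(z))\le\lambda_0 d(y,z)$ on unstable sets yields $[\phi^{-1}(x),\phi^{-1}(y)]=\phi^{-1}(y)$ and $d(\phi^{-1}(x),\phi^{-1}(y))<\epsilon$, i.e. $\phi^{-1}(y)\in X^u(\phi^{-1}(x),\epsilon)$.
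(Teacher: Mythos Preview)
Your proof is correct and complete; the only cosmetic point is that in part (ii) you should phrase the use of ({\bf SS1})(iii) as applying it to the pair $(\phi^{-1}(x),\phi^{-1}(y))$ (so that $[\phi(\phi^{-1}(x)),\phi(\phi^{-1}(y))]=\phi([\phi^{-1}(x),\phi^{-1}(y)])$ gives the desired identity after applying $\phi^{-1}$), rather than ``applying (SS1)(iii) to $\phi^{-1}$.'' The paper itself does not give a proof of this lemma --- it is simply cited from Putnam \cite{Putnam1} and Ruelle \cite{Ruelle1} --- so there is nothing to compare against.
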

Following Putnam \cite[Section 1]{Putnam1}, we put for $x \in X$
\begin{align*}
X^s(x) & = \{ y \in X \mid \lim_{n\to{\infty}}d(\phi^n(x), \phi^n(y)) =0\}, \\
X^u(x) & = \{ y \in X \mid \lim_{n\to{\infty}}d(\phi^{-n}(x), \phi^{-n}(y)) =0\}, \\
X^a(x) & = X^s(x) \cap X^u(x).
\end{align*}
We note that the inclusion relations 
$X^s(x,\epsilon_0) \subset X^s(x)$ 
and
$X^u(x,\epsilon_0) \subset X^u(x)$ 
have been shown in \cite{Putnam1}.
The following lemma has been seen in 
\cite{Putnam1}, \cite{Ruelle1}. 
\begin{lemma}[{Putnam \cite[Section 1]{Putnam1}, Ruelle \cite[7.1]{Ruelle1}}]
For any $\epsilon$ with $0 <\epsilon\le \epsilon_0$ and $x \in X$, we have
\begin{enumerate}
\renewcommand{\theenumi}{\roman{enumi}}
\renewcommand{\labelenumi}{\textup{(\theenumi)}}
\item
$X^s(x) = \cup_{n=0}^\infty \phi^{-n}(X^s(\phi^n(x),\epsilon)). $
\item
$X^u(x) = \cup_{n=0}^\infty \phi^{n}(X^s(\phi^{-n}(x),\epsilon)). $
\end{enumerate}
\end{lemma}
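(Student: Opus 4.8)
The plan is to prove (i) by a double inclusion and then to deduce (ii) from it by applying (i) to the Smale space $(X,\phi^{-1})$, whose bracket is $(x,y)\mapsto[y,x]$; one checks directly from ({\bf SS1}), ({\bf SS2}) that this is again a Smale space, and under this exchange the unstable sets $X^u(x)$ and unstable neighbourhoods $X^u(x,\epsilon)$ of $(X,\phi)$ are exactly the stable sets and stable neighbourhoods of $(X,\phi^{-1})$, so that assertion (ii) for $(X,\phi)$ is precisely assertion (i) for $(X,\phi^{-1})$. Thus it suffices to prove (i). Three ingredients from the preceding material will be used: the monotonicity $X^s(x,\epsilon')\subseteq X^s(x,\epsilon)$ for $0<\epsilon'\le\epsilon\le\epsilon_0$, immediate from $X^s(x,\epsilon)=\{y\mid[y,x]=y,\ d(x,y)<\epsilon\}$; the inclusion $X^s(x,\epsilon_0)\subset X^s(x)$ recorded above; and, for $0<\epsilon'<\epsilon_1$, the one-sided description $X^s(x,\epsilon')=\{y\in X\mid d(\phi^{n}(x),\phi^{n}(y))<\epsilon'\ \text{for all}\ n\ge 0\}$ from the equalities displayed after Definition \ref{def:smalespace}.

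For the inclusion $\supseteq$ in (i): if $y\in\phi^{-n}(X^s(\phi^{n}(x),\epsilon))$ for some $n\ge 0$, then $\phi^{n}(y)\in X^s(\phi^{n}(x),\epsilon)\subseteq X^s(\phi^{n}(x),\epsilon_0)\subset X^s(\phi^{n}(x))$, hence $d(\phi^{m}(x),\phi^{m}(y))=d(\phi^{m-n}(\phi^{n}(x)),\phi^{m-n}(\phi^{n}(y)))\to 0$ as $m\to\infty$, so $y\in X^s(x)$.

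For the inclusion $\subseteq$ in (i): given $y\in X^s(x)$, set $\epsilon':=\min\{\epsilon,\epsilon_1/2\}$, so $0<\epsilon'<\epsilon_1$ and $\epsilon'\le\epsilon$. Since $d(\phi^{n}(x),\phi^{n}(y))\to 0$, there is $N\ge 0$ with $d(\phi^{n}(x),\phi^{n}(y))<\epsilon'$ for all $n\ge N$, that is, $d(\phi^{k}(\phi^{N}(x)),\phi^{k}(\phi^{N}(y)))<\epsilon'$ for all $k\ge 0$. Applying the one-sided description at the point $\phi^{N}(x)$ gives $\phi^{N}(y)\in X^s(\phi^{N}(x),\epsilon')\subseteq X^s(\phi^{N}(x),\epsilon)$, so $y\in\phi^{-N}(X^s(\phi^{N}(x),\epsilon))$, which lies in the union on the right-hand side of (i). This proves (i), and (ii) follows as explained in the first paragraph.

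The only point that needs care is the bookkeeping of neighbourhood radii: the one-sided characterization of $X^s(x,\cdot)$ is available only below $\epsilon_1$, whereas the statement permits any $\epsilon\le\epsilon_0$, so it cannot be invoked at radius $\epsilon$ directly; interposing the auxiliary radius $\epsilon'=\min\{\epsilon,\epsilon_1/2\}$ and then using monotonicity of $X^s(x,\cdot)$ to pass back up to $\epsilon$ circumvents this, and the remainder is a routine unwinding of the definitions. If one prefers to avoid passing through $\phi^{-1}$, the identical argument proves (ii) directly, with $\phi^{-n}$, the backward one-sided description of $X^u(\cdot,\epsilon)$, and the inclusion $X^u(x,\epsilon_0)\subset X^u(x)$ replacing their forward counterparts.
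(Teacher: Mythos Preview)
The paper does not supply its own proof of this lemma; it is simply cited from Putnam \cite{Putnam1} and Ruelle \cite{Ruelle1}. There is therefore nothing to compare against.

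Your argument is correct. The double inclusion for (i) uses exactly the right ingredients: the inclusion $X^s(x,\epsilon_0)\subset X^s(x)$ recorded in the paper handles $\supseteq$, and for $\subseteq$ you correctly drop to an auxiliary radius $\epsilon'<\epsilon_1$ so that the one-sided characterization of $X^s(\cdot,\epsilon')$ is available, then use monotonicity in the radius to return to $\epsilon$. The deduction of (ii) from (i) by passing to $(X,\phi^{-1})$ with bracket $(x,y)\mapsto[y,x]$ is clean, and you are right that the constants $\epsilon_0,\epsilon_1,\lambda_0$ are unchanged under this passage. One small point: the printed statement of (ii) in the paper has an evident typo (it reads $X^s(\phi^{-n}(x),\epsilon)$ where $X^u(\phi^{-n}(x),\epsilon)$ is intended); your symmetry argument proves the correct version.
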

Following Putnam \cite[Section 1]{Putnam1}, we put
\begin{align*}
G_{\phi}^{s,0} &= \{ (x,y) \in X \times X \mid y \in X^s(x,\epsilon_0) \}, \\
G_{\phi}^{u,0} &= \{ (x,y) \in X \times X \mid y \in X^u(x,\epsilon_0) \}, \\
G_{\phi}^{a,0} &= G_{\phi}^{s,0} \cap G_{\phi}^{u,0},
\end{align*}
and for $n\in \N$, 
\begin{align*}
G_{\phi}^{s,n} &= (\phi\times\phi)^{-n}(G_{\phi}^{s,0}), \\
G_{\phi}^{u,n} &= (\phi\times\phi)^{n}(G_{\phi}^{u,0}), \\
G_{\phi}^{a,n} &= G_{\phi}^{s,n} \cap G_{\phi}^{u,n}.
\end{align*}
All of them are given the relative topology of $X\times X$.

Since $[y,x] =y$ if and only if $ [x,y] =x$, one sees that 
$y \in X^s(x,\epsilon_0)$ 
if and only if 
$x \in X^s(y,\epsilon_0)$.
Hence $(x,y) \in G_{\phi}^{*,n}$ if and only if 
$(y,x) \in G_{\phi}^{*,n}$ for $* =s,u,a$. 

We note the following lemma, which is well-known and useful. 
\begin{lemma}\label{lem:2.5}
For $x,y \in X$ we have $(x,y) \in G_\phi^{a,0}$
if and only if $x = y$.
Hence we may identify
$G_\phi^{a,0}$ with $X$ as a topological space.
\end{lemma}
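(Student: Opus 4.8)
The plan is to prove both directions of the equivalence $(x,y)\in G_\phi^{a,0} \iff x=y$, the nontrivial content being the forward implication. First I would recall the definitions just laid out: $(x,y)\in G_\phi^{a,0}$ means $(x,y)\in G_\phi^{s,0}\cap G_\phi^{u,0}$, i.e. $y\in X^s(x,\epsilon_0)$ \emph{and} $y\in X^u(x,\epsilon_0)$. Unwinding the bracket characterizations in \eqref{eq:2.2.3} and the displays for $X^s,X^u$, this says $[y,x]=y$ and $[x,y]=y$. But by property (i) of ({\bf SS1}) applied symmetrically (using the remark right before Lemma~\ref{lem:2.5} that $[y,x]=y\iff[x,y]=x$), the condition $y\in X^s(x,\epsilon_0)$ also forces $x\in X^s(y,\epsilon_0)$, and similarly on the unstable side. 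So we have simultaneously $d(x,y)<\epsilon_0$ together with $y$ being both forward- and backward-asymptotic to $x$ at a uniform scale.

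Next I would make this quantitative using the characterizations of $X^s(x,\epsilon)$ and $X^u(x,\epsilon)$ via orbits, valid for $0<\epsilon<\epsilon_1$: shrinking if necessary, $(x,y)\in G_\phi^{a,0}$ (together with the symmetry observations) gives $d(\phi^n(x),\phi^n(y))<\epsilon$ for \emph{all} $n\in\Z$. Now invoke the contraction estimates ({\bf SS2}): since $y\in X^s(x,\epsilon)$, iterating $d(\phi^n(x),\phi^n(y))\le\lambda_0^n d(x,y)$ for $n\ge 0$, and since $y\in X^u(x,\epsilon)$, iterating the analogous backward estimate $d(\phi^{-n}(x),\phi^{-n}(y))\le\lambda_0^n d(x,y)$ for $n\ge 0$. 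Actually it is cleanest to argue: $d(x,y)=d(\phi^{-n}(\phi^n(x)),\phi^{-n}(\phi^n(y)))$; applying the unstable contraction to the pair $\phi^n(x),\phi^n(y)$ (which lie in $X^u(\phi^n(x),\epsilon)$ by Lemma~\ref{lem:2.1}(ii) iterated, using that $y\in X^u(x)$) we get $d(x,y)\le\lambda_0^n d(\phi^n(x),\phi^n(y))<\lambda_0^n\epsilon$. Letting $n\to\infty$ with $0<\lambda_0<1$ forces $d(x,y)=0$, hence $x=y$.

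The reverse implication is immediate: if $x=y$ then $[x,x]=x$ by ({\bf SS1})(i), so trivially $x\in X^s(x,\epsilon_0)\cap X^u(x,\epsilon_0)$, whence $(x,x)\in G_\phi^{s,0}\cap G_\phi^{u,0}=G_\phi^{a,0}$. Finally, the diagonal map $x\mapsto(x,x)$ is a homeomorphism from $X$ onto $G_\phi^{a,0}$ in the relative topology of $X\times X$ (it is continuous, injective, with continuous inverse given by either coordinate projection, and $X$ is compact), giving the asserted identification of $G_\phi^{a,0}$ with $X$.

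The step I expect to be the main obstacle is justifying that one may simultaneously arrange the uniform orbit bound $d(\phi^n(x),\phi^n(y))<\epsilon$ in \emph{both} time directions from membership in $G_\phi^{a,0}$, i.e. correctly combining the stable and unstable bracket conditions with the symmetry $[y,x]=y\iff[x,y]=x$ and with Lemma~\ref{lem:2.1} so that the contraction estimates of ({\bf SS2}) actually apply along the orbit (the estimates of ({\bf SS2}) are stated for points of a single local stable/unstable set, so one must check the whole forward/backward orbit stays in such a set). Once that bookkeeping is done, the contraction argument collapsing $d(x,y)$ to zero is routine.
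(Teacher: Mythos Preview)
Your contraction approach is correct in spirit but takes a longer route than the paper, and one step of your justification is miscited. The paper's proof is a one-line bracket computation: from $(x,y)\in G_\phi^{s,0}$ we get $[y,x]=y$, from $(x,y)\in G_\phi^{u,0}$ we get $[x,y]=y$, and then
\[
x=[x,x]=[x,[y,x]]=[x,y]=y,
\]
using only ({\bf SS1})(i) and (ii). No metric estimates are needed.

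Your dynamical argument can be made to work, but the parenthetical ``which lie in $X^u(\phi^n(x),\epsilon)$ by Lemma~\ref{lem:2.1}(ii) iterated'' is wrong: Lemma~\ref{lem:2.1}(ii) says $\phi^{-1}(X^u(x,\epsilon))\subset X^u(\phi^{-1}(x),\epsilon)$, which pushes unstable sets \emph{backward} in time, not forward. To show $\phi^n(y)\in X^u(\phi^n(x),\epsilon_0)$ for $n\ge 0$ you must instead combine two facts: first, Lemma~\ref{lem:2.1}(i) applied to the \emph{stable} condition gives $d(\phi^n(x),\phi^n(y))<\epsilon_0$ for all $n\ge 0$; second, the $\phi$-equivariance of the bracket ({\bf SS1})(iii) then propagates $[x,y]=y$ to $[\phi^n(x),\phi^n(y)]=\phi^n(y)$ by induction. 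With this fix your bound $d(x,y)\le\lambda_0^n d(\phi^n(x),\phi^n(y))<\lambda_0^n\epsilon_0$ goes through. You correctly flagged this bookkeeping as the main obstacle; the point is that it is resolved by ({\bf SS1})(iii), not by Lemma~\ref{lem:2.1}(ii).
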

\begin{proof}
Take an arbitrary
$(x,y) \in G_\phi^{a,0}$.
As $(x,y) \in G_\phi^{s,0}$, 
we see that $y \in X^s(x,\epsilon_0)$ so that $[y,x] =y$,
and also 
as $(x,y) \in G_\phi^{u,0}$, 
we see that $y \in X^u(x,\epsilon_0)$ so that $[x,y] =y$.
Hence we have
$$
x = [x,x] = [x,[y,x]] = [x,y] = y.
$$
\end{proof}
By Lemma \ref{lem:2.1}, we know that 
\begin{equation}
G_\phi^{*, n} \subset G_\phi^{*, n+1}, \qquad * =s,u,a, \,\, n=0,1,\dots
\label{eq:Gphin}
\end{equation} 
Following \cite[Section 1]{Putnam1}, \cite[Section 3]{Putnam2} and 
\cite[Section 2]{PutSp},
we define several equivalence relations on $X$:
\begin{equation*}
G_{\phi}^{s} = \cup_{n=0}^\infty G_{\phi}^{s,n},\qquad
G_{\phi}^{u} = \cup_{n=0}^\infty G_{\phi}^{u,n},\qquad
G_{\phi}^{a} = \cup_{n=0}^\infty G_{\phi}^{a,n}.
\end{equation*}
By \eqref{eq:Gphin},
the set $G_{\phi}^{*} = \cup_{n=0}^\infty G_{\phi}^{*,n}$
is an inductive system of topological spaces.
Each $G_\phi^*, * =s,u,a$ is  endowed with the inductive limit topology.
The following lemma has been also shown by Putnam. 
\begin{lemma}[Putnam {\cite[Section 1]{Putnam1}}]\label{lem:n2.6}
\hspace{5cm}
\begin{enumerate}
\renewcommand{\theenumi}{\roman{enumi}}
\renewcommand{\labelenumi}{\textup{(\theenumi)}}
\item
$G_{\phi}^{s} = \{ (x,y) \in X\times X
\mid 
\lim\limits_{n\to\infty}d(\phi^n(x),\phi^n(y)) =0 \}.$
\item
$G_{\phi}^{u} = \{ (x,y) \in X\times X
\mid 
\lim\limits_{n\to\infty}d(\phi^{-n}(x),\phi^{-n}(y)) =0 \}.$
\item
$G_{\phi}^{a} = \{ (x,y) \in X\times X
\mid 
  \lim\limits_{n\to\infty}d(\phi^n(x),\phi^n(y)) 
=\lim\limits_{n\to\infty}d(\phi^{-n}(x),\phi^{-n}(y)) =0 \}.$
\end{enumerate}
\end{lemma}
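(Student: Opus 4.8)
The plan is to prove (i) by a direct double inclusion, deduce (ii) from (i) by the symmetry $\phi\leftrightarrow\phi^{-1}$, and then obtain (iii) from (i) and (ii) by an elementary cofinality argument. The substantive inputs are all results already recorded above: the inclusions $X^s(x,\epsilon_0)\subset X^s(x)$ and $X^u(x,\epsilon_0)\subset X^u(x)$, the exhaustions $X^s(x)=\bigcup_{n\ge0}\phi^{-n}(X^s(\phi^n(x),\epsilon_0))$ and $X^u(x)=\bigcup_{n\ge0}\phi^{n}(X^u(\phi^{-n}(x),\epsilon_0))$, and the monotonicity \eqref{eq:Gphin}.

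For (i) I would argue as follows. If $(x,y)\in G_\phi^s$, then $(x,y)\in G_\phi^{s,n}$ for some $n$, i.e.\ $(\phi^n(x),\phi^n(y))\in G_\phi^{s,0}$, so $\phi^n(y)\in X^s(\phi^n(x),\epsilon_0)\subset X^s(\phi^n(x))$; this says $d(\phi^{m}(\phi^n(x)),\phi^{m}(\phi^n(y)))\to0$ as $m\to\infty$, and since a limit is unaffected by dropping finitely many initial terms this is the same as $d(\phi^k(x),\phi^k(y))\to0$ as $k\to\infty$, so that $y\in X^s(x)$. Conversely, if $d(\phi^k(x),\phi^k(y))\to0$ then $y\in X^s(x)$ by definition, and by the exhaustion lemma there is an $n$ with $\phi^n(y)\in X^s(\phi^n(x),\epsilon_0)$, that is $(\phi^n(x),\phi^n(y))\in G_\phi^{s,0}$, hence $(x,y)\in(\phi\times\phi)^{-n}(G_\phi^{s,0})=G_\phi^{s,n}\subset G_\phi^s$.

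Statement (ii) will follow by applying (i) to $(X,\phi^{-1})$, which is again a Smale space (with the same constants $\epsilon_0,\lambda_0$) whose bracket is obtained from that of $(X,\phi)$ by interchanging the two arguments; consequently $X^s_{\phi^{-1}}(x,\epsilon)=X^u_\phi(x,\epsilon)$ and $G^{s,n}_{\phi^{-1}}=(\phi^{-1}\times\phi^{-1})^{-n}(G^{s,0}_{\phi^{-1}})=(\phi\times\phi)^{n}(G^{u,0}_\phi)=G^{u,n}_\phi$, so $G^s_{\phi^{-1}}=G^u_\phi$ and (i) for $\phi^{-1}$ reads exactly as (ii). Alternatively, one simply runs the mirror of the previous paragraph verbatim, replacing $\phi^n$ by $\phi^{-n}$ throughout and using $X^u(x,\epsilon_0)\subset X^u(x)$ together with the exhaustion of $X^u(x)$.

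Finally, for (iii): by definition $G_\phi^a=\bigcup_n G_\phi^{a,n}=\bigcup_n(G_\phi^{s,n}\cap G_\phi^{u,n})$, which is contained in $G_\phi^s\cap G_\phi^u$. Conversely, if $(x,y)\in G_\phi^s\cap G_\phi^u$, pick $n,m$ with $(x,y)\in G_\phi^{s,n}$ and $(x,y)\in G_\phi^{u,m}$, set $N=\max\{n,m\}$, and use \eqref{eq:Gphin} to get $(x,y)\in G_\phi^{s,N}\cap G_\phi^{u,N}=G_\phi^{a,N}\subset G_\phi^a$; thus $G_\phi^a=G_\phi^s\cap G_\phi^u$, and combining this with (i) and (ii) gives the asserted description of $G_\phi^a$. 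I do not anticipate a genuine obstacle here: the analytic content is entirely absorbed into the cited work of Putnam and Ruelle, and the only points requiring a little care are the reindexing in (i) --- a limit along $(\phi^{m+n})_{m}$ is the same as a limit along $(\phi^k)_k$ --- and, in (iii), the necessity of passing to the common index $N$ before forming the intersection, which is exactly where \eqref{eq:Gphin} enters.
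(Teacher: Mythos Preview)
Your proof is correct. The paper itself does not give a proof of this lemma; it is simply attributed to Putnam \cite[Section 1]{Putnam1} and stated without argument. Your reconstruction from the preceding cited facts (the inclusion $X^s(x,\epsilon_0)\subset X^s(x)$, the exhaustion lemma for $X^s(x)$ and $X^u(x)$, and the monotonicity \eqref{eq:Gphin}) is exactly the natural way to fill in the details, and each step is sound.
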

Putnam have studied these three equivalence relations  
$G_{\phi}^{s}, \, G_{\phi}^{u} $ and $G_{\phi}^{a}$
on $X$ by regarding them as principal groupoids.
He pointed out that the third equivalence relation $G_{\phi}^a$ is an \'etale
equivalence relation whereas  
the first two  are not \'etale in general. 
He has also studied the associated groupoid $C^*$-algebras
$C^*(G_{\phi}^{s}), \, C^*(G_{\phi}^{u}) $ and $C^*(G_{\phi}^{a})$
which have been denoted by
$S(X,\phi), \, U(X,\phi)$
 and $A(X,\phi)$, respectively.
He has pointed out that they are all stably AF-algebras
if $(X,\phi)$ is a shift of finite type.
He also studied their semi-direct products by the integer group $\Z$ 
as groupoids
\begin{align*}
G_{\phi}^{s} \rtimes \Z= &\{(x,n,y) \in X \times \Z \times X \mid
(\phi^n(x), y) \in G_{\phi}^{s} \}, \\
G_{\phi}^{u} \rtimes \Z= &\{(x,n,y) \in X \times \Z \times X \mid
(\phi^n(x), y) \in G_{\phi}^{u} \}, \\
G_{\phi}^{a} \rtimes \Z= &\{(x,n,y) \in X \times \Z \times X \mid
(\phi^n(x), y) \in G_{\phi}^{a} \}.
\end{align*}
Since the map
\begin{equation}
\gamma: 
(x,n,y)\in G_{\phi}^{*} \rtimes \Z \rightarrow 
((x,\phi^{-n}(y)),n)\in G_{\phi}^{*} \times \Z
\label{eq:gamma}
\end{equation}
is bijective, the topology of the groupoid
$G_{\phi}^{*} \rtimes \Z$ is defined by the product topology of 
$G_{\phi}^{*} \times \Z$ through the map $\gamma$.
Let us denote by
$(G_{\phi}^{*} \rtimes \Z)^\circ$
the unit space of the groupoid
$G_{\phi}^{*} \rtimes \Z$
which is identified with
that of 
$G_{\phi}^{*}$
and naturally homeomorphic to
the original space $X$
through the correspondence
$(x,0,x) \in (G_{\phi}^{*} \rtimes \Z)^\circ \longrightarrow x \in X$
for $*=s,u,a.$
The range map
$r:G_{\phi}^{*} \rtimes \Z\rightarrow (G_{\phi}^{*} \rtimes \Z)^\circ$
and the source map
$s:G_{\phi}^{*} \rtimes \Z\rightarrow (G_{\phi}^{*} \rtimes \Z)^\circ$
are defined by 
\begin{equation*}
r(x,n,y) = (x,0,x)
\quad
\text{ and }
\quad
s(x,n,y) = (y,0,y).
\end{equation*}
The groupoid operations are defined by
\begin{gather*}
(x, n, y)\cdot (x',m,w) =(x, n+m, w)\quad \text{ if } y = x',\\
(x, n,y)^{-1} = (y,-n,x).
\end{gather*}

Putnam \cite{Putnam1}, \cite{Putnam2} 
and
Putnam--Spielberg \cite{PutSp}
have  also  studied their associated groupoid $C^*$-algebras
$C^*(G_{\phi}^{s}\rtimes \Z), \, C^*(G_{\phi}^{u}\rtimes \Z) $ 
and $C^*(G_{\phi}^{a}\rtimes \Z)$
which have been written 
$R_s, \, R_u$
 and $R_a$, respectively
in their  papers.
In this paper we denote them 
by $\R_\phi^s, \, \R_\phi^u$
 and $\R_\phi^a$, respectively,
to emphasize the homeomorphism
$\phi$.
We remark that Putnam--Spielberg \cite{PutSp}
(cf. Killough--Putnam \cite{KilPut})
also constructed another kinds of $C^*$-algebras
$S(X,\phi,P), U(X,\phi, P)$
and their crossed products
$S(X,\phi,P)\rtimes\Z, U(X,\phi,P)\rtimes\Z
$
from a $\phi$-invariant subset $P\subset X$ of periodic points
by using \'etale groupoids defined by 
restricting the stable equivalence relation $G_\phi^s$, 
unstable equivalence relations $G_\phi^u$ to $P$, respectively.
In the present paper,  
we will not deal with these $C^*$-algebras
$S(X,\phi,P), U(X,\phi, P), S(X,\phi,P)\rtimes\Z, U(X,\phi,P)\rtimes\Z.
$

\section{Asymptotic continuous orbit equivalence}
Let $(X,\phi)$ be a Smale space.
In this section, the symbol $d$ will be used as a two-cocycle.
It does not mean the metric on $X$. 
A sequence $\{ f_n \}_{n \in \Z}$ of integer-valued continuous functions on $X$ 
is called a {\it one-cocycle for $\phi$}\/ 
if they satisfy the identity  
\begin{equation}
f_n(x) + f_m(\phi^n(x)) = f_{n+m}(x),\qquad x \in X, \, n,m \in \Z. \label{eq:onecocycle}
\end{equation}
For a continuous function 
$f:X\longrightarrow \Z$ and $n \in \Z$,
we define
\begin{equation*}
f^n(x) =
\begin{cases}
\sum_{i=0}^{n-1} f(\phi^i(x)) & \text{ for } n>0, \\
0 & \text{ for } n=0,\\
- \sum_{i=n}^{-1}f(\phi^{i}(x)) & \text{ for } n<0.
\end{cases}                           
\end{equation*}
It is straightforward to see the following lemma.
\begin{lemma}
For $n, m\in \Z$, the identity
\begin{equation}
f^n(x) + f^m(\phi^n(x)) = f^{n+m}(x),\qquad x \in X
\label{eq:1cocycle}
\end{equation}
holds. Hence the sequence
$\{f^n\}_{n \in \Z}$ 
is a one-cocycle for $\phi$. 
\end{lemma}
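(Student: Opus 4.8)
The plan is to avoid a four-fold case analysis on the signs of $n$, $m$ and $n+m$ by first isolating the one-step version of the identity and then bootstrapping from it. First I would verify
\begin{equation*}
f^{k+1}(x) = f^k(x) + f(\phi^k(x)), \qquad f^{k-1}(x) = f^k(x) - f(\phi^{k-1}(x)) \qquad (x \in X,\ k \in \Z),
\end{equation*}
directly from the defining three-case formula for $f^k$: for $k \ge 1$ this is splitting off the top term of $\sum_{i=0}^{k-1}$, for $k \le -1$ it is splitting off a term of $-\sum_{i=k}^{-1}$, and at $k = 0$ it reduces to $f^1(x) = f(x)$ together with $f^{-1}(x) = -f(\phi^{-1}(x))$, both immediate from the definition. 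This step is the one place where one must be careful to match the sign conventions at the boundary $k = 0$, but it presents no real obstacle.

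Next, fix $x \in X$ and $n \in \Z$ and prove \eqref{eq:1cocycle} by induction on $m$, treating $m \ge 0$ and $m \le 0$ separately. The case $m = 0$ is the trivial statement $f^n(x) + 0 = f^n(x)$. For the inductive step $m \to m+1$ (with $m \ge 0$), using $\phi^m \circ \phi^n = \phi^{n+m}$ and the one-step identities above,
\begin{align*}
f^n(x) + f^{m+1}(\phi^n(x))
&= f^n(x) + f^m(\phi^n(x)) + f(\phi^{n+m}(x)) \\
&= f^{n+m}(x) + f(\phi^{n+m}(x)) = f^{n+m+1}(x),
\end{align*}
where the second equality is the induction hypothesis. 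The step $m \to m-1$ (with $m \le 0$) is identical, using the subtractive one-step identity in place of the additive one. This establishes \eqref{eq:1cocycle} for all $n, m \in \Z$.

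Finally, \eqref{eq:1cocycle} is exactly the one-cocycle relation \eqref{eq:onecocycle} for the sequence $\{f^n\}_{n \in \Z}$, and each $f^n$ is a finite $\Z$-linear combination of the continuous functions $f \circ \phi^i$, hence continuous; therefore $\{f^n\}_{n \in \Z}$ is a one-cocycle for $\phi$. (Alternatively one can compute directly: for $n, m \ge 0$ the identity is the reindexing $j \mapsto n+j$ of $\sum_{j=0}^{m-1} f(\phi^{n+j}(x))$, and the remaining sign patterns go similarly; but the inductive argument packages all cases uniformly, which is why I prefer it.)
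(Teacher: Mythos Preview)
Your argument is correct. The paper itself omits the proof entirely, stating only that the lemma is ``straightforward to see''; your inductive argument via the one-step recursions $f^{k+1}(x) = f^k(x) + f(\phi^k(x))$ and $f^{k-1}(x) = f^k(x) - f(\phi^{k-1}(x))$ is a clean way to supply those details and avoids the mild awkwardness of a case split on the signs of $n$, $m$, and $n+m$.
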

We note that a sequence of functions
satisfying \eqref{eq:onecocycle}  
is determined by only $f_1$.

In what follows we focus on asymptotic equivalence relations
$G_\phi^a$.
A continuous function $d:G_\phi^a \longrightarrow \Z$
is called a {\it two-cocycle}\/ if it satisfies the following equalities:
\begin{equation}
d(x,z) + d(z,w) = d(x,w), \qquad (x,z), (z,w), (x,w) \in G_\phi^a. \label{eq:2cocycle}
\end{equation}

The identity \eqref{eq:2cocycle} means that $d: G_\phi^a \longrightarrow \Z$
is a groupoid homomorphism.
\begin{definition}\label{def:acoe}
Smale spaces
$(X,\phi)$ and $(Y, \psi)$ are said to be
{\it asymptotically continuously orbit equivalent}\/
if there exist a homeomorphism
$h: X\longrightarrow Y$,
continuous functions
$
c_1:X\longrightarrow \Z, \, c_2:Y\longrightarrow \Z,
$
and two-cocycle functions
$
d_1: G_\phi^a \longrightarrow \Z,\, d_2: G_\psi^a \longrightarrow \Z
$
such that 
\begin{enumerate}
\renewcommand{\theenumi}{\arabic{enumi}}
\renewcommand{\labelenumi}{\textup{(\theenumi)}}
\item
$c_1^m(x) + d_1(\phi^m(x), \phi^m(z))
=c_1^m(z) + d_1(x, z), \quad
(x,z) \in G_\phi^a, \, m \in \Z.$
\item
$c_2^m(y) + d_2(\psi^m(y), \psi^m(w))
=c_2^m(w) + d_2(y, w),\quad
(y, w) \in G_\psi^a, \, m \in \Z.$
\end{enumerate}
and,
\begin{enumerate}
\renewcommand{\theenumi}{\roman{enumi}}
\renewcommand{\labelenumi}{\textup{(\theenumi)}}
\item
For each $n \in \Z$, the pair
$( \psi^{c_1^n(x)}(h(x)),  h(\phi^n(x)))$ denoted by $\xi_1^n(x)$
 belongs to $G_\psi^a$ for each $x \in X$,
and
the map
$\xi_1^n: x \in X \longrightarrow \xi_1^n(x) \in G_\psi^a$ is continuous. 
\item
For each $n \in \Z$, the pair
$( \phi^{c_2^n(y)}(h^{-1}(y)),  h^{-1}(\psi^n(y)))$ denoted by $\xi_2^n(y)$
 belongs to $G_\phi^a$ for each $y \in Y$,
and
the map
$\xi_2^n: y \in Y \longrightarrow \xi_2^n(y) \in G_\phi^a$ is continuous. 
\item
The pair 
$(\psi^{d_1(x,z)}(h(x)), h(z))$ denoted by $\eta_1(x,z)$ belongs to $G_\psi^a$
for each $(x,z) \in G_\phi^a$, and the map
$\eta_1:(x,z) \in G_\phi^a\longrightarrow \eta_1(x,z)\in G_\psi^a$ is continuous.
\item
The pair 
$(\phi^{d_2(y,w)}(h^{-1}(y)), h^{-1}(w))$ denoted by $\eta_2(y,w)$ belongs to $G_\phi^a$
for each $(y,w) \in G_\psi^a$, and the map
$\eta_2:(y,w) \in G_\psi^a\longrightarrow \eta_2(y,w)\in G_\phi^a$ is continuous.
\item
$c^{c^n_1(x)}_2(h(x)) + d_2(\psi^{c_1^n(x)}(h(x)), h(\phi^n(x))) = n, \quad x \in X,\, n \in \Z.$
\item
$c^{c^n_2(y)}_1(h^{-1}(y)) + d_1(\phi^{c_2^n(y)}(h^{-1}(y)), h^{-1}(\psi^n(y))) = n, 
\qquad y \in Y,\, n \in \Z.$
\item
$c^{d_1(x,z)}_2(h(x)) + d_2(\psi^{d_1(x,z)}(h(x)), h(z)) = 0, \quad (x,z) \in G_\phi^{a}.$
\item
$c^{d_2(y,w)}_1(h^{-1}(y)) + d_1(\phi^{d_2(y,w)}(h^{-1}(y)), h^{-1}(w)) = 0, 
\qquad (y,w) \in G_\psi^{a}.$
\end{enumerate}
In this situation, we write 
$(X, \phi) \underset{ACOE}{\sim}(Y, \psi).$
\end{definition}

\begin{remark}\label{remark3.3}
{\bf 1.} The condition (1) above is equivalent to the condition
$$
c_1(x) + d_1(\phi(x), \phi(z))
=c_1(z) + d_1(x, z), \quad
(x,z) \in G_\phi^a,
$$
and the condition (2) is similar to (1).

{\bf 2.} The conditions (i), (ii), (iii), (iv) above 
are equivalent to the following conditions respectively,
\begin{enumerate}
\renewcommand{\theenumi}{\roman{enumi}}
\renewcommand{\labelenumi}{\textup{(\theenumi)}}
\item
For each $n \in \Z$, there exits a continuous function
$k_{1,n}:X\longrightarrow \Zp$ such that 
\begin{gather}
( \psi^{k_{1,n}(x) +c_1^n(x)}(h(x)),  \psi^{k_{1,n}(x)}(h(\phi^n(x))))\in G_\psi^{s,0},  
\label{eq:iprime1}\\
( \psi^{-k_{1,n}(x) +c_1^n(x)}(h(x)),  \psi^{-k_{1,n}(x)}(h(\phi^n(x))))\in G_\psi^{u,0}. 
\label{eq:iprime2}
\end{gather}
\item
For each $n \in \Z$, there exits a continuous function
$k_{2,n}:Y\longrightarrow \Zp$ such that 
\begin{gather}
( \phi^{k_{2,n}(y) +c_2^n(y)}(h^{-1}(y)),  
  \phi^{k_{2,n}(y)}(h^{-1}(\psi^n(y))))\in G_\phi^{s,0}, 
\label{eq:iiprime1}\\
( \phi^{-k_{2,n}(y) +c_2^n(y)}(h^{-1}(y)),  
  \phi^{-k_{2,n}(y)}(h^{-1}(\psi^n(y))))\in G_\phi^{u,0}. 
\label{eq:iiprime2}
\end{gather}
\item
There exists a continuous function
$m_1: G_\phi^a\longrightarrow\Zp$ such that 
\begin{gather*}
(\psi^{m_1(x,z) +d_1(x,z)}(h(x)), \psi^{m_1(x,z)}(h(z))) \in G_\psi^{s,0}
\text{ for } (x,z) \in G_\phi^a, \\
(\psi^{-m_1(x,z) +d_1(x,z)}(h(x)), \psi^{-m_1(x,z)}(h(z))) \in G_\psi^{u,0}
\text{ for } (x,z) \in G_\phi^a.
\end{gather*}
\item
There exists a continuous function
$m_2: G_\psi^a\longrightarrow\Zp$ such that 
\begin{gather*}
(\phi^{m_2(y,w) +d_2(y,w)}(h^{-1}(y)), \phi^{m_2(y,w)}(h^{-1}(w))) \in G_\phi^{s,0}
\text{ for } (y,w) \in G_\psi^a, \\
(\phi^{-m_2(y,w) +d_2(y,w)}(h^{-1}(y)), \phi^{-m_2(y,w)}(h^{-1}(w))) \in G_\phi^{u,0}
\text{ for } (y,w) \in G_\psi^a.
\end{gather*}
\end{enumerate}
\end{remark}

In what follows, we will assume that our Smale space is irreducible,
which means that for every ordered pair of open sets $U, V \subset X$,
there exists $K \in \N$ such that 
$\phi^K(U) \cap V \ne \emptyset$.  

\begin{theorem}\label{thm:main1}
Suppose that 
 Smale spaces  $(X,\phi)$ and $(Y,\psi)$ are irreducible.
Then the following assertions are equivalent:
\begin{enumerate}
\renewcommand{\theenumi}{\roman{enumi}}
\renewcommand{\labelenumi}{\textup{(\theenumi)}}
\item
$(X, \phi)$ and $(Y, \psi)$ are asymptotically continuous orbit equivalent.
\item
The groupoids 
$G_{\phi}^{a} \rtimes \Z$ and $G_{\psi}^{a} \rtimes \Z$
are isomorphic as \'etale groupoids.
\end{enumerate}
\end{theorem}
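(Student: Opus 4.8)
The plan is to prove the two implications separately, with the bulk of the work in (i) $\Rightarrow$ (ii), since (ii) $\Rightarrow$ (i) is essentially a matter of reading off the data of an asymptotic continuous orbit equivalence from a given groupoid isomorphism. For (i) $\Rightarrow$ (ii), suppose $(X,\phi)\underset{ACOE}{\sim}(Y,\psi)$ via a homeomorphism $h\colon X\to Y$, continuous functions $c_1,c_2$, and two-cocycles $d_1,d_2$ satisfying the conditions of Definition \ref{def:acoe}. I would define a map $\varphi\colon G_\phi^a\rtimes\Z\to G_\psi^a\rtimes\Z$ by
$$
\varphi(x,n,z) = \bigl(h(x),\; c_1^n(x) + d_1(\phi^n(x), z),\; h(z)\bigr)
$$
for $(x,n,z)\in G_\phi^a\rtimes\Z$ (so that $(\phi^n(x),z)\in G_\phi^a$). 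First I would check that $\varphi$ lands in $G_\psi^a\rtimes\Z$: writing $m = c_1^n(x) + d_1(\phi^n(x),z)$, one must verify $(\psi^m(h(x)), h(z))\in G_\psi^a$. This should follow by combining condition (i) of the definition — which says $(\psi^{c_1^n(x)}(h(x)), h(\phi^n(x)))\in G_\psi^a$ — with condition (iii) applied to the pair $(\phi^n(x),z)\in G_\phi^a$, which says $(\psi^{d_1(\phi^n(x),z)}(h(\phi^n(x))), h(z))\in G_\psi^a$, and then using that $G_\psi^a$ is an equivalence relation closed under the diagonal $\psi$-action (the cocycle identity \eqref{eq:1cocycle} and $\Z$-invariance of $G_\psi^a$ let one translate and compose these two membership statements). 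Continuity of $\varphi$ follows from continuity of $h$, $c_1^n$, $d_1$, and of the maps $\xi_1^n$ and $\eta_1$.

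The next step is to check that $\varphi$ is a groupoid homomorphism. Multiplicativity on composable pairs $(x,n,z)\cdot(z,m,w) = (x,n+m,w)$ reduces to the additive identity
$$
c_1^{n+m}(x) + d_1(\phi^{n+m}(x), w) = \bigl[c_1^n(x) + d_1(\phi^n(x), z)\bigr] + \bigl[c_1^m(z) + d_1(\phi^m(z), w)\bigr],
$$
which I would derive from the one-cocycle identity \eqref{eq:1cocycle} for $c_1$, the two-cocycle identity \eqref{eq:2cocycle} for $d_1$, and crucially condition (1) of Definition \ref{def:acoe}, which is exactly the compatibility relation $c_1^m(x) + d_1(\phi^m(x),\phi^m(z)) = c_1^m(z) + d_1(x,z)$ needed to commute the $d_1$-term past a power of $\phi$. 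Preservation of units and inverses is then automatic. For bijectivity, I would write down the candidate inverse using $h^{-1}$, $c_2$, $d_2$, namely $\psi\colon (y,m,w)\mapsto (h^{-1}(y),\, c_2^m(y) + d_2(\psi^m(y),w),\, h^{-1}(w))$, and verify $\psi\circ\varphi = \id$ and $\varphi\circ\psi = \id$; this is where conditions (v)–(viii) of the definition — the "index normalization" identities such as $c_2^{c_1^n(x)}(h(x)) + d_2(\psi^{c_1^n(x)}(h(x)), h(\phi^n(x))) = n$ and $c_2^{d_1(x,z)}(h(x)) + d_2(\psi^{d_1(x,z)}(h(x)), h(z)) = 0$ — enter, guaranteeing that the $\Z$-components compose back to the identity. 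Being a continuous groupoid isomorphism with continuous inverse between étale groupoids, $\varphi$ is automatically an isomorphism of étale groupoids (it restricts to the homeomorphism $h$ on unit spaces).

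For the converse (ii) $\Rightarrow$ (i), given an isomorphism $\varphi\colon G_\phi^a\rtimes\Z\to G_\psi^a\rtimes\Z$ of étale groupoids, it restricts to a homeomorphism $h\colon X\to Y$ of unit spaces. Writing the $\Z$-component of $\varphi$ and of $\varphi^{-1}$ as integer-valued continuous functions, I would set $c_\phi := d_\psi\circ\varphi\colon G_\phi^a\rtimes\Z\to\Z$ and decompose it: restricting $c_\phi$ to elements of the form $(x,n,\phi^n(x))$ (the "orbit" part) yields a one-cocycle for $\phi$, hence is of the form $c_1^n(x)$ for a continuous $c_1\colon X\to\Z$ by the remark following Lemma 3.2; restricting $c_\phi$ to $G_\phi^a = G_\phi^a\rtimes\{0\}$ yields the two-cocycle $d_1\colon G_\phi^a\to\Z$. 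Symmetrically $c_\phi^{-1}$-type data from $\varphi^{-1}$ gives $c_2$ and $d_2$. Then each of the conditions (1), (2), (i)–(viii) of Definition \ref{def:acoe} is a direct transcription of the statements "$\varphi$ is a well-defined groupoid homomorphism into $G_\psi^a\rtimes\Z$", "$\varphi^{-1}$ is too", and "$\varphi\circ\varphi^{-1} = \varphi^{-1}\circ\varphi = \id$" — the membership/continuity conditions (i)–(iv) come from $\varphi$ and $\varphi^{-1}$ being well-defined continuous maps into the respective groupoids, the cocycle-compatibility conditions (1), (2) from multiplicativity, and the normalization conditions (v)–(viii) from $\varphi$ and $\varphi^{-1}$ being mutually inverse.

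The main obstacle I anticipate is the bookkeeping in (i) $\Rightarrow$ (ii): correctly identifying which of the eight numbered conditions of Definition \ref{def:acoe} supplies each piece of the verification (well-definedness into $G_\psi^a\rtimes\Z$, multiplicativity, and the two composition identities $\psi\circ\varphi=\id$, $\varphi\circ\psi=\id$), and keeping the various powers of $\phi$ and $\psi$ straight when shuttling the $d$-terms through the dynamics via conditions (1)–(2). I do not expect any genuinely hard analytic input; irreducibility of the Smale spaces is not needed for this theorem (it will be used elsewhere, e.g. for simplicity of $\R_\phi^a$), and indeed the statement as an equivalence of (i) and (ii) should hold without it — the irreducibility hypothesis in Theorem \ref{thm:main1} is presumably retained only for uniformity with the other theorems in the section.
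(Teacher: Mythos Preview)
Your proposal is correct and follows essentially the same approach as the paper's proof: the paper defines exactly the map $\varphi_h(x,n,z) = (h(x),\, c_1^n(x) + d_1(\phi^n(x),z),\, h(z))$, verifies it lands in $G_\psi^a\rtimes\Z$ using conditions (i) and (iii), checks multiplicativity via condition (1) together with the cocycle identities, and shows $\varphi_{h^{-1}}$ is the inverse using conditions (v)--(viii); for (ii)$\Rightarrow$(i) the paper likewise extracts $c_1$ from the elements $(x,n,\phi^n(x))$ and $d_1$ from $(x,0,z)$. Your observation that irreducibility is not actually used in this equivalence is also correct.
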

\begin{proof}
(ii) $\Longrightarrow$ (i):
Suppose that 
the groupoids 
$G_{\phi}^{a} \rtimes \Z$ and $G_{\psi}^{a} \rtimes \Z$
are isomorphic as \'etale groupoids.
There exist  homeomorphisms
$h: (G_{\phi}^{a} \rtimes \Z)^\circ \rightarrow  (G_{\psi}^{a} \rtimes \Z)^\circ$ 
and
$\varphi_h: G_{\phi}^{a} \rtimes \Z \rightarrow  G_{\psi}^{a} \rtimes \Z$
which are compatible with their groupoid operations.
Since the unit spaces
$ (G_{\phi}^{a} \rtimes \Z)^\circ$ and $(G_{\psi}^{a} \rtimes \Z)^\circ$
are identified with the original spaces
$X$ and $Y$ as topological spaces
through the identifications
$$
(x,0,x) \in (G_{\phi}^{a} \rtimes \Z)^\circ \longrightarrow x \in X
\quad
\text{ and }
\quad
(y,0,y) \in (G_{\psi}^{a} \rtimes \Z)^\circ \longrightarrow y \in Y,
$$
respectively, 
we have a homeomorphism
from $X$ onto $Y$,
which is still denoted by 
$h: X\rightarrow Y.$
As 
$\varphi_h(x,n,z) \in G_{\psi}^{a} \rtimes \Z$
for $(x,n,z) \in G_{\phi}^{a} \rtimes \Z$,
there exists
$c_1(x,n,z) \in \Z$ such that 
$
\varphi_h(x,n,z) =(h(x), c_1(x,n,z), h(z)). 
$
In particular, we  have
$(x, n, \phi^n(x))\in G_{\phi}^{a} \rtimes \Z$
for $z = \phi^n(x)$,
and may define
$c_{1,n}(x) =c_1(x,n,\phi^n(x))$ so that  
\begin{equation}
(h(x), c_{1,n}(x), h(\phi^n(x)))\in G_{\psi}^{a} \rtimes \Z. \label{eq:c1n}
\end{equation}
Now for $x \in X$ and $n, m\in \Z$, we have
$$
(x,n+m, \phi^{n+m}(x))=
(x,n, \phi^{n}(x))\cdot(\phi^{n}(x), m, \phi^{n+m}(x))
\text{ in }
G_{\phi}^{a} \rtimes \Z
$$
so that 
\begin{align*}
 & (h(x), {c_{1,n+m}(x)}, h(\phi^{n+m}(x))) \\
=& \varphi_h(x, n+m, \phi^{n+m}(x)) \\
=& \varphi_h(x,n, \phi^{n}(x))\varphi_h(\phi^{n}(x), m, \phi^{n+m}(x))\\
=& (h(x),{c_{1,n}(x)}, h(\phi^{n}(x)))
 ( h(\phi^{n}(x)), {c_{1,m}(\phi^{n}(x))},  h(\phi^{n+m}(x)))\\
=& (h(x),{c_{1,n}(x)} +{c_{1,m}(\phi^{n}(x))},  h(\phi^{n+m}(x))).
\end{align*}
Hence we have 
\begin{equation}
c_{1,n+m}(x) ={c_{1,n}(x)} +{c_{1,m}(\phi^{n}(x))}, \label{eq:c1nl}
\end{equation}
so that
the sequence
$\{c_{1,n}\}_{n \in \Z}$ of continuous functions  is a one-cocycle function on $X$.
By putting $c_1(x):=c_{1,1}(x)$, we see that 
$c_1^n(x) = c_{1,n}(x)$
by \eqref{eq:c1nl}.
By \eqref{eq:c1n},
we see that
$(\psi^{c_1^n(x)}(h(x)), h(\phi^n(x))) \in G_\psi^{a}$.
Since the maps below
\begin{align*}
 &((x,x),n) \in  G_{\phi}^a \times \Z \\
\overset{\gamma^{-1}}{\longrightarrow}\hspace{6mm}
& (x,n,\phi^n(x))\in G_{\phi}^{a} \rtimes \Z \\
\overset{\varphi_h}{\longrightarrow}\hspace{6mm}
&(h(x), c_1^n(x), h(\phi^n(x)))\in G_{\psi}^{a} \rtimes \Z \\
\overset{\gamma}{\longrightarrow}\hspace{6mm}
&((h(x), \psi^{-c_1^n(x)}(h(\phi^n(x))),c_1^n(x))  \in G_{\psi}^{a} \times \Z \\
\overset{(\psi^{c_1^n(x)}\times\psi^{c_1^n(x)})\times\id}{\longrightarrow}
&(\psi^{c_1^n(x)}(h(x)), h(\phi^n(x))),c_1^n(x))  \in G_{\psi}^{a} \times \Z 
\label{eq:peta}
\end{align*}
are all continuous,
 the map
$\xi_1^n: x \in X \longrightarrow \xi_1^n(x):=( \psi^{c_1^n(x)}(h(x)),h(\phi^n(x)))
 \in G_\psi^a$ is continuous. 

On the other hand, for $(x,z) \in G_\phi^{a}$
we see that 
$(x,0,z)\in G_{\phi}^{a} \rtimes \Z.
$  
Hence there exists
$d_1(x,z) \in \Z$ such that 
$\varphi_h(x,0,z) = (h(x),d_1(x,z), h(z))$.
Since $\varphi_h:G_{\phi}^{a} \rtimes \Z \longrightarrow G_{\psi}^{a} \rtimes \Z$ 
is continuous,
the function
$d_1: G_\phi^{a}\longrightarrow \Z$
is continuous.
For 
$(x,z), (z,w) \in G_\phi^{a}$,
we have
$(x,0,w) =(x,0,z)(z,0,w) \in G_\phi^{a}$,
and hence
\begin{align*}
 (h(x),d_1(x,w), h(w)) 
= & \varphi_h(x,0,w) \\ 
= & \varphi_h(x,0,z)\cdot\varphi_h(z,0,w) \\ 
= & (h(x),d_1(x,z), h(z))\cdot(h(z),d_1(z,w), h(w)) \\
= & (h(x),d_1(x,z) + d_1(z,w), h(w))
\end{align*}
so that 
$d_1(x,w) =d_1(x,z) + d_1(z,w)$
holds,
and 
$d_1: G_\phi^{a}\longrightarrow \Z$
is a two-cocycle function.
Since the maps below
\begin{align*}
 &((x,z),0) \in  G_{\phi}^a \times \Z \\
\overset{\gamma^{-1}}{\longrightarrow}\hspace{8mm}
& (x,0,z)\in G_{\phi}^{a} \rtimes \Z \\
\overset{\varphi_h}{\longrightarrow}\hspace{8mm}
&(h(x), d_1(x,z), h(z))\in G_{\psi}^{a} \rtimes \Z \\
\overset{\gamma}{\longrightarrow}\hspace{8mm}
&((h(x), \psi^{-d_1(x,z)}(h(z)),d_1(x,z))  \in G_{\psi}^{a} \times \Z \\
\overset{(\psi^{d_1(x,z)}\times\psi^{d_1(x,z)})\times\id}{\longrightarrow}
&(\psi^{d_1(x,z)}(h(x)), h(z)),d_1(x,z))  \in G_{\psi}^{a} \times \Z
\end{align*}
are all continuous,
 the map
$\eta_1: (x,z) \in G_\phi^a
 \longrightarrow \eta_1(x,z):=(\psi^{d_1(x,z)}(h(x)), h(z))
 \in G_\psi^a$ is continuous.

For $(x, n, x'), (x', m, z) \in G_\phi^a\rtimes\Z,$
the identity
$\varphi_h((x, n, x') \cdot (x', m, z) ) =  
\varphi_h(x, n, x') \cdot \varphi_h(x', m, z)$
is equivalent to the identity
\begin{equation*}
c_1^m(\phi^n(x)) + d_1(\phi^{m+n}(x), z)
=c_1^m(x') + d_1(\phi^{n}(x), x') + d_1(\phi^{m}(x'), z),
\end{equation*}
that implies the identity
$$c_1^m(x) + d_1(\phi^{m}(x), \phi^m(z))
=c_1^m(z) + d_1(x, z), \quad
(x, z) \in G_\phi^a, \, m \in \Z.
$$

We similarly have one-cocycle function
$c_2:Y \rightarrow\Z$
and two-cocycle function
$d_2: G_\psi^{a}\longrightarrow \Z$
for the homeomorphism
$\varphi_h^{-1}: G_{\psi}^{a} \rtimes \Z\longrightarrow G_{\phi}^{a} \rtimes \Z$.
Since 
$h^{-1} = (\varphi_h)^{-1}|_{(G_\psi^a\rtimes\Z)^\circ}:
{(G_\psi^a\rtimes\Z)^\circ}=Y \longrightarrow 
{(G_\phi^a\rtimes\Z)^\circ} =X,$
we see that
$\varphi_h^{-1} =\varphi_{h^{-1}}.
 $
By the identity 
\begin{equation*}
(x,n,\phi^n(x)) = (\varphi_h^{-1}\circ\varphi_h)(x,n,\phi^n(x)) 
\qquad
\text{for } x \in X, \, n \in \Z,
\end{equation*}
we have
\begin{align*}
  & (\varphi_h^{-1}\circ\varphi_h)(x,n,\phi^n(x)) \\
= & \varphi_h^{-1}(h(x),c_1^n(x), h(\phi^n(x))) \\
= & \varphi_h^{-1}(h(x),c_1^n(x), \psi^{c_1^n(x)}(h(x)))\cdot 
   \varphi_h^{-1}(\psi^{c_1^n(x)}(h(x)), 0, h(\phi^n(x))) \\
= & (x,c_2^{c_1^n(x)}(h(x)), h^{-1}(\psi^{c_1^n(x)}(h(x)))\cdot 
   (h^{-1}(\psi^{c_1^n(x)}(h(x))), 
   d_2(\psi^{c_1^n(x)}(h(x)), h(\phi^n(x))), \phi^n(x)) \\
= &(x,c_2^{c_1^n(x)}(h(x))+d_2(\psi^{c_1^n(x)}(h(x)), h(\phi^n(x))), \phi^n(x))
\end{align*}
so that
the identity
\begin{equation*}
c_2^{c_1^n(x)}(h(x))+d_2(\psi^{c_1^n(x)}(h(x)), h(\phi^n(x)))
=n
\end{equation*}
holds, and similarly
\begin{equation*}
c^{c^n_2(y)}_1(h^{-1}(y)) + d_1(\phi^{c_2^n(y)}(h^{-1}(y)), h^{-1}(\psi^n(y))) = n, 
\quad y \in Y, \, n \in \Z.
\end{equation*}

For 
$(x,z) \in G_\phi^{a}$,
the identity
$(x,0,z) = (\varphi_h^{-1}\circ \varphi)(x, 0, z)
$ 
holds so that
we have
\begin{align*}
  & (\varphi_h^{-1}\circ\varphi_h)(x, 0, z) \\
= & \varphi_h^{-1}(h(x), d_1(x,z), h(z)) \\
= & \varphi_h^{-1}(h(x), d_1(x,z), \psi^{d_1(x,z)}(h(x)))\cdot 
   \varphi_h^{-1}(\psi^{d_1(x,z)}(h(x)), 0, h(z)) \\
= & (x, c_2^{d_1(x,z)}(h(x)), h^{-1}(\psi^{d_1(x,z)}(h(x)))\cdot 
   (h^{-1}(\psi^{d_1(x,z)}(h(x))), 
   d_2(\psi^{d_1(x,z)}(h(x)), h(z)), z) \\
= &(x,  c_2^{d_1(x,z)}(h(x))+d_2(\psi^{d_1(x,z)}(h(x)), h(z)), z).
\end{align*}
Hence we have 
\begin{equation*}
c^{d_1(x,z)}_2(h(x)) + d_2(\psi^{d_1(x,z)}(h(x)), h(z)) = 0, \qquad (x,z) \in G_\phi^{a},
\end{equation*}
and similarly 
\begin{equation*}
c^{d_2(y,w)}_1(h^{-1}(y)) + d_1(\phi^{d_2(y,w)}(h^{-1}(y)), h^{-1}(w)) = 0, 
\qquad (y,w) \in G_\psi^{a}.
\end{equation*}
Therefore we see that
$(X, \phi) \underset{ACOE}{\sim}(Y, \psi).$

(i) $\Longrightarrow$ (ii):
Assume that
$(X, \phi) \underset{ACOE}{\sim}(Y, \psi)$
and take a homeomorphism
$h: X\longrightarrow Y$,
continuous functions
$
c_1:X\longrightarrow \Z, \,
c_2:Y\longrightarrow \Z, 
$
and two-cocycle functions
$
d_1: G_\phi^a \longrightarrow \Z,\,
d_2: G_\psi^a \longrightarrow \Z
$
 as in Definition \ref{def:acoe}.
Let $(x, n, z) \in G_{\phi}^{a} \rtimes \Z$ be  an arbitrary element, 
 so that
 $(\phi^n(x), z) \in G_\phi^a$ and we have
\begin{equation*}
(x, n, z)  = (x, n, \phi^n(x)) \cdot (\phi^n(x), 0, z). 
\end{equation*}
Put
\begin{equation}
\varphi_h(x,n,z) =
(h(x), c_1^n(x), h(\phi^n(x))\cdot
(h(\phi^n(x)), d_1(\phi^n(x), z), h(z)).\label{eq:3.19}
\end{equation}
By Definition \ref{def:acoe} (i), 
$( \psi^{c_1^n(x)}(h(x)),  h(\phi^n(x)))$ belongs to $G_\psi^a$ 
 so that $(h(x), c_1^n(x), h(\phi^n(x)))$ gives an element of
 $G_\psi^a\rtimes\Z$.
As $(\phi^n(x), z) \in G_\phi^a$,
we see that by Definition \ref{def:acoe} (iii),
$(\psi^{d_1(\phi^n(x), z)}(h(\phi^n(x))),h(z))$
belongs to $G_\psi^a$
so that 
$(h(\phi^n(x)), d_1(\phi^n(x), z), h(z))
$
gives an element of $G_\psi^a\rtimes\Z$.
 Hence 
$ \varphi_h(x,n,z)
$
defines an element of the groupoid
$G_\psi^a\rtimes\Z$ such that 
\begin{equation*}
\varphi_h(x,n,z) =
(h(x), c_1^n(x) + d_1(\phi^n(x), z), h(z)).
\end{equation*}
It is straightforward to see that the equality (1) in  
Definition \ref{def:acoe} implies 
$$
\varphi_h((x, n, x') \cdot (x', m, z) ) =  
\varphi_h(x, n, x') \cdot \varphi_h(x', m, z)
$$
for 
$(x, n, x'), (x',m,z) \in G_\phi^a\rtimes\Z.$

Since 
$x \in X \longrightarrow 
\xi_1^n(x)= ( \psi^{c_1^n(x)}(h(x)),  h(\phi^n(x)))
 \in G_\psi^a$
 is continuous by Definition \ref{def:acoe} (i) and
 \begin{align*}
& \gamma^{-1}\circ {((\psi^{-c_1^n(x)}\times\psi^{-c_1^n(x)})}\times \id)
 (\xi_1^n(x), c_1^n(x)) \\
=&
\gamma^{-1}(h(x), \psi^{-c_1^n(x)}(h(\phi^n(x)), c_1^n(x)) \\
= &(h(x), c_1^n(x), h(\phi^n(x))),
\end{align*}
 the map $\varphi_h^1: G_\phi^a\rtimes\Z\longrightarrow G_\psi^a\rtimes\Z$
 defined by
 \begin{equation*}
\varphi_h^1(x,n,z) := (h(x), c_1^n(x), h(\phi^n(x)))
\end{equation*}
 is continuous.

 And also the map
$\eta_1:(x,z) \in G_\phi^a\longrightarrow
\eta_1(x,z) = (\psi^{d_1(x,z)}(h(x)), h(z))\in G_\psi^a$ is continuous
by Definition \ref{def:acoe} (iii) and
\begin{align*}
& \gamma^{-1}{((\psi^{-d_1(\phi^n(x),z)}\times\psi^{-d_1(\phi^n(x),z)})\times\id)}
(\eta_1(\phi^n(x),z),d_1(\phi^n(x),z)) \\
=&\gamma^{-1}(h(\phi^n(x)), \psi^{-d_1(\phi^n(x),z)},d_1(\phi^n(x),z)) \\
=&(h(\phi^n(x)), d_1(\phi^n(x),z), h(z)).
\end{align*}
 Hence
 the map $\varphi_h^0: G_\phi^a\rtimes\Z\longrightarrow G_\psi^a\rtimes\Z$
 defined by
\begin{equation*}
\varphi_h^0(x,n,z) := (h(\phi^n(x)), d_1(\phi^n(x),z), h(z))
\end{equation*}
 is continuous.
Since
$\varphi_h(x,n,z) = \varphi_h^1(x,n,z)\varphi_h^0(x,n,z)$
by \eqref{eq:3.19},
the map
$\varphi_h: 
G_\phi^a\rtimes\Z\longrightarrow G_\psi^a\rtimes\Z$
is continuous.

Similarly we may define a continuous map
$\varphi_{h^{-1}}: 
G_\psi^a\rtimes\Z\longrightarrow G_\phi^a\rtimes\Z$
from the homeomorphism
$h^{-1}: Y\longrightarrow X$ 
and one-cocycle function
$c_2:Y\longrightarrow \Z$,
two-cocycle function
$d_2:G_\psi^a\longrightarrow \Z$
by setting
\begin{equation*} 
 \varphi_{h^{-1}}(y,m,w)
= (h^{-1}(y),c_2^m(y)+ d_2(\psi^m(y), w), h^{-1}(w))
\qquad
\text{for }(y,m,w)\in G_\psi^a\rtimes\Z.
\end{equation*}
We put
for $(y,m,w)\in G_\psi^a\rtimes\Z$
\begin{align*} 
 (\varphi_{h^{-1}})^1(y,m,w)
= &(h^{-1}(y),c_2^m(y),h^{-1}(\psi^m(y))), \\
 (\varphi_{h^{-1}})^0(y,m,w)
= &   (h^{-1}(\psi^m(y)), d_2(\psi^m(y), w), h^{-1}(w))
\end{align*}
so that 
\begin{equation*} 
 \varphi_{h^{-1}}(y,m,w)
= (\varphi_{h^{-1}})^1(y,m,w)\cdot(\varphi_{h^{-1}})^0(y,m,w)
\qquad
\text{for }(y,m,w)\in G_\psi^a\rtimes\Z.
\end{equation*}
We will next show that 
$\varphi_h $ and $\varphi_{h^{-1}}$ are inverses to each other.
For $x\in X, n \in \Z$, we have
\begin{align*}
  &(\varphi_{h^{-1}}\circ \varphi_h)(x,n, \phi^n(x)) \\
= &\varphi_{h^{-1}}(h(x), c_1^n(x), h(\phi^n(x)))\\
= &\varphi_{h^{-1}}(h(x), c_1^n(x), \psi^{c_1^n(x)}(h(x)))\cdot
           (\psi^{c_1^n(x)}(h(x)), 0, h(\phi^n(x)))   \\
= & (x, c_2^{c_1^n(x)}(h(x)), h^{-1}(\psi^{c_1^n(x)}(h(x))))\cdot
    (h^{-1}(\psi^{c_1^n(x)}(h(x))), d_2(\psi^{c_1^n(x)}(h(x)),h(\phi^n(x))), \phi^n(x)) \\
= &(x, c_2^{c_1^n(x)}(h(x))+ d_2(\psi^{c_1^n(x)}(h(x)),h(\phi^n(x))), \phi^n(x)). 
\end{align*}           
By the condition of Definition \ref{def:acoe} (v),
we have            
\begin{equation*}
(\varphi_{h^{-1}}\circ \varphi_h)(x,n, \phi^n(x))
=(x,n,\phi^n(x)).
\end{equation*}
We also have
for $(x,z) \in G_\phi^a$,
\begin{align*}
 & \varphi_{h^{-1}}\circ \varphi_h(x,0, z) \\
=& \varphi_{h^{-1}}(h(x), d_1(x,z), h(z))  \\
=& (x, c_2^{d_1(x,z)}(h(x)), h^{-1}(\psi^{d_1(x,z)}(h(x))))
    \cdot (h^{-1}(\psi^{d_1(x,z)}(h(x))), d_2(\psi^{d_1(x,z)}(h(x)), h(z)), z)  \\
=& (x, c_2^{d_1(x,z)}(h(x)) + d_2(\psi^{d_1(x,z)}(h(x)), h(z)), z)
\end{align*}           
By the condition of Definition \ref{def:acoe} (vii),
we have            
\begin{equation*}
(\varphi_{h^{-1}}\circ \varphi_h)(x,0, z)
=(x,0,z).
\end{equation*}
Therefore we have for $(x,n,z) \in G_\phi^a\rtimes\Z$
\begin{align*}
  &(\varphi_{h^{-1}}\circ \varphi_h)(x,n, z) \\
= &((\varphi_{h^{-1}}\circ \varphi_h)(x,n,\phi^n(x)))\cdot
   ((\varphi_{h^{-1}}\circ \varphi_h)(\phi^n(x), 0, z))\\
= &(x, n, \phi^n(x))\cdot (\phi^n(x), 0,z) \\ 
= & (x,n,z).
\end{align*}           
Similarly we have
$(\varphi_h\circ\varphi_{h^{-1}})(y,m,w) = (y,m,w)
$
for $(y,m,w) \in G_\psi^a\rtimes\Z$.
Hence we have 
$\varphi_{h^{-1}} = (\varphi_h)^{-1}
$
and $\varphi_h$ gives rise to an isomorphism 
$ G_\phi^a\rtimes\Z\longrightarrow G_\psi^a\rtimes\Z$ 
of  the \'etale groupoids. 
\end{proof}
Smale spaces
$(X,\phi)$ and $(Y, \psi)$ are said to be
{\it stably  continuous orbit equivalent}\/
if in Definition \ref{def:acoe}, we may replace $G_\phi^a, G_\psi^a$
with $G_\phi^s, G_\psi^s$, respectively,
and written
$(X, \phi) \underset{SCOE}{\sim}(Y, \psi).$
Unstably continuous orbit equivalent is similarly defined 
by replacing $G_\phi^a, G_\psi^a$
with $G_\phi^u, G_\psi^u$, respectively,
and written
$(X, \phi) \underset{UCOE}{\sim}(Y, \psi).$
Precise definition of 
{\it stably continuous orbit equivalet}\/
is the following:
\begin{definition}\label{def:scoe}
Smale spaces
$(X,\phi)$ and $(Y, \psi)$ are said to be
{\it stably continuous orbit equivalent}\/
if there exist a homeomorphism
$h: X\longrightarrow Y$,
continuous functions
$
c_1:X\longrightarrow \Z, \,
c_2:Y\longrightarrow \Z, 
$
and two-cocycle functions
$
d_1: G_\phi^s \longrightarrow \Z,\,
d_2: G_\psi^s \longrightarrow \Z
$
such that 
\begin{enumerate}
\renewcommand{\theenumi}{\arabic{enumi}}
\renewcommand{\labelenumi}{\textup{(\theenumi)}}
\item
$c_1^m(x) + d_1(\phi^{m}(x), \phi^m(z))
=c_1^m(z) + d_1(x, z), \quad
(x,z) \in G_\phi^s, \, m \in \Z.$
\item
$c_2^m(y) + d_2(\psi^{m}(y), \psi^m(w))
=c_2^m(w) + d_2(y, w),\quad
(y, w) \in G_\psi^s,  m \in \Z.$
\end{enumerate}
and,
\begin{enumerate}
\renewcommand{\theenumi}{\roman{enumi}}
\renewcommand{\labelenumi}{\textup{(\theenumi)}}
\item
For each $n \in \Z$, there exits a continuous function
$k_{1,n}:X\longrightarrow \Zp$ such that 
\begin{equation*}
( \psi^{k_{1,n}(x) +c_1^n(x)}(h(x)),  \psi^{k_{1,n}(x)}(h(\phi^n(x))))\in G_\psi^{s,0}.  
\end{equation*}
\item
For each $n \in \Z$, there exits a continuous function
$k_{2,n}:Y\longrightarrow \Zp$ such that 
\begin{equation*}
( \phi^{k_{2,n}(y) +c_2^n(y)}(h^{-1}(y)),  
  \phi^{k_{2,n}(y)}(h^{-1}(\phi^n(y))))\in G_\phi^{s,0}.
\end{equation*}
\item
There exists a continuous function
$m_1: G_\phi^s\longrightarrow\Zp$ such that 
\begin{equation*}
(\psi^{m_1(x,z) +d_1(x,z)}(h(x)), \psi^{m_1(x,z)}(h(z))) \in G_\psi^{s,0}
\text{ for } (x,z) \in G_\phi^s.
\end{equation*}
\item
There exists a continuous function
$m_2: G_\psi^s\longrightarrow\Zp$ such that 
\begin{equation*}
(\phi^{m_2(y,w) +d_2(y,w)}(h^{-1}(y)), \phi^{m_2(y,w)}(h^{-1}(w))) \in G_\phi^{s,0}
\text{ for } (y,w) \in G_\psi^s. 
\end{equation*}
\item
$c^{c^n_1(x)}_2(h(x)) + d_2(\psi^{c_1^n(x)}(h(x)), h(\phi^n(x))) = n, 
\quad x \in X,\, n \in \Z.$
\item
$c^{c^n_2(y)}_1(h^{-1}(y)) + d_1(\phi^{c_2^n(y)}(h^{-1}(y)), h^{-1}(\psi^n(y))) = n, 
\qquad y \in Y,\, n \in \Z.$
\item
$c^{d_1(x,z)}_2(h(x)) + d_2(\psi^{d_1(x,z)}(h(x)), h(z)) = 0, 
\qquad (x,z) \in G_\phi^{s}.$
\item
$c^{d_2(y,w)}_1(h^{-1}(y)) + d_1(\phi^{d_2(y,w)}(h^{-1}(y)), h^{-1}(w)) = 0, 
\qquad (y,w) \in G_\psi^{s}.$
\end{enumerate}
\end{definition}
If we replace 
$G_\phi^{s,0}, G_\psi^{s,0},
G_\phi^{s}, G_\psi^{s}
$ with 
$G_\phi^{u,0}, G_\psi^{u,0},
G_\phi^{u}, G_\psi^{u},$ respectively, 
then 
$(X,\phi)$ and $(Y,\psi)$ are said to be 
{\it unstably continuous orbit equivalent}.

We may show the following theorem in a similar fashion to Theorem \ref{thm:main1}. 
\begin{theorem}
Suppose that 
the Smale spaces  $(X,\phi)$ and $(Y,\psi)$ are irreducible.
Then the following conditions are equivalent:
\begin{enumerate}
\renewcommand{\theenumi}{\roman{enumi}}
\renewcommand{\labelenumi}{\textup{(\theenumi)}}
\item
$(X, \phi) \underset{SCOE}{\sim}(Y, \psi)$
(resp.  $(X, \phi) \underset{UCOE}{\sim}(Y, \psi)$)
\item
The groupoids 
$G_{\phi}^{s} \rtimes \Z$ and $G_{\psi}^{s} \rtimes \Z$
(resp. $G_{\phi}^{u} \rtimes \Z$ and $G_{\psi}^{u} \rtimes \Z$)
are isomorphic as topological groupoids.
\end{enumerate}
\end{theorem}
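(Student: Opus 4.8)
The plan is to repeat the proof of Theorem~\ref{thm:main1} almost verbatim, with the \'etale groupoids $G_\phi^a\rtimes\Z$ and $G_\psi^a\rtimes\Z$ replaced throughout by $G_\phi^s\rtimes\Z$ and $G_\psi^s\rtimes\Z$ (and, in the unstable case, by $G_\phi^u\rtimes\Z$ and $G_\psi^u\rtimes\Z$), now carrying the inductive limit topology coming from the filtrations $G_\phi^{s,0}\subset G_\phi^{s,1}\subset\cdots$ and $G_\psi^{s,0}\subset G_\psi^{s,1}\subset\cdots$. All of the algebraic bookkeeping is unchanged. In the direction (ii) $\Longrightarrow$ (i) I would restrict a topological groupoid isomorphism $\varphi_h\colon G_\phi^s\rtimes\Z\to G_\psi^s\rtimes\Z$ to the unit spaces to get a homeomorphism $h\colon X\to Y$, write $\varphi_h(x,n,z)=(h(x),c_1(x,n,z),h(z))$ with $c_1$ continuous and $\Z$-valued, put $c_1(x):=c_1(x,1,\phi(x))$ and $d_1(x,z):=c_1(x,0,z)$ for $(x,z)\in G_\phi^s$, and then copy the computations of Theorem~\ref{thm:main1} — using the groupoid law in $G_\phi^s\rtimes\Z$, the homomorphism property of $\varphi_h$, and $\varphi_{h^{-1}}=\varphi_h^{-1}$ — to conclude that $\{c_1^n\}_n$ is a one-cocycle, $d_1$ is a two-cocycle on $G_\phi^s$, and that the identities (1), (2), (v), (vi), (vii), (viii) of Definition~\ref{def:scoe} hold. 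In the direction (i) $\Longrightarrow$ (ii) I would set $\varphi_h(x,n,z)=(h(x),\,c_1^n(x)+d_1(\phi^n(x),z),\,h(z))$; identity (1) makes it a groupoid homomorphism, the data $c_2,d_2$ produce a candidate inverse $\varphi_{h^{-1}}$ exactly as in the $a$-case, and identities (v), (vii) force $\varphi_{h^{-1}}\circ\varphi_h=\id$ (and symmetrically $\varphi_h\circ\varphi_{h^{-1}}=\id$).

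The only genuinely new point, and the step I expect to be the main obstacle, is that $G_\psi^s$ is \emph{not} \'etale, so ``isomorphic as \'etale groupoids'' in Theorem~\ref{thm:main1} must be weakened to ``isomorphic as topological groupoids'', and continuity must be verified against the inductive limit topology rather than an \'etale one. This is precisely the role of the auxiliary continuous functions $k_{1,n},k_{2,n}$ and $m_1,m_2$ of Definition~\ref{def:scoe}, which is why these appear in the stable definition but not, in this form, in the asymptotic one. In (ii) $\Longrightarrow$ (i) I would argue that continuity of $\varphi_h$ — composed with $\gamma$ from \eqref{eq:gamma}, the shift maps, and coordinate projections — forces the map $x\mapsto(\psi^{c_1^n(x)}(h(x)),h(\phi^n(x)))\in G_\psi^s$ to land, over a clopen partition of the compact space $X$, in finitely many of the levels $G_\psi^{s,j}$; the resulting continuous $\Zp$-valued function is $k_{1,n}$, arranged so that $(\psi^{k_{1,n}(x)+c_1^n(x)}(h(x)),\psi^{k_{1,n}(x)}(h(\phi^n(x))))\in G_\psi^{s,0}$, and the same reasoning applied to $(x,z)\mapsto(\psi^{d_1(x,z)}(h(x)),h(z))$ on $G_\phi^s$ yields $m_1$, with $k_{2,n},m_2$ coming from $\varphi_h^{-1}$. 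Conversely, in (i) $\Longrightarrow$ (ii) these same functions do the reverse: applying $\psi^{-k_{1,n}}$ (resp.\ $\psi^{-m_1}$) to clause (i) (resp.\ (iii)) of Definition~\ref{def:scoe}, and using $(\psi\times\psi)^{-j}(G_\psi^{s,0})=G_\psi^{s,j}$ together with \eqref{eq:Gphin}, first shows that $\varphi_h$ genuinely takes values in $G_\psi^s\rtimes\Z$, and then — by restricting to the clopen sets $\{k_{1,n}=j\}$ and $\{m_1=j\}$, on which the factors $\varphi_h^1(x,n,z)=(h(x),c_1^n(x),h(\phi^n(x)))$ and $\varphi_h^0(x,n,z)=(h(\phi^n(x)),d_1(\phi^n(x),z),h(z))$ factor continuously through $G_\psi^{s,j}\times\Z$ via $\gamma^{-1}$ — shows that $\varphi_h=\varphi_h^1\cdot\varphi_h^0$ is continuous into the inductive limit; continuity of $\varphi_{h^{-1}}$ is symmetric.

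The unstable assertion is handled identically, with $G^u, G^{u,0}$ in place of $G^s, G^{s,0}$ throughout and Lemma~\ref{lem:2.1}(ii) replacing Lemma~\ref{lem:2.1}(i). Thus the substance of the argument is just the topological bookkeeping forced by the non-\'etale inductive limit structure: once one verifies that continuity of a map into $G_\psi^s$ is equivalent to its factoring, over a clopen partition of the source, through the levels $G_\psi^{s,j}$, every remaining step is a direct transcription of the proof of Theorem~\ref{thm:main1}.
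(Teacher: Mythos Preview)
Your proposal is correct and follows exactly the approach the paper takes: the paper's own proof consists of the single sentence ``We may show the following theorem in a similar fashion to Theorem~\ref{thm:main1},'' and you have spelled out precisely how that transcription goes, correctly flagging that the only change of substance is replacing the \'etale structure by the inductive limit topology on $G_\phi^s$ and $G_\psi^s$, which is handled via the auxiliary functions $k_{1,n},k_{2,n},m_1,m_2$ of Definition~\ref{def:scoe}.
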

We note that the groupoids $G_\phi^s, G_\psi^s, G_\phi^u, G_\psi^u$ above 
are the non-\'etale groupoids appearing in Lemma \ref{lem:n2.6} 
which had been defined in \cite{Putnam1}.    
We do not know whether or not the corresponding theorem holds for 
\'etale groupoids defined from $\phi$-invariant set of stable or unstable equivalence relations appearing in \cite{PutSp}. 

\section{Asymptotic periodic orbits of Smale spaces}
Let $(X,\phi)$ be an irreducible Smale space.
\begin{definition}
An element $x \in X$ is called  {\it a stably asymptotic periodic point}\/
if there exists $p\in \Z$ with $p \ne 0$ such that 
$(x,\phi^p(x) )\in G_\phi^s$.
We call such $p$ asymptotic period of $x$.
If $|p|$ is the least positive such number, 
it is said to be the least asymptotic period of $x$. 
\end{definition}
We note that the asymptotic period is possibly negative,  
and hence if $p$ is the least asymptotic period, then $-p$ is also
the least asymptotic period.  

Throughout this section,
we assume that
$(X, \phi) \underset{SCOE}{\sim}(Y, \psi)$
and keep 
a homeomorphism
$h:X\longrightarrow Y$,
continuous functions
$c_1, c_2$ and
two-cocycle functions
$d_1, d_2$
which give rise to 
stably asymptotic continuous orbit equivalence
between 
$(X,\phi)$ and $(Y,\psi)$ as in Definition \ref{def:scoe}.
\begin{lemma}
If $x\in X$ is a stably asymptotic periodic point with asymptotic period $p$,
then  $h(x)$ is also a stably asymptotic periodic point with asymptotic period 
$c_1^p(x) + d_1(\phi^p(x), x)$. 
\end{lemma}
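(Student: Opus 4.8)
The plan is to realise $(x,p,x)$ as an isotropy element of the groupoid $G_\phi^s\rtimes\Z$ at the unit $x$ and to push it forward through the stably continuous orbit equivalence. First I would note that $(x,\phi^p(x))\in G_\phi^s$ together with the symmetry of $G_\phi^s$ gives $(\phi^p(x),x)\in G_\phi^s$, so that $(x,p,x)$ is a bona fide element of $G_\phi^s\rtimes\Z$ --- indeed a loop based at $x$. Set $q:=c_1^p(x)+d_1(\phi^p(x),x)$; the task is then to prove both that $(h(x),\psi^q(h(x)))\in G_\psi^s$ and, since the definition of stably asymptotic periodic point requires a nonzero period, that $q\neq0$.

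For the inclusion I would combine two pieces of the data of Definition \ref{def:scoe}. Applying condition (i) with $n=p$ and discarding the nonnegative shift $\psi^{k_{1,p}(x)}$ (legitimate since $G_\psi^{s,k}\subseteq G_\psi^s$) yields $(\psi^{c_1^p(x)}(h(x)),\,h(\phi^p(x)))\in G_\psi^s$. Applying condition (iii) to the pair $(\phi^p(x),x)\in G_\phi^s$ and discarding $\psi^{m_1(\phi^p(x),x)}$ likewise yields $(\psi^{d_1(\phi^p(x),x)}(h(\phi^p(x))),\,h(x))\in G_\psi^s$. By Lemma \ref{lem:n2.6}(i) the relation $G_\psi^s$ is invariant under $\psi\times\psi$ in both directions, so applying $\psi^{d_1(\phi^p(x),x)}$ to both coordinates of the first pair keeps it in $G_\psi^s$; since $G_\psi^s$ is an equivalence relation it composes with the second pair, and one further use of symmetry gives $(h(x),\psi^q(h(x)))\in G_\psi^s$, as wanted.

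It remains to rule out $q=0$, which is the only genuinely non-mechanical point. If $q=0$, then running the symmetric construction for the inverse orbit equivalence $h^{-1}$ --- equivalently, invoking conditions (v) and (vii) of Definition \ref{def:scoe}, which amount to $\varphi_{h^{-1}}\circ\varphi_h=\id$ on $G_\phi^s\rtimes\Z$ --- one computes $\varphi_{h^{-1}}(h(x),0,h(x))=(x,\,d_2(h(x),h(x)),\,x)=(x,0,x)$ because $d_2$ is a two-cocycle, whence $(x,p,x)=(x,0,x)$ and $p=0$, contrary to hypothesis. All of this can be packaged through the stably continuous orbit equivalence analogue of Theorem \ref{thm:main1}: the induced groupoid isomorphism $\varphi_h(x,n,z)=(h(x),\,c_1^n(x)+d_1(\phi^n(x),z),\,h(z))$ carries $(x,p,x)$ to $(h(x),q,h(x))$, and injectivity together with $\varphi_h(x,0,x)=(h(x),0,h(x))$ delivers $q\neq0$ immediately. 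The main obstacle is thus merely keeping the cocycle identities and the signs of the various shifts consistent; all the dynamical content sits in the invariance and transitivity of $G_\psi^s$ already recorded above.
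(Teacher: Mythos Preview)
Your proof is correct and ultimately converges to the paper's own argument: the paper simply observes that $(x,p,x)\in G_\phi^s\rtimes\Z$ and applies the groupoid isomorphism $\varphi_h$ to obtain $\varphi_h(x,p,x)=(h(x),\,c_1^p(x)+d_1(\phi^p(x),x),\,h(x))$, concluding in one line. Your direct verification via conditions (i) and (iii) of Definition~\ref{def:scoe} is an elementary unpacking of exactly what $\varphi_h$ is doing on this particular element, and your explicit treatment of $q\neq 0$ (which the paper leaves implicit in the injectivity of $\varphi_h$) is a welcome point of care that the paper glosses over.
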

\begin{proof}
Since
$(x,\phi^p(x) )\in G_\phi^s$
and hence
 $(x,p,x) \in G_\phi^s\rtimes\Z$,
 we have
\begin{equation*}
\varphi_h(x,p,x) =(h(x), c_1^p(x) +d_1(\phi^p(x),x ), h(x)).
\end{equation*}
As
$(X, \phi) \underset{SCOE}{\sim}(Y, \psi),$
$h(x)$ is  a stably asymptotic periodic point in $Y$ with asymptotic period 
$c_1^p(x) + d_1(\phi^p(x), x)$.
\end{proof}
\begin{lemma}
Let $x\in X$ be  a stably asymptotic periodic point with least asymptotic period $p$.
Let $p'$ be the least asymptotic period of $h(x)$.
Then we have the equality
\begin{equation}
c_2^{np'}(h(x)) + d_2(\psi^{np'}(h(x)), h(x))
=n \cdot (c_2^{p'}(h(x)) + d_2(\psi^{p'}(h(x)), h(x))) \label{eq:aperiod}
\end{equation}
for all
$ n \in \Z.$
 \end{lemma}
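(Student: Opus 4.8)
The plan is to read off the left-hand side of \eqref{eq:aperiod} as the value of a group homomorphism evaluated at the $n$-th multiple of the generator $p'$, so that the asserted identity reduces to additivity.

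First I would record an elementary fact about periodic points. For $y \in X$ put $H_y^\phi = \{ m \in \Z \mid (\phi^m(y), y) \in G_\phi^s \}$, and define $H_w^\psi$ similarly for $w \in Y$. Using that $G_\phi^s$ is an equivalence relation (its symmetry was noted after Lemma \ref{lem:2.5}) which is invariant under $\phi\times\phi$ in both directions --- immediate from the description in Lemma \ref{lem:n2.6} --- one checks that $H_y^\phi$ is a subgroup of $\Z$: it contains $0$; it is stable under negation, since $(\phi^m(y),y)\in G_\phi^s$ yields $(y,\phi^m(y))\in G_\phi^s$ by symmetry and then $(\phi^{-m}(y),y)\in G_\phi^s$ after applying $(\phi\times\phi)^{-m}$; and it is stable under addition, since $(\phi^m(y),y),(\phi^n(y),y)\in G_\phi^s$ give $(\phi^{m+n}(y),\phi^m(y))\in G_\phi^s$ after applying $(\phi\times\phi)^m$, hence $(\phi^{m+n}(y),y)\in G_\phi^s$ by transitivity. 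Since $x$ has least asymptotic period $p$, this gives $H_x^\phi = p\Z$, and likewise $H_{h(x)}^\psi = p'\Z$. Consequently the isotropy group of the groupoid $G_\psi^s\rtimes\Z$ at the unit $h(x)$ is exactly $\{(h(x),m,h(x)) \mid m \in p'\Z\}$, and $(h(x),m,h(x))\mapsto m$ identifies it with $p'\Z$; similarly the isotropy group of $G_\phi^s\rtimes\Z$ at $x$ is $\{(x,k,x)\mid k \in p\Z\}$, identified with $p\Z$.

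Next I would invoke the groupoid isomorphism $\varphi_{h^{-1}}\colon G_\psi^s\rtimes\Z \to G_\phi^s\rtimes\Z$ provided by $(X,\phi)\underset{SCOE}{\sim}(Y,\psi)$, which restricts to $h^{-1}$ on the unit space and is given by $\varphi_{h^{-1}}(y,m,w) = (h^{-1}(y),\, c_2^m(y) + d_2(\psi^m(y),w),\, h^{-1}(w))$. Since $h^{-1}(h(x)) = x$, this isomorphism carries the isotropy group at $h(x)$ into the isotropy group at $x$; composing with the identifications of the previous step yields a group homomorphism
\begin{equation*}
F\colon p'\Z \longrightarrow \Z, \qquad F(m) = c_2^m(h(x)) + d_2(\psi^m(h(x)), h(x)),
\end{equation*}
which is well defined on all of $p'\Z = H_{h(x)}^\psi$ precisely because $(\psi^m(h(x)),h(x))\in G_\psi^s$ for $m \in p'\Z$. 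Being a homomorphism of abelian groups, $F$ satisfies $F(np') = nF(p')$ for all $n \in \Z$, and this is exactly \eqref{eq:aperiod}.

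I expect the conceptual content to be entirely as above; the only point requiring care is the bookkeeping that guarantees $d_2$ (and, in the companion statements, $d_1$) is evaluated only on pairs genuinely lying in $G_\psi^s$ (respectively $G_\phi^s$) --- and that is exactly what the subgroup property of $H_{h(x)}^\psi$ supplies. Should one prefer to avoid the groupoid language, the identity $F(np')=nF(p')$ for $n\ge 1$ can instead be obtained by induction, using the cocycle identity $c_2^{(n+1)p'}(h(x)) = c_2^{p'}(h(x)) + c_2^{np'}(\psi^{p'}(h(x)))$, the two-cocycle identity to split $d_2(\psi^{(n+1)p'}(h(x)),h(x)) = d_2(\psi^{(n+1)p'}(h(x)),\psi^{np'}(h(x))) + d_2(\psi^{np'}(h(x)),h(x))$, and condition (2) of Definition \ref{def:scoe} with $y = \psi^{p'}(h(x))$, $w = h(x)$, $m = np'$; the remaining cases then reduce to $F(0)=0$ and running the same induction with $-p'$ in place of $p'$.
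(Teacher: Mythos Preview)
Your proof is correct and uses essentially the same mechanism as the paper: applying the groupoid homomorphism $\varphi_{h^{-1}}$ to the isotropy elements $(h(x),np',h(x))$ and reading off the middle coordinate. The paper carries this out as an explicit induction on $n$ via the product $(y,p',y)(y,kp',y)=(y,(k+1)p',y)$, whereas you package the same computation more cleanly by observing that the restriction of $\varphi_{h^{-1}}$ to the isotropy group is a group homomorphism $F$, whence $F(np')=nF(p')$.
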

\begin{proof}
Suppose that 
$(x,\phi^p(x) )\in G_\phi^s$.
Put $y =h(x)$ and 
$q' =c_1^p(x) + d_1(\phi^p(x), x).$
By the preceding lemma, we know that 
$y $ has asymptotic period $q'$,
so that
$(y,p',y) \in G_\psi^s\rtimes\Z$.
Now suppose that the equality \eqref{eq:aperiod} holds for $n=k$. 
Since 
$(y,p',y)(y,kp',y) = (y,(k+1)p',y)$, we get
\begin{equation}
\varphi_{h^{-1}}((y,p',y)(y,kp',y)) = \varphi_{h^{-1}}(y,(k+1)p',y). \label{eq:ypy}
\end{equation}
The left hand side of  \eqref{eq:ypy} equals
\begin{align*}
  & \varphi_{h^{-1}}(y,p',y)\varphi_{h^{-1}}(y,kp',y) \\
 =& (h^{-1}(y), c_2^{p'}(y) + d_2(\psi^{p'}(y),y), h^{-1}(y))\cdot    
      (h^{-1}(y), c_2^{kp'}(y) + d_2(\psi^{kp'}(y),y), h^{-1}(y)) \\
 =& (h^{-1}(y), 
     c_2^{p'}(y) + d_2(\psi^{p'}(y),y) + c_2^{kp'}(y) + d_2(\psi^{kp'}(y),y), h^{-1}(y)).
\end{align*}
The right hand side of  \eqref{eq:ypy} equals
\begin{equation*}
   \varphi_{h^{-1}}(y,(k+1)p',y) 
 = (h^{-1}(y), c_2^{(k+1)p'}(y) + d_2(\psi^{(k+1)p'}(y),y), h^{-1}(y))
\end{equation*}
so that we have
\begin{equation*}
  c_2^{p'}(y) + d_2(\psi^{p'}(y),y) + c_2^{kp'}(y) + d_2(\psi^{kp'}(y),y) 
=  c_2^{(k+1)p'}(y) + d_2(\psi^{(k+1)p'}(y),y).
\end{equation*}
By induction, we obtain the desired equalities for all $n \in \N$, and hence for all 
$n \in \Z$ in a similar way.
\end{proof}
\begin{lemma}\label{lem:4.5}
If $x\in X$ is a stably asymptotic periodic point with asymptotic period $p$,
then  $h(x)$ is also a stably asymptotic periodic point with asymptotic period 
$c_1^p(x) + d_1(\phi^p(x), x)$. 
If in particular, $p$ is the least asymptotic period of $x$,
then 
$c_1^p(x) + d_1(\phi^p(x), x)$
is the least asymptotic period of $h(x).$
\end{lemma}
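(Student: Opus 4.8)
The first assertion merely restates an earlier lemma of this section; recall its one-line proof: since $(x,\phi^p(x))\in G_\phi^s$ we have $(x,p,x)\in G_\phi^s\rtimes\Z$, so applying the groupoid isomorphism $\varphi_h\colon G_\phi^s\rtimes\Z\to G_\psi^s\rtimes\Z$ built from $h,c_1,c_2,d_1,d_2$ exactly as in the proof of Theorem \ref{thm:main1} (in its stable version) yields $\varphi_h(x,p,x)=(h(x),\,c_1^p(x)+d_1(\phi^p(x),x),\,h(x))\in G_\psi^s\rtimes\Z$, i.e. $h(x)$ has asymptotic period $q':=c_1^p(x)+d_1(\phi^p(x),x)$; here $q'\ne 0$ because $\varphi_h$ is injective and $p\ne 0$. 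So the real point is the ``least'' statement, and the plan is to read it off as a statement about isotropy groups.

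For $x\in X$ put $P_\phi(x):=\{\,n\in\Z\mid(\phi^n(x),x)\in G_\phi^s\,\}$, the set of (stable) asymptotic periods of $x$ together with $0$; define $P_\psi(y)$ for $y\in Y$ analogously. \textbf{Step 1.} Using the description of $G_\phi^s$ in Lemma \ref{lem:n2.6}(i), the invariance of $G_\phi^s$ under $\phi\times\phi$, its symmetry and transitivity, and the triangle inequality, one checks that $P_\phi(x)$ is a subgroup of $\Z$; hence if $x$ is stably asymptotic periodic with least asymptotic period $p$ then $P_\phi(x)=p\Z$, and likewise $P_\psi(h(x))=p'\Z$ once this group is known to be nonzero, $p'$ being the least asymptotic period of $h(x)$. \textbf{Step 2.} The isotropy group of $G_\phi^s\rtimes\Z$ at the unit $(x,0,x)$ is exactly $\{(x,n,x)\mid n\in P_\phi(x)\}$, and $\varphi_h(x,n,x)=(h(x),\,c_1^n(x)+d_1(\phi^n(x),x),\,h(x))$; since $\varphi_h$ is a groupoid homomorphism carrying this isotropy group into the isotropy group at $(h(x),0,h(x))$, the map $n\mapsto c_1^n(x)+d_1(\phi^n(x),x)$ is a group homomorphism $P_\phi(x)\to P_\psi(h(x))$, and because $\varphi_{h^{-1}}=(\varphi_h)^{-1}$ implements the reverse inclusion, it is a group \emph{isomorphism}; in particular $P_\psi(h(x))$ is infinite cyclic and nonzero, hence $=p'\Z$. \textbf{Step 3.} A group isomorphism $p\Z\to p'\Z$ of subgroups of $\Z$ sends the generator $p$ to $\pm p'$, so $q'=c_1^p(x)+d_1(\phi^p(x),x)\in\{p',-p'\}$; since both $p'$ and $-p'$ are least asymptotic periods of $h(x)$, so is $q'$.

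The only step requiring care is Step 1 — checking that $P_\phi(x)$ is closed under addition (which uses transitivity of the equivalence relation $G_\phi^s$ together with its $\phi\times\phi$-invariance) and under negation (symmetry plus invariance); once this is in hand everything else is formal bookkeeping with $\varphi_h$ and $\varphi_{h^{-1}}$. A variant that sidesteps the explicit subgroup language — and which appears to be the route prepared by the preceding lemmas — is to write $q'=kp'$ and $r:=c_2^{p'}(h(x))+d_2(\psi^{p'}(h(x)),h(x))=\ell p$ (each a multiple of the relevant least period), apply $\varphi_{h^{-1}}\circ\varphi_h=\id$ to $(x,p,x)$, and use the multiplicativity $\varphi_{h^{-1}}\big((h(x),p',h(x))^{k}\big)=(x,r,x)^{k}=(x,kr,x)$ furnished by \eqref{eq:aperiod} to obtain $p=kr=k\ell p$, whence $k\ell=1$ and $k=\pm1$.
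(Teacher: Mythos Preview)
Your proof is correct. Your main approach (Steps 1--3) via isotropy groups is cleaner and more conceptual than the paper's: you observe directly that the set $P_\phi(x)$ of asymptotic periods is a subgroup of $\Z$, identify it with the isotropy group of $G_\phi^s\rtimes\Z$ at $x$, and use that the groupoid isomorphism $\varphi_h$ carries isotropy isomorphically to isotropy, hence generator to generator. This packages the whole argument into a single structural observation.

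The paper instead works element-wise, exactly along the lines of your variant: it writes $q'=np'$, applies $\varphi_{h^{-1}}\circ\varphi_h=\id$ to $(x,p,x)$ and invokes the multiplicativity \eqref{eq:aperiod} to obtain $p=n\cdot r$ with $r=c_2^{p'}(h(x))+d_2(\psi^{p'}(h(x)),h(x))$, and then spends the bulk of the proof (equations \eqref{eq:phid2psip'1}--\eqref{eq:phid2psip'4}) verifying from conditions (ii) and (iv) of Definition~\ref{def:scoe} that $r$ is an asymptotic period of $x$, hence $r=m'p$, whence $nm'=1$. Your variant's parenthetical ``each a multiple of the relevant least period'' compresses this verification --- it is justified (apply the first part of the lemma with $h^{-1}$ in place of $h$ to see $r\in P_\phi(x)$), but the paper carries it out by hand rather than through $\varphi_{h^{-1}}$. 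Both routes implicitly use the subgroup property of $P_\phi(x)$ at the step ``least period divides any period''; you make this explicit in Step~1, which is a genuine clarification over the paper's presentation.
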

\begin{proof}
It suffices to show the ``If in particular'' part.
Suppose that 
$(x,\phi^p(x) )\in G_\phi^s$
and
$p$ is the least asymptotic period of $x$.
We will show that 
$c_1^p(x) + d_1(\phi^p(x), x)$
is the least asymptotic period of $h(x).$
Let $p'$ be the the least asymptotic period of $h(x).$
Put
$q' =c_1^p(x) + d_1(\phi^p(x), x),$
so that $q' = n \cdot p'$ for some $n \in \Z$.
We will prove that $n =\pm 1$.
We have
\begin{align*}
(x, p, x) 
= & (\varphi_{h^{-1}}\circ\varphi_h)(x,p,x) \\
= & \varphi_{h^{-1}}(h(x), q', h(x)) \\
= & (x, c_2^{q'}(h(x)) + d_2(\psi^{q'}(h(x)), h(x)), x). 
\end{align*}
Hence 
$p =c_2^{q'}(h(x)) + d_2(\psi^{q'}(h(x)), h(x)).$
As $q' = n p'$, the preceding lemma tells us that
\begin{equation}
p= n\cdot(c_2^{p'}(h(x)) + d_2(\psi^{p'}(h(x)), h(x))). \label{eq:pnpprime}
\end{equation}
Since $p'$ is (the least) asymptotic period of $h(x)$,
we have $( \psi^{p'}(h(x)),h(x)) \in G_\psi^s$,
so that by 
Definition \ref{def:scoe} (iv), we have
\begin{equation}
(\phi^{d_2( \psi^{p'}(h(x)),h(x))}(h^{-1}(\psi^{p'}(h(x))), h^{-1}(h(x))) \in G_\phi^s.
\label{eq:phid2psip'1}
\end{equation}
By Definition \ref{def:scoe} (ii), we have
$
(\phi^{c_2^{p'}(h(x))}(x), h^{-1}(\psi^{p'}(h(x))) ) \in G_\phi^s
$
and hence
\begin{equation}
(\phi^{c_2^{p'}(h(x)) + d_2(\psi^{p'}(h(x)),h(x))}(x), 
\phi^{d_2(\psi^{p'}(h(x)),h(x))}(h^{-1}(\psi^{p'}(h(x)))) ) \in G_\phi^s.
\label{eq:phid2psip'2}
\end{equation}
By \eqref{eq:phid2psip'1} and \eqref{eq:phid2psip'2},
we have
\begin{equation}
(\phi^{c_2^{p'}(h(x)) + d_2(\psi^{p'}(h(x)),h(x))}(x), x) \in G_\phi^s.
\label{eq:phid2psip'3}
\end{equation}
As $p$ is the least asymptotic period of $x$, we have
\begin{equation}
c_2^{p'}(h(x)) + d_2(\psi^{p'}(h(x)), h(x)) = p\cdot m' \quad
\text{ for some }m' \in \Z.
\label{eq:phid2psip'4}
\end{equation}
By \eqref{eq:pnpprime} and \eqref{eq:phid2psip'4}, we have
\begin{equation*}
p = n\cdot m'\cdot p
\end{equation*}
 so that we conclude that $n=\pm 1$
 and hence
$c_1^p(x) + d_1(\phi^p(x), x)$
is the least asymptotic period of $h(x).$
\end{proof}
For a stably asymptotic periodic point $x \in X$ with asymptotic period $p$,
We put
\begin{equation*}
c_h^p(x) := c_1^p(x) + d_1(\phi^p(x), x) \in \Z.
\end{equation*}
If $p$ is the least asymptotic period, 
the preceding proposition tells us that 
\begin{equation*}
c_h^{np}(x) =n \cdot c_h^p(x) \quad \text{ for } n \in \Z.
\end{equation*}
In this case, as any asymptotic period $q$ of $x$ is written $q = m\cdot p$
for some $m \in \Z$ with $m \ne 0$, we have
$c_h^{nmp}(x) =nm \cdot c_h^p(x)=n \cdot c_h^{mp}(x)$ 
so that 
$c_h^{nq}(x) =n \cdot c_h^q(x).$

For a periodic point $x \in X$,
the finite set 
$\{ \phi^n(x) \mid n \in \Z\}$
is called a periodic orbit.
Let us denote by 
\begin{equation*}
\Porb(X) := \text{the set of periodic orbits of }(X,\phi).
\end{equation*}
A periodic point with period $p$ is called a $p$-periodic point. 
Let $\Per_p(X)$ be the set of $p$-periodic points of $(X,\phi)$.
The following theorem due to R. Bowen 
tells us that  
the set $\Per_p(X)$ is finite for each $p$
because so is $\Per_p(\bar{X}_A)$.
\begin{theorem}[{\cite[Theorem 3.12]{Bowen2}}] \label{thm:Bowen}
Let $(X,\phi)$ be an irreducible Smale space.
Then there exists an irreducible subshift of finite type 
$(\bar{X}_A, \bar{\sigma}_A)$ such that there exists a finite-to-one factor map
$\varphi: (\bar{X}_A, \bar{\sigma}_A) \longrightarrow (X,\phi).$
\end{theorem}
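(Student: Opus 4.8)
The plan is to build a \emph{Markov partition} for $(X,\phi)$ and take $A$ to be its transition matrix, with $\varphi$ the associated itinerary map. Call a closed set $R\subset X$ of diameter $<\epsilon_0$ a \emph{rectangle} if $[x,y]\in R$ whenever $x,y\in R$; then for $x\in R$ the sets $R^s(x):=X^s(x,\epsilon_0)\cap R$ and $R^u(x):=X^u(x,\epsilon_0)\cap R$ satisfy $R^s(x)\times R^u(x)\cong R$ via the bracket. A finite family $\mathcal{R}=\{R_1,\dots,R_m\}$ of rectangles covering $X$, with $\overline{\operatorname{int} R_i}=R_i$ and pairwise disjoint interiors, is a \emph{Markov partition} if in addition, whenever $x\in\operatorname{int} R_i$ and $\phi(x)\in\operatorname{int} R_j$, one has $\phi(R_i^s(x))\subset R_j^s(\phi(x))$ and $\phi^{-1}(R_j^u(\phi(x)))\subset R_i^u(x)$.

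First I would construct such a partition, following Bowen (and Sinai in the Anosov case). Using the continuity of the bracket $[\cdot,\cdot]$ and compactness of $X$, cover $X$ by finitely many rectangles of arbitrarily small diameter; intersecting these in the stable and unstable directions yields a finite cover $\mathcal{R}_0$ by rectangles with pairwise disjoint interiors. One then replaces $\mathcal{R}_0$ by a refinement adapted to $\phi$: roughly, declare two points equivalent when, for all $|n|\le N$ with $N$ large, $\phi^n$ of them lie in the same member of $\mathcal{R}_0$ and on the same side of its stable and unstable boundaries, and take the closures of the interiors of the resulting classes. The contraction in \eqref{eq:2.2.3} forces these sets to have small diameter, and the $N$-step refinement forces the Markov boundary condition. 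This boundary-tightening step is the technical heart of the argument and is where I expect the main difficulty to lie. Given the Markov partition $\mathcal{R}$, define $A\in M_m(\{0,1\})$ by $A(i,j)=1$ iff $\operatorname{int} R_i\cap\phi^{-1}(\operatorname{int} R_j)\ne\emptyset$, and let $(\bar X_A,\bar\sigma_A)$ be the corresponding two-sided subshift of finite type.

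Next, define $\varphi\colon\bar X_A\to X$ by $\varphi((x_n)_{n\in\Z})=\bigcap_{n\in\Z}\phi^{-n}(R_{x_n})$. The contraction estimates give $\operatorname{diam}\bigcap_{|n|\le k}\phi^{-n}(R_{x_n})\to 0$, so this intersection is a single point depending continuously on $(x_n)$, and $\varphi\circ\bar\sigma_A=\phi\circ\varphi$ is immediate. The Markov property guarantees $\bigcap_{|n|\le k}\phi^{-n}(\operatorname{int} R_{x_n})\ne\emptyset$ for every admissible word, whence $\varphi$ is surjective by compactness. For finiteness of the fibers one notes that $x$ lies in $\operatorname{int} R_i$ for at most one $i$, that $\bigcup_i(R_i\setminus\operatorname{int} R_i)$ together with its $\phi^{\pm}$-images is a set of empty interior, and that consequently $\varphi^{-1}(x)$ is controlled by the number of rectangles whose boundary passes through $x$, which is bounded uniformly in terms of $m$. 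Finally, since $(X,\phi)$ is irreducible, for any $i,j$ there is $K$ with $\phi^K(\operatorname{int} R_i)\cap\operatorname{int} R_j\ne\emptyset$, and unwinding this through the Markov property produces an admissible path from $i$ to $j$ in the graph of $A$; after discarding the finitely many vertices not met by any dense orbit (which changes nothing up to sets of empty interior) $A$ is irreducible. This yields the desired finite-to-one factor map $\varphi\colon(\bar X_A,\bar\sigma_A)\to(X,\phi)$.
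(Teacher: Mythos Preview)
The paper does not prove this theorem at all; it simply cites it as \cite[Theorem 3.12]{Bowen2} and uses it (only to observe that $\Per_p(X)$ is finite). Your sketch is the standard Bowen Markov-partition construction, which is precisely the content of the cited reference, so your approach matches the original source rather than anything done in this paper. One small caution: the ``discarding finitely many vertices'' step at the end is not quite the way irreducibility is usually handled---in Bowen's argument one shows directly that the transition matrix restricted to the rectangles actually meeting a dense orbit is irreducible, and that these rectangles already cover $X$---but this is a minor organizational point and your outline is otherwise a faithful summary of the classical proof.
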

For a periodic orbit $\gamma \in \Porb(X)$, 
take a periodic point  $x \in X$ 
such that
$\gamma =\{ \phi^n(x)\mid n \in \Z\}$.
The cardinality of the set $\{ \phi^n(x)\mid n \in \Z\}$ is
called the length of $\gamma$ and written
$|\gamma|$.
We will show that the periodic orbits 
$\Porb(X)$ and $\Porb(Y)$
are related by their cocycle functions under the condition 
$(X, \phi) \underset{SCOE}{\sim}(Y, \psi).$
We provide the following lemma 
\begin{lemma}\label{lem:periodicpoints}
Suppose that
$(X, \phi) \underset{SCOE}{\sim}(Y, \psi).$
Let $x \in X$ be a periodic point in $X$ such that $\phi^p(x) = x$.
Put $q = |c_1^p(x)|.$ Then we have
\begin{enumerate}
\renewcommand{\theenumi}{\roman{enumi}}
\renewcommand{\labelenumi}{\textup{(\theenumi)}}
\item $ c_1^{kp}(x) = k c_1^p(x) $ for $k \in \Z.$
\item $\psi^q(h(x)) \in Y^s(h(x))$ so that the limit
$\lim_{k\to{\infty}}\psi^{q k}(h(x))$ exists in $Y.$
\item Put 
$\eta_h(x) =\lim_{k\to{\infty}}\psi^{q k}(h(x)).$
Then 
\begin{equation}
\eta_h(\phi^n(x)) = \psi^{c_1^n(x)}(\eta_h(x)) \qquad \text{for } \quad n \in \Z. 
\label{eq:etah}
\end{equation}
In particular, $\eta_h(x)$ is a $q$-periodic point in $Y$.
\item $\eta_h(x) \in Y^s(h(x)).$
\item If $p$ is the least period of $x$, then $c_1^p(x)$ is the least period of $\eta_h(x)$.
\item   $c_2^{c_1^p(x)}(\eta_h(x)) = p$.
\end{enumerate}
\end{lemma}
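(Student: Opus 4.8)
First, part (i) is purely formal. Since $\phi^p(x)=x$, applying the one-cocycle identity \eqref{eq:1cocycle} to $f=c_1$ repeatedly gives $c_1^{(k+1)p}(x)=c_1^p(x)+c_1^{kp}(\phi^p(x))=c_1^p(x)+c_1^{kp}(x)$, so $c_1^{kp}(x)=kc_1^p(x)$ for $k\ge0$ by induction, and \eqref{eq:1cocycle} with $(n,m)=(kp,-kp)$ together with $\phi^{kp}(x)=x$ extends this to all $k\in\Z$; the same reasoning applies to $c_2$ at any $\psi$-periodic point, which I will reuse. I also record, from condition (v) of Definition~\ref{def:scoe} with $n=p$ and $\phi^p(x)=x$,
\begin{equation*}
c_2^{c_1^p(x)}(h(x))+d_2(\psi^{c_1^p(x)}(h(x)),h(x))=p,
\tag{$\ast$}
\end{equation*}
which, since $p\ne0$, already forces $c_1^p(x)\ne0$; hence $q=|c_1^p(x)|\ge1$ from now on.

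For (ii) and (iv) I would start from $(x,p,x)\in G_\phi^s\rtimes\Z$ (because $(\phi^p(x),x)=(x,x)\in G_\phi^s$) and apply the groupoid isomorphism $\varphi_h$ attached to the SCOE as in Lemma~\ref{lem:4.5}; using $d_1(x,x)=0$ this yields $\varphi_h(x,p,x)=(h(x),c_1^p(x),h(x))$, hence $(\psi^{c_1^p(x)}(h(x)),h(x))\in G_\psi^s$, and by symmetry of $G_\psi^s$ (applying $\psi^{-c_1^p(x)}$ to both coordinates if $c_1^p(x)<0$) one gets $\psi^q(h(x))\in Y^s(h(x))$. Choosing $m_0$ with $(\psi^{m_0}(h(x)),\psi^{m_0+q}(h(x)))\in G_\psi^{s,0}$ and writing $\delta_0$ for that distance, the contraction \eqref{eq:2.2.3} (with Lemma~\ref{lem:2.1}, which keeps us inside local stable sets) gives $d(\psi^{m_0+j}(h(x)),\psi^{m_0+q+j}(h(x)))\le\lambda_0^{\,j}\delta_0$ for all $j\ge0$. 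Summing a geometric series in steps of $q$ shows $(\psi^{qk}(h(x)))_k$ is Cauchy, hence convergent by compactness of $Y$, which is (ii); and for $n\ge m_0$ the same bound gives $d(\psi^n(h(x)),\psi^{n+qk}(h(x)))\le\lambda_0^{\,n-m_0}\delta_0/(1-\lambda_0^q)$, so letting $k\to\infty$ and invoking Lemma~\ref{lem:n2.6}(i) we obtain $(h(x),\eta_h(x))\in G_\psi^s$, which is (iv).

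For (iii), note first that $\phi^n(x)$ is again $p$-periodic with $c_1^p(\phi^n(x))=c_1^p(x)$ by \eqref{eq:1cocycle} and $\phi^p(x)=x$, so (ii) applies to it with the same $q$ and $\eta_h(\phi^n(x))=\lim_k\psi^{qk}(h(\phi^n(x)))$ exists. Condition (i) of Definition~\ref{def:scoe} gives $(\psi^{c_1^n(x)}(h(x)),h(\phi^n(x)))\in G_\psi^s$, so $d(\psi^{m+c_1^n(x)}(h(x)),\psi^m(h(\phi^n(x))))\to0$ as $m\to\infty$; restricting to $m=qk$ (this is where $q\ge1$ is essential) and using continuity of $\psi^{c_1^n(x)}$ with $\psi^{qk}(h(x))\to\eta_h(x)$, I conclude $\psi^{qk}(h(\phi^n(x)))\to\psi^{c_1^n(x)}(\eta_h(x))$, i.e.\ $\eta_h(\phi^n(x))=\psi^{c_1^n(x)}(\eta_h(x))$; with $n=p$ this gives $\psi^{c_1^p(x)}(\eta_h(x))=\eta_h(x)$, hence $\psi^q(\eta_h(x))=\eta_h(x)$, so $\eta_h(x)$ is $q$-periodic. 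Then (vi) is obtained by transporting $(\ast)$ from $h(x)$ to $\eta_h(x)$ along $(h(x),\eta_h(x))\in G_\psi^s$: condition (2) of Definition~\ref{def:scoe} with $m=c_1^p(x)$, combined with $\psi^{c_1^p(x)}(\eta_h(x))=\eta_h(x)$ and the two-cocycle relation $d_2(\psi^{c_1^p(x)}(h(x)),\eta_h(x))=d_2(\psi^{c_1^p(x)}(h(x)),h(x))+d_2(h(x),\eta_h(x))$, reduces to $c_2^{c_1^p(x)}(\eta_h(x))=c_2^{c_1^p(x)}(h(x))+d_2(\psi^{c_1^p(x)}(h(x)),h(x))=p$ by $(\ast)$.

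Finally (v), which I expect to be the crux. From (iii), $\psi^{c_1^p(x)}$ fixes $\eta_h(x)$, so if $p'$ is the least period of $\eta_h(x)$ then $c_1^p(x)=mp'$ with $m\ne0$, and by (vi) and the $c_2$-version of (i) we get $p=c_2^{c_1^p(x)}(\eta_h(x))=m\,c_2^{p'}(\eta_h(x))$, so $c_2^{p'}(\eta_h(x))\ne0$ and $|m|\,|c_2^{p'}(\eta_h(x))|=p$. Applying the already-established parts (i)--(iv) to the SCOE $(Y,\psi)\underset{SCOE}{\sim}(X,\phi)$ with the transposed data $(h^{-1},c_2,c_1,d_2,d_1)$ and the $p'$-periodic point $\eta_h(x)$ produces $\eta_{h^{-1}}(\eta_h(x))\in X^s(h^{-1}(\eta_h(x)))$, periodic with period $|c_2^{p'}(\eta_h(x))|$. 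On the other hand, applying $\varphi_{h^{-1}}$ to $(h(x),0,\eta_h(x))\in G_\psi^s\rtimes\Z$ shows $(\phi^{d_2(h(x),\eta_h(x))}(x),h^{-1}(\eta_h(x)))\in G_\phi^s$, hence $(\phi^{d_2(h(x),\eta_h(x))}(x),\eta_{h^{-1}}(\eta_h(x)))\in G_\phi^s$; since two periodic points lying in one $G_\phi^s$-class must coincide (evaluate $d(\phi^n\cdot,\phi^n\cdot)\to0$ along multiples of a common period), $\eta_{h^{-1}}(\eta_h(x))$ lies on the $\phi$-orbit of $x$ and so has least period $p$, giving $p\mid|c_2^{p'}(\eta_h(x))|$. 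Together with $|m|\,|c_2^{p'}(\eta_h(x))|=p$ this forces $|m|=1$, i.e.\ $|c_1^p(x)|=p'$ is the least period of $\eta_h(x)$. The overall order is thus: (i); the hyperbolic estimate giving (ii) and (iv); (iii); (vi); then (v) invoking (i)--(iv) and (vi) for both orderings of the pair. The genuinely delicate point is the two-directional period comparison in (v); extracting the uniform geometric decay from (SS2) in (ii)/(iv) is a secondary technical hurdle.
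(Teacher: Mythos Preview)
Your argument is correct, and the logical order (i); (ii)+(iv); (iii); (vi); (v) is sound. The differences from the paper are worth recording. For (ii) and (iv) the paper simply cites \cite[Lemma~5.3]{PutSp} for the existence of the limit and then gives a separate computation for (iv); your self-contained Cauchy/geometric argument via (SS2) handles both at once. For (vi) the paper works by continuity, computing $c_2^q(\psi^{qk}(h(x)))$ via the cocycle identity and Definition~\ref{def:scoe}(v), and then passing to the limit $k\to\infty$ using continuity of $d_2$ on $G_\psi^s$; your use of the compatibility condition~(2) of Definition~\ref{def:scoe} together with the two-cocycle relation for $d_2$ is considerably shorter and purely algebraic once (iv) and $\psi^{c_1^p(x)}(\eta_h(x))=\eta_h(x)$ are in hand. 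For (v) the paper argues differently: it shows that the least period $q_0$ of $\eta_h(x)$ is a stably-asymptotic period of $h(x)$ (via $\psi^{q_0}(\eta_h(x))=\eta_h(x)$ and (iv)), and then invokes Lemma~\ref{lem:4.5}, which says that $q=|c_1^p(x)|$ is the \emph{least} asymptotic period of $h(x)$, forcing $q_0=q$. Your route---building $\eta_{h^{-1}}(\eta_h(x))$, identifying it with a point on the $\phi$-orbit of $x$ via ``two stably equivalent periodic points coincide,'' and comparing periods through $p=m\,c_2^{p'}(\eta_h(x))$---is essentially the content of the paper's subsequent Lemma (on $\eta_{h^{-1}}(\eta_h(x))$) folded back into the proof; it avoids the dependence on Lemma~\ref{lem:4.5} at the cost of anticipating that later lemma. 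Both approaches are valid; yours is more self-contained within the statement, the paper's is more modular across the section.
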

\begin{proof}
(i) Since $\phi^p(x) = x$, we have 
$d_1(\phi^p(x), x) =0$ so that 
$c_h^p(x) = c_1^p(x) + d_1(\phi^p(x),x) = c_1^p(x)$.
Hence the equality 
 $c_1^p(x)\cdot k = c_1^{kp}(x)$  for $k \in \Z$ 
is immediate.

(ii) 
We have 
$(\psi^{q}(h(x)), h(x))=
(\psi^{c_1^p(x)}(h(x)), h(\phi^p (x)))$
which belongs to $G_\psi^s$ 
because of Definition \ref{def:scoe} (i),
so that 
$ \psi^{q}(h(x)) \in Y^s(h(x)).$
By using \cite[Lemma 5.3]{PutSp},  
the element
$\lim_{k\to{\infty}}\psi^{q k}(h(x))$
exists in $Y$ and is a periodic point with period 
$q.$

(iii)
By Definition \ref{def:scoe} (i) with Lemma \ref{lem:n2.6} ,
we have 
\begin{equation*}
\lim_{k\to{\infty}}\psi^{q  k}(h(\phi^n(x)))
=
\lim_{k\to{\infty}}\psi^{q k}( \psi^{c_1^n(x)}(h(x)))
=
\psi^{c_1^n(x)}(\lim_{k\to{\infty}}\psi^{q k}( h(x)))
\end{equation*}
so that 
the equality \eqref{eq:etah} holds.

(iv)
For each $n \in \Z, $ we have $qn = |c_1^p(x)| n$ by (i)
so that the equality  
\begin{equation*}
  \lim_{k\to{\infty}}\psi^{qk}(\psi^{qn}(h(x))) 
= \lim_{k\to{\infty}}\psi^{qk}(h(\phi^{pn}(x)))
= \lim_{k\to{\infty}}\psi^{qk}(h(x))
\end{equation*}
holds because of  Definition \ref{def:scoe} (i).
It then follows that
\begin{align*}
\lim_{n\to{\infty}}\psi^{qn}(\eta_h(x))
= & \lim_{n\to{\infty}}\psi^{qn}(\lim_{k\to{\infty}}\psi^{qk}(h(x))) \\
= & \lim_{n\to{\infty}}(\lim_{k\to{\infty}}\psi^{qn+ qk}(h(x))) \\
= & \lim_{n\to{\infty}}\left(\lim_{k\to{\infty}} \psi^{qk}(\psi^{qn}(h(x)))\right) \\
= & \lim_{n\to{\infty}}\left(\lim_{k\to{\infty}} \psi^{qk}(h(x))\right) \\
=&\lim_{k\to{\infty}} \psi^{qk}(h(x)) = \eta_h(x) 
\end{align*}
and also
for $j=1,\dots, q-1$,
\begin{align*}
\lim_{n\to{\infty}}\psi^{qn+j}(\eta_h(x))
= &\psi^j\left(\lim_{n\to{\infty}}\psi^{qn}(\eta_h(x)) \right) \\
= &\psi^j\left(\lim_{n\to{\infty}}\psi^{qn}(h(x)) \right) \\
=&\lim_{n\to{\infty}} \psi^{qn+j}(h(x)).
\end{align*}
Hence we have 
 \begin{equation*}
\lim_{n\to{\infty}} d(\psi^n(\eta_h(x)), \psi^n(h(x))) =0
\end{equation*}
where the above $d(\cdot, \cdot)$ is the metric on $Y$, 
so that we obtain that 
$\eta_h(x) \in Y^s(h(x)).$


(v)
Assume that $p$ is the least period of $x$.
We will show that $q=|c^p_1(x)|$ is
the least period of $\eta_h(x).$
Let $q_0$ be the least period of $\eta_h(x),$
such that $q = q_0\cdot m$ for some $m \in \N$ and 
$\psi^{q_0}(\eta_h(x)) =\eta_h(x).$
Hence we have
\begin{equation*}
\lim_{k\to{\infty}}\psi^{q k +j}(\psi^{q_0}(h(x))) 
=\lim_{k\to{\infty}}\psi^{q k+j}(h(x)), 
\quad j=0,1,\dots, q -1
\end{equation*}
so that 
\begin{equation*}
\lim_{n\to{\infty}}\psi^{n}(\psi^{q_0}(h(x))) 
=\lim_{n\to{\infty}}\psi^{n}(h(x)). \label{eq:psinq}
\end{equation*}
By  Lemma \ref{lem:n2.6},
we have 
$(\psi^{q_0}(h(x)), h(x)) \in G_\psi^s$ 
and hence $q_0$ is an asymptotic period of $h(x)$ .
As $q$ is the least asymptotic period of $h(x)$ by Lemma \ref{lem:4.5}
and $q = {q_0}\cdot m,$ we get $m=1$, 
that is, 
 $q$ is the least period of $\eta_h(x).$

(vi)
We will prove $c_2^q(\eta_h(x)) =p $
for the case $q = c_1^p(x).$
The other case $q = -c_1^p(x) = c_1^{-p}(x)$ is similarly shown.
As the function $c_2^q$ is continuous, we have 
\begin{equation*}
c_2^q(\eta_h(x)) = 
\lim_{k\to{\infty}} c_2^q(\psi^{q k}(h(x))).
\label{c2q}
\end{equation*}  
By the cocycle property \eqref{eq:1cocycle} for $c_2$
and 
Definition \ref{def:scoe} (v), we have
\begin{align*}
 c_2^q(\psi^{q k}(h(x))) 
= & c_2^{q+ q k}(h(x)) -  c_2^{q k}(h(x)) \\
= & c_2^{c^{(k+1)p}_1(x)}(h(x)) -  c_2^{ c^{kp}_1(x)}(h(x)) \\ 
= &\left( (k+1)p -d_2(\psi^{c_1^{(k+1)p}(x)}(h(x)), h(\phi^{(k+1)p}(x)))\right) \\
  -& \left( k p -d_2(\psi^{c_1^{k p}(x)}(h(x)), h(\phi^{k p}(x)))\right) \\
=&  p -d_2(\psi^{(k+1)q}(h(x)), h(x)) 
        +d_2(\psi^{k p}(h(x)), h(x)).
\end{align*}
We have
\begin{equation*}
\lim_{n\to{\infty}}\psi^n(\psi^{qk}(h(x))) 
=\lim_{n\to{\infty}}\psi^n(h(\phi^{qk}(x)))
=\lim_{n\to{\infty}}\psi^n(h(x))
 \end{equation*} 
and 
\begin{equation*}
\lim_{n\to{\infty}}d(\psi^n(\psi^{qk}(h(x))), \psi^n(h(x)))
=0,
\end{equation*}
where the above $d(\cdot, \cdot )$ is the metric on $Y$.
This implies that 
$\psi^{qk}(h(x)) \in Y^s(h(x))$ for all $k \in \Z$.
As $\eta_h(x)\in Y^s(h(x))$ by (iv),
we have 
$\psi^{qk}(h(x)) \in Y^s(\eta_h(x))$ for all $k \in \Z,$
so that  there exists $k_0 \in N$ such that for all $k \ge k_0$ and $l \in \N$
\begin{equation*}
d(\psi^{ql}(\psi^{qk}(h(x)) ), \psi^{ql}(\eta_h(x))) < \epsilon_0.
\end{equation*}
Hence for $j=1,\dots,q-1,$ there exists
$k_j \in  \N$ such that for all $k \ge k_j$ and $l \in \N$
\begin{equation*}
d(\psi^{ql +j}(\psi^{qk}(h(x)) ), \psi^{ql+j}(\eta_h(x))) < \epsilon_0.
\end{equation*}
We then find $K\in \N$ such that for all $k \ge K$ and $n \in  \N,$
\begin{equation*}
d(\psi^{n}(\psi^{qk}(h(x)) ), \psi^{n}(\eta_h(x))) < \epsilon_0.
\end{equation*}
This implies 
$\psi^{qk}(h(x)) \in Y^s(\eta_h(x),\epsilon_0)$ for all $k \ge K$.
Since
$$
\lim_{k\to{\infty}}\psi^{(k+1)q}(h(x)) =
\lim_{k\to{\infty}}\psi^{q k}(h(x)) = \eta_h(x),
$$
by the continuity of $d_2$, we see that
$$
\lim_{k\to{\infty}}d_2(\psi^{(k+1)q}(h(x)), h(x)) 
=\lim_{k\to{\infty}}d_2(\psi^{k p}(h(x)), h(x))
=d_2(\eta_h(x), h(x)),
$$ 
thus proving   
$\lim_{k\to{\infty}} c_2^q (\psi^{q k}(h(x))) =p.$
\end{proof}
For a $q$-periodic point $y$ in $Y,$
we put
\begin{equation*}
\eta_{h^{-1}}(y) =\lim_{m\to{\infty}}\phi^{|c^q_2(y)|\cdot m}(h^{-1}(y)).
\end{equation*}
The above limit exists in $X$ by a similar manner to 
Lemma \ref{lem:periodicpoints} (ii),
and $\eta_{h^{-1}}(y)$ is $c_2^q(y)$-periodic point in $X$. 
 \begin{lemma}
For a periodic point $x$ in $X$, 
we have
\begin{equation}
\eta_{h^{-1}}(\eta_h(x)) = \phi^{-d_2(\eta_h(x), h(x))} (x). \label{eq:etaeta}
\end{equation}
Hence 
$\eta_{h^{-1}}(\eta_h(x))$ belongs to the periodic orbit of $x$  under $\phi$.
\end{lemma}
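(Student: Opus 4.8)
The plan is to follow the point $h^{-1}(\eta_h(x))$ through the stable equivalence relation $G_\phi^s$ and then extract the limit along the right arithmetic progression. Fix $p$ as in the statement, so $\phi^p(x)=x$, put $q := |c_1^p(x)|$, and abbreviate $d_* := d_2(\eta_h(x), h(x))$. Recall from Lemma \ref{lem:periodicpoints} (iii) that $\psi^{c_1^p(x)}(\eta_h(x)) = \eta_h(x)$, so $q$ is a period of $\eta_h(x)$, and from Lemma \ref{lem:periodicpoints} (iv) that $\eta_h(x)\in Y^s(h(x))$, hence $(\eta_h(x), h(x))\in G_\psi^s$ by Lemma \ref{lem:n2.6}.

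The first step is to show $h^{-1}(\eta_h(x))\in X^s\big(\phi^{-d_*}(x)\big)$. Applying Definition \ref{def:scoe} (iv) to the pair $(\eta_h(x), h(x))\in G_\psi^s$ and using $h^{-1}(h(x)) = x$, one gets, with $m_2 := m_2(\eta_h(x), h(x))\in\Zp$,
$$\big(\phi^{m_2 + d_*}(h^{-1}(\eta_h(x))),\ \phi^{m_2}(x)\big)\in G_\phi^{s,0}\subset G_\phi^s.$$
Since $G_\phi^s$ is invariant under $\phi^k\times\phi^k$ for every $k\in\Z$ (immediate from Lemma \ref{lem:n2.6} (i)), applying $\phi^{-m_2-d_*}\times\phi^{-m_2-d_*}$ gives $\big(h^{-1}(\eta_h(x)),\ \phi^{-d_*}(x)\big)\in G_\phi^s$, i.e.\ $h^{-1}(\eta_h(x))\in X^s(\phi^{-d_*}(x))$.

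The second step is the arithmetic identification $|c_2^q(\eta_h(x))| = |p|$. By Lemma \ref{lem:periodicpoints} (vi), $c_2^{c_1^p(x)}(\eta_h(x)) = p$. If $c_1^p(x)\ge 0$ then $q = c_1^p(x)$ and the claim is immediate; if $c_1^p(x)<0$ then $q = -c_1^p(x)$, and applying the cocycle identity \eqref{eq:1cocycle} for $c_2$ with $n=c_1^p(x)$, $m=q$ together with $\psi^{c_1^p(x)}(\eta_h(x))=\eta_h(x)$ yields $0 = c_2^{c_1^p(x)}(\eta_h(x)) + c_2^q(\psi^{c_1^p(x)}(\eta_h(x))) = p + c_2^q(\eta_h(x))$, so $c_2^q(\eta_h(x)) = -p$; in both cases $|c_2^q(\eta_h(x))| = |p|$.

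Finally, $\phi^{-d_*}(x)$ is $|p|$-periodic (since $\phi^{|p|}(x)=x$), so restricting the convergence defining $X^s(\phi^{-d_*}(x))$ to the subsequence of indices $|p|m$ gives $\lim_{m\to\infty}\phi^{|p|m}(h^{-1}(\eta_h(x))) = \phi^{-d_*}(x)$. By the definition of $\eta_{h^{-1}}$ applied to the $q$-periodic point $\eta_h(x)$, and since $|c_2^q(\eta_h(x))| = |p|$, the left-hand side is exactly $\eta_{h^{-1}}(\eta_h(x))$; therefore $\eta_{h^{-1}}(\eta_h(x)) = \phi^{-d_2(\eta_h(x),h(x))}(x)$. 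As $-d_2(\eta_h(x),h(x))\in\Z$, this point lies in the periodic orbit $\{\phi^n(x)\mid n\in\Z\}$ of $x$, which gives the final assertion. I expect no serious obstacle; the only points requiring care are the sign bookkeeping when passing between $c_1^p(x)$ and $q=|c_1^p(x)|$ in the second step, and cancelling the shifts in Definition \ref{def:scoe} (iv) correctly in the first step.
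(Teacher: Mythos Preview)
Your proof is correct and follows essentially the same route as the paper's: both reduce to showing $h^{-1}(\eta_h(x))\in X^s(\phi^{-d_*}(x))$ via Definition~\ref{def:scoe}~(iv), then take the limit along the arithmetic progression $|p|m$. Your version is in fact cleaner---you invoke Lemma~\ref{lem:n2.6} and the $\phi\times\phi$-invariance of $G_\phi^s$ directly rather than unpacking the local metric estimates with $\epsilon_0,\lambda_0$, and you explicitly verify $|c_2^q(\eta_h(x))|=|p|$ (a step the paper leaves implicit).
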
 
\begin{proof}
Suppose that 
$\phi^p(x) = x$.
Take the constants
$0<\epsilon_1 < \epsilon_0$ and $0< \lambda_0<1$ for the Smale space 
$(X,\phi)$ as in Definition \ref{def:smalespace} and right after Definition \ref{def:smalespace}.
By using Definition \ref{def:scoe} (ii),
 we know that Lemma \ref{lem:periodicpoints} (iv)
implies that $(\eta_h(x), h(x)) \in G_\psi^s,$
so that 
$$
(\phi^{d_2(\eta_h(x), h(x))}(h^{-1}(\eta_h(x))), x) \in G_\phi^s
$$ 
because of Definition \ref{def:scoe} (iv). 
Hence for $\epsilon >0$ with $0 < \epsilon <\epsilon_1,$
there exists 
$n_0 \in \N$ such that 
\begin{equation*}
d(\phi^n(\phi^{n_0}(\phi^{d_2(\eta_h(x), h(x))}(h^{-1}(\eta_h(x))))), \, 
\phi^n(\phi^{n_0}(x))) < \epsilon
\quad
\text{for } n=0,1,2,\dots
\end{equation*}
where the above $d(\cdot, \cdot)$ is the metric on $X,$
and hence 
$$
\phi^{n_0}(\phi^{d_2(\eta_h(x), h(x))}(h^{-1}(\eta_h(x)))) \in 
X^s(\phi^{n_0}(x), \epsilon).
$$
For any $l \in \N$, we have by \eqref{eq:2.2.3}
\begin{align*}
 & d(\phi^{l}(\phi^{n_0 + d_2(\eta_h(x), h(x))}(h^{-1}(\eta_h(x)))), \, \phi^l(\phi^{n_0}(x))) \\
\le 
&\lambda_0^l d(\phi^{n_0 +d_2(\eta_h(x), h(x))}(h^{-1}(\eta_h(x)))), \, \phi^{n_0}(x))  
\le \lambda_0^l\cdot \epsilon  \\
\end{align*}
so that 
\begin{equation*}
\lim_{n\to{\infty}}\phi^{|p| n}(\phi^{n_0 + d_2(\eta_h(x), h(x))}(h^{-1}(\eta_h(x))))
=\lim_{n\to{\infty}}\phi^{|p|n}(\phi^{n_0}(x)) 
=\phi^{n_0}(x).
\end{equation*}
Since
\begin{align*}
\lim_{n\to{\infty}}\phi^{|p| n}(\phi^{n_0 + d_2(\eta_h(x), h(x))}(h^{-1}(\eta_h(x)))
=& \phi^{n_0 + d_2(\eta_h(x), h(x))}
     \left(\lim_{n\to{\infty}}\phi^{|p| n}(h^{-1}(\eta_h(x))) \right) \\
= & \phi^{n_0 + d_2(\eta_h(x), h(x))}(\eta_{h^{-1}}(\eta_h(x))),
\end{align*}
the equality
\begin{equation*}
\phi^{n_0 + d_2(\eta_h(x), h(x))}(\eta_{h^{-1}}(\eta_h(x)))
=
\phi^{n_0}(x)
\end{equation*}
holds, thus proving \eqref{eq:etaeta}.
\end{proof}

We thus reach the following proposition.
\begin{proposition}\label{prop:4.9}
Suppose that
$(X, \phi) \underset{SCOE}{\sim}(Y, \psi).$
Then there exists a bijective map
$\xi_h: \Porb(X)\longrightarrow \Porb(Y)$
such that 
\begin{equation*}
|\xi_h(\gamma)| = |c_1^{|\gamma|}(x) | 
\quad \text{ for } \gamma \in \Porb(X)
\text{ with }
\gamma =\{ \phi^n(x)\mid n \in \Z\}.
\end{equation*}
 \end{proposition}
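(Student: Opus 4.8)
The plan is to read off the map $\xi_h$ directly from the function $\eta_h$ constructed in Lemma~\ref{lem:periodicpoints} and to establish bijectivity by exhibiting an explicit inverse built from the mirror construction for $h^{-1}$. First I would define $\xi_h$ on orbits: given $\gamma \in \Porb(X)$, choose a periodic point $x$ with $\gamma = \{\phi^n(x)\mid n \in \Z\}$, so that $p := |\gamma|$ is the least period of $x$, and set $\xi_h(\gamma)$ to be the periodic orbit of $\eta_h(x)$ in $Y$, which makes sense because $\eta_h(x)$ is periodic by Lemma~\ref{lem:periodicpoints}(iii). Independence of the chosen representative follows from the transformation rule \eqref{eq:etah}, $\eta_h(\phi^n(x)) = \psi^{c_1^n(x)}(\eta_h(x))$: replacing $x$ by another point $\phi^n(x)$ of $\gamma$ replaces $\eta_h(x)$ by a point on the same $\psi$-orbit, so $\xi_h$ is well defined. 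For the length formula, $|\xi_h(\gamma)|$ is by construction the least period of $\eta_h(x)$, and since $p$ is the least period of $x$, Lemma~\ref{lem:periodicpoints}(v) identifies this least period with $|c_1^p(x)|$; as $p = |\gamma|$, this is exactly $|c_1^{|\gamma|}(x)|$, as claimed.

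For bijectivity I would run the construction with the roles of $(X,\phi)$ and $(Y,\psi)$, and of $(h,c_1,d_1)$ and $(h^{-1},c_2,d_2)$, interchanged: for every periodic point $y \in Y$ one gets a periodic point $\eta_{h^{-1}}(y) \in X$ and a well-defined map $\xi_{h^{-1}} : \Porb(Y) \to \Porb(X)$ sending the orbit of $y$ to the orbit of $\eta_{h^{-1}}(y)$, the well-definedness again being a consequence of the $h^{-1}$-analogue of \eqref{eq:etah}. It then suffices to check the two composition identities. For $\xi_{h^{-1}}\circ\xi_h = \id_{\Porb(X)}$, pick $\gamma$ and $x$ as above; since $\eta_h(x)$ is a periodic point of $Y$ it may serve as the representative of the orbit $\xi_h(\gamma)$, so $\xi_{h^{-1}}(\xi_h(\gamma))$ is the orbit of $\eta_{h^{-1}}(\eta_h(x))$, which by \eqref{eq:etaeta} equals $\phi^{-d_2(\eta_h(x),h(x))}(x)$ and hence lies on $\gamma$. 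Thus $\xi_{h^{-1}}\circ\xi_h = \id$, and by symmetry $\xi_h\circ\xi_{h^{-1}} = \id$, so $\xi_h$ is a bijection with inverse $\xi_{h^{-1}}$.

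The routine points I am glossing over are that every periodic orbit has a well-defined least period (immediate, orbits being finite; finiteness of $\Per_p(X)$ comes from Bowen's Theorem~\ref{thm:Bowen}) and that Lemma~\ref{lem:periodicpoints} and \eqref{eq:etaeta} apply verbatim on the $h^{-1}$-side because $\underset{SCOE}{\sim}$ is a symmetric relation. I do not expect a genuine obstacle here, since the analytic content has already been absorbed into Lemma~\ref{lem:periodicpoints} and the lemma yielding \eqref{eq:etaeta}; the only thing requiring real care is the bookkeeping of signs arising from the fact that $c_1^p(x)$ (and $c_2^q(y)$) may be negative, which is why the statement is phrased with absolute values and why the definitions of $\eta_h$, $\eta_{h^{-1}}$ use $|c_1^p(x)|$, $|c_2^q(y)|$ — one must keep the distinction between the (signed) cocycle value and the (positive) period consistent throughout the inverse computation.
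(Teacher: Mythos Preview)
Your proposal is correct and follows essentially the same route as the paper: define $\xi_h(\gamma)$ as the $\psi$-orbit of $\eta_h(x)$, read off the length from Lemma~\ref{lem:periodicpoints}(v), and obtain bijectivity from \eqref{eq:etaeta} by exhibiting $\xi_{h^{-1}}$ as the inverse. The only cosmetic difference is that you make the representative-independence of $\xi_h$ explicit via \eqref{eq:etah}, whereas the paper leaves this implicit (effectively checking it inline by computing $\eta_{h^{-1}}(\psi^n(\eta_h(x)))$ for all $n$).
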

\begin{proof}
For $\gamma =\{ \phi^n(x)\mid n \in \Z\}\in \Porb(X)$,
put  $p =|\gamma|$ the positive least period of $x$.
Define
\begin{equation*}
\xi_h(\gamma) = \{ \psi^n(\eta_h(x)) \mid n \in \Z \} 
\end{equation*}
Since 
$\eta_h(x)$ is a periodic point in $Y$ with its least  period $c_1^p(x),$ 
$\xi_h(\gamma)$ is a periodic orbit in $Y$ such that 
$|\xi_h(\gamma)| = |c_1^p(x)|.$
We note the corresponding statement for $h^{-1}$ to Lemma \ref{lem:periodicpoints} (iii) 
holds so that we have
the equality
\begin{equation}
\eta_{h^{-1}}(\psi^n(y)) = \phi^{c_2^n(y)}(\eta_{h^{-1}}(y)), \qquad n \in \Z
 \label{eq:4.16}
\end{equation}
for a periodic point $y \in Y$.
By \eqref{eq:etah} and  \eqref{eq:4.16},
we have
\begin{equation*}
\eta_{h^{-1}}(\psi^n(\eta_h(x))) 
=\phi^{c_2^n(\eta_h(x))}(\eta_{h^{-1}}(\eta_h(x)))
=\phi^{c_2^n(\eta_h(x)) -d_2(\eta_h(x)), h(x))}(x)
\end{equation*}
Hence 
$\eta_{h^{-1}}(\psi^n(\eta_h(x))) 
$ belongs to $\gamma$ so that we have
$\xi_{h^{-1}}(\xi_h(\gamma)) = \gamma.$
Similarly we have
$\xi_h(\xi_{h^{-1}}(\gamma')) = \gamma'$
for $\gamma' \in \Porb(Y).$
We thus conclude that 
the map 
$\xi_h: \Porb(X)\longrightarrow \Porb(Y)$
is bijective and satisfies the desired property.
\end{proof}
The zeta function $\zeta_\phi(t)$ for the dynamical system $(X,\phi)$
is defined by
\begin{equation*}
\zeta_{\phi}(t) :=
\exp\left\{ 
\sum_{n=1}^\infty \frac{t^n}{n} |\Per_n(X) |\right\}
\qquad
(cf. \cite{Bowen2}, \cite{LM}, \cite{PP}, \cite{Ruelle3}),
\end{equation*}
where
$|\Per_n(X) |$ means the cardinal number of the finite set $\Per_n(X). $
Suppose that
$(X, \phi) \underset{SCOE}{\sim}(Y, \psi).$
By Proposition \ref{prop:4.9}, 
there exists a bijective map
$\xi_h: \Porb(X)\longrightarrow \Porb(Y)$
such that 
\begin{equation*}
|\xi_h(\gamma)| = |c_1^{|\gamma|}(x)| 
\quad \text{ for } \gamma \in \Porb(X)
\text{ with }
\gamma =\{ \phi^n(x)\mid n \in \Z\}.
\end{equation*}
We set the two kinds of dynamical zeta functions 
\begin{equation*}
\zeta_{\xi_h}(t) := \prod_{\gamma\in \Porb(X)}( 1 - t^{|\xi_h(\gamma)|})^{-1} 
\end{equation*} 
and
\begin{equation*}
\zeta_{\phi, c_1}(s) :=
\exp\left\{ 
\sum_{n=1}^\infty \frac{1}{n}\sum_{x \in \Per_n(X)}\exp(-s |c_1^n(x)|)\right\}
\qquad(cf.  \cite{PP}, \cite{Ruelle3}).
\end{equation*}
By putting $t =e^{-s}$, we see that
\begin{equation*}
\zeta_{\xi_h}(t) =\zeta_{\phi, c_1}(s)
\end{equation*}
by general theory of dynamical zeta function
(cf. \cite{PP}, \cite{Ruelle3}).
\begin{theorem}\label{thm:zeta}
Suppose that
$(X, \phi) \underset{SCOE}{\sim}(Y, \psi).$
Let $h:X\longrightarrow Y$ be a homeomorphism 
which gives rise to a stably asymptotic continuous orbit equivalence between them.
Then we have
\begin{equation*}
\zeta_\phi(t)=\zeta_{\xi_{h^{-1}}}(t),
\qquad
\zeta_\psi(t)=\zeta_{\xi_h}(t).
\end{equation*}
\end{theorem}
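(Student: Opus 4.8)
The plan is to reduce the theorem to the classical Euler-product expansion of a dynamical zeta function, combined with the orbit bijection of Proposition \ref{prop:4.9}. First I would record that, by Bowen's Theorem \ref{thm:Bowen}, each set $\Per_n(X)$ is finite, so that for $|t|$ small enough the series $\sum_{n\ge1}\frac{t^n}{n}|\Per_n(X)|$ converges absolutely and all the rearrangements below are legitimate. Let $p_d(X)$ denote the number of periodic orbits of $(X,\phi)$ of length $d$. Since a point lies in $\Per_n(X)$ exactly when its least period divides $n$, the set $\Per_n(X)$ is the disjoint union of those periodic orbits whose length divides $n$, and an orbit of length $d$ contributes $d$ points; hence $|\Per_n(X)|=\sum_{d\mid n}d\,p_d(X)$. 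Substituting $n=dk$ and using $\sum_{k\ge1}\frac{t^{dk}}{k}=-\log(1-t^d)$ gives
$$
\sum_{n=1}^\infty\frac{t^n}{n}|\Per_n(X)|
=\sum_{d=1}^\infty p_d(X)\sum_{k=1}^\infty\frac{t^{dk}}{k}
=-\sum_{d=1}^\infty p_d(X)\log(1-t^d),
$$
so that, after exponentiating,
$$
\zeta_\phi(t)=\prod_{d=1}^\infty(1-t^d)^{-p_d(X)}=\prod_{\gamma\in\Porb(X)}(1-t^{|\gamma|})^{-1},
$$
and the identical computation applied to $(Y,\psi)$ gives $\zeta_\psi(t)=\prod_{\delta\in\Porb(Y)}(1-t^{|\delta|})^{-1}$.

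Next I would bring in Proposition \ref{prop:4.9}, which furnishes a bijection $\xi_h\colon\Porb(X)\to\Porb(Y)$ with $|\xi_h(\gamma)|=|c_1^{|\gamma|}(x)|$ whenever $\gamma=\{\phi^n(x):n\in\Z\}$; by Lemma \ref{lem:periodicpoints}(v) this quantity is exactly the cardinality (length) of the orbit $\xi_h(\gamma)\subset Y$. Reindexing the Euler product for $\zeta_\psi$ along the bijection $\delta=\xi_h(\gamma)$ then yields
$$
\zeta_\psi(t)=\prod_{\delta\in\Porb(Y)}(1-t^{|\delta|})^{-1}
=\prod_{\gamma\in\Porb(X)}(1-t^{|\xi_h(\gamma)|})^{-1}=\zeta_{\xi_h}(t),
$$
which is the second claimed equality. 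For the first equality I would run the symmetric argument using the inverse equivalence: applying Proposition \ref{prop:4.9} to $h^{-1}$ produces a bijection $\xi_{h^{-1}}\colon\Porb(Y)\to\Porb(X)$ whose pushed-forward lengths are genuine orbit lengths, so reindexing the Euler product for $\zeta_\phi$ along $\gamma=\xi_{h^{-1}}(\delta)$ gives $\zeta_\phi(t)=\prod_{\delta\in\Porb(Y)}(1-t^{|\xi_{h^{-1}}(\delta)|})^{-1}=\zeta_{\xi_{h^{-1}}}(t)$.

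I do not expect a genuine obstacle here; the two points that need care are (a) the absolute convergence of the defining series for small $|t|$, which justifies the interchange of summations in the Euler-product step and follows from the finiteness of $\Per_n(X)$ via Theorem \ref{thm:Bowen}, and (b) the verification that the exponent $|\xi_h(\gamma)|$ appearing in the definition of $\zeta_{\xi_h}$ really equals the length of the orbit $\xi_h(\gamma)$ — that is, the least period of $\eta_h(x)$ — which is precisely the content of Lemma \ref{lem:periodicpoints}(v). Everything else is formal bookkeeping, and this argument also recovers the stated coincidence $\zeta_{\xi_h}(t)=\zeta_{\phi,c_1}(s)$ under the substitution $t=e^{-s}$.
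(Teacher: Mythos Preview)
Your proof is correct and follows essentially the same approach as the paper: invoke the bijection $\xi_h$ from Proposition \ref{prop:4.9} and reindex the Euler product of the zeta function along it. The paper's proof merely asserts that the equality is ``direct to see'' once the bijection is in hand, whereas you have made explicit the standard Euler-product expansion $\zeta_\psi(t)=\prod_{\delta\in\Porb(Y)}(1-t^{|\delta|})^{-1}$ that underlies this step; this is a welcome clarification but not a different argument.
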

\begin{proof}
There exists a bijection $\xi_h: \Porb(X) \longrightarrow \Porb(Y)$
such that $|\xi_h(\gamma)| = |c_1^{|\gamma|}(x)|$ for 
$\gamma \in \Porb(X)$ with $\gamma =\{\phi^n(x) \mid n \in \Z\}$.
As $\xi_h$ is bijective with   $|\xi_h(\gamma)| = |c_1^{|\gamma|}(x)|$, 
it is direct to see that
$\zeta_\psi(t) = \zeta_{\xi_h}(t),$
and similarly  
$
\zeta_\phi(t)=\zeta_{\xi_{h^{-1}}}(t)$.
\end{proof}
We remark that a similat statement under the condition
$(X, \phi) \underset{UCOE}{\sim}(Y, \psi)$
to the above theorem holds.
\section{ Asymptotic Ruelle algebras $\R_\phi^a$ with dual actions}
Let us recall the construction of the groupoid $C^*$-algebras from  \'etale groupoids
(\cite{Renault}).
Let $G$ be an  \'etale groupoid
with range map $r: G\longrightarrow G^{\circ}$
and source map  $s: G\longrightarrow G^{\circ}$
from $G$ to the unit space $G^\circ$ of $G$.
In \cite{Renault},  ``r-discrete'' was used 
instead of ``\'etale''.

The (reduced) groupoid $C^*$-algebra $C^*_r(G)$ 
for an \'etale groupoid $G$ is defined as in the following way
 (\cite{Renault}).
Let $C_c(G) $ be the set of all continuous functions on
$G$ with compact support that has a natural product structure of 
$*$-algebra given by
\begin{align*}
(f*g)(u) &  = 
 \sum_{r(t) = r(u)}f(t) g(t^{-1}u) 
           = 
 \sum_{ u = t_1 t_2} f(t_1) g(t_2), \\
  f^*(u) & = \overline{f(u^{-1})}, 
  \qquad f,g \in C_c(G), \quad u \in G.     
\end{align*}
Let $C_0(G^{\circ}) $ be the $C^*$-algebra  of all continuous functions on the unit space
$G^{\circ}$ that vanish at infinity. 
The algebra
$C_c(G) $
is a 
$C_0(G^{\circ}) $-right module, endowed with a $C_0(G^{\circ}) $-valued inner product 
given  by
\begin{align*}
 (\xi f )(t) 
 = & \xi(t)f(s(t)), 
 \qquad
   \xi \in C_c(G), \quad f \in C_0(G^{\circ}),
   \quad t \in G,
 \\
  < \xi, \eta >(x) 
 = & \sum_{x=s(t)}
   \overline{ \xi (t)} \eta (t),
   \qquad
   \xi,\eta \in C_c(G), \quad x \in G^\circ.
 \end{align*}
 Let us denote by $l^2(G)$ the completion of the inner product 
$C_0(G^{\circ}) $-module
$C_c(G)$.
It is a Hilbert $C^*$-right module over the commutative $C^*$-algebra 
$C_0(G^{\circ}) $.
We denote by
$B(l^2(G))$
the $C^*$-algebra of all bounded adjointable
$C_0(G^{\circ}) $-module maps on
$l^2(G).$
Let $\pi $ be the $*$-homomorphism of 
$C_c(G)$ into
$B(l^2(G))$
defined by
$
\pi (f)\xi = f * \xi
$
for
$
f, \xi \in C_c(G).
$
Then the closure of $\pi (C_c(G))$ in
$B(l^2(G))$
is called the (reduced) $C^*$-algebra of the groupoid $G$,
that we denote by
$C^*_r(G).$
If we endow 
$C_c(G)$ with the universal $C^*$-norm,
its completion is called the  
the (full) $C^*$-algebra of the groupoid $G$,
that we denote by
$C^*(G).$
By a general theory of groupoid $C^*$-algebras,
$C^*_r(G)$
 is canonically isomorphic to
$C^*(G)$
if the groupoid is amenable
(\cite{Renault}).
An \'etale groupoid $G$ is said to be essentially principal
if the interior of
$G' =\{ \gamma \in G \mid s(\gamma) = r(\gamma) \}$ is $G^{\circ}$
(\cite[Definition 3.1]{Renault2}). 
By Renault \cite[Proposition 4.7]{Renault}, \cite[Proposition 4.2]{Renault2},
$C_0(G^{\circ})$ is maximal abelian in $C^*_r(G)$ if and only if
$G$ is essentially principal.
\begin{definition}
A Smale space $(X,\phi)$ is said to be {\it asymptotically essentially free}\/
if the interior of the set of $n$-asymptotic periodic points
$\{x \in X \mid (\phi^n(x), x) \in G_\phi^a \}$
is empty for every $n\in \Z$ with $n\ne 0$.
\end{definition}
We are always assuming that the space $X$ is infinite.
Recall that 
a Smale space $(X,\phi)$ is said to be irreducible
if for every ordered pair of open sets $U, V \subset X$,
there exists $K \in \N$ such that 
$\phi^K(U) \cap V \ne \emptyset$.  
It is equivalent to the condition that 
for every ordered pair of open sets $U, V \subset X$,
there exists $K \in \N$ such that 
$\phi^{-K}(U) \cap V \ne \emptyset$. 
The referee kindly showed to the author the following lemma with its proof below.
The author deeply thanks the referee.
\begin{lemma}\label{lem:irreessfree}
If a Smale space $(X,\phi)$ is irreducible and $X$ is infinite,
then $(X,\phi)$ is asymptotically essentially free.
\end{lemma}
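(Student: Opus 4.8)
The plan is to show the contrapositive: if $(X,\phi)$ fails to be asymptotically essentially free, then $X$ is finite (given irreducibility). So suppose there is some $n\in\Z$ with $n\ne 0$ such that the set $P_n := \{x\in X \mid (\phi^n(x),x)\in G_\phi^a\}$ has nonempty interior. The key feature to exploit is that, by Lemma~\ref{lem:n2.6}(iii), $(\phi^n(x),x)\in G_\phi^a$ means exactly that $d(\phi^{kn+m}(x),\phi^m(x))\to 0$ as $k\to\pm\infty$ uniformly-ish in the relevant sense; in particular $x$ is ``asymptotically $n$-periodic'' in both time directions. First I would fix a nonempty open set $U$ with $U\subseteq P_n$, and use the local product structure to shrink $U$ so that it is contained in a product neighbourhood on which the bracket $[\cdot,\cdot]$ is defined and well-behaved.

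The heart of the argument should be a ``stable/unstable contraction'' trick combined with irreducibility. Since every $x\in U$ satisfies $\lim_{k\to\infty}d(\phi^{kn}(x),x)=0$, the forward orbit under $\phi^n$ of each point of $U$ accumulates on itself; iterating, $\phi^{kn}(U)$ should be forced (for large $k$) to be contained in any prescribed neighbourhood of the set of $\phi^n$-periodic points meeting $\overline{U}$ — and by Bowen's theorem (Theorem~\ref{thm:Bowen}) that set is finite. More precisely, I would argue that for each $x\in U$ the limit $\lim_{k\to\infty}\phi^{kn}(x)$ exists (using the exponential contraction on stable sets, \eqref{eq:2.2.3}, exactly as in the periodic-point arguments of Lemma~\ref{lem:periodicpoints}(ii) via \cite[Lemma~5.3]{PutSp}) and is a $\phi^n$-periodic point; this gives a continuous map from $U$ into a finite set, hence $U$ is partitioned into finitely many clopen-in-$U$ pieces, and on each piece all points are asymptotic, under all of $\phi^k$, to a single periodic point. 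Then using the uniqueness of the bracket (the Lemma right after Definition~\ref{def:smalespace}), a point asymptotic to a periodic point $p$ in both directions, and lying in a small enough neighbourhood, must actually equal a point on the orbit of $p$: the stable set of $p$ meets the unstable set of $p$ only in $p$ itself near $p$. So $U$ is finite.

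Now irreducibility finishes it: a nonempty open set $U$ that is finite must be a single point $\{x_0\}$, and $\{x_0\}$ open means $x_0$ is isolated; but in an irreducible system there can be at most one point (if the isolated point's orbit is finite, irreducibility forces $X$ to equal that orbit, hence finite) — here I would invoke irreducibility directly: for the ordered pair $(U,V)$ with $V$ any nonempty open set, $\phi^K(U)\cap V\ne\emptyset$ for some $K$, and since $U=\{x_0\}$ this says $\phi^K(x_0)\in V$; as $V$ was arbitrary, the orbit $\{\phi^K(x_0)\mid K\in\N\}$ is dense in $X$. But $x_0$ is periodic (it's $\phi^n$-asymptotic to itself and isolated, so eventually periodic, and being isolated its whole orbit is isolated, so it is genuinely periodic), hence its orbit is finite; a finite dense subset of a Hausdorff space is the whole space, so $X$ is finite, contradicting the hypothesis.

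\textbf{Main obstacle.} The delicate step is the passage ``$x\in U$ is asymptotic to the periodic point $p$ in both forward and backward time $\Rightarrow$ $x$ lies on the orbit of $p$,'' i.e.\ ruling out nontrivial homoclinic points in the open set $U$. One has to be careful that the relevant estimates hold uniformly on $U$ (so that after finitely many iterations $\phi^{kn}(U)$ sits inside the $\epsilon_1$-product-neighbourhood of the finite periodic set), and then apply the bracket-uniqueness lemma: $\{[y,x]\}=X^u(y,\epsilon_1)\cap X^s(x,\epsilon_1)$, with $y$ and $x$ both forced to be the periodic point. Getting the uniformity right — rather than just a pointwise statement — is where the irreducibility and compactness really need to be used in tandem, and it is the part I would write out most carefully.
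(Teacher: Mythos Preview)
Your overall contrapositive strategy is reasonable, and the final step (a finite nonempty open set in an irreducible system forces $X$ finite) is fine. The gap is exactly where you flag it, and it is not merely a matter of bookkeeping uniformity.

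The claim ``every $x\in U$ lies on the orbit of some $\phi^n$-periodic point'' is false in general, and your proposed mechanism does not establish it. From $(\phi^n(x),x)\in G_\phi^a$ you correctly get forward and backward limits $p_+=\lim_{k\to\infty}\phi^{kn}(x)$ and $p_-=\lim_{k\to\infty}\phi^{-kn}(x)$ in $\Per_n(X)$, but there is no reason these coincide: for instance in the full $2$-shift the point $x=\cdots 111.000\cdots$ has $(\bar\sigma(x),x)\in G_{\bar\sigma}^a$, yet $p_+=\overline{0}$ and $p_-=\overline{1}$. So $x$ is heteroclinic, not homoclinic. Even when $p_+=p_-=p$, the bracket-uniqueness lemma only says $X^s(p,\epsilon_1)\cap X^u(p,\epsilon_1)=\{p\}$; it says nothing about points in the \emph{global} sets $X^s(p)\cap X^u(p)$, and nontrivial homoclinic points do exist in Smale spaces (e.g.\ $\cdots 000\,1\,000\cdots$ in the $2$-shift lies in $P_1$ but is not periodic). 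Pushing $x$ forward to $\phi^{kn}(x)$ puts you in the local stable set of $p_+$, but not in the local unstable set, so you never get to invoke uniqueness. Consequently $U$ need not consist of periodic points, and the argument stalls.

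The paper's proof avoids this entirely by working with a \emph{single} well-chosen point rather than all of $U$. By Baire category (using irreducibility), the set of points with dense forward orbit is dense, so one can pick $x\in U$ with dense forward orbit. Then, exactly as you observe, $\phi^{kn}(x)\to y\in\Per_n(X)$; but now one only needs the forward limit, and one shows directly that $\phi^m(x)$ is eventually trapped in an $\epsilon$-neighbourhood of the finite orbit $\{y,\phi(y),\dots,\phi^{n-1}(y)\}$, contradicting density of the forward orbit (since $X$ infinite gives a point $z$ outside that neighbourhood). This sidesteps the homoclinic/heteroclinic issue completely.
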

\begin{proof}
Let $U_n, n \in \N$ be a countable base of open sets of the topology of $X$.
Since $(X,\phi)$ is irreducible, 
the set $\cup_{n=0}^\infty\phi^{-n}(U_m)$ 
is dense in $X$ for every $m \in \N$.
By Baire's category theorem, 
 $\cap_{m=1}^\infty\cup_{n=0}^\infty\phi^{-n}(U_m)$ 
is dense in $X.$ 
The set 
$\cap_{m=1}^\infty\cup_{n=0}^\infty\phi^{-n}(U_m)$
coincides with
the set of points whose forward orbit is dense in $X.$
 Now suppose that for a fixed $n \ne 0$,
the interior of the set of $n$-asymptotic periodic points
$\{x \in X \mid (\phi^n(x), x) \in G_\phi^a \}$
contains  a non-empty open set $U$.
There exists a point $x \in U$ such that the forward orbit of $x$ is dense in $X$.
Since 
$(\phi^n(x),x) \in G_\phi^a$, we have
$$
\lim_{m\to\infty}d(\phi^m(\phi^n(x)), \phi^m(x)) =0
$$
so that  $\phi^n(x) \in X^s(x)$.
By \cite[Lemma 5.3]{PutSp}, 
there exists 
$\lim_{k\to{\infty}}\phi^{kn}(x),$
denoted by $y$, 
in the set of $n$-periodic points 
$\Per_n(X)$.
We note that although \cite[Lemma 5.3]{PutSp}
is considering only mixing Smale spaces,
the assertion \cite[Lemma 5.3]{PutSp} holds in the irreducible Smale space with $X$ being infinite.  
Since $X$ is infinite,
one may find a point $z {\not}{\in} \{y,\phi(y), \dots, \phi^{n-1}(y) \}.$
Put $\epsilon =\frac{1}{4}\Min\{d(z,\phi^i(y)) \mid i=0,1,\dots,n-1\}$.
Let us denote by
$N_\epsilon(z)$ the $\epsilon$-neighborhood of $z$ of open ball.
We put $V =\cup_{i=0}^{n-1}N_\epsilon(\phi^i(y))$
so that we have
$V\cap N_\epsilon(z) =\emptyset.$
Since $X$ is compact,
there exists $\delta>0$ such that 
for all $w_1,w_2 \in X,$  $d(w_1,w_2) <\delta$ implies
 $d(\phi^j(w_1),\phi^j(w_2)) <\epsilon$
for all $j=0,1,\dots,n-1$.
In particular, for $j=0$, we have $\delta <\epsilon$.
Since 
$\lim_{k\to{\infty}}\phi^{kn}(x) =y,$ 
there exists $K \in \N$ such that 
$d(\phi^{k n}(x), y) <\delta$ for all $k \ge K$.
Hence we have
$$
d(\phi^j(\phi^{k n}(x)),\phi^j(y)) <\epsilon
\quad
\text{ for all } j=0,1,\dots,n-1, \, \, k \ge K
$$
so that
$\phi^{k n +j}(x) \in N_\epsilon(\phi^j(y)) $
for all 
$j=0,1,\dots,n-1, \, k \ge K.$
Hence we have
$\phi^m(x) \in V$  for all $m \ge K\cdot n$.
This contradicts the condition that the forward orbit of $x$ is dense in $X$.
We thus conclude that the interior of the set 
$\{x \in X \mid (\phi^n(x), x) \in G_\phi^a \}$
is empty.
\end{proof}
\begin{lemma}\label{lem:essprin}
A Smale space $(X,\phi)$ is  asymptotically essentially free
if and only if the groupoid 
$G_\phi^a\rtimes \Z$ is essentially principal.
\end{lemma}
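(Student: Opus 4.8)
The plan is to peel both properties down to the same statement — that for every $n\ne0$ the set $P_n:=\{x\in X\mid(\phi^n(x),x)\in G_\phi^a\}$ of $n$-asymptotic periodic points has empty interior in $X$ — and to match them one $\Z$-slice at a time. Via the bijection $\gamma$ of \eqref{eq:gamma} and the discreteness of $\Z$, the groupoid $G_\phi^a\rtimes\Z$ is the disjoint union $\bigsqcup_{n\in\Z}G_n$ of the clopen pieces $G_n=\{(x,n,y)\mid(\phi^n(x),y)\in G_\phi^a\}$, each homeomorphic to $G_\phi^a$ through $(x,n,y)\mapsto(x,\phi^{-n}(y))$ (using that $G_\phi^a$ is invariant under $\phi\times\phi$, by Lemma \ref{lem:n2.6}). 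Under this homeomorphism the isotropy bundle $(G_\phi^a\rtimes\Z)'=\{\gamma\mid s(\gamma)=r(\gamma)\}$ meets $G_n$ in the graph $\Gamma_n:=\{(x,\phi^{-n}(x))\mid x\in P_n\}\subseteq G_\phi^a$; for $n=0$ this is the unit space of $G_\phi^a$, which is open since $G_\phi^a$ is étale, so $(G_\phi^a\rtimes\Z)'\cap G_0=(G_\phi^a\rtimes\Z)^\circ$ is open. Since the $G_n$ are clopen, $G_\phi^a\rtimes\Z$ is essentially principal if and only if $\Gamma_n$ has empty interior in $G_\phi^a$ for every $n\ne0$, while $(X,\phi)$ is asymptotically essentially free if and only if $P_n$ has empty interior in $X$ for every $n\ne0$. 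Hence the lemma reduces to: for $n\ne0$, $\Gamma_n$ has nonempty interior in $G_\phi^a$ if and only if $P_n$ has nonempty interior in $X$.

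For the ``only if'' I would take a nonempty open $W\subseteq\Gamma_n$ in $G_\phi^a$, shrink it (using étaleness) to a nonempty open bisection $B$, and note that $r(B)$ is nonempty open in $X$ while every element of $B$ lies in $\Gamma_n$ and so has the form $(x,\phi^{-n}(x))$ with $x=r(\cdot)$; thus $r(B)\subseteq P_n$. For the ``if'', let $U\subseteq P_n$ be nonempty open. Writing $G_\phi^a=\bigcup_k G_\phi^{a,k}$, each $G_\phi^{a,k}$ is locally closed in $X\times X$ (it is cut out of the open set $\Delta_{\epsilon_0}$ by a closed bracket condition, then transported by powers of $\phi\times\phi$ and intersected) and is open in $G_\phi^a$ (which follows, for $\epsilon_0$ chosen small enough, from $G_\phi^{s,0}=G_\phi^{s,1}\cap\Delta_{\epsilon_0}$ and the analogous fact for the unstable relation). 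The sets $U\cap\{x\mid(x,\phi^{-n}(x))\in G_\phi^{a,k}\}$ are then locally closed in $X$ and cover $U$, so by Baire's theorem one of them fails to be nowhere dense; a locally closed set that is not nowhere dense contains a nonempty open set, so there are a nonempty open $O\subseteq U$ and a fixed $k$ with $(x,\phi^{-n}(x))\in G_\phi^{a,k}$ for all $x\in O$. Fixing $x_0\in O$ and choosing, by étaleness, a small box $B=G_\phi^{a,k}\cap(D_1\times D_2)$ with $(x_0,\phi^{-n}(x_0))\in B$ on which $r$ is injective (and which is open in $G_\phi^a$, being open in the open subset $G_\phi^{a,k}$), one checks that for $x$ in the open neighbourhood $N=O\cap D_1\cap\phi^{n}(D_2)$ of $x_0$ the point $(x,\phi^{-n}(x))$ lies in $B$, whence injectivity of $r$ on $B$ forces $B\cap(N\times X)=\{(x,\phi^{-n}(x))\mid x\in N\}$; this is a nonempty open subset of $G_\phi^a$ contained in $\Gamma_n$.

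The main obstacle is exactly this ``if'' direction: the graph of $\phi^{-n}$ need not be open in $G_\phi^a$, so passing from ``$P_n$ has interior'' to ``$\Gamma_n$ has interior'' is not mere bookkeeping; it requires the Baire reduction to a single level $G_\phi^{a,k}$, where the topology is the manageable relative one, together with the local identification of $\Gamma_n$ with a box-shaped étale bisection. The remaining ingredients — the $\Z$-decomposition, openness of the unit space, the behaviour of range maps on bisections, local closedness of the $G_\phi^{a,k}$ — are routine.
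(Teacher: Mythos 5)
Your proof is correct, and its skeleton — decompose $G_\phi^a\rtimes\Z$ into the clopen $\Z$-slices via the map $\gamma$, observe that the $n=0$ slice of the isotropy is exactly the (open) unit space, and reduce to a statement for each $n\neq 0$ separately — is exactly the paper's. Where you genuinely go beyond the paper is the last step: the paper passes from ``$\interior\{(x,n,x)\mid(\phi^n(x),x)\in G_\phi^a\}=\emptyset$ in the groupoid topology for all $n\neq0$'' to ``$\interior\{x\mid(\phi^n(x),x)\in G_\phi^a\}=\emptyset$ in $X$ for all $n\neq0$'' with the single phrase ``this implies,'' whereas you correctly identify that only the implication from a nonempty open subset of $\Gamma_n$ to a nonempty open subset of $P_n$ is soft (push forward by the open range map), while the converse requires relating the interior of a graph inside the inductive-limit topology on $G_\phi^a=\bigcup_k G_\phi^{a,k}$ to the interior of its range in $X$. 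Your Baire-category reduction to a single level $G_\phi^{a,k}$ (using that each level is locally closed in $X\times X$ and open in $G_\phi^a$), followed by trapping the graph inside a box-shaped bisection on which $r$ is injective, is a valid and complete way to close that gap; the only caveat is the one you already flag, namely that the openness of $G_\phi^{a,k}$ in $G_\phi^{a,k+1}$ uses the identity $G_\phi^{s,0}=G_\phi^{s,1}\cap\Delta_{\epsilon_0}$, which holds once $\epsilon_0$ is taken below the constant $\epsilon_1$ of Section 2 — a harmless normalization. In short: same route as the paper, but with a substantive justification of the one step the paper leaves implicit.
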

\begin{proof}
As we have
\begin{align*}
(G_\phi^a\rtimes \Z)'
= & \cup_{n \in \Z}\{(x,n,z) \in G_\phi^a\rtimes \Z \mid x=z\}\\
= & \cup_{n \in \Z}\{(x,n,x) \in X \times \Z \times X  \mid (\phi^n(x),x) \in G_\phi^a\},
\end{align*}
the interior $\interior((G_\phi^a\rtimes \Z)')$
of $G_\phi^a\rtimes \Z$ is 
\begin{equation*}
\interior((G_\phi^a\rtimes \Z)')
=  \cup_{n \in \Z}
\interior(\{(x,n,x) \in X \times \Z \times X  \mid (\phi^n(x),x) \in G_\phi^a\}).
\end{equation*}
For $n=0$,
we see that
$$
\interior(\{(x,0,x) \in X \times \Z \times X  \mid (x,x) \in G_\phi^a\})
 = (G_\phi^a\rtimes\Z)^{\circ}=X.
$$
 Hence  
$\interior((G_\phi^a\rtimes \Z)') = (G_\phi^a\rtimes\Z)^{\circ}$ 
if and only if
$\interior(\{(x,n,x) \in X \times \Z \times X  \mid (\phi^n(x),x) \in G_\phi^a\})$
is empty for all $n \in \Z$ except $n=0$.
This implies that 
$(X,\phi)$ is  asymptotically essentially free
if and only if 
$G_\phi^a\rtimes \Z$ is essentially principal.
\end{proof}
The following proposition as well as Lemma \ref{lem:amenable}
is well-known to experts through \cite[Theorem 3.1]{Putnam1}.
The proof is also direct from 
Renault's result \cite[Proposition 4.6]{Renault}.
\begin{proposition}\label{prop:simple}
If  a Smale space $(X,\phi)$ is irreducible,
then the groupoid $C^*$-algebra
$C^*_r(G_\phi^a\rtimes \Z)$ is simple.
\end{proposition}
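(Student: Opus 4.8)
The plan is to deduce simplicity from Renault's criterion for reduced groupoid $C^*$-algebras. The groupoid $G_\phi^a\rtimes\Z$ is, by its construction in Section~2 (and by Putnam \cite{Putnam1}), a second countable \'etale groupoid with the standard topological properties, and with unit space identified with $X$. By Renault \cite[Proposition 4.6]{Renault} (see also Putnam \cite[Theorem 3.1]{Putnam1}), if such a groupoid is \emph{essentially principal} and \emph{minimal} --- i.e. the only open invariant subsets of its unit space are $\emptyset$ and the whole space --- then its reduced $C^*$-algebra is simple. So two things have to be checked: essential principality and minimality.

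Essential principality is already available. Since $X$ is infinite (a standing assumption) and $(X,\phi)$ is irreducible, Lemma \ref{lem:irreessfree} shows that $(X,\phi)$ is asymptotically essentially free, and Lemma \ref{lem:essprin} rephrases this as precisely the statement that $G_\phi^a\rtimes\Z$ is essentially principal.

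For minimality, identify $(G_\phi^a\rtimes\Z)^{\circ}$ with $X$. The orbit of a point $x\in X$ under $G_\phi^a\rtimes\Z$ is $\{\, z\in X \mid (z,n,x)\in G_\phi^a\rtimes\Z \text{ for some } n\in\Z \,\}$, and since $(z,n,x)\in G_\phi^a\rtimes\Z$ holds iff $\phi^n(z)\in X^a(x)$, this orbit equals $\bigcup_{n\in\Z}\phi^{-n}(X^a(x))$. Hence minimality is equivalent to the density of $\bigcup_{n\in\Z}\phi^{-n}(X^a(x))$ in $X$ for every $x$. I would prove this by pulling back to a subshift of finite type via Bowen's theorem (Theorem \ref{thm:Bowen}): choose an irreducible SFT $(\bar{X}_A,\bar{\sigma}_A)$ together with a finite-to-one factor map $\varphi\colon(\bar{X}_A,\bar{\sigma}_A)\to(X,\phi)$, and pick $\bar{x}$ with $\varphi(\bar{x})=x$. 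Uniform continuity of $\varphi$ gives $\varphi(\bar{X}_A^{a}(\bar{x}))\subseteq X^a(x)$, while $\varphi\circ\bar{\sigma}_A=\phi\circ\varphi$ gives $\varphi\big(\bigcup_n\bar{\sigma}_A^{-n}(\bar{X}_A^{a}(\bar{x}))\big)\subseteq\bigcup_n\phi^{-n}(X^a(x))$; since $\varphi$ is a continuous surjection it therefore suffices to show that $\bigcup_n\bar{\sigma}_A^{-n}(\bar{X}_A^{a}(\bar{x}))$ is dense in $\bar{X}_A$. For a subshift of finite type this is classical: passing to the cyclic decomposition of the irreducible matrix $A$ reduces it to the mixing case, in which the homoclinic (asymptotic) class of any point is dense in its mixing component by the mixing/specification property, and applying the powers $\bar{\sigma}_A^{-n}$ sweeps across all the cyclic components. (Alternatively one can argue directly inside $(X,\phi)$ using the local product structure and a shadowing argument, but the reduction to a subshift of finite type makes the combinatorics transparent.) Granting minimality, Renault \cite[Proposition 4.6]{Renault} yields that $C^*_r(G_\phi^a\rtimes\Z)$ is simple.

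The only step with genuine content is minimality: essential principality and the topological properties of the groupoid are handled by the cited lemmas and by the groupoid's construction. The place where hyperbolicity enters is the density of the asymptotic classes together with the $\Z$-action, and the sole subtlety there is that $(X,\phi)$ is assumed merely irreducible rather than mixing, so one must accommodate a possible cyclic period --- which is absorbed exactly as above, either via Theorem \ref{thm:Bowen} or via the cyclic decomposition directly.
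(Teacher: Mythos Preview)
Your proposal is correct and follows exactly the route the paper indicates: the paper gives no detailed proof but simply points to Renault \cite[Proposition 4.6]{Renault} (together with \cite[Theorem 3.1]{Putnam1}), and you apply precisely that criterion, invoking Lemmas \ref{lem:irreessfree} and \ref{lem:essprin} for essential principality and supplying a clean minimality argument via Bowen's factor map (Theorem \ref{thm:Bowen}) that the paper leaves to the references. The reduction to the SFT case and the handling of the cyclic decomposition are accurate, so there is nothing to add.
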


 \begin{lemma}[{cf. \cite[Theorem 1.1]{PutSp}}]\label{lem:amenable}
The groupoids $G_\phi^a$ and $G_\phi^a\rtimes\Z$
are both amenable.
\end{lemma}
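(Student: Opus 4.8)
The plan is to deduce the amenability of $G_\phi^a\rtimes\Z$ from that of $G_\phi^a$, and to prove the latter by reduction to the shift-of-finite-type case. For the first step, note that $G_\phi^a\rtimes\Z$ is the semidirect product of the \'etale groupoid $G_\phi^a$ by the $\Z$-action induced by $\phi$; since $\Z$ is an amenable discrete group, once $G_\phi^a$ is known to be amenable the standard permanence properties of amenable groupoids give that $G_\phi^a\rtimes\Z$ is amenable as well. On the $C^*$-level this reflects the fact that $C^*_r(G_\phi^a\rtimes\Z)$ is a reduced crossed product $C^*_r(G_\phi^a)\rtimes_r\Z$ of a nuclear $C^*$-algebra by an amenable group, hence nuclear. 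So everything reduces to the amenability of $G_\phi^a$.

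First I would treat the case of a two-sided topological Markov shift $(\bar X_A,\bar\sigma_A)$. There $(x,y)\in G_{\bar\sigma_A}^a$ exactly when $x_i=y_i$ for all but finitely many $i$, so $G_{\bar\sigma_A}^a=\bigcup_n R_n$ with $R_n=\{(x,y)\mid x_i=y_i \text{ for all }|i|>n\}$ an increasing sequence of compact open subequivalence relations (essentially the filtration $G_{\bar\sigma_A}^{a,n}$), each of which is a proper, finite-fibred \'etale groupoid and therefore amenable. An increasing union of open amenable subgroupoids is amenable, so $G_{\bar\sigma_A}^a$ is an AF groupoid, in particular amenable. For a general irreducible Smale space $(X,\phi)$, Bowen's theorem (Theorem \ref{thm:Bowen}) provides a finite-to-one factor map $\pi\colon(\bar X_A,\bar\sigma_A)\to(X,\phi)$ from an irreducible shift of finite type. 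Using the local product structure one checks that $\pi$ sends stable sets into stable sets and unstable sets into unstable sets and is uniformly finite-to-one on each of them; carrying this out along the filtration $G_\phi^{a,n}$, where only finitely much combinatorial data is involved at each stage, yields a continuous surjective finite-fibred groupoid homomorphism between the relevant AF subgroupoids, and since amenability passes to subgroupoids and is preserved under such finite-fibred extensions, the amenability of $G_{\bar\sigma_A}^a$ forces that of $G_\phi^a$. Equivalently, one simply invokes that $C^*(G_\phi^a)=A(X,\phi)$ is nuclear, as established in \cite{Putnam1}; this is precisely the content of \cite[Theorem 1.1]{PutSp}.

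The delicate point is the passage through the Bowen cover: a finite-to-one factor map of Smale spaces need not induce an isomorphism, nor even a clean finite-fibred surjection, of the \emph{full} asymptotic groupoids, since preimages of asymptotically equivalent points need not themselves be asymptotically equivalent. The remedy is to exploit that $\pi$ restricts to a finite-to-one (generically bijective) map on each stable and unstable set and to build the comparison level by level along the filtration $G_\phi^{a,n}$, which keeps all counting finite; this is exactly what makes \cite[Theorem 1.1]{PutSp} nontrivial. Accordingly, in the write-up I would cite \cite[Theorem 1.1]{PutSp} together with \cite{Putnam1} for the amenability of $G_\phi^a$ (and likewise of $G_\phi^s$ and $G_\phi^u$), and then append the one-line semidirect-product argument of the first paragraph to conclude that $G_\phi^a\rtimes\Z$ is amenable.
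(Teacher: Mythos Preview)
Your proposal is correct and is in fact more detailed than what the paper provides: the paper gives no proof at all for this lemma, simply recording it with the citation \cite[Theorem 1.1]{PutSp} (and remarking just before that it is ``well-known to experts through \cite[Theorem 3.1]{Putnam1}''). Your two-step outline---amenability of $G_\phi^a$ from the Putnam--Spielberg result, then $G_\phi^a\rtimes\Z$ via the amenable-kernel/semidirect-product permanence argument---is exactly the pattern the paper itself uses later for $G_\phi^{s,u}\rtimes\Z^2$, where it cites \cite[Proposition 5.1.2]{AnanRenault} for the extension step. One small caution: your sketch of transferring amenability through the Bowen factor map (``finite-fibred groupoid homomorphism \ldots amenability \ldots preserved under such finite-fibred extensions'') is heuristic rather than a clean formal argument, and you rightly flag this as the delicate point; in a write-up you should indeed defer to \cite[Theorem 1.1]{PutSp} rather than try to push that sketch through directly.
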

By Lemma \ref{lem:amenable},
the full groupoid $C^*$-algebras 
$C^*(G_\phi^a), C^*(G_\phi^a\rtimes\Z)$
and the reduced groupoid 
$C^*$-algebras $C^*_r(G_\phi^a), C^*_r(G_\phi^a\rtimes\Z)
$
are canonically isomorphic respectively.
We do not distinguish them
and write them 
$C^*(G_\phi^a), C^*(G_\phi^a\rtimes\Z)$, respectively.

For an irreducible Smale space 
$(X,\phi)$,
the asymptotic Ruelle algebra  
$\R_\phi^a$
is defined as the
groupoid $C^*$-algebras
$C^*(G_{\phi}^{a}\rtimes \Z)
$
of the \'etale groupoid 
$G_{\phi}^{a}\rtimes \Z$.
The algebra  has been written 
 $R_a$
in Putnam's paper \cite{Putnam1}.
In this paper, we denote it
by $\R_\phi^a$.
As in the papers \cite{Putnam1}, \cite{PutSp},
the groupoid 
$G_{\phi}^{a}\rtimes \Z$ 
is the semidirect product of 
the goupoid 
$G_{\phi}^{a}$ by the integer group $\Z$,
one knows that the algebra
$\R_\phi^a$ is naturally isomorphic to the crossed product $C^*$-algebra
$C^*(G_{\phi}^{a})\rtimes \Z$
of the groupoid $C^*$-algebra  
$C^*(G_{\phi}^{a})$
by  $ \Z$.

In the construction of the 
groupoid $C^*$-algebras
$C^*(G_{\phi}^{a}\rtimes \Z),
$
we first define the unitary group $U_t^\phi$ for $t \in \T = {\mathbb{R}}/\Z$
on 
$l^2(G_{\phi}^{a}\rtimes \Z)$
by setting
\begin{equation}
[U_t^\phi\xi](x,n,z) = \exp{(2\pi\sqrt{-1}nt)}\xi(x,n,z) \label{eq:5.1}
\end{equation}
for 
$\xi \in l^2(G_{\phi}^{a}\rtimes \Z), \, (x,n,z) \in G_{\phi}^{a}\rtimes \Z.$
The automorphisms $\Ad(U_t^\phi), t \in \T$ 
on $B(l^2(G_{\phi}^{a}\rtimes \Z))$
leave $\R_\phi^a$ globally invariant,
and yield an action of $\T$ on $\R_\phi^a$.
Let us denote by $\rho^\phi_t$ 
the action
$\Ad(U_t^\phi), t \in \T$ on $\R_\phi^a$.
It is direct to see that the action is exactly corresponds to the dual action
of the crossed product
$C^*(G_{\phi}^{a})\rtimes \Z$.

A continuous function
$f:G_{\phi}^{a}\rtimes \Z\rightarrow \Z$
is called a continuous homomorphism
if it satisfies 
\begin{equation*}
f(\gamma_1 \gamma_2 ) 
= f(\gamma_1) + f(\gamma_2) 
\quad \text{ for } \gamma_1, \gamma_2 \in G_{\phi}^a\rtimes\Z.
\end{equation*}
It defines a one-parameter unitary group
$U_t(f), t \in \T$ on $l^2(G_{\phi}^{a}\rtimes \Z)$
by setting
\begin{equation}
[U_t(f) \xi](x,n,z) = \exp{(2\pi\sqrt{-1}f(x,n,z)t)}\xi(x,n,z)
\end{equation}
for
$\xi \in l^2(G_{\phi}^{a}\rtimes \Z), \, (x,n,z) \in G_{\phi}^{a}\rtimes \Z.$
If in particular 
for the continuous homomorphism
 $d_\phi(x,n,z) = n,$
 we have
$U_t(d_\phi) = U_t^\phi$
by \eqref{eq:5.1}.

Now suppose that
$(X, \phi) \underset{ACOE}{\sim}(Y, \psi).$
Let 
$\varphi_h: G_{\phi}^{a}\rtimes \Z\longrightarrow G_{\psi}^{a}\rtimes \Z
$ 
be the isomorphism of the \'etale groupoids
and
$h:X\longrightarrow Y$ 
the homeomorphism which give rise to the asymptotic continuous orbit equivalence between them.
We define two unitaries
$$
V_h: l^2(G_{\psi}^{a}\rtimes \Z)\longrightarrow  l^2(G_{\phi}^{a}\rtimes \Z),
\qquad
V_{h^{-1}}: l^2(G_{\phi}^{a}\rtimes \Z)\longrightarrow  l^2(G_{\psi}^{a}\rtimes \Z),
$$
by setting
\begin{align}
[V_h \zeta](x,n,z) =& \zeta(\varphi_h(x,n,z)),
\qquad
\zeta \in l^2(G_{\psi}^{a}\rtimes \Z), \, (x,n,z) \in G_{\phi}^{a}\rtimes \Z,
\label{eq:Vh}\\
[V_{h^{-1}} \xi](y,m,w) =& \xi(\varphi_{h^{-1}}(y,m,w)),
\qquad
\xi \in l^2(G_{\phi}^{a}\rtimes \Z), \, (y,m,w) \in G_{\psi}^{a}\rtimes \Z.
\end{align}
Since the unit space
$(G_{\phi}^{a}\rtimes \Z)^\circ$
is identified with the original space $X$
through the correspondence
$(x,0,x) \in (G_{\phi}^{a}\rtimes \Z)^\circ\longrightarrow 
x \in X$
and
$(G_{\phi}^{a}\rtimes \Z)^\circ$
is clopen in 
$G_{\phi}^{a}\rtimes \Z$,
we regard $C(X)$  as a subalgebra of 
$\R_\phi^a$.
Similarly 
 $C(Y)$ is regarded  as a subalgebra of 
$\R_\psi^a.$
\begin{proposition}\label{prop:5.1}
Suppose that
$(X, \phi) \underset{ACOE}{\sim}(Y, \psi),$
and keep the above notation.
Let 
$\varphi_h:G_{\phi}^{a}\rtimes \Z\longrightarrow G_{\psi}^{a}\rtimes \Z$
be the isomorphism of the \'etale groupoids
giving rise to $(X, \phi) \underset{ACOE}{\sim}(Y, \psi).$
Let
$f:G_{\phi}^{a}\rtimes \Z\longrightarrow \Z$
and 
$g:G_{\psi}^{a}\rtimes \Z\longrightarrow \Z$
be continuous homomorphisms satisfying 
$f = g\circ\varphi_h$.
Then there exists an isomorphism
$\Phi: \R_\phi^a \longrightarrow \R_\psi^a$
of $C^*$-algebras
such that 
$\Phi(C(X)) = C(Y)$
and 
\begin{equation}
\Phi\circ \Ad(U_t(f)) = \Ad(U_t(g))\circ \Phi,
\qquad
\text{ for }
t \in \T. \label{eq:prop5.1}
\end{equation}
\end{proposition}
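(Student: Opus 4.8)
The plan is to take for $\Phi$ the $*$-isomorphism of groupoid $C^*$-algebras induced by $\varphi_h$, and then to check the two extra properties by a direct computation on the dense convolution subalgebras. So I would begin by recalling the functoriality of the groupoid $C^*$-algebra construction: since $\varphi_h\colon G_\phi^a\rtimes\Z\to G_\psi^a\rtimes\Z$ is an isomorphism of \'etale groupoids, the pushforward formula $\Phi_0(a):=a\circ\varphi_h^{-1}$ gives a $*$-isomorphism of the convolution $*$-algebras $C_c(G_\phi^a\rtimes\Z)\to C_c(G_\psi^a\rtimes\Z)$; it respects the convolution product and the involution because $\varphi_h$ preserves the groupoid operations together with the range and source maps. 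Concretely $\Phi_0$ is implemented by the unitary $V_{h^{-1}}$ of \eqref{eq:Vh}, namely $V_{h^{-1}}\,\pi(a)\,V_{h^{-1}}^{*}=\pi(\Phi_0(a))$ for $a\in C_c(G_\phi^a\rtimes\Z)$, where one uses $V_{h^{-1}}^{*}=V_h$ and the defining formulas of $V_h,V_{h^{-1}}$. Hence $\Phi_0$ extends to an isomorphism $\Phi\colon C^*_r(G_\phi^a\rtimes\Z)\to C^*_r(G_\psi^a\rtimes\Z)$, i.e. $\Phi\colon\R_\phi^a\to\R_\psi^a$; by Lemma~\ref{lem:amenable} the reduced and full groupoid $C^*$-algebras coincide, so there is no ambiguity in the completion.

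Next I would compute the image of $C(X)$. The isomorphism $\varphi_h$ restricts to a homeomorphism of the unit spaces which, under the identifications $(x,0,x)\leftrightarrow x$ and $(y,0,y)\leftrightarrow y$, is exactly the homeomorphism $h\colon X\to Y$. Since the unit space is clopen in the groupoid, $C(X)\subset\R_\phi^a$ and $C(Y)\subset\R_\psi^a$ as in the excerpt, and the restriction of $\Phi$ to $C(X)$ is the map $b\mapsto b\circ h^{-1}$, a $*$-isomorphism onto $C(Y)$. Thus $\Phi(C(X))=C(Y)$.

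It remains to verify \eqref{eq:prop5.1}. The key point is that, because $f$ is a groupoid homomorphism, the automorphism $\Ad(U_t(f))$ acts on the dense subalgebra $\pi(C_c(G_\phi^a\rtimes\Z))$ by pointwise multiplication by the character $\gamma\mapsto\exp(2\pi\sqrt{-1}\,f(\gamma)t)$: in the convolution one computes $\bigl(U_t(f)\,\pi(a)\,U_t(f)^{*}\xi\bigr)(\gamma)=\sum_{\gamma=\gamma_1\gamma_2}\exp(2\pi\sqrt{-1}\,f(\gamma_1)t)\,a(\gamma_1)\,\xi(\gamma_2)$, the phases $\exp(2\pi\sqrt{-1}\,f(\gamma)t)$ and $\exp(-2\pi\sqrt{-1}\,f(\gamma_2)t)$ collapsing via $f(\gamma)=f(\gamma_1)+f(\gamma_2)$. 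The same holds for $\Ad(U_t(g))$ on $\pi(C_c(G_\psi^a\rtimes\Z))$ with the character $\gamma\mapsto\exp(2\pi\sqrt{-1}\,g(\gamma)t)$. Using $f=g\circ\varphi_h$, for $a\in C_c(G_\phi^a\rtimes\Z)$ and $\eta\in G_\psi^a\rtimes\Z$,
$$
\Phi\bigl(\Ad(U_t(f))a\bigr)(\eta)=\exp(2\pi\sqrt{-1}\,f(\varphi_h^{-1}(\eta))t)\,a(\varphi_h^{-1}(\eta))=\exp(2\pi\sqrt{-1}\,g(\eta)t)\,\Phi(a)(\eta)=\bigl(\Ad(U_t(g))\,\Phi(a)\bigr)(\eta),
$$
so \eqref{eq:prop5.1} holds on a dense subalgebra and hence on all of $\R_\phi^a$ by continuity. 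Equivalently, one may check directly from the defining formulas that $V_{h^{-1}}\,U_t(f)\,V_{h^{-1}}^{*}=U_t(g)$, which yields \eqref{eq:prop5.1} at once.

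I do not expect a genuine obstacle: the statement is functoriality of $C^*_r(-)$ together with a one-line character computation. The only care needed is in the bookkeeping of the direction conventions (pushforward along $\varphi_h$ versus pullback, and which $l^2$-space carries $U_t(f)$ and which carries $U_t(g)$) and in the correct use of the homomorphism property of $f$ in the telescoping step above.
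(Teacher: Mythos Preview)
Your proposal is correct and follows essentially the same approach as the paper: define $\Phi=\Ad(V_{h^{-1}})$ (equivalently $a\mapsto a\circ\varphi_h^{-1}$ on $C_c$) and verify the intertwining by a direct computation using the homomorphism property of $f$ and the relation $f=g\circ\varphi_h$. Your presentation is in fact a bit more streamlined: you first isolate the observation that $\Ad(U_t(f))$ acts on $C_c(G_\phi^a\rtimes\Z)$ as pointwise multiplication by $\exp(2\pi\sqrt{-1}\,f(\cdot)t)$, which turns the verification of \eqref{eq:prop5.1} into a one-line identity, whereas the paper expands both $[(\Phi\circ\Ad(U_t(f)))(a)\zeta](y,m,w)$ and $[(\Ad(U_t(g))\circ\Phi)(a)\zeta](y,m,w)$ in full and compares the resulting sums term by term.
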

\begin{proof}
We set 
$\Phi = \Ad(V_{h^{-1}})$.
It satisfies 
$\Phi(C_c(G_\phi^a\rtimes\Z)) =C_c(G_\psi^a\rtimes\Z)$
and hence
$\Phi(\R_\phi^a) = \R_\psi^a,$
and
$\Phi(C(X))= C(Y).$
For 
$\zeta \in l^2(G_{\psi}^{a}\rtimes \Z)$,
$(y,m,w) \in G_{\psi}^{a}\rtimes \Z$
and
$a \in C_c(G_{\phi}^{a}\rtimes \Z)$,
we have the following equalities:
\begin{align*}
 &[(\Phi\circ\Ad(U_t(f)))(a)\zeta](y,m,w) \\
=& [V_{h^{-1}}U_t(f) a U_t(f)^* V_h \zeta](y,m,w) \\
=&[U_t(f) a U_t(f)^* V_h \zeta](\varphi_h^{-1}(y,m,w)) \\
=&  \exp{(2\pi\sqrt{-1}f(\varphi_h^{-1}(y,m,w))t)}
      [a * (U_t(f)^* V_h \zeta)](\varphi_h^{-1}(y,m,w)) \\
=&  \exp{(2\pi\sqrt{-1}f(\varphi_h^{-1}(y,m,w))t)}
      \left( \sum_{r(\gamma) =r(\varphi_h^{-1}(y,m,w))}
      a(\gamma) (U_t(f)^* V_h \zeta)(\gamma^{-1}\varphi_h^{-1}(y,m,w)) \right) \\
=&  \exp{(2\pi\sqrt{-1}f(\varphi_h^{-1}(y,m,w))t)}\\
 &    \left(\sum_{r(\gamma) =h^{-1}(y)}
      a(\gamma)
       \exp{(-2\pi\sqrt{-1}f(\gamma^{-1}\varphi_h^{-1}(y,m,w))t)}
      (V_h \zeta)(\gamma^{-1}\varphi_h^{-1}(y,m,w)) \right) \\
=&  \exp{(2\pi\sqrt{-1}f(\varphi_h^{-1}(y,m,w))t)}\\
 &    \left(\sum_{r(\gamma) =h^{-1}(y)}
      a(\gamma)
       \exp{(-2\pi\sqrt{-1}(f(\gamma^{-1}) + f(\varphi_h^{-1}(y,m,w)))t)}
      (V_h \zeta)(\gamma^{-1}\varphi_h^{-1}(y,m,w)) \right) \\
=&        \sum_{r(\gamma) = h^{-1}(y)}
      a(\gamma)
       \exp{(-2\pi\sqrt{-1}f(\gamma^{-1})t)}
       \zeta(\varphi_h(\gamma^{-1})\cdot (y,m,w)) 
\end{align*}
and
\begin{align*}
 &[(\Ad(U_t(g))\circ \Phi)(a)\zeta](y,m,w) \\
= & [U_t(g) V_{h^{-1}} a V_h U_t(g)^* \zeta](y,m,w) \\
=& \exp{(2\pi\sqrt{-1}g(y,m,w)t)}
     [V_{h^{-1}} a V_h U_t(g)^* \zeta](y,m,w) \\
=& \exp{(2\pi\sqrt{-1}g(y,m,w)t)}
     [ a V_h U_t(g)^* \zeta](\varphi_h^{-1}(y,m,w)) \\
=& \exp{(2\pi\sqrt{-1}g(y,m,w)t)}
      \left(\sum_{r(\gamma) =r(\varphi_h^{-1}(y,m,w))}
      a(\gamma) (V_h U_t(g)^* \zeta)(\gamma^{-1}\varphi_h^{-1}(y,m,w)) \right) \\
=& \exp{(2\pi\sqrt{-1}g(y,m,w)t)}
      \left(\sum_{r(\gamma) =h^{-1}(y)}
      a(\gamma) ( U_t(g)^* \zeta)(\varphi_h(\gamma^{-1}\varphi_h^{-1}(y,m,w))) \right) \\
=& \exp{(2\pi\sqrt{-1}g(y,m,w)t)} \\
  &   \left( \sum_{r(\gamma) =h^{-1}(y)}
      a(\gamma)
       \exp{(-2\pi\sqrt{-1}g(\varphi_h(\gamma^{-1}) \cdot (y,m,w))t)}
      \zeta(\varphi_h(\gamma^{-1}) \cdot (y,m,w)) \right) \\
=&       \sum_{r(\gamma) =h^{-1}(y)}
      a(\gamma)
       \exp{(-2\pi\sqrt{-1}g(\varphi_h(\gamma^{-1}))t)}
      \zeta(\varphi_h(\gamma^{-1})\cdot (y,m,w)). 
\end{align*}
By assumption, we see that 
$f(\gamma^{-1})=g(\varphi_h(\gamma^{-1}))$,
so that we obtain
$\Phi\circ\Ad(U_t(f)) =\Ad(U_t(g))\circ \Phi$.
\end{proof}

We are assuming that 
$(X, \phi) \underset{ACOE}{\sim}(Y, \psi).$
Let $h:X\longrightarrow Y$ 
be a homeomorphism which gives rise to the asymptotic continuous orbit equivalence between them.
Take the continuous functions
$
c_1:X\longrightarrow \Z, \,
c_2:Y\longrightarrow \Z
$
and two-cocycle functions
$
d_1: G_\phi^a \longrightarrow \Z,\, 
d_2: G_\psi^a \longrightarrow \Z
$ 
satisfying
 Definition \ref{def:acoe}
of asymptotic continuous orbit equivalence.
We set
two functions
\begin{align}
c_\phi(x,n,z) = & c_1^n(x)+ d_1(\phi^n(x), z),
 \qquad
 (x,n,z) \in G_{\phi}^a\rtimes\Z, \label{eq:cphixnz}\\
c_\psi(y,m,w) = & c_2^m(y)+ d_2(\psi^m(y), w),
 \qquad
 (y,m,w) \in G_{\psi}^a\rtimes\Z.
\end{align}
They satisfy
\begin{align*}
c_\phi(\gamma_1 \gamma_2 ) 
= & c_\phi(\gamma_1) + c_\phi(\gamma_2) 
\quad \text{ for } \gamma_1, \gamma_2 \in G_{\phi}^a\rtimes\Z,\\  
c_\psi(\gamma'_1 \gamma'_2 ) 
= & c_\psi(\gamma'_1) + c_\psi(\gamma'_2) 
\quad \text{ for } \gamma'_1, \gamma'_2 \in G_{\psi}^a\rtimes\Z,
\end{align*}
and hence they are continuous homomorphisms
satisfying
\begin{align*}
\varphi_h(x,n, z) & = (h(x), c_\phi(x,n,z), h(z)),
\qquad (x,n,z) \in G_{\phi}^a\rtimes\Z, \\
\varphi_{h^{-1}}(y,m,w) & =(h^{-1}(y),c_\psi(y,m,w), h^{-1}(w)),
\qquad
 (y,m,w) \in G_{\psi}^a\rtimes\Z.
\end{align*}
We note that the following identities hold:
\begin{gather}
d_\psi(\varphi_h(x,n,z)) = c_\phi(x,n,z), \qquad
d_\phi(\varphi_h^{-1}(y,m,w)) = c_\psi(y,m,w), \label{eq:5.10}\\
c_\psi(\varphi_h(x,n,z)) =d_\phi(x,n,z) = n, \qquad
c_\phi(\varphi_h^{-1}(y,m,w)) =d_\psi(y,m,w) = m. \label{eq:5.11}
\end{gather}

\begin{theorem}\label{thm:main3}
Suppose that 
Smale spaces  $(X,\phi)$ and $(Y,\psi)$ are irreducible.
Then the following assertions are equivalent:
\begin{enumerate}
\renewcommand{\theenumi}{\roman{enumi}}
\renewcommand{\labelenumi}{\textup{(\theenumi)}}
\item
$(X, \phi)$ and $(Y, \psi)$
are asymptotically continuous orbit equivalent.
\item
There exists an isomorphism $\R_\phi^a \longrightarrow \R_\psi^a$
of $C^*$-algebras such that 
$\Phi(C(X)) = C(Y)$
and 
$$
\Phi\circ \rho^\phi_t = \Ad(U_t(c_\psi))\circ \Phi,
\qquad
\Phi\circ\Ad(U_t(c_\phi)) =\rho^\psi_t\circ \Phi
\qquad
\text{ for }
t \in \T
$$
for some continuous homomorphisms
$c_\phi:G_\phi^a\rtimes\Z\longrightarrow\Z$ and
$c_\psi:G_\psi^a\rtimes\Z\longrightarrow\Z.$
\end{enumerate}
\end{theorem}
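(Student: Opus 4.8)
The plan is to reduce both implications to Theorem~\ref{thm:main1}, which already identifies asymptotic continuous orbit equivalence with the existence of an isomorphism of the \'etale groupoids $G_\phi^a\rtimes\Z$ and $G_\psi^a\rtimes\Z$, and then to pass between the groupoid level and the Ruelle-algebra level by means of Proposition~\ref{prop:5.1} in one direction and Renault's reconstruction theorem for Cartan pairs in the other. So (i) will be turned into a groupoid isomorphism carrying one system of cocycles to the other, and (ii) will be turned back into a groupoid isomorphism by recovering the groupoid from the Cartan pair $(\R_\phi^a,C(X))$.

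For (i)$\Longrightarrow$(ii): assuming $(X,\phi)\underset{ACOE}{\sim}(Y,\psi)$, Theorem~\ref{thm:main1} furnishes a homeomorphism $h\colon X\to Y$, continuous functions $c_1,c_2$, two-cocycles $d_1,d_2$, and the \'etale groupoid isomorphism $\varphi_h\colon G_\phi^a\rtimes\Z\to G_\psi^a\rtimes\Z$ with inverse $\varphi_{h^{-1}}$. I would introduce the continuous homomorphisms $c_\phi,c_\psi$ of \eqref{eq:cphixnz} and record, via \eqref{eq:5.10} and \eqref{eq:5.11}, the identities $d_\phi=c_\psi\circ\varphi_h$ and $d_\psi=c_\phi\circ\varphi_{h^{-1}}$. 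Applying Proposition~\ref{prop:5.1} to the pair $(f,g)=(d_\phi,c_\psi)$ produces an isomorphism $\Phi=\Ad(V_{h^{-1}})\colon\R_\phi^a\to\R_\psi^a$ with $\Phi(C(X))=C(Y)$ and $\Phi\circ\Ad(U_t(d_\phi))=\Ad(U_t(c_\psi))\circ\Phi$; since $U_t(d_\phi)=U_t^\phi$ by \eqref{eq:5.1}, the left-hand side equals $\Phi\circ\rho_t^\phi$, which is the first intertwining relation. Applying Proposition~\ref{prop:5.1} instead to $\varphi_{h^{-1}}$ with $(f,g)=(d_\psi,c_\phi)$ yields an isomorphism $\Ad(V_h)\colon\R_\psi^a\to\R_\phi^a$ with $\Ad(V_h)\circ\rho_t^\psi=\Ad(U_t(c_\phi))\circ\Ad(V_h)$; because $V_h=V_{h^{-1}}^{-1}$ this map is $\Phi^{-1}$, and rearranging gives $\Phi\circ\Ad(U_t(c_\phi))=\rho_t^\psi\circ\Phi$. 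Thus (ii) holds with these $c_\phi,c_\psi$.

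For (ii)$\Longrightarrow$(i): here only the isomorphism $\Phi\colon\R_\phi^a\to\R_\psi^a$ with $\Phi(C(X))=C(Y)$ is used. Since $(X,\phi)$ and $(Y,\psi)$ are irreducible and $X,Y$ are infinite, Lemma~\ref{lem:irreessfree} and Lemma~\ref{lem:essprin} show that the second countable Hausdorff \'etale groupoids $G_\phi^a\rtimes\Z$ and $G_\psi^a\rtimes\Z$ are essentially principal, hence topologically principal; by Lemma~\ref{lem:amenable} they are amenable, so $\R_\phi^a=C^*(G_\phi^a\rtimes\Z)$ and $C(X)$ is exactly the canonical diagonal Cartan subalgebra $C((G_\phi^a\rtimes\Z)^\circ)$, and likewise for $\psi$. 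By Renault's reconstruction theorem for Cartan subalgebras \cite{Renault2}, the Weyl groupoid of the Cartan pair $(\R_\phi^a,C(X))$ is isomorphic as an \'etale groupoid to $G_\phi^a\rtimes\Z$, and similarly for $(\R_\psi^a,C(Y))$. An isomorphism of $C^*$-algebras carrying one Cartan pair onto the other induces an isomorphism of the associated Weyl groupoids, hence an \'etale groupoid isomorphism $G_\phi^a\rtimes\Z\cong G_\psi^a\rtimes\Z$, and Theorem~\ref{thm:main1} then gives $(X,\phi)\underset{ACOE}{\sim}(Y,\psi)$.

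The main obstacle is the (ii)$\Longrightarrow$(i) step: one must verify carefully that $C(X)\subset\R_\phi^a$ really is the diagonal subalgebra attached to the unit space of $G_\phi^a\rtimes\Z$ and that this groupoid is topologically principal, so that Renault's reconstruction applies and produces a \emph{canonical} groupoid isomorphism out of $\Phi$; once this is in place the rest is cocycle bookkeeping through Proposition~\ref{prop:5.1}. Note that the extra intertwining conditions in (ii) are then automatic for this direction and are not needed to recover (i).
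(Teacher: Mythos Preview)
Your proposal is correct and follows essentially the same route as the paper: for (i)$\Rightarrow$(ii) you invoke Proposition~\ref{prop:5.1} with the cocycle identities \eqref{eq:5.10}--\eqref{eq:5.11} (the paper applies it twice to $\varphi_h$ with the pairs $(d_\phi,c_\psi)$ and $(c_\phi,d_\psi)$ rather than once to $\varphi_h$ and once to $\varphi_{h^{-1}}$, but this is the same computation), and for (ii)$\Rightarrow$(i) you use essential principality and Renault's reconstruction \cite[Proposition~4.11]{Renault2} to pass from the Cartan-preserving isomorphism back to a groupoid isomorphism, exactly as the paper does.
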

\begin{proof}
(i) $\Longrightarrow$ (ii):
Take $f = d_\phi, g= c_\psi$
in the equality \eqref{eq:prop5.1}.
We then have
$\Ad(U_t(d_\phi)) = \rho^\phi_t$
and
$c_\psi\circ \varphi_h = d_\phi$ 
by
\eqref{eq:5.11}.
Hence by \eqref{eq:prop5.1}, we obtain
\begin{equation}
\Phi\circ \rho^\phi_t = \Ad(U_t(c_\psi))\circ \Phi, \qquad t \in \T. 
\label{eq:5.13}
\end{equation}
Take $f = c_\phi, g= d_\psi$
in the equality \eqref{eq:prop5.1}.
We then have
$\Ad(U_t(d_\psi)) = \rho^\psi_t$
and
$c_\phi\circ (\varphi_h)^{-1} = d_\psi$ 
by
\eqref{eq:5.11}.
Hence by \eqref{eq:prop5.1}, we obtain
\begin{equation}
\Phi\circ\Ad(U_t(c_\phi)) =\rho^\psi_t\circ \Phi, \qquad t \in \T.
\label{eq:5.14}
\end{equation}

(ii) $\Longrightarrow$ (i):
Since the Smale spaces 
$(X,\phi)$ and $(Y,\psi)$ are both asymptotically essentially free,
the \'etale groupoids
$G_{\phi}^{a} \rtimes \Z$ and $G_{\psi}^{a} \rtimes \Z$
are both essentially principal by Lemma \ref{lem:essprin}.
By Renault \cite[Proposition 4.11]{Renault2},
an isomorphism 
$\R_\phi^a \longrightarrow \R_\psi^a$
of $C^*$-algebras such that 
$\Phi(C(X)) = C(Y)$
yields an isomorphism of the underlying
\'etale groupoids
$G_{\phi}^{a} \rtimes \Z$ and $G_{\psi}^{a} \rtimes \Z$.
Hence by Theorem \ref{thm:main1},
we see the implication
(ii) $\Longrightarrow$ (i).
\end{proof}
\begin{remark}
Similar discussions to 
Theorem \ref{thm:main3}
 for topological Markov shifts  with continuous orbit equivalence 
 are seen in  several papers
(cf. \cite{CR}, \cite{CRS}, \cite{MaPacific}, \cite{MaJOT2015}, \cite{MaProcAMS2017}, \cite{MaMZ2017}, \cite{MatuiPLMS},
etc. )    

\end{remark}

\section{Asymptotic conjugacy}
In this section, we will introduce a notion of asymptotic conjugacy between Smale spaces,
and describe the asymptotic conjugacy 
in terms of the Ruelle algebras with its dual actions.
 Smale spaces $(X,\phi)$
 in this section are assumed to be 
irreducible and the space $X$ is infinite. 
\begin{definition}
Smale spaces  $(X,\phi)$ and $(Y,\psi)$ are said to be
 {\it asymptotically  conjugate}\/ if they are 
asymptotically  continuously orbit equivalent 
such that  we may take their cocycle functions
such as
$c_1 \equiv 1,\, c_2 \equiv 1$ and $d_1 \equiv 0, \, d_2 \equiv 0$
in Definition \ref{def:acoe}. 
\end{definition}
In this situation, we write
$(X, \phi) \underset{a}{\cong}(Y, \psi).$
Namely we have
$(X,\phi)$ and $(Y,\psi)$ are said to be
 asymptotically  conjugate if and only if
there exists a homeomorphism
$h: X\longrightarrow Y$ 
which satisfies
the following four conditions:
\begin{enumerate}
\renewcommand{\theenumi}{\roman{enumi}}
\renewcommand{\labelenumi}{\textup{(\theenumi)}}
\item
There exists a continuous function $k_{1,n}:X\longrightarrow \Zp$ for each $n \in \Z$
such that
\begin{gather*}
(\psi^{k_{1,n}(x) + n}(h(x)), \psi^{k_{1,n}(x)}(h(\phi^n(x)))) \in G_\psi^{s,0},\\
(\psi^{-k_{1,n}(x) + n}(h(x)), \psi^{-k_{1,n}(x)}(h(\phi^n(x)))) \in G_\psi^{u,0}.
\end{gather*}
\item
There exists a continuous function $k_{2,n}:Y\longrightarrow \Zp$ for each $n \in \Z$
such that
\begin{gather*}
(\phi^{k_{2,n}(y) + n}(h^{-1}(y)), \phi^{k_{2,n}(y)}(h^{-1}(\psi^n(y)))) \in G_\phi^{s,0},\\
(\phi^{-k_{2,n}(y) + n}(h^{-1}(y)), \phi^{-k_{2,n}(y)}(h^{-1}(\psi^n(y)))) \in G_\phi^{u,0}.
\end{gather*}
\item
There exists a continuous function $m_1;G_\phi^a \longrightarrow \Zp$ 
such that
\begin{gather*}
(\psi^{m_1(x,z)}(h(x)), \psi^{m_1(x,z)}(h(z))) \in G_\psi^{s,0}
\quad\text{ for } (x,z) \in G_\phi^a,\\
(\psi^{-m_1(x,z)}(h(x)), \psi^{-m_1(x,z)}(h(z)))\in G_\psi^{u,0}
\quad\text{ for } (x,z) \in G_\phi^a.
\end{gather*}
\item
There exists a continuous function $m_2;G_\psi^a \longrightarrow \Zp$ 
such that
\begin{gather*}
(\phi^{m_2(y,w)}(h^{-1}(y)), \phi^{m_2(y,w)}(h^{-1}(w))) \in G_\phi^{s,0}
\quad\text{ for } (y,w) \in G_\psi^a,\\
(\phi^{-m_2(y,w)}(h^{-1}(y)), \phi^{-m_2(y,w)}(h^{-1}(w))) \in G_\phi^{u,0}
\quad\text{ for } (y,w) \in G_\psi^a.
\end{gather*}
\end{enumerate}
Recall that the Ruelle algebra
$\R_\phi^a$ is defined as the groupoid $C^*$-algebra
$C^*(G_\phi^a\rtimes\Z)$ 
of the \'etale groupoid $G_\phi^a\rtimes\Z$.
It is naturally isomorphic to the crossed product
$C^*(G_\phi^a)\rtimes\Z$
of the $C^*$-algebra $C^*(G_\phi^a)$ 
by the automorphism
$\phi^*$ on $C^*(G_\phi^a)$
induced by the formula
\begin{equation*}
\phi^*(f)(x,z) = f(\phi(x), \phi(z))
\qquad
\text{ for }
f \in C_c(G_\phi^a), \, (x,z) \in G_\phi^a.
\end{equation*}
Define the unitary
$U_\phi$ on $l^2(G_\phi^a\rtimes\Z)$
by setting
\begin{equation}
(U_\phi\xi)(x,n,z) = \xi(\phi(x),n-1,z) \qquad \text{ for } \xi \in l^2(G_\phi^a\rtimes\Z),
\, (x,n,z) \in G_\phi^a\rtimes\Z. \label{eq:Uphi}
\end{equation}
It is direct to see that 
\begin{equation*}
U_\phi f U_\phi^* = \phi^*(f)  \qquad 
\text{ for } f \in C_c(G_\phi^a),
\end{equation*}
where 
$$
f(x,m,z) =
\begin{cases} 
f(x,z) & \text{ if } m=0,\\
0 & \text{ otherwise }
\end{cases}
\qquad \text{ for } (x,z) \in G_\phi^a.
$$
Now we are assuming that 
$(X,\phi)$ is irreducible,
so that the $C^*$-algebra
$\R_\phi^a$ is simple by Proposition \ref{prop:simple}.
Hence we know that 
$\R_\phi^a$ is isomorphic to the $C^*$-algebra
$C^*(C^*(G_\phi^a), U_\phi)$
generated by the its subalgebra
$C^*(G_\phi^a)$ and the unitary
$U_\phi$.
The  following lemma is directly seen from J. Renault's result
\cite[Proposition 4.11]{Renault2}.
\begin{lemma}\label{lem:groupoidC*6.2}
Let $(X, \phi)$ and $(Y, \psi)$
be irreducible  Smale spaces.
The following assertions are equivalent:
\begin{enumerate}
\renewcommand{\theenumi}{\roman{enumi}}
\renewcommand{\labelenumi}{\textup{(\theenumi)}}
\item
There exists an isomorphism
$
\varphi: G_\phi^a\rtimes\Z \longrightarrow G_\psi^a\rtimes\Z
$  
of \'etale groupoids such that 
$
\varphi(G_\phi^a)=G_\psi^a
$  
and
$
\varphi(G_\phi^{a,0})=G_\psi^{a,0}.
$  
\item
There exists an isomorphism 
$\Phi:\R_\phi^a \longrightarrow \R_\psi^a$
of $C^*$-algebras such that 
$\Phi(C^*(G_\phi^a))=C^*(G_\psi^a)
$
and
$\Phi(C(X)) = C(Y)$
 \end{enumerate}
\end{lemma}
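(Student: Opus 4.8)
The plan is to deduce this lemma directly from Renault's reconstruction theorem for essentially principal \'etale groupoids (\cite[Proposition 4.11]{Renault2}), as the paragraph preceding the statement already indicates. The preliminary step is to check that both $G_\phi^a\rtimes\Z$ and $G_\psi^a\rtimes\Z$ satisfy the hypotheses of that theorem: since $(X,\phi)$ and $(Y,\psi)$ are irreducible with $X$, $Y$ infinite, Lemma \ref{lem:irreessfree} gives that they are asymptotically essentially free, hence by Lemma \ref{lem:essprin} the groupoids $G_\phi^a\rtimes\Z$ and $G_\psi^a\rtimes\Z$ are essentially principal; by Lemma \ref{lem:amenable} they are amenable, so reduced and full $C^*$-algebras agree and, by \cite[Proposition 4.7]{Renault}, $C(X)$ and $C(Y)$ are the canonical masas $C_0((G_\phi^a\rtimes\Z)^\circ)$ and $C_0((G_\psi^a\rtimes\Z)^\circ)$ of $\R_\phi^a$ and $\R_\psi^a$.

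For (i) $\Longrightarrow$ (ii): given a groupoid isomorphism $\varphi$, let $\Phi$ be the $*$-isomorphism it induces at the level of $C_c$, extended to the groupoid $C^*$-algebras. Since $\varphi$ intertwines range and source maps it carries the unit space onto the unit space, so $\Phi(C(X))=C(Y)$; and since the map $\gamma$ of \eqref{eq:gamma} identifies $G_\phi^a$ with the clopen slice $G_\phi^a\times\{0\}$ in $G_\phi^a\times\Z$, the subgroupoid $G_\phi^a$ is clopen in $G_\phi^a\rtimes\Z$, so the hypothesis $\varphi(G_\phi^a)=G_\psi^a$ transports $C_c(G_\phi^a)$ onto $C_c(G_\psi^a)$ and hence $\Phi(C^*(G_\phi^a))=C^*(G_\psi^a)$. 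For (ii) $\Longrightarrow$ (i): apply \cite[Proposition 4.11]{Renault2} to the isomorphism $\Phi$ with $\Phi(C(X))=C(Y)$ to obtain the (unique) \'etale groupoid isomorphism $\varphi:G_\phi^a\rtimes\Z\to G_\psi^a\rtimes\Z$ inducing $\Phi$; being a groupoid isomorphism, $\varphi$ carries the unit space, i.e. $G_\phi^{a,0}$ (identified with $X$ by Lemma \ref{lem:2.5}), onto $G_\psi^{a,0}$. It remains to see that $\Phi(C^*(G_\phi^a))=C^*(G_\psi^a)$ forces $\varphi(G_\phi^a)=G_\psi^a$, and for this I would argue through supports: in Renault's correspondence the normalizers of $C(X)$ in $\R_\phi^a$ are supported on open bisections of $G_\phi^a\rtimes\Z$, $\Phi$ sends the support of a normalizer $n$ onto the support of $\Phi(n)$ via $\varphi$, and $C^*(G_\phi^a)$ is the closed linear span of those normalizers whose (open bisection) support lies in the clopen set $G_\phi^a$; thus $\Phi(C^*(G_\phi^a))=C^*(G_\psi^a)$ says precisely that $\varphi$ maps the union of these bisections, namely $G_\phi^a$, onto $G_\psi^a$.

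Most of this is routine bookkeeping — amenability, clopenness of the slice, and the Cartan/normalizer picture are all standard. The one place that needs genuine care, and which I regard as the \emph{main obstacle}, is the last step: certifying that the extra algebraic constraint $\Phi(C^*(G_\phi^a))=C^*(G_\psi^a)$ matches the extra geometric constraint $\varphi(G_\phi^a)=G_\psi^a$, which is exactly why the argument must be run through supports of normalizers rather than by quoting the bare reconstruction statement.
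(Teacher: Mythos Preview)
Your proposal is correct and follows essentially the same route as the paper: both directions rest on Renault's reconstruction \cite[Proposition 4.11]{Renault2} after observing via Lemmas \ref{lem:irreessfree} and \ref{lem:essprin} that the groupoids are essentially principal, with the (i)$\Rightarrow$(ii) direction being immediate and the (ii)$\Rightarrow$(i) direction reducing to the support argument you outline. The paper's own proof is in fact terser than yours on the key point---it simply asserts that ``by the construction of the isomorphism $\varphi$'' the extra condition $\Phi(C^*(G_\phi^a))=C^*(G_\psi^a)$ yields $\varphi(G_\phi^a)=G_\psi^a$---so your normalizer/support unpacking is a welcome elaboration of exactly what the paper leaves implicit.
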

\begin{proof}
By Lemma \ref{lem:2.5},
the spaces
$G_\phi^{a,0}, G_\psi^{a,0}$
are identified with $X,Y$ respectively
as topological spaces.
They are also identified with the unit spaces 
$(G_\phi^a\rtimes\Z)^\circ, (G_\psi^a\rtimes\Z)^\circ,$
respectively.
Since $(X, \phi)$ and $(Y, \psi)$ are irreducible and 
hence asymptotically essentially free,
the \'etale groupoids 
$G_\phi^a\rtimes\Z$ and $G_\psi^a\rtimes\Z$
are both essentially principal by Lemma \ref{lem:essprin}.
The implication
(i) $\Longrightarrow$ (ii) 
is direct.
By Renault \cite[Proposition 4.11]{Renault2},
an isomorphism 
$\R_\phi^a \longrightarrow \R_\psi^a$
of $C^*$-algebras such that 
$\Phi(C(X)) = C(Y)$
yields an isomorphism $\varphi$ of the underlying
\'etale groupoids
$G_{\phi}^{a} \rtimes \Z$ and $G_{\psi}^{a} \rtimes \Z$.
By the construction of the isomorphism $\varphi$
of 
the \'etale groupoids,
we see that
$
\varphi(G_\phi^{a})=G_\psi^{a}
$  
by
the additional condition
$\Phi(C^*(G_\phi^a))=C^*(G_\psi^a),
$
thus proving the implication
(ii) $\Longrightarrow$ (i).
\end{proof}

\begin{proposition}\label{prop:main6.1}
Let
$(X,\phi)$ and $(Y,\psi)$
be irreducible Smale spaces.
Suppose that
there exists an isomorphism $\varPhi: \R_\phi^a \longrightarrow \R_\psi^a$
of $C^*$-algebras such that 
$\varPhi(C(X)) = C(Y)$
and 
$$
\varPhi\circ \rho^\phi_t = \rho^\psi_t \circ \varPhi,
\qquad t \in \T.
$$
Then there exists a homeomorphism
$h:X\longrightarrow Y$ 
which gives rise to an asymptotic continuous orbit equivalence
between  
$(X,\phi)$ and $(Y,\psi),$
such that its cocycle functions satisfy 
\begin{equation*}
c_1\equiv 1, \qquad
c_2\equiv 1, \qquad
d_1\equiv 0, \qquad
d_2\equiv 0.
\end{equation*}
Namely,
$(X, \phi)$ and $(Y, \psi)$
are asymptotically conjugate.
\end{proposition}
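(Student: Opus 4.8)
The plan is to upgrade the $C^*$-algebraic hypothesis to an isomorphism of the underlying \'etale groupoids that additionally respects the canonical $\Z$-gradings, and then to feed this isomorphism into the construction in the proof of Theorem \ref{thm:main1} with all cocycle functions forced to their trivial values.

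First I would note that, since $(X,\phi)$ and $(Y,\psi)$ are irreducible with $X$ (hence $Y$) infinite, Lemma \ref{lem:irreessfree} makes them asymptotically essentially free, so by Lemma \ref{lem:essprin} the groupoids $G_\phi^a\rtimes\Z$ and $G_\psi^a\rtimes\Z$ are essentially principal. Because $\rho^\phi$ is exactly the dual action on $\R_\phi^a\cong C^*(G_\phi^a)\rtimes\Z$, its fixed point algebra is $C^*(G_\phi^a)$, and likewise $(\R_\psi^a)^{\rho^\psi}=C^*(G_\psi^a)$; hence the intertwining hypothesis $\varPhi\circ\rho^\phi_t=\rho^\psi_t\circ\varPhi$ automatically forces $\varPhi(C^*(G_\phi^a))=C^*(G_\psi^a)$. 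Together with $\varPhi(C(X))=C(Y)$, Lemma \ref{lem:groupoidC*6.2} then yields a homeomorphism $h\colon X\to Y$ and an isomorphism $\varphi_h\colon G_\phi^a\rtimes\Z\to G_\psi^a\rtimes\Z$ of \'etale groupoids with $\varphi_h(G_\phi^a)=G_\psi^a$ and $\varphi_h(G_\phi^{a,0})=G_\psi^{a,0}$; moreover, as in the proof of Lemma \ref{lem:groupoidC*6.2}, $\varPhi$ is induced by $\varphi_h$, so it carries a function supported on $U\subseteq G_\phi^a\rtimes\Z$ to a function supported on $\varphi_h(U)$.

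The crucial point is to promote the equality $\varphi_h(G_\phi^a)=G_\psi^a$ to the pointwise identity $d_\psi\circ\varphi_h=d_\phi$. I would do this by comparing the spectral subspaces of the dual actions: for $n\in\Z$ the $n$-th spectral subspace of $\rho^\phi$ is the closed span of $C_c(d_\phi^{-1}(n))$, and likewise for $\rho^\psi$. Since $\varPhi$ intertwines $\rho^\phi$ and $\rho^\psi$ it maps the $n$-th spectral subspace onto the $n$-th spectral subspace, while, as noted, it maps functions supported on $d_\phi^{-1}(n)$ to functions supported on $\varphi_h(d_\phi^{-1}(n))$; comparing supports gives $\varphi_h(d_\phi^{-1}(n))=d_\psi^{-1}(n)$ for every $n$, i.e.\ $d_\psi\circ\varphi_h=d_\phi$ (equivalently $d_\phi\circ\varphi_h^{-1}=d_\psi$). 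The same conclusion drops out of a one-line computation evaluating $\varPhi(\rho^\phi_t(f))$ and $\rho^\psi_t(\varPhi(f))$ at $(y,m,w)\in G_\psi^a\rtimes\Z$, which yields $\exp(2\pi\sqrt{-1}\,d_\phi(\varphi_h^{-1}(y,m,w))\,t)\,f(\varphi_h^{-1}(y,m,w))$ and $\exp(2\pi\sqrt{-1}\,m\,t)\,f(\varphi_h^{-1}(y,m,w))$ respectively, so that equality for all $t$ and all $f\in C_c(G_\phi^a\rtimes\Z)$ forces $d_\phi(\varphi_h^{-1}(y,m,w))=m=d_\psi(y,m,w)$.

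Finally I would run the construction in the proof of Theorem \ref{thm:main1}, (ii)$\Rightarrow$(i), using this $\varphi_h$: it produces the homeomorphism $h$, continuous functions $c_1\colon X\to\Z$, $c_2\colon Y\to\Z$ and two-cocycle functions $d_1\colon G_\phi^a\to\Z$, $d_2\colon G_\psi^a\to\Z$ with $c_1^n(x)=d_\psi(\varphi_h(x,n,\phi^n(x)))$, $d_1(x,z)=d_\psi(\varphi_h(x,0,z))$, and symmetrically for $c_2,d_2$ via $\varphi_h^{-1}$. By the previous paragraph $c_1^n(x)=d_\phi(x,n,\phi^n(x))=n$, so $c_1\equiv 1$, and $d_1(x,z)=d_\phi(x,0,z)=0$, so $d_1\equiv 0$; using $d_\phi\circ\varphi_h^{-1}=d_\psi$ in the same way gives $c_2\equiv 1$ and $d_2\equiv 0$. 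Thus $(X,\phi)\underset{ACOE}{\sim}(Y,\psi)$ with the cocycle functions $c_1\equiv 1,\ c_2\equiv 1,\ d_1\equiv 0,\ d_2\equiv 0$, which is precisely what it means for $(X,\phi)$ and $(Y,\psi)$ to be asymptotically conjugate; alternatively one reads off conditions (i)--(iv) of the definition of asymptotic conjugacy directly, with $k_{1,n},k_{2,n},m_1,m_2$ obtained as in Remark \ref{remark3.3}. I expect the main obstacle to be this middle step: one must carefully track how the reconstruction of $\varphi_h$ from the pair $(\R_\phi^a,C(X))$ interacts with the gradings, so that the intertwining of the dual actions genuinely yields the pointwise identity $d_\psi\circ\varphi_h=d_\phi$, rather than merely the weaker $\varphi_h(G_\phi^a)=G_\psi^a$, which would leave $c_1$ possibly non-constant.
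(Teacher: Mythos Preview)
Your argument is correct and reaches the same conclusion, but the route differs from the paper's in a meaningful way. You invoke the full strength of Renault's reconstruction \cite[Proposition 4.11]{Renault2}: not merely that a Cartan-preserving isomorphism $\varPhi$ yields \emph{some} groupoid isomorphism $\varphi_h$, but that $\varPhi$ is actually implemented by $\varphi_h$ (up to a $\T$-valued $1$-cocycle on the groupoid, which you suppress in your one-line computation but which cancels anyway). This support-preservation makes the spectral-subspace comparison immediate and delivers the pointwise identity $d_\psi\circ\varphi_h=d_\phi$ in one stroke; both $d_1\equiv 0$ and $c_1\equiv 1$ then drop out of the formulas from Theorem~\ref{thm:main1}.

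The paper, by contrast, treats the given $\varPhi$ and the groupoid isomorphism $\varphi_h$ as a priori unrelated beyond agreeing on $C(X)$. It first reads off $d_1\equiv 0$ from $\varphi_h(G_\phi^a)=G_\psi^a$ as you do, but for $c_1\equiv 1$ it builds a second isomorphism $\Phi_h=\Ad(V_{h^{-1}})$ explicitly from $\varphi_h$, uses maximal abelianness of $C(X)$ to write $\varPhi(U_\phi)=\Phi_h(U_\phi f_0)$ for some unitary $f_0\in C(X)$, and then feeds this into the intertwining relations $\varPhi\circ\rho^\phi_t=\rho^\psi_t\circ\varPhi$ and $\Phi_h\circ\Ad(U_t(c_\phi))=\rho^\psi_t\circ\Phi_h$ to obtain $\exp(2\pi\sqrt{-1}t)U_\phi=U_t(c_\phi)U_\phi U_t(c_\phi)^*$, from which an explicit calculation on $l^2(G_\phi^a\rtimes\Z)$ yields $c_\phi(x,n,z)-c_\phi(\phi(x),n-1,z)=1$, i.e.\ $c_1(x)=1$. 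Your approach is shorter and more conceptual; the paper's is more self-contained, not asking the reader to unpack what ``implemented by'' means in Renault's theorem. Your closing caveat about this reconstruction step is exactly the right place to be careful, and the cocycle ambiguity is the only thing you glossed over.
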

\begin{proof}
Suppose that 
there exists an isomorphism $\varPhi: \R_\phi^a \longrightarrow \R_\psi^a$
of $C^*$-algebras such that 
$\varPhi(C(X)) = C(Y)$
and 
$
\varPhi\circ \rho^\phi_t = \rho^\psi_t \circ \varPhi,
t \in \T.
$
We will first show that
$d_1\equiv 0, \,
d_2\equiv 0.
$
Since 
the fixed point algebra
${(\R_\phi^a)}^{\rho^\phi}$ of
 $\R_\phi^a$ under $\rho^\phi$ 
is canonically isomorphic to the groupoid $C^*$-subalgebra
$C^*(G_\phi^a)$,
the isomorphism 
$\varPhi: \R_\phi^a \longrightarrow \R_\psi^a$
satisfies
$\varPhi(C^*(G_\phi^a))=C^*(G_\psi^a).
$
By Lemma \ref{lem:groupoidC*6.2},
we then  find  
a homeomorphism
$h:X\longrightarrow Y$ and
a
groupoid isomorphism
$\varphi_h:G_{\phi}^{a} \rtimes \Z\longrightarrow G_{\psi}^{a} \rtimes \Z$
such that 
$\varphi_h(G_\phi^a) = G_\psi^a,
\varphi_h|_{G_\phi^{a,0}} =h$
and
$\varPhi(f) = f\circ h^{-1}$ for $f \in C(X)$.
For $(x,z) \in G_\phi^a$, we have
\begin{equation*}
\varphi_h(x,0,z)
= (h(x),c_\phi(x,0,z),h(z))
= (h(x), d_1(x,z),h(z)).
\end{equation*}
As 
$\varphi_h(x,0,z)
 \in G_\psi^a$,
we know that
$d_1(x,z)=0$,
and $d_2(y,w) =0$ for $(y,w) \in G_\psi^a$
similarly.


We will second show that 
$c_1\equiv 1, \,
c_2\equiv 1.
$
Since
the isomorphism
$\varPhi: \R_\phi^a \longrightarrow \R_\psi^a$
satisfies
$\varPhi(C(X)) = C(Y)$,
the groupoid isomorphism
$\varphi_h:G_{\phi}^{a} \rtimes \Z\longrightarrow G_{\psi}^{a} \rtimes \Z$
with homeomorphism
$h:X\longrightarrow Y$ yields
an asymptotic continuous orbit equivalence
between them.
They also satisfy the equalities
\begin{align}
\varphi_h(x,n, z) & = (h(x), c_\phi(x,n,z), h(z)), \qquad (x,n,z) \in G_{\phi}^a\rtimes\Z, 
\label{eq:6.3.1}\\
\varphi_{h^{-1}}(y,m,w) & =(h^{-1}(y),c_\psi(y,m,w), h^{-1}(w)),
\qquad
(y,m,w) \in G_{\psi}^a\rtimes\Z.
\end{align}
Let
$V_h$ be the unitary defined in \eqref{eq:Vh}.
As in the proof of Proposition \ref{prop:5.1},
by putting
$\Phi_h = \Ad(V_{h^{-1}})$,
we have
an isomorphism
$\Phi_h: \R_\phi^a \longrightarrow \R_\psi^a$
such that 
$\Phi_h(C(X)) = C(Y)$
and 
$$
\Phi_h\circ \rho^\phi_t = \Ad(U_t(c_\psi))\circ \Phi_h,
\qquad
\Phi_h\circ\Ad(U_t(c_\phi)) =\rho^\psi_t\circ \Phi_h.
$$
Let $U_\phi$ be the unitary defined in \eqref{eq:Uphi},
which corresponds to the implementing unitary of the positive generator of 
the group representation of $\Z$ in the crossed product
$C^*(G_\phi^a)\rtimes\Z$.
It satisfies the 
equality
$U_\phi f U_\phi^* = f\circ \phi$ for $f \in C(X)$.
For $f \in C(X)$,
as $\varPhi(f) = \Phi_h(f)$, we see that 
\begin{equation*}
\varPhi(U_\phi) \Phi_h(f) \varPhi(U_\phi^*) 
= \varPhi( f\circ \phi)= \Phi_h( f\circ \phi)
= \Phi_h(U_\phi) \Phi_h(f) \Phi_h(U_\phi^*),
\end{equation*}
so that 
\begin{equation*}
\Phi_h^{-1}(\varPhi(U_\phi)) f \Phi_h^{-1}(\varPhi(U_\phi^*)) 
= U_\phi f U_\phi^*.
\end{equation*}
Hence we have
\begin{equation*}
U_\phi^*\Phi_h^{-1}(\varPhi(U_\phi)) f  
=  f U_\phi^*\Phi_h^{-1}(\varPhi(U_\phi))
\quad
\text{ for all }
f \in C(X).
\end{equation*}
Since 
$(X,\phi)$ is irreducible and hence asymptotically essentially free,
the groupoid 
$G_\phi^a\rtimes\Z$
is essentially principal 
by Lemma \ref{lem:essprin}. 
By \cite[Proposition 4.7]{Renault} or \cite[Proposition 4.2]{Renault2}, 
$C(X)=C((G_\phi^a\rtimes\Z)^\circ)$
is a maximal abelian $C^*$-subalgebra of $\R_\phi^a.$
Hence there exists a unitary $f_0 \in C(X)$ such that 
$U_\phi^*\Phi_h^{-1}(\varPhi(U_\phi)) = f_0$,
so that 
\begin{equation}
\varPhi(U_\phi) = \Phi_h(U_\phi f_0).\label{eq:PhiUphi}
\end{equation}
Since
$\varPhi\circ\rho^\phi_t=\rho^\psi_t \circ \varPhi $
and
$
\Phi_h\circ\Ad(U_t(c_\phi)) =\rho^\psi_t\circ \Phi_h,
$
we get by the equality \eqref{eq:PhiUphi}
\begin{equation}
\varPhi\circ \rho^\phi_t(U_\phi) 
= (\Phi_h\circ\Ad(U_t(c_\phi)))(U_\phi f_0).
\label{eq:PhiUphi2}
\end{equation}
As $\rho^\phi_t(U_\phi) = \exp(2\pi\sqrt{-1}t)U_\phi$,
the equality \eqref{eq:PhiUphi2}
goes to
\begin{equation}
\exp{(2\pi\sqrt{-1}t)}\Phi(U_\phi) 
= \Phi_h(U_t(c_\phi) U_\phi f_0 U_t(c_\phi)^*).
\label{eq:PhiUphi3}
\end{equation}
As $f_0 \in C(X)$ and
$U_t(c_\phi)^* = U_t(-c_\phi)$,
we have  
for $\xi \in l^2(G_\phi^a\rtimes\Z)$ and 
$(x,n,z) \in G_\phi^a\rtimes\Z$,
\begin{align*}
[U_t(-c_\phi)f_0\xi](x,n,z)
& =\exp{(2\pi\sqrt{-1} (-c_\phi(x,n,z))t)}[f_0\xi](x,n,z) \\
& =f_0(x) \exp{(2\pi\sqrt{-1} (-c_\phi(x,n,z))t)}\xi(x,n,z) \\
& =[f_0(x) U_t(-c_\phi)\xi](x,n,z) 
\end{align*}
so that 
$U_t(c_\phi)^*f_0 =f_0 U_t(c_\phi)^*$.
Hence the equation
\eqref{eq:PhiUphi3} implies
\begin{equation}
\exp{(2\pi\sqrt{-1}t)}\varPhi(U_\phi) 
= \Phi_h(U_t(c_\phi) U_\phi  U_t(c_\phi)^*f_0),
\label{eq:PhiUphi4}
\end{equation}
which goes to by \eqref{eq:PhiUphi}
\begin{equation}
\exp{(2\pi\sqrt{-1}t)}\Phi_h(U_\phi) 
= \Phi_h(U_t(c_\phi) U_\phi  U_t(c_\phi)^*)
\label{eq:PhiUphi5}
\end{equation}
so that
\begin{equation}
\exp{(2\pi\sqrt{-1}t)}U_\phi 
= U_t(c_\phi) U_\phi  U_t(c_\phi)^*.
\label{eq:PhiUphi6}
\end{equation}
For $\xi \in l^2(G_\phi^a\rtimes\Z)$ and 
$(x,n,z) \in G_\phi^a\rtimes\Z$,
we have the equalities
\begin{align*}
  & [U_t(c_\phi) U_\phi  U_t(c_\phi)^*\xi](x,n,z) \\
=&\exp{(2\pi\sqrt{-1} (c_\phi(x,n,z))t)}[U_\phi  U_t(-c_\phi)\xi](x,n,z) \\
=&\exp{(2\pi\sqrt{-1} (c_\phi(x,n,z))t)}[U_t(-c_\phi)\xi](\phi(x),n-1,z) \\
=&\exp{(2\pi\sqrt{-1} (c_\phi(x,n,z) -c_\phi(\phi(x), n-1,z))t)}\xi(\phi(x),n-1,z). 
\end{align*}
On the other hand,
\begin{equation*}
[\exp{(2\pi\sqrt{-1}t)}U_\phi\xi](x,n,z) 
= \exp{(2\pi\sqrt{-1}t)}\xi(\phi(x),n-1,z).
\end{equation*}
By \eqref{eq:PhiUphi6},
we have
\begin{equation*}
c_\phi(x,n,z) -c_\phi(\phi(x), n-1,z) =1.
\end{equation*}
By \eqref{eq:cphixnz}, we see that
\begin{align*}
  &c_\phi(x,n,z) -c_\phi(\phi(x), n-1,z) \\
=&\{c_1^n(x) + d_1(\phi^n(x),z)\} -\{c_1^{n-1}(\phi(x)) + d_1(\phi^{n-1}(\phi(x)), z) \}\\
=&c_1(x).
\end{align*}
Therefore we have
$c_1(x) =1$ for all $x \in X$,
and $c_2(y) =1$ for all $y \in Y$ similarly.
\end{proof}
Recall that a continuous homomorphism
$d_\phi: G_\phi^a\rtimes\Z\longrightarrow \Z$ is defined 
by $d_\phi(x,n,z) = n$ for $(x,n,z) \in G_\phi^a\rtimes\Z.$ 
\begin{theorem}\label{thm:mains6}
Let
$(X,\phi)$ and $(Y,\psi)$
be irreducible Smale spaces.
Then the following assertions are equivalent:
\begin{enumerate}
\renewcommand{\theenumi}{\roman{enumi}}
\renewcommand{\labelenumi}{\textup{(\theenumi)}}
\item
$(X,\phi)$ and $(Y,\psi)$
are asymptotically conjugate:
$(X, \phi) \underset{a}{\cong}(Y, \psi).$
\item 
There exists an isomorphism
$\varphi: G_\phi^a\rtimes\Z\longrightarrow G_\psi^a\rtimes\Z$
of \'etale groupoids
such that 
$d_\psi\circ\varphi = d_\phi$.
\item
There exists an isomorphism $\Phi: \R_\phi^a \longrightarrow \R_\psi^a$
of $C^*$-algebras such that 
$\Phi(C(X)) = C(Y)$
and 
$$
\Phi\circ \rho^\phi_t = \rho^\psi_t \circ \Phi,
\qquad t \in \T.
$$
\end{enumerate}
\end{theorem}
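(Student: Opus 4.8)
The plan is to prove the cycle of implications (i) $\Rightarrow$ (ii) $\Rightarrow$ (iii) $\Rightarrow$ (i), so that the one genuinely delicate step is invoked only once. Each arrow reduces to a result already available in the excerpt together with an explicit identification of the relevant cocycle.

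For (i) $\Rightarrow$ (ii), I would start from Theorem \ref{thm:main1}: since $(X,\phi)\underset{a}{\cong}(Y,\psi)$ in particular means $(X,\phi)\underset{ACOE}{\sim}(Y,\psi)$, there is a groupoid isomorphism $\varphi_h\colon G_\phi^a\rtimes\Z\to G_\psi^a\rtimes\Z$ of the form $\varphi_h(x,n,z)=(h(x),c_\phi(x,n,z),h(z))$ with $c_\phi(x,n,z)=c_1^n(x)+d_1(\phi^n(x),z)$ by \eqref{eq:cphixnz}. Asymptotic conjugacy lets us choose the data with $c_1\equiv 1$ and $d_1\equiv 0$, so $c_1^n(x)=n$, $d_1(\phi^n(x),z)=0$, hence $c_\phi(x,n,z)=n$ and $\varphi_h(x,n,z)=(h(x),n,h(z))$. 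Therefore $d_\psi(\varphi_h(x,n,z))=n=d_\phi(x,n,z)$, i.e. $d_\psi\circ\varphi_h=d_\phi$; this is (ii) with $\varphi=\varphi_h$.

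For (ii) $\Rightarrow$ (iii), I would note first that a groupoid isomorphism $\varphi$ gives $(X,\phi)\underset{ACOE}{\sim}(Y,\psi)$ by Theorem \ref{thm:main1}, and that $\varphi$ restricts on unit spaces to a homeomorphism $h\colon X\to Y$, so Proposition \ref{prop:5.1} applies. Taking the continuous homomorphisms $f=d_\phi$ and $g=d_\psi$ there, the hypothesis $f=g\circ\varphi$ is exactly the given $d_\psi\circ\varphi=d_\phi$, and Proposition \ref{prop:5.1} produces an isomorphism $\Phi\colon\R_\phi^a\to\R_\psi^a$ with $\Phi(C(X))=C(Y)$ and $\Phi\circ\Ad(U_t(d_\phi))=\Ad(U_t(d_\psi))\circ\Phi$. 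Since $U_t(d_\phi)=U_t^\phi$ and $U_t(d_\psi)=U_t^\psi$ by \eqref{eq:5.1}, while $\rho^\phi_t=\Ad(U_t^\phi)$ and $\rho^\psi_t=\Ad(U_t^\psi)$ by definition, this reads $\Phi\circ\rho^\phi_t=\rho^\psi_t\circ\Phi$, which is (iii).

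Finally, (iii) $\Rightarrow$ (i) is exactly Proposition \ref{prop:main6.1}, so nothing new is needed here. The main obstacle of the theorem is precisely that implication, and it is already carried out there: from $\Phi\circ\rho^\phi_t=\rho^\psi_t\circ\Phi$ one first identifies $C^*(G_\phi^a)$, $C^*(G_\psi^a)$ as the fixed-point algebras of the dual actions, concludes $\Phi(C^*(G_\phi^a))=C^*(G_\psi^a)$, applies Lemma \ref{lem:groupoidC*6.2} to extract a groupoid isomorphism $\varphi_h$ together with a homeomorphism $h$ and the vanishing $d_1\equiv d_2\equiv 0$, and then uses that $C(X)$ is maximal abelian in $\R_\phi^a$ (Lemma \ref{lem:essprin} together with Renault's theorem, which is where irreducibility enters) to write $\varPhi(U_\phi)=\Phi_h(U_\phi f_0)$ for a unitary $f_0\in C(X)$ and compare with $\rho$-equivariance, forcing $c_1\equiv c_2\equiv 1$. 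With all of this in hand, the residual work is the bookkeeping of the three arrows and the two cocycle computations displayed above, which are routine.
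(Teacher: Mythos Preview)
Your proof is correct and uses essentially the same ingredients as the paper: the cocycle computation $c_\phi(x,n,z)=c_1^n(x)+d_1(\phi^n(x),z)=n$ under the conditions $c_1\equiv 1$, $d_1\equiv 0$, the application of Proposition~\ref{prop:5.1} with $f=d_\phi$, $g=d_\psi$, and Proposition~\ref{prop:main6.1} for the hard direction. The only difference is cosmetic: you run the cycle (i)$\Rightarrow$(ii)$\Rightarrow$(iii)$\Rightarrow$(i), whereas the paper proves (iii)$\Rightarrow$(ii)$\Rightarrow$(i) (by noting that the groupoid isomorphism extracted inside the proof of Proposition~\ref{prop:main6.1} already satisfies $d_\psi\circ\varphi_h=d_\phi$) and separately (i)$\Rightarrow$(iii) via \eqref{eq:5.13}--\eqref{eq:5.14}.
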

\begin{proof}
The implication (iii) $\Longrightarrow$ (i)
follows from
Proposition \ref{prop:main6.1}.
In the proof of 
Proposition \ref{prop:main6.1},
we have shown that there exists an isomorphism of groupoids
$\varphi_h: G_\phi^a\rtimes\Z\longrightarrow G_\psi^a\rtimes\Z$
such that
$c_1\equiv 1, \,
c_2\equiv 1
$
and
$d_1\equiv 0, \,
d_2\equiv 0.
$
Hence we have
\begin{equation*}
c_\phi(x,n,z) = c_1^n(x) + d_1(\phi^n(x),z) =n \quad
\text{ for } (x,n,z) \in G_\phi^a\rtimes\Z
\end{equation*}
and 
$c_\psi(y,m,w) =m$, similarly.
This implies that 
$c_\phi = d_\phi$ and $c_\psi = d_\psi.$
By \eqref{eq:6.3.1}, 
we obtain
$d_\psi\circ\varphi = d_\phi$.
This argument shows that the implications
(iii) $\Longrightarrow$ (ii) $\Longrightarrow$ (i)
hold.

 We will show 
the implication (i) $\Longrightarrow$ (iii).
Suppose that 
$(X,\phi)$ and $(Y,\psi)$
are asymptotically conjugate.
Take a homeomorphism
$h:X\longrightarrow Y$ 
which gives rise to the asymptotic conjugacy.
In the proof of (i) $\Longrightarrow$ (ii) of Theorem \ref{thm:main3},
we know that 
$c_\phi = d_\phi$ and $c_\psi = d_\psi$
because 
$c_\phi(x,n,z)  =n
$
for $ (x,n,z) \in G_\phi^a\rtimes\Z$
and
$c_\psi(y,m,w) =m$
for $(y,m,w) \in G_\psi^a\rtimes\Z,$  similarly,
which come from the conditions
$
c_1\equiv 1, \,
c_2\equiv 1, \,
d_1\equiv 0, \,
d_2\equiv 0.
$
Hence we have
$
\Ad(U_t(c_\phi)) = \Ad(U_t(d_\phi)) = \rho^\phi_t
$
and 
$
\Ad(U_t(c_\psi)) = \Ad(U_t(d_\psi)) = \rho^\psi_t.
$
We thus obtain the equality
$\Phi_h\circ\rho^\phi_t=\rho^\psi_t \circ \Phi_h $
by \eqref{eq:5.13} or \eqref{eq:5.14}.
\end{proof}

\section{Extended Ruelle algebras $\R_\phi^{s,u}$}
In this section, we will introduce an extended Ruelle algebra $\R_\phi^{s,u}$
from a certain amenable \'etale groupoid of a Smale space
$(X,\phi)$.
The introduced $C^*$-algebra contains the asymptotic Ruelle algebra $\R^a_\phi$ 
as a fixed point subalgebra under some circle action.  
The extended Ruelle algebras will be useful
in the following sections to investigate the asymptotic Ruelle algebra
$\R^a_\phi$ for topological Markov shifts from the view points of Cuntz--Krieger algebras.

We first introduce the following groupoid
$G_{\phi}^{s,u} \rtimes \Z^2$
 for a Smale space $(X,\phi)$ 
which will be  proved to be \'etale and amenable. 
\begin{equation}
G_{\phi}^{s,u} \rtimes \Z^2
= \{(x, p,q,y) \in X \times \Z\times \Z \times X \mid
(\phi^p(x), y) \in G_{\phi}^{s}, \, (\phi^q(x), y) \in G_{\phi}^{u} \}.
\end{equation}
The following lemma is straightforward.
\begin{lemma}
For
$(x, p,q,y), (x', p',q',y') \in G_{\phi}^{s,u} \rtimes \Z^2$,
we have
\begin{enumerate}
\renewcommand{\theenumi}{\roman{enumi}}
\renewcommand{\labelenumi}{\textup{(\theenumi)}}
\item
$(x, p+p', q + q', y') \in G_{\phi}^{s,u} \rtimes \Z^2$ if $y =x'$.
\item
$(y,-p,-q,x)  \in G_{\phi}^{s,u} \rtimes \Z^2$.
\end{enumerate}
\end{lemma}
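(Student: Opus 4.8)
The plan is to reduce both assertions to two elementary facts about the equivalence relations $G_\phi^s$ and $G_\phi^u$: each is a genuine equivalence relation on $X$ (reflexive, symmetric and transitive), and each is invariant under $\phi\times\phi$ in the strong sense that $(a,b)\in G_\phi^s$ if and only if $(\phi^n(a),\phi^n(b))\in G_\phi^s$ for every $n\in\Z$, and likewise with $G_\phi^u$. Both facts are immediate from the descriptions in Lemma \ref{lem:n2.6}, namely $G_\phi^s=\{(a,b)\in X\times X:\lim_{k\to\infty} d(\phi^k(a),\phi^k(b))=0\}$ and $G_\phi^u=\{(a,b)\in X\times X:\lim_{k\to\infty} d(\phi^{-k}(a),\phi^{-k}(b))=0\}$: symmetry and transitivity follow from the symmetry and triangle inequality of the metric together with the fact that the sum of two sequences converging to $0$ converges to $0$, and the $\phi$-invariance follows because shifting a sequence converging to $0$ by a fixed number of steps does not affect convergence.

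For (i), suppose $(x,p,q,y),(x',p',q',y')\in G_{\phi}^{s,u}\rtimes\Z^2$ with $y=x'$, so that $(\phi^p(x),y)\in G_\phi^s$ and $(\phi^{p'}(y),y')=(\phi^{p'}(x'),y')\in G_\phi^s$. Applying $\phi^{p'}\times\phi^{p'}$ to the first relation gives $(\phi^{p+p'}(x),\phi^{p'}(y))\in G_\phi^s$, and transitivity with $(\phi^{p'}(y),y')\in G_\phi^s$ yields $(\phi^{p+p'}(x),y')\in G_\phi^s$. The identical argument with $q,q'$ in place of $p,p'$ and $G_\phi^u$ in place of $G_\phi^s$ gives $(\phi^{q+q'}(x),y')\in G_\phi^u$, so indeed $(x,p+p',q+q',y')\in G_{\phi}^{s,u}\rtimes\Z^2$.

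For (ii), from $(\phi^p(x),y)\in G_\phi^s$ symmetry gives $(y,\phi^p(x))\in G_\phi^s$, and applying $\phi^{-p}\times\phi^{-p}$ gives $(\phi^{-p}(y),x)\in G_\phi^s$; similarly $(\phi^q(x),y)\in G_\phi^u$ yields, via symmetry and $\phi^{-q}\times\phi^{-q}$, the relation $(\phi^{-q}(y),x)\in G_\phi^u$. These are precisely the two membership conditions defining $(y,-p,-q,x)\in G_{\phi}^{s,u}\rtimes\Z^2$. There is essentially no obstacle here; the only point that deserves a word of care is making the $\phi$-invariance of $G_\phi^s$ and $G_\phi^u$ under arbitrary integer powers explicit (one could alternatively track the filtrations $G_\phi^{s,n},G_\phi^{u,n}$, but the metric description in Lemma \ref{lem:n2.6} makes this cleanest), after which both statements are one-line verifications.
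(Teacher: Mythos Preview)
Your proof is correct. The paper itself declares the lemma ``straightforward'' and gives no proof, so your argument is precisely the kind of routine verification the author had in mind: exploit that $G_\phi^s$ and $G_\phi^u$ are $\phi$-invariant equivalence relations (immediate from Lemma~\ref{lem:n2.6}) and check the defining conditions directly.
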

Two elements
$(x, p,q,y), (x', p',q',y') \in G_{\phi}^{s,u} \rtimes \Z^2$
are composable 
if and only if $y=x'$.
The multiplication and the inverse 
in $G_{\phi}^{s,u} \rtimes \Z^2$
are given by
\begin{gather*}
(x, p,q,y)\cdot (x', p',q',y') =(x, p+p', q + q', y')\quad \text{ if } y = x',\\
(x, p,q,y)^{-1} = (y,-p,-q,x).
\end{gather*}
We write the unit space 
$(G_{\phi}^{s,u} \rtimes \Z^2)^\circ$
of $G_{\phi}^{s,u} \rtimes \Z^2$
as
\begin{equation*}
(G_{\phi}^{s,u} \rtimes \Z^2)^\circ
= \{ (x,0,0, x) \mid x \in X\}
\end{equation*} 
which is identified with $X$.
Define the range map, source map  
$r, s :G_{\phi}^{s,u} \rtimes \Z^2\longrightarrow (G_{\phi}^{s,u} \rtimes \Z^2)^\circ$
by
\begin{equation*}
r(x,p,q,y) = (x,0,0,x),\qquad
s(x,p,q,y) = (y,0,0,y).
\end{equation*} 
For $p,q \in \Z$ and $n=0,1,\dots$,
we set
\begin{align*}
G_\phi^{s,u,n}(p,q)
= &\{ (x,y) \in X\times X \mid 
(\phi^p(x),y) \in G_\phi^{s,n}, 
(\phi^q(x),y) \in G_\phi^{u,n}\}, \\
G_\phi^{s,u}(p,q)
= &\{ (x,y) \in X\times X \mid 
(\phi^p(x),y) \in G_\phi^{s}, 
(\phi^q(x),y) \in G_\phi^{u}\}. 
\end{align*}
For each $n$, the set
$G_\phi^{s,u,n}(p,q)$ is endowed with the relative topology from $X\times X$.
Since
$G_\phi^{*,n}\subset G_\phi^{*,n+1}$
for
$*=s,u$ and $n=0,1,\dots$,
we have
\begin{equation}
G_\phi^{s,u,n}(p,q)\subset 
G_\phi^{s,u,n+1}(p,q)
\quad
\text{ and }
\quad
G_\phi^{s,u}(p,q) = \cup_{n=0}^\infty G_\phi^{s,u,n}(p,q). \label{eq:Gphisu}
\end{equation}
We may endow $G_\phi^{s,u}(p,q)$ with inductive limit topology
from the inductive system \eqref{eq:Gphisu} of the topological spaces
$\{ G_\phi^{s,u,n}(p,q)\}_{n\in \Zp}$.
Since we may identify
$G_{\phi}^{s,u} \rtimes \Z^2$ with the disjoint union
$\sqcup_{(p,q)\in \Z^2}G_{\phi}^{s,u}(p,q)$,
the groupoid
$G_{\phi}^{s,u} \rtimes \Z^2$
has the topology
defined from the topology of the disjoint union
 $\sqcup_{(p,q)\in \Z^2}G_{\phi}^{s,u}(p,q)$.
We then have
\begin{proposition}
$G_{\phi}^{s,u} \rtimes \Z^2$ is an \'etale groupoid. 
\end{proposition}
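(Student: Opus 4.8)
The plan is to exhibit, for every $\gamma_0\in G:=G_\phi^{s,u}\rtimes\Z^2$, an open neighborhood $U$ of $\gamma_0$ on which the range map $r$ restricts to a homeomorphism onto an open subset of the unit space $X$; together with the (routine) continuity of the multiplication and inversion and the fact that $G^\circ$ is open in $G$, this is exactly the statement that $G$ is \'etale. Since the topology of $G$ is the disjoint-union topology of the $G_\phi^{s,u}(p,q)$, and since by \eqref{eq:Gphisu} together with the clopenness of the inclusions $G_\phi^{s,n}\hookrightarrow G_\phi^{s,n+1}$ and $G_\phi^{u,n}\hookrightarrow G_\phi^{u,n+1}$ (as in Putnam \cite{Putnam1}; the extra conditions cut out are clopen, and they pull back along the continuous maps $(x,y)\mapsto(\phi^p x,y)$ and $(x,y)\mapsto(\phi^q x,y)$) each $G_\phi^{s,u,n}(p,q)$ is clopen in $G_\phi^{s,u}(p,q)$, it suffices to treat $\gamma_0=(x_0,p,q,y_0)$ with $(x_0,y_0)$ inside a fixed $G_\phi^{s,u,n}(p,q)$ carrying its relative topology from $X\times X$. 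In particular $G^\circ=\{(x,0,0,x)\}$ is, as a topological space, $G_\phi^{s,u,0}(0,0)=G_\phi^{a,0}=X$ by Lemma \ref{lem:2.5}, and is clopen in $G$.

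Fix such an $(x_0,y_0)$ and put $u_0=\phi^{-n}(y_0)$, $v_0=\phi^{n}(y_0)=\phi^{2n}(u_0)$; unwinding the definitions, $v_0\in X^s(\phi^{n+p}(x_0),\epsilon_0)$ and $u_0\in X^u(\phi^{q-n}(x_0),\epsilon_0)$. First I would build a continuous ``section'' $x\mapsto y(x)$ near $x_0$ with $y(x_0)=y_0$ and $(x,y(x))\in G_\phi^{s,u,n}(p,q)$ by repeated use of the bracket: for $x$ in a small neighborhood $V$ of $x_0$, set
\begin{equation*}
w(x)=[\phi^{q-n}(x),\,u_0],\qquad v(x)=[\phi^{2n}(w(x)),\,\phi^{n+p}(x)],\qquad y(x)=\phi^{-n}(v(x)).
\end{equation*}
Since $[\phi^{q-n}(x_0),u_0]=u_0$ and $[v_0,\phi^{n+p}(x_0)]=v_0$, continuity of $[\cdot,\cdot]$ and of $\phi$ gives $y(x_0)=y_0$ and the continuity of $y$ on a small enough $V$. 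Using the defining properties ({\bf SS1}) of the bracket, Lemma \ref{lem:2.1}, transitivity of the local unstable relation, and the contraction estimates ({\bf SS2}), one checks that after shrinking $V$ one has $w(x)\in X^u(\phi^{q-n}(x),\epsilon_0)$, $v(x)\in X^s(\phi^{n+p}(x),\epsilon_0)$ and $\phi^{-2n}(v(x))\in X^u(\phi^{q-n}(x),\epsilon_0)$; equivalently $\phi^{n}(y(x))\in X^s(\phi^{n+p}(x),\epsilon_0)$ and $\phi^{-n}(y(x))\in X^u(\phi^{q-n}(x),\epsilon_0)$, i.e.\ $(x,y(x))\in G_\phi^{s,u,n}(p,q)$.

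The remaining point is local uniqueness: there is a neighborhood $W$ of $y_0$ (and, shrinking, a $V$) so that any $(x,y')\in G_\phi^{s,u,n}(p,q)$ with $x\in V$ and $y'\in W$ satisfies $y'=y(x)$. Indeed, $\phi^{n}(y')$ and $\phi^{n}(y(x))$ both lie in $X^s(\phi^{n+p}(x),\epsilon_0)$, so by a standard contraction estimate ({\bf SS2}) combined with Lemma \ref{lem:2.1} their forward orbits are $\epsilon_1$-close after a number $N$ of steps depending only on $\epsilon_0,\epsilon_1,\lambda_0$; hence $(y',y(x))\in G_\phi^{s,n+N}$, and symmetrically $(y',y(x))\in G_\phi^{u,n+N'}$, so $(y',y(x))\in G_\phi^{a,M}$ with $M=n+\max(N,N')$ a constant. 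Now $G_\phi^{a,0}=\Delta_X$ (Lemma \ref{lem:2.5}) is clopen in every $G_\phi^{a,m}$ and $G_\phi^{a,M}$ is clopen in $G_\phi^{a,m}$ for $m\geq M$ (again using the clopen inclusions for $G^s,G^u$), so $\Delta_X=G_\phi^{a,0}$ is open in $G_\phi^{a,M}$ with its relative topology from $X\times X$; consequently $y_0$ has a neighborhood $W$ with $G_\phi^{a,M}\cap(W\times W)=\Delta_W$, which forces $y'=y(x)$. Granting this, $U:=\{(x,p,q,y(x)):x\in V\}=G_\phi^{s,u,n}(p,q)\cap(V\times W)$ is open, and $r|_U\colon(x,p,q,y(x))\mapsto x$ is a homeomorphism onto $V$ with inverse $x\mapsto(x,p,q,y(x))$. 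As $\gamma_0,n,p,q$ were arbitrary these $U$'s cover $G$, so $G$ is \'etale; and continuity of the groupoid operations follows, as for $G_\phi^{a}\rtimes\Z$ in \cite{Putnam1}, from the corresponding statements inside each $G_\phi^{s,u,n}(p,q)\subseteq X\times X$.

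The step I expect to be the real obstacle is the scale bookkeeping underlying both the construction of $y(x)$ and, more pointedly, the uniform level bound $M$ in the uniqueness argument: one must track, through finitely many applications of $\phi$, the passage between the scale $\epsilon_0$ at which the brackets and the sets $G_\phi^{*,0}$ are defined and the scale $\epsilon_1<\epsilon_0$ at which the orbit characterizations of $X^s,X^u$ and the rigidity of the local product structure are available, bearing in mind that the intermediate point $\phi^{2n}(w(x))$ is produced by an expanding iterate. Everything else is formal manipulation with the inductive-limit and disjoint-union topologies and with the groupoid operations.
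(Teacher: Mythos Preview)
Your overall strategy matches the paper's: both build a bracket-based local section of the range map near an arbitrary $(x_0,p,q,y_0)$ and conclude that $r$ is a local homeomorphism. The paper's section is the single nested bracket
\[
\gamma(z)=\bigl[\phi^N\bigl([\phi^{-(N-q)}(z),\phi^{-N}(y)]\bigr),\ \phi^{-N}\bigl([\phi^N(y),\phi^{N+p}(z)]\bigr)\bigr],
\]
whereas you go in two steps through $w(x)$ and $v(x)=[\phi^{2n}(w(x)),\phi^{n+p}(x)]$; the formulas differ, but the idea (first pin down the unstable past, then the stable future) is the same.

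The genuine divergence is in the uniqueness step. The paper proves \emph{injectivity of $\gamma$} directly by bracket algebra: assuming $\gamma(z)=\gamma(z')$, repeated use of ({\bf SS1})(ii) and the $\epsilon_0$--characterization of $X^s,X^u$ forces $z'\in X^s(z,\epsilon_0)\cap X^u(z,\epsilon_0)=\{z\}$. You instead show that any competitor $y'$ satisfies $(y',y(x))\in G_\phi^{a,M}$ for a uniform $M$, and then invoke that $\Delta_X=G_\phi^{a,0}$ is open in $G_\phi^{a,M}$ --- in effect, you are appealing to Putnam's result that $G_\phi^a$ is \'etale. This is a clean shortcut and is valid; the price is that the argument is no longer self-contained, while the paper's bracket computation is.

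One correction: the inclusions $G_\phi^{*,n}\hookrightarrow G_\phi^{*,n+1}$ are \emph{open} (exactly by the argument you sketch: near a point of $G_\phi^{*,0}$ the strict inequality $d(x,y)<\epsilon_0$ persists, and ({\bf SS1})(iii) together with injectivity of $\phi$ pulls back the bracket identity), but in a general Smale space they need not be \emph{closed}, since the strict inequality $d(x,y)<\epsilon_0$ can fail in a limit. Openness is all you use, so replace ``clopen'' by ``open'' throughout. Your own diagnosis is accurate: the scale bookkeeping you flag (passing between $\epsilon_0$ and $\epsilon_1$ through finitely many iterates of $\phi$, including the expanding step $\phi^{2n}$) is exactly what the paper's explicit bracket computation carries out.
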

\begin{proof}
We will show that  
the range map
$r :(x,p,q,y)\in G_{\phi}^{s,u} \rtimes \Z^2
\longrightarrow 
(x,0,0,x) \in (G_{\phi}^{s,u} \rtimes \Z^2)^\circ$
is a local homeomorphism.
Take an arbitrary point 
$(x,p,q,y) \in G_{\phi}^{s,u} \rtimes \Z^2.$
Since
$G_{\phi}^{s,u} \rtimes \Z^2 =\sqcup_{(p,q)\in \Z^2}G_{\phi}^{s,u}(p,q)$
and
$G_{\phi}^{s,u}(p,q) =
\cup_{n=0}^\infty G_\phi^{s,u,n}(p,q),$
we may assume that
$(x,y) $ belongs to $G_\phi^{s,u,N}(p,q)$ 
for some $N \in \Zp$,
so that 
$$
(\phi^p(x),y) \in G_\phi^{s,N}, \qquad
(\phi^q(x),y) \in G_\phi^{u,N}
$$
which imply that 
\begin{equation}
(\phi^{N+p}(x), \phi^N(y)) \in G_\phi^{s,0}, \qquad
(\phi^{-(N-q)}(x),\phi^{-N}(y)) \in G_\phi^{u,0} \label{eq:7.2.1}
\end{equation}
and
\begin{equation}
d(\phi^{N+p+n}(x), \phi^{N+n}(y)) < \epsilon_0,\qquad
d(\phi^{-(N-q+n)}(x),\phi^{-(N+n)}(y)) <\epsilon_0
\end{equation}
for all 
$n=0,1,2,\dots.$
Take $z \in X$ such that $d(x,z)$ is small enough
and
$d(\phi^N(y), \phi^{N+p}(z)) <\epsilon_0$,
so that 
$(\phi^N(y), \phi^{N+p}(z)) \in \Delta_{\epsilon_0}$.
Hence 
the point
$[\phi^N(y), \phi^{N+p}(z)]$ defines an element of $X$,
and we have an element
$\phi^{-N}([\phi^N(y), \phi^{N+p}(z)])$
in $X$.
Since we may assume that 
$[\phi^N(y), \phi^{N+p}(z)] \in X^u(\phi^N(y),\epsilon_0)$,
we have
\begin{align*}
d(y,\phi^{-N}([\phi^N(y), \phi^{N+p}(z)]))
= & d(\phi^{-N}(\phi^N(y)),\phi^{-N}([\phi^N(y), \phi^{N+p}(z)])) \\
< & \lambda_0^N d(\phi^N(y),[\phi^N(y), \phi^{N+p}(z)]) \\
< & \lambda_0^N \epsilon_0.
\end{align*}

Similarly we have an element
$[\phi^{-(N-q)}(z), \phi^{-N}(y)] \in X$
and
$\phi^N([\phi^{-(N-q)}(z), \phi^{-N}(y)] )\in X$
such that 
\begin{equation*}
d(y,\phi^N([\phi^{-(N-q)}(z), \phi^{-N}(y)] )) < \lambda_0^N \epsilon_0.
\end{equation*}
We may also assume that $\lambda_0^N  < \frac{1}{2}$
by taking $N$ large enough, so that
\begin{align*}
  & d(\phi^N([\phi^{-(N-q)}(z), \phi^{-N}(y)] ), \phi^{-N}([\phi^N(y), \phi^{N+p}(z)])) \\
< & 
d(y,\phi^N([\phi^{-(N-q)}(z), \phi^{-N}(y)] ))
+
d(y,\phi^{-N}([\phi^N(y), \phi^{N+p}(z)])) \\
< & 2 \lambda_0^N \epsilon_0 < \epsilon_0.
\end{align*}
Hence we have
\begin{equation*}
(\phi^N([\phi^{-(N-q)}(z), \phi^{-N}(y)] ), \phi^{-N}([\phi^N(y), \phi^{N+p}(z)]))
\in \Delta_{\epsilon_0}
\end{equation*}
so that the element
\begin{equation}
\gamma(z):=[\phi^N([\phi^{-(N-q)}(z), \phi^{-N}(y)] ), \phi^{-N}([\phi^N(y), \phi^{N+p}(z)])]
\label{eq:gammaz}
\end{equation}
is defined in $X$.
The map $\gamma$ is defined on a small neighborhood of $x$
and gives rise to a continuous map on the neighborhood. 
The conditions \eqref{eq:7.2.1}
imply
\begin{equation*}
[\phi^N(y),\phi^{N+p}(x)] =\phi^N(y), \qquad
[\phi^{-(N-q)}(x),\phi^{-N}(y)] =\phi^{-N}(y).
\end{equation*}
Hence 
for $z=x$ in \eqref{eq:gammaz}, we have
\begin{align*}
\gamma(x)
= & [\phi^N([\phi^{-(N-q)}(x), \phi^{-N}(y)] ), \phi^{-N}([\phi^N(y), \phi^{N+p}(x)])] \\
= & [\phi^N(\phi^{-N}(y)), \phi^{-N}(\phi^N(y))] \\
= & [y,y] = y.
\end{align*}

We will next show that $\gamma$ is injective.
Suppose that 
$\gamma(z) = \gamma(z')$ for $z, z'$ in a small neighborhood of $x$. 
Since
\begin{align}
\phi^{-N}(\gamma(z))
= &[[\phi^{-(N-q)}(z), \phi^{-N}(y)], \phi^{-2N}([\phi^N(y), \phi^{N+p}(z)])] \\
= &[\phi^{-(N-q)}(z),  \phi^{-2N}([\phi^N(y), \phi^{N+p}(z)])]  \label{eq:phigammaz}
\intertext{and similarly}
\phi^{-N}(\gamma(z'))
= &[\phi^{-(N-q)}(z'),  \phi^{-2N}([\phi^N(y), \phi^{N+p}(z')])]. \label{eq:phigammaz2}
\end{align}
We then have by \eqref{eq:phigammaz}, \eqref{eq:phigammaz2}
\begin{align*}
\phi^{-(N-q)}(z)
= &[[\phi^{-(N-q)}(z), \phi^{-2N}([\phi^N(y), \phi^{N+p}(z)])], \phi^{-(N-q)}(z)] \\
= &[\phi^{-N}(\gamma(z)), \phi^{-(N-q)}(z)] \\
= &[\phi^{-N}(\gamma(z')), \phi^{-(N-q)}(z)] \\
= &[[\phi^{-(N-q)}(z'), \phi^{-2N}([\phi^N(y), \phi^{N+p}(z')])], \phi^{-(N-q)}(z)]\\
= &[\phi^{-(N-q)}(z'), \phi^{-(N-q)}(z)]\\ 
\end{align*}
so that
\begin{equation}
\phi^{-(N-q)}(z)
= [\phi^{-(N-q)}(z'), \phi^{-(N-q)}(z)].  \label{eq:7.2.2}
\end{equation}
Since
$z, z'$ are in a small neighborhood of $x$,
we may assume that
\begin{equation}
d(\phi^{-(N-q)}(z'),\phi^{-(N-q)}(z)) < \epsilon_0. \label{eq:7.2.3}
\end{equation}
By \eqref{eq:7.2.2} and \eqref{eq:7.2.3},
we know 
\begin{equation*}
\phi^{-(N-q)}(z') \in X^u(\phi^{-(N-q)}(z), \epsilon_0)
\end{equation*}
so that 
\begin{equation}
d(\phi^{-(N-q+n)}(z'), \phi^{-(N-q+n)}(z)) <\epsilon_0
\quad
\text{ for all } 
n=0,1,2,\dots. \label{eq:7.2.4}
\end{equation}
As $z, z'$ are in a small neighborhood of $x$,
we may assume that 
\begin{equation*}
d(\phi^{-n}(x), \phi^{-n}(z)) <\frac{\epsilon_0}{2},\qquad
d(\phi^{-n}(x), \phi^{-n}(z')) <\frac{\epsilon_0}{2}
\quad
\text{ for all } 
n=0,1,\dots, N-q. 
\end{equation*}
Hence we have
\begin{equation}
d(\phi^{-n}(z'), \phi^{-n}(z)) <\epsilon_0
\quad
\text{ for all } 
n=0,1,\dots N-q. 
 \label{eq:7.2.5}
\end{equation}
By \eqref{eq:7.2.4} and \eqref{eq:7.2.5}, we obtain
\begin{equation*}
d(\phi^{-n}(z'), \phi^{-n}(z)) <\epsilon_0
\quad
\text{ for all } 
n=0,1,2,\dots
\end{equation*}
and hence
\begin{equation*}
z' \in X^u(z,\epsilon_0).
\end{equation*}

We similarly observe that
\begin{equation*}
d(\phi^{n}(z'), \phi^{n}(z)) <\epsilon_0 \quad
\text{ for all } 
n=0,1,2,\dots
\end{equation*}
and hence
\begin{equation*}
z' \in X^s(z,\epsilon_0)
\end{equation*}
so that
\begin{equation*}
z' \in X^u(z,\epsilon_0)\cap X^s(z,\epsilon_0)
\end{equation*}
and 
$z' = [z,z] = z$.
This shows that $\gamma$ is injective on a small neighborhood of $x$
and locally a homeomorphism
by the definition of $\gamma$.
As  a consequence, the groupoid  
$G_{\phi}^{s,u} \rtimes \Z^2$ is  \'etale.
\end{proof}

\begin{lemma}
The \'etale groupoid  
$G_{\phi}^{s,u} \rtimes \Z^2$ is amenable.
\end{lemma}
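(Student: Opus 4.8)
The plan is to exhibit $G_{\phi}^{s,u}\rtimes\Z^2$ as an extension of the amenable group $\Z^2$ by the amenable groupoid $G_\phi^a$, and then to invoke the stability of topological amenability under extensions by amenable groups. The extension is witnessed by the map
\[
c:G_{\phi}^{s,u}\rtimes\Z^2\longrightarrow\Z^2,\qquad c(x,p,q,y)=(p,q).
\]
From the multiplication rule $(x,p,q,y)\cdot(x',p',q',y')=(x,p+p',q+q',y')$ one reads off immediately that $c$ is a groupoid homomorphism, and it is continuous because the topology on $G_{\phi}^{s,u}\rtimes\Z^2$ is that of the disjoint union $\sqcup_{(p,q)\in\Z^2}G_\phi^{s,u}(p,q)$, on each (clopen) summand of which $c$ is constant. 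Equivalently, one may view $G_{\phi}^{s,u}\rtimes\Z^2$ as the fibred product $(G_\phi^s\rtimes\Z)\ast_X(G_\phi^u\rtimes\Z)$ of Putnam's stable and unstable groupoids over their common unit space $X$, that is, as a closed subgroupoid of $(G_\phi^s\rtimes\Z)\times(G_\phi^u\rtimes\Z)$; I would keep both descriptions available.

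First I would identify the kernel of $c$ with $G_\phi^a$ carrying its usual (inductive limit) topology. Indeed $c^{-1}(0,0)=\{(x,0,0,y)\mid (x,y)\in G_\phi^{s,u}(0,0)\}$, and since $G_\phi^{s,u,n}(0,0)=G_\phi^{s,n}\cap G_\phi^{u,n}=G_\phi^{a,n}$ for every $n$, passing to the inductive limit (see \eqref{eq:Gphisu}) gives $G_\phi^{s,u}(0,0)=\cup_{n=0}^{\infty}G_\phi^{a,n}=G_\phi^a$ with precisely Putnam's topology. As $G_\phi^{s,u}(0,0)$ is a clopen summand, $c^{-1}(0,0)$ is the groupoid $G_\phi^a$ sitting inside $G_{\phi}^{s,u}\rtimes\Z^2$ as a wide open (and normal) subgroupoid; by Lemma \ref{lem:amenable} it is amenable. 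Note also that the image of $c$ is a subgroup of $\Z^2$, hence amenable, so $G_\phi^a\rightarrow G_{\phi}^{s,u}\rtimes\Z^2\rightarrow \operatorname{im}(c)$ is genuinely an extension with amenable ends.

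Finally, since $\Z^2$ is an amenable discrete group and the continuous cocycle $c$ has amenable kernel $G_\phi^a$, the groupoid $G_{\phi}^{s,u}\rtimes\Z^2$ is amenable by the standard permanence property of topological amenability under extensions by amenable groups, which is the same mechanism by which graph groupoids and Deaconu--Renault groupoids are seen to be amenable, and it is in the spirit of the amenability arguments of \cite{PutSp}. I expect the only point needing genuine care to be this last invocation: one has to confirm that the extension/permanence statement applies in the present framework (Hausdorff unit space $X$, non-\'etale kernel topology), for which the cleanest routes are either to run the approximate-invariant-mean argument directly, combining a net of means witnessing amenability of $G_\phi^a$ with the trivial means on $\Z^2$, or to use the fibred-product description together with the facts that products and closed subgroupoids of amenable groupoids are amenable. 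Everything else is routine once the kernel has been correctly identified, so that is the step I would write out carefully.
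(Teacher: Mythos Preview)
Your approach is essentially identical to the paper's: define the cocycle $c:G_{\phi}^{s,u}\rtimes\Z^2\to\Z^2$, identify its kernel with $G_\phi^a$, and appeal to the extension/permanence property for amenability. The only difference is that the paper dispatches the last step in one line by citing \cite[Proposition 5.1.2]{AnanRenault}, which is exactly the permanence statement you were looking for and removes the need for the direct approximate-invariant-mean argument or the fibred-product workaround you sketched; also, a small slip: the kernel $G_\phi^a$ \emph{is} \'etale, so there is no ``non-\'etale kernel topology'' issue to worry about.
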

\begin{proof}
Consider the groupoid homomorphism 
$\eta: (x,p,q,y)\in G_{\phi}^{s,u} \rtimes \Z^2 \longrightarrow (p,q) \in \Z^2$.
The kernel is $G_\phi^s \cap G_\phi^u = G_\phi^a$
which is amenable by Lemma \ref{lem:amenable}.
Hence by \cite[Proposition 5.1.2]{AnanRenault},
we conclude that 
$G_{\phi}^{s,u} \rtimes \Z^2$ is amenable.
\end{proof}


\begin{definition}
A Smale space $(X,\phi)$ is said to be {\it (s,u)-essentially free}\/
if the interior of the set 
$\{x \in X \mid (\phi^p(x), x) \in G_\phi^s, (\phi^q(x), x) \in G_\phi^u \}$
is empty for each $(p,q)\in \Z\times \Z$ with $(p,q) \ne(0,0)$.
\end{definition}
The following lemma, which is kindly suggested by the referee, 
is proved in a similar way to Lemma \ref{lem:irreessfree} 
\begin{lemma}\label{lem:irresuessfree}
If $(X,\phi)$ is irreducible and $X$ is infinite,
then $(X,\phi)$ is (s,u)-essentially free.
\end{lemma}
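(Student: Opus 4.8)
The plan is to mimic the proof of Lemma \ref{lem:irreessfree}, treating the stable condition via the forward orbit and the unstable condition via the backward orbit (i.e. by passing to $\phi^{-1}$). Fix $(p,q)\in\Z\times\Z$ with $(p,q)\neq(0,0)$ and assume, for contradiction, that the set $\{x\in X\mid (\phi^p(x),x)\in G_\phi^s,\ (\phi^q(x),x)\in G_\phi^u\}$ has nonempty interior, containing a nonempty open set $U$.

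First I would record the Baire-category input, exactly as in the proof of Lemma \ref{lem:irreessfree}. If $U_1,U_2,\dots$ is a countable base for $X$, then by irreducibility each $\bigcup_{n\ge0}\phi^{-n}(U_m)$ is open and dense, so $\bigcap_{m}\bigcup_{n\ge0}\phi^{-n}(U_m)$ is a dense $G_\delta$; it is precisely the set $D^+$ of points with dense forward orbit. Using the equivalent formulation of irreducibility with $\phi^{-K}$, the set $D^-$ of points with dense backward orbit is likewise a dense $G_\delta$. Hence $D^+\cap D^-$ is dense and meets $U$: choose $x\in U$ whose forward orbit and backward orbit are both dense in $X$.

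Since $(p,q)\neq(0,0)$, either $p\neq0$ or $q\neq0$. If $p\neq0$, then $(\phi^p(x),x)\in G_\phi^s$ gives $\phi^p(x)\in X^s(x)$ (by Lemma \ref{lem:n2.6}), so by \cite[Lemma 5.3]{PutSp} (valid for irreducible Smale spaces with $X$ infinite, as noted after Lemma \ref{lem:irreessfree}) the limit $y:=\lim_{k\to\infty}\phi^{kp}(x)$ exists and lies in $\Per_p(X)$. Now the trapping argument of Lemma \ref{lem:irreessfree} applies verbatim: using that $X$ is infinite, pick $z\notin\{y,\phi(y),\dots,\phi^{p-1}(y)\}$, take $\epsilon>0$ small with $V:=\bigcup_{i=0}^{p-1}N_\epsilon(\phi^i(y))$ disjoint from $N_\epsilon(z)$, use uniform continuity of $\phi^0,\dots,\phi^{p-1}$ on the compact space $X$ to get $\delta>0$, and conclude from $\phi^{kp}(x)\to y$ that $\phi^m(x)\in V$ for all large $m$, contradicting density of the forward orbit. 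If instead $q\neq0$, I would run the same reasoning for $\psi:=\phi^{-1}$, which is again irreducible: here $X^u_\phi(x)=X^s_\psi(x)$, and $(\phi^q(x),x)\in G_\phi^u$ reads $\psi^{-q}(x)\in X^s_\psi(x)$ with $-q\neq0$, so $\lim_{k\to\infty}\phi^{kq}(x)=\lim_{k\to\infty}\psi^{-kq}(x)$ exists and is a $\phi$-periodic point of period $q$, whose finite orbit eventually traps the $\psi$-forward orbit of $x$, i.e. the $\phi$-backward orbit of $x$ — contradicting density of the backward orbit. In either case we reach a contradiction, so the interior in question is empty and $(X,\phi)$ is $(s,u)$-essentially free.

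I expect no genuine obstacle here; the only points needing a little care are (i) intersecting the two dense $G_\delta$ sets so that a single $x$ serves for whichever of $p,q$ happens to be nonzero, and (ii) the routine translation of the unstable condition into the stable condition for $\phi^{-1}$ so that \cite[Lemma 5.3]{PutSp} is applicable. Everything else is the trapping estimate already carried out in the proof of Lemma \ref{lem:irreessfree}.
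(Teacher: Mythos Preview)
Your proposal is correct and follows essentially the same route as the paper. The paper is slightly more economical: rather than selecting $x\in D^+\cap D^-$ and treating the cases $p\neq 0$ and $q\neq 0$ separately, it simply observes the containment
\[
\{x\mid (\phi^p(x),x)\in G_\phi^s,\ (\phi^q(x),x)\in G_\phi^u\}\subset\{x\mid (\phi^p(x),x)\in G_\phi^s\},
\]
reduces (by the $\phi\leftrightarrow\phi^{-1}$ symmetry you also invoke) to the case $p\neq 0$, and then cites the proof of Lemma~\ref{lem:irreessfree} verbatim, since that proof only uses the stable condition $\phi^p(x)\in X^s(x)$.
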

\begin{proof}
Suppose that 
the set
$$
\interior\{x \in X \mid (\phi^p(x), x) \in G_\phi^s, (\phi^q(x), x) \in G_\phi^u \}
$$
contains a non-empty open set $U$ 
for a fixed  $(p,q)\in \Z\times \Z$ with $(p,q) \ne(0,0)$.
We may assume that $p \ne 0$.
Since
\begin{equation*}
\interior\{x \in X \mid (\phi^p(x), x) \in G_\phi^s, (\phi^q(x), x) \in G_\phi^u \}
\subset
\interior\{x \in X \mid (\phi^p(x), x) \in G_\phi^s \},
\end{equation*}
We have a non-empty open set $U$ 
such that 
\begin{equation*}
U
\subset
\interior\{x \in X \mid (\phi^p(x), x) \in G_\phi^s \}
\end{equation*}
for a fixed $p\ne 0$.
By the same argument as the proof of Lemma \ref{lem:irreessfree},
we have a contradiction, thus proving
$(X,\phi)$ is (s,u)-essentially free. 
\end{proof}

\begin{lemma}\label{lem:suessprin}
A Smale space $(X,\phi)$ is  (s,u)-essentially free
if and only if the \'etale groupoid 
$G_\phi^{s,u}\rtimes \Z^2$ is essentially principal.
\end{lemma}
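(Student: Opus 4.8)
The plan is to prove this in the same way as Lemma~\ref{lem:essprin}, replacing the \'etale groupoid $G_\phi^a\rtimes\Z$ by $G_\phi^{s,u}\rtimes\Z^2$, which is \'etale. First I would compute the isotropy bundle
\begin{equation*}
(G_\phi^{s,u}\rtimes\Z^2)' = \{\gamma \in G_\phi^{s,u}\rtimes\Z^2 \mid s(\gamma) = r(\gamma)\}.
\end{equation*}
Since $r(x,p,q,y) = (x,0,0,x)$ and $s(x,p,q,y) = (y,0,0,y)$, and the unit space is identified with $X$ through $(x,0,0,x)\leftrightarrow x$, an element $(x,p,q,y)$ lies in $(G_\phi^{s,u}\rtimes\Z^2)'$ precisely when $x = y$, so that
\begin{equation*}
(G_\phi^{s,u}\rtimes\Z^2)'
= \cup_{(p,q)\in\Z^2}\{(x,p,q,x)\in X\times\Z\times\Z\times X \mid
 (\phi^p(x),x)\in G_\phi^s, \ (\phi^q(x),x)\in G_\phi^u \}.
\end{equation*}

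Next I would take interiors. Because the topology of $G_\phi^{s,u}\rtimes\Z^2$ is that of the disjoint union $\sqcup_{(p,q)\in\Z^2}G_\phi^{s,u}(p,q)$, each sheet $G_\phi^{s,u}(p,q)$ is clopen, hence the interior of a union of subsets lying in distinct sheets is the union of their interiors computed sheet by sheet:
\begin{equation*}
\interior((G_\phi^{s,u}\rtimes\Z^2)')
= \cup_{(p,q)\in\Z^2}\interior(\{(x,p,q,x) \mid
 (\phi^p(x),x)\in G_\phi^s, \ (\phi^q(x),x)\in G_\phi^u \}).
\end{equation*}
For $(p,q) = (0,0)$ we have $(x,x)\in G_\phi^{s,0}\subset G_\phi^s$ and $(x,x)\in G_\phi^{u,0}\subset G_\phi^u$ for every $x\in X$, so the $(0,0)$-term equals $\{(x,0,0,x)\mid x\in X\} = (G_\phi^{s,u}\rtimes\Z^2)^\circ$, which is open. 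Therefore $(G_\phi^{s,u}\rtimes\Z^2)^\circ \subseteq \interior((G_\phi^{s,u}\rtimes\Z^2)')$ always, and equality holds if and only if for every $(p,q)\ne(0,0)$ the $(p,q)$-term has empty interior.

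Finally I would translate this back to $X$: using that the range and source maps of the \'etale groupoid $G_\phi^{s,u}\rtimes\Z^2$ are local homeomorphisms and that the unit space is identified with $X$, the $(p,q)$-term has empty interior in $G_\phi^{s,u}\rtimes\Z^2$ if and only if $\interior\{x\in X \mid (\phi^p(x),x)\in G_\phi^s, \ (\phi^q(x),x)\in G_\phi^u\}$ is empty in $X$. Since the latter, for all $(p,q)\ne(0,0)$, is exactly the defining condition of $(s,u)$-essential freeness, this yields the claimed equivalence. The only step that is not pure bookkeeping is this last transfer of ``emptiness of interior'' between a sheet of the groupoid and the space $X$; it is the same point that underlies the proof of Lemma~\ref{lem:essprin}, and is where the \'etale structure (via the local-homeomorphism property of $r$ and $s$) is essential, so I expect it to be the main, though mild, obstacle.
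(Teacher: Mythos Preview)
Your proposal is correct and follows essentially the same route as the paper's proof: compute the isotropy bundle, split the interior sheet by sheet over $(p,q)\in\Z^2$, observe that the $(0,0)$-sheet is the unit space, and conclude by matching the emptiness-of-interior condition on the remaining sheets with the definition of $(s,u)$-essential freeness. If anything, you are slightly more explicit than the paper about the last transfer step from the groupoid sheet to $X$, which the paper leaves implicit.
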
	
\begin{proof}
As we have
\begin{align*}
(G_\phi^{s,u}\rtimes \Z^2)'
= & \cup_{p,q \in \Z}\{(x,p,q,y) \in G_\phi^{s,u}\rtimes \Z^2 \mid x=y\}\\
= & \cup_{p,q \in \Z}\{(x,p,q,x) \in X \times \Z\times \Z \times X  
       \mid (\phi^p(x),x) \in G_\phi^s, (\phi^q(x),x) \in G_\phi^u\},
\end{align*}
the interior $\interior((G_\phi^{s,u}\rtimes \Z^2)')$
of $G_\phi^{s,u}\rtimes \Z^2$ is 
\begin{equation*}
\interior((G_\phi^{s,u}\rtimes \Z^2)')
=  \cup_{p,q \in \Z} \interior(\{(x,p,q,x) \in X \times \Z\times \Z \times X  
       \mid (\phi^p(x),x) \in G_\phi^s, (\phi^q(x),x) \in G_\phi^u\}).
\end{equation*}
For $p=q=0$,
we see that
$$
\interior(\{(x,0,0,x) \in X \times \Z\times \Z \times X  
       \mid (x,x) \in G_\phi^s, (x,x) \in G_\phi^u\})
= (G_\phi^{s,u}\rtimes\Z^2)^{\circ} =X.
$$
 Hence  
$\interior((G_\phi^{s,u}\rtimes \Z^2)') = (G_\phi^{s,u}\rtimes\Z^2)^{\circ}$ 
if and only if
the interior of 
$\{(x,p,q,x) \in X \times \Z\times \Z \times X  
       \mid (\phi^p(x),x) \in G_\phi^s, (\phi^q(x),x) \in G_\phi^u\}
$
is empty for all $p,q \in \Z$ except $p=q=0$.
This implies that 
$(X,\phi)$ is  (s,u)-essentially free
if and only if 
$G_\phi^{s,u}\rtimes \Z^2$ is essentially principal.
\end{proof}
\begin{definition}
The groupoid $C^*$-algebra 
$C^*(G_\phi^{s,u}\rtimes\Z^2)$ of the
\'etale amenable groupoid $G_\phi^{s,u}\rtimes\Z^2$
for a Smale space $(X,\phi)$ is called the {\it extended asymptotic Ruelle algebra}\/
or simply the {\it extended Ruelle algebra}\/ and written
$\R_\phi^{s,u}$.
\end{definition}
Since
$G_{\phi}^{s,u} \rtimes \Z^2$ is amenable,
the $C^*$-algebra
$\R_\phi^{s,u}$
is  identified with the reduced 
groupoid $C^*$-algebra
$C_r^*(G_{\phi}^{s,u} \rtimes \Z^2)$
on
$l^2(G_{\phi}^{s,u} \rtimes \Z^2)$
in a canonical way.

Similarly to Proposition \ref{prop:simple},
we obtain the following. 
\begin{proposition}\label{prop:susimple}
If a Smale space $(X,\phi)$ is irreducible
and $X$ is infinite,
then the $C^*$-algebra
$\R_\phi^{s,u}$ is simple.
\end{proposition}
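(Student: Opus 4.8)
The plan is to follow the proof of Proposition \ref{prop:simple} almost verbatim, with the groupoid $G_\phi^a\rtimes\Z$ replaced by $G_\phi^{s,u}\rtimes\Z^2$. Concretely, I will invoke Renault's simplicity criterion \cite[Proposition 4.6]{Renault}, according to which the reduced groupoid $C^*$-algebra of an \'etale groupoid that is simultaneously essentially principal and minimal is simple. Since $G_\phi^{s,u}\rtimes\Z^2$ has already been shown to be \'etale and amenable, its reduced and full groupoid $C^*$-algebras coincide and equal $\R_\phi^{s,u}$, so it suffices to verify these two hypotheses.

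The first hypothesis is immediate from what has been proved: as $(X,\phi)$ is irreducible and $X$ is infinite, Lemma \ref{lem:irresuessfree} gives that $(X,\phi)$ is $(s,u)$-essentially free, whence $G_\phi^{s,u}\rtimes\Z^2$ is essentially principal by Lemma \ref{lem:suessprin}.

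For the second hypothesis I must show that $X$, regarded as the unit space, has no proper non-empty open $G_\phi^{s,u}\rtimes\Z^2$-invariant subset; equivalently, that every orbit is dense. I will compute the orbit of a point $x\in X$ to be $\bigcup_{p,q\in\Z}\bigl(X^s(\phi^p(x))\cap X^u(\phi^q(x))\bigr)$ and note that, since $(\phi^n(x),y)\in G_\phi^a$ forces $(x,n,n,y)\in G_\phi^{s,u}\rtimes\Z^2$, this orbit contains the $G_\phi^a\rtimes\Z$-orbit $\bigcup_{n\in\Z}X^a(\phi^n(x))$ of $x$; the latter is dense by the density of orbits that already underlies Proposition \ref{prop:simple}, so the $G_\phi^{s,u}\rtimes\Z^2$-orbit of $x$ is dense as well. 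If a self-contained argument is preferred, I would instead reduce to an irreducible shift of finite type via Bowen's Theorem \ref{thm:Bowen}: a finite-to-one factor map $\pi\colon(\bar X_A,\bar\sigma_A)\to(X,\phi)$ is uniformly continuous and intertwines the dynamics, hence carries stable (resp. unstable) sets into stable (resp. unstable) sets and therefore carries $G_{\bar\sigma_A}^{s,u}\rtimes\Z^2$-orbits into $G_\phi^{s,u}\rtimes\Z^2$-orbits; consequently the $\pi$-preimage of an open invariant subset of $X$ is an open invariant subset of $\bar X_A$, and since for an infinite irreducible topological Markov shift every orbit of $G_{\bar\sigma_A}^{s,u}\rtimes\Z^2$ is dense — a direct cylinder-set computation using irreducibility of $A$ — minimality descends to $(X,\phi)$ by surjectivity of $\pi$.

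Having checked both hypotheses, Renault \cite[Proposition 4.6]{Renault} gives simplicity of $\R_\phi^{s,u}$. The one genuinely substantive point is the minimality step, i.e., the density of the combined stable--unstable orbits, which is exactly where irreducibility of $(X,\phi)$ enters; everything else is a routine transcription of the argument for Proposition \ref{prop:simple}, together with the \'etaleness and amenability of $G_\phi^{s,u}\rtimes\Z^2$ established above.
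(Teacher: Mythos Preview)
Your proposal is correct and follows exactly the approach the paper indicates: the paper says only ``Similarly to Proposition \ref{prop:simple}'' (whose proof is declared ``direct from Renault's result \cite[Proposition 4.6]{Renault}''), and you have supplied precisely those details, invoking Lemmas \ref{lem:irresuessfree} and \ref{lem:suessprin} for essential principality and the containment of $G_\phi^a\rtimes\Z$-orbits in $G_\phi^{s,u}\rtimes\Z^2$-orbits for minimality. The paper also mentions, as an alternative, that simplicity follows from \cite[Theorem 1.4]{PutSp} via the stable isomorphism $\R_\phi^{s,u}\sim_{\mathrm{st}}\R_\phi^s\otimes\R_\phi^u$ of Proposition \ref{prop:extendedtensor}, but your route is the one the paper's wording points to.
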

We note that the above proposition also follows from 
\cite[Theorem 1.4]{PutSp} through Proposition \ref{prop:extendedtensor}
which will be shown later. 

Let $U_{z_1,z_2}, \, (z_1,z_2) \in 
\T^2 = \{ (z_1,z_2) \in \mathbb{C}\times \mathbb{C} \mid |z_i| =1 \}$ 
be an action of $\T^2$ to the  unitary group of
$B(l^2(G_{\phi}^{s,u} \rtimes \Z^2))$ defined by
$$
(U_{z_1,z_2} \xi)(x,p,q,y) =z_1^{p} z_2^{-q}\xi(x,p,q,y) \quad \text{ for} \quad
\xi \in l^2(G_{\phi}^{s,u} \rtimes \Z^2), (x,p,q,y) \in 
G_{\phi}^{s,u} \rtimes \Z^2.
$$
It is easy to see that the automorphisms
$Ad(U_{z_1,z_2})$
of 
$B(l^2(G_{\phi}^{s,u} \rtimes \Z^2))$
for 
$(z_1,z_2) \in \T^2 
$
leave
 $\R_\phi^{s,u}$  globally invariant.
They  give rise to an action of $\T^2$ on $\R_\phi^{s,u},$
denoted by $\rho_{\phi}^{s,u}$.
Let us denote by
$\delta^\phi_z=\rho_{\phi,(z, z)}^{s,u}, z \in \T$
the action of $\T$, called the diagonal action.
Recall that the asymptotic Ruelle algebra 
$\R_\phi^a$ is defined 
by
the groupoid $C^*$-algebra
$C^*(G_{\phi}^{a} \rtimes \Z)$
of the \'etale groupoid
$G_{\phi}^{a} \rtimes \Z$.
We then have 
\begin{theorem}\label{thm:7.3}
Assume that 
a Smale space $(X,\phi)$ is irreducible
and $X$ is infinite.
Then the fixed point algebra  
$(\R_\phi^{s,u})^{\delta^\phi}$
of
$\R_\phi^{s,u}$ under the diagonal action $\delta^\phi$
is isomorphic to the asymptotic Ruelle algebra
$\R_\phi^a$.
\end{theorem}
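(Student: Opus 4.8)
The plan is to realize the diagonal action $\delta^\phi$ as the gauge action attached to a $\Z$-grading of the groupoid $G_\phi^{s,u}\rtimes\Z^2$ and then to identify the corresponding kernel subgroupoid with $G_\phi^a\rtimes\Z$. First I would note that $c\colon(x,p,q,y)\in G_\phi^{s,u}\rtimes\Z^2\longrightarrow p-q\in\Z$ is a continuous homomorphism, since $c$ is locally constant on $\sqcup_{(p,q)}G_\phi^{s,u}(p,q)$, and that by the very definition of $\rho^{s,u}_\phi$ the unitary $U_{z,z}$ implementing $\delta^\phi_z=\rho^{s,u}_{\phi,(z,z)}$ acts on $l^2(G_\phi^{s,u}\rtimes\Z^2)$ by $(U_{z,z}\xi)(x,p,q,y)=z^{p-q}\xi(x,p,q,y)$; thus $\delta^\phi$ is exactly the gauge action associated with $c$, in the spirit of Section 5. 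Because $c$ takes values in $\Z$ and is continuous, the kernel $c^{-1}(0)$ is a clopen subgroupoid of $G_\phi^{s,u}\rtimes\Z^2$ containing the unit space, so $C_c(c^{-1}(0))\subseteq C_c(G_\phi^{s,u}\rtimes\Z^2)$ and the averaging map $E(a)=\int_\T\delta^\phi_z(a)\,dz$ is a faithful conditional expectation of $\R_\phi^{s,u}$ onto the closure of $C_c(c^{-1}(0))$, which is precisely the fixed point algebra $(\R_\phi^{s,u})^{\delta^\phi}$. Since both $G_\phi^{s,u}\rtimes\Z^2$ (shown amenable above) and $c^{-1}(0)$ (amenable, once identified with $G_\phi^a\rtimes\Z$, by Lemma~\ref{lem:amenable}) are amenable, reduced and full groupoid $C^*$-algebras agree on both, and the inclusion induced by the clopen subgroupoid is isometric; hence $(\R_\phi^{s,u})^{\delta^\phi}\cong C^*(c^{-1}(0))$.

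Next I would identify $c^{-1}(0)$ with $G_\phi^a\rtimes\Z$ as \'etale groupoids. By definition
\[
c^{-1}(0)=\{(x,p,p,y)\in X\times\Z\times\Z\times X \mid (\phi^p(x),y)\in G_\phi^s\cap G_\phi^u=G_\phi^a\},
\]
so $\Theta\colon(x,p,p,y)\mapsto(x,p,y)$ is a bijection of $c^{-1}(0)$ onto $G_\phi^a\rtimes\Z$. It is immediate from the groupoid operations on the two sides that $\Theta$ preserves products and inverses and is the identity on unit spaces (both identified with $X$). For the topology, recall that $G_\phi^{s,u}\rtimes\Z^2=\sqcup_{(p,q)}G_\phi^{s,u}(p,q)$ with $G_\phi^{s,u}(p,q)$ the inductive limit of the spaces $G_\phi^{s,u,n}(p,q)$; for $q=p$ one has $G_\phi^{s,u,n}(p,p)=\{(x,y)\mid(\phi^p(x),y)\in G_\phi^{s,n}\cap G_\phi^{u,n}=G_\phi^{a,n}\}$ as a subspace of $X\times X$, and $(x,y)\mapsto(\phi^p(x),y)$ is a homeomorphism of this set onto $G_\phi^{a,n}$ compatible with the inclusions in $n$. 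Passing to inductive limits and comparing with the product topology on $G_\phi^a\times\Z$ transported to $G_\phi^a\rtimes\Z$ by the homeomorphism $\gamma$ of \eqref{eq:gamma}, one finds that $\Theta$ restricted to each fiber over $p\in\Z$ is a homeomorphism onto the corresponding fiber of $G_\phi^a\rtimes\Z$, and since both total spaces are the (discrete) topological coproducts of these fibers, $\Theta$ is a homeomorphism. Therefore $\Theta$ is an isomorphism of \'etale groupoids, whence $C^*(c^{-1}(0))\cong C^*(G_\phi^a\rtimes\Z)=\R_\phi^a$, and combined with the first step this yields $(\R_\phi^{s,u})^{\delta^\phi}\cong\R_\phi^a$; moreover the resulting isomorphism carries $C(X)$ onto $C(X)$ because $\Theta$ fixes the unit space.

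The bulk of the write-up is the routine topological bookkeeping in the second step: matching the inductive-limit topology that $c^{-1}(0)$ inherits from $G_\phi^{s,u}\rtimes\Z^2$ with the topology on $G_\phi^a\rtimes\Z$ defined through $\gamma$. The genuine point requiring care is the first step, namely that the fixed point algebra of the gauge action of the $\Z$-grading $c(x,p,q,y)=p-q$ is the groupoid $C^*$-algebra of the kernel $c^{-1}(0)$; here one must invoke amenability of \emph{both} $G_\phi^{s,u}\rtimes\Z^2$ and $c^{-1}(0)\cong G_\phi^a\rtimes\Z$ so that the full and reduced pictures coincide and the inclusion $C^*(c^{-1}(0))\hookrightarrow\R_\phi^{s,u}$ is isometric, after which the conditional expectation $E$ obtained by averaging over $\T$ identifies its range with $(\R_\phi^{s,u})^{\delta^\phi}$.
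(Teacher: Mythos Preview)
Your proof is correct and follows essentially the same route as the paper: both identify the clopen diagonal subgroupoid $\{(x,p,p,y)\}=c^{-1}(0)$ with $G_\phi^a\rtimes\Z$, and both use the $\T$-averaging conditional expectation $\int_\T\delta^\phi_z(\,\cdot\,)\,dz$ to land in the fixed point algebra. Your write-up is in fact more careful than the paper's, which simply declares the topological identification and the equality $\mathcal{E}_\phi(\R_\phi^{s,u})=(\R_\phi^{s,u})^{\delta^\phi}$ to be ``routine'' without invoking amenability or spelling out the inductive-limit topology matching.
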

\begin{proof}
The 
\'etale groupoid
$G_{\phi}^{a} \rtimes \Z$
is identified with the subgroupoid 
\begin{equation}
 \{(x, p,p,y) \in X \times \Z\times \Z \times X \mid
(\phi^p(x), y) \in G_{\phi}^{s}, \, (\phi^p(x), y) \in G_{\phi}^{u} \}
\subset G_{\phi}^{s,u} \rtimes \Z^2
\end{equation}
of
$G_{\phi}^{s,u} \rtimes \Z^2$,
which is written $(G_{\phi}^{s,u} \rtimes \Z^2)^D$.
Since $(G_{\phi}^{s,u} \rtimes \Z^2)^D$
is clopen in $G_{\phi}^{s,u} \rtimes \Z^2$,
we have a natural inclusion relation
$C_c((G_{\phi}^{s,u} \rtimes \Z^2)^D)
\subset
C_c(G_{\phi}^{s,u} \rtimes \Z^2)$
of the algebras.
For $f \in C_c((G_{\phi}^{s,u} \rtimes \Z^2)^D),
$
we put
\begin{equation*}
{\mathcal{E}}_\phi(f) (x,p,q,y)
=
\begin{cases}
f(x,p,p,y) & \text{ if } p=q,\\
0          & \text{ if } p\ne q.
\end{cases}
\end{equation*}
Then 
${\mathcal{E}}_\phi$ 
defines a continuous linear map from 
$
C_c(G_{\phi}^{s,u} \rtimes \Z^2)
$
to
$C_c((G_{\phi}^{s,u} \rtimes \Z^2)^D)
$
and extends to 
$\R_\phi^{s,u}$ by the formula
\begin{equation*}
{\mathcal{E}}_\phi(f) 
= \int_{\mathbb{T}}\delta^{\phi}_z(f) dz \quad
\text{ for }
f \in \R_\phi^{s,u}
\end{equation*}
so that we have a conditional expectation from
$\R_\phi^{s,u}$ onto $\R_\phi^a$.
It is routine to check that
${\mathcal{E}}_\phi(\R_\phi^{s,u})$
is the fixed point algebra $(\R_\phi^{s,u})^{\delta^\phi}$
of $\R_\phi^{s,u}$ under the diagonal action $\delta^\phi$.
\end{proof}
The author would like to thank the referee who kindly suggested the following proposition
 to the author.
\begin{proposition}\label{prop:extendedtensor}
The extended Ruelle algebra $\R_\phi^{s,u}$
is stably isomorphic to the tensor product $\R^s_\phi \otimes \R^u_\phi$
between the stable Ruelle algebra $\R^s_\phi$ 
and the unstable Ruelle algebra $\R^u_\phi$.
\end{proposition}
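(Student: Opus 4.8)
The plan is to present $\R_\phi^{s,u}$ as the groupoid $C^*$-algebra of the reduction of the product groupoid $P:=(G_\phi^s\rtimes\Z)\times(G_\phi^u\rtimes\Z)$ to the diagonal transversal in its unit space, and then to invoke the groupoid equivalence theorem of Muhly--Renault--Williams together with the Brown--Green--Rieffel stabilization theorem.

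First I would record two identifications. Since $G_\phi^s\rtimes\Z$ and $G_\phi^u\rtimes\Z$ are amenable (their $C^*$-algebras being $\R_\phi^s$ and $\R_\phi^u$), the product groupoid $P$ is amenable and $C^*(P)\cong\R_\phi^s\otimes\R_\phi^u$. Its unit space is $X\times X$; let $\Delta=\{(x,x)\mid x\in X\}$ be the diagonal, a closed subset. Unravelling the groupoid operations of Section 2, an element $((a,n,a'),(b,m,b'))\in P$ has both its range $(a,b)$ and its source $(a',b')$ in $\Delta$ exactly when $b=a$ and $b'=a'$, and then $(\phi^n a,a')\in G_\phi^s$ and $(\phi^m a,a')\in G_\phi^u$; relabelling $a,a'$ as $x,y$ this says precisely $(x,n,m,y)\in G_\phi^{s,u}\rtimes\Z^2$. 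Hence the reduction $P|_\Delta$ coincides with $G_\phi^{s,u}\rtimes\Z^2$ and $\R_\phi^{s,u}=C^*(P|_\Delta)$.

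Second, I would verify that $\Delta$ is an \emph{abstract transversal} for $P$, i.e.\ that $r\colon s^{-1}(\Delta)\to X\times X$ is an open surjection; granting this, Muhly--Renault--Williams gives a topological groupoid equivalence between the reduction $P|_\Delta=G_\phi^{s,u}\rtimes\Z^2$ and $P$, implemented by the linking space $Z=s^{-1}(\Delta)=\{(a,b,n,m,x)\mid(\phi^n a,x)\in G_\phi^s,\ (\phi^m b,x)\in G_\phi^u\}$, with $P$ acting on the left by left multiplication in the two factors (moving the stable class of $a$ and the unstable class of $b$) and $G_\phi^{s,u}\rtimes\Z^2$ acting on the right through the common source coordinate $x$, both actions being free and proper. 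Openness of $r$ is routine from the local product structure ({\bf SS1}), which makes the range maps $G_\phi^{s,0}\to X$ and $G_\phi^{u,0}\to X$ (hence their $\Z$-extensions and their product) locally trivial fibrations. Surjectivity of $r$ is the assertion that every $P$-orbit meets $\Delta$: for all $a,b\in X$ there exist $x\in X$ and $n,m\in\Z$ with $(\phi^n a,x)\in G_\phi^s$ and $(\phi^m b,x)\in G_\phi^u$. For an irreducible two-sided topological Markov shift one takes $x$ to agree with $a$ on a sufficiently long right tail and with $b$ on a sufficiently long left tail, the bounded middle block being filled in admissibly by irreducibility; the general irreducible Smale space is reduced to this case through Bowen's finite-to-one factor map $(\bar X_A,\bar\sigma_A)\to(X,\phi)$ of Theorem~\ref{thm:Bowen}, using that such a map carries $G_{\bar\sigma_A}^s$ into $G_\phi^s$ and $G_{\bar\sigma_A}^u$ into $G_\phi^u$.

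Once the equivalence $G_\phi^{s,u}\rtimes\Z^2\sim P$ is in hand, the Muhly--Renault--Williams equivalence theorem yields a Morita equivalence $\R_\phi^{s,u}=C^*(G_\phi^{s,u}\rtimes\Z^2)\sim C^*(P)\cong\R_\phi^s\otimes\R_\phi^u$, and, all algebras being separable, the Brown--Green--Rieffel theorem upgrades this to a stable isomorphism $\R_\phi^{s,u}\otimes\K\cong(\R_\phi^s\otimes\R_\phi^u)\otimes\K$. I expect the main obstacle to be the surjectivity of $r$ on $s^{-1}(\Delta)$: this is exactly where irreducibility and Bowen's subshift cover are essential — for a general, possibly reducible, Smale space a pair $(a,b)$ lying in unrelated orbits need not be connectable through a single point, so $\Delta$ would fail to be a transversal — whereas the openness of $r$, though more routine, still rests on the local product structure. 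One could alternatively deduce the proposition from the periodic-orbit groupoids of Putnam--Spielberg \cite{PutSp} and their Morita equivalences with $G_\phi^s\rtimes\Z$ and $G_\phi^u\rtimes\Z$, but the transversal argument above is more self-contained.
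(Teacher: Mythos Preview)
Your proposal is correct and follows essentially the same route as the paper: identify $\R_\phi^{s,u}$ with the reduction of the product groupoid $(G_\phi^s\rtimes\Z)\times(G_\phi^u\rtimes\Z)$ to the diagonal, show the diagonal is an abstract transversal, and invoke Muhly--Renault--Williams. The paper simply defers the transversal verification to \cite[Theorem~3.1]{Putnam1} rather than spelling out the surjectivity argument via Bowen's cover as you do, and it first passes through the isomorphic groupoid $(G_\phi^s\times G_\phi^u)\rtimes\Z^2$, but these are cosmetic differences.
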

\begin{proof}
It is easy to see that the correspondence
\begin{equation*}
(x,p,z) \times (y,q,w) 
\in (G_\phi^s\rtimes\Z)\times (G_\phi^u\rtimes\Z) \longrightarrow
((x,z)\times(y,w)), (p,q) )
\in 
 (G_\phi^s\times G_\phi^u)\rtimes\Z^2
\end{equation*}
yields an isomorphism of \'etale groupoids
between $ (G_\phi^s\rtimes\Z)\times (G_\phi^u\rtimes\Z)$
and
$ (G_\phi^s\times G_\phi^u)\rtimes\Z^2.$
Hence we have
\begin{equation*}
\R^s_\phi \otimes \R^u_\phi
=  C^*((G_\phi^s\rtimes\Z) \times (G_\phi^u\rtimes\Z))
\cong C^*((G_\phi^s \times G_\phi^u)\rtimes\Z^2).  
\end{equation*}
As in the proof of \cite[Theorem 3.1]{Putnam1},
the diagonal 
$\Delta = \{((x,z)\times(x,z),(p,q)) \in (G_\phi^s \times G_\phi^u)\rtimes\Z^2\}$
is an abstract transversal in the sense of Muhly--Renault-Williams \cite{MRW}.
Since the reduction of 
 $(G_\phi^s \times G_\phi^u)\rtimes\Z^2$ to $\Delta$ is clearly isomorphic to
$G_\phi^{s,u}\rtimes\Z^2$ 
as \'etale groupoids,
we see by \cite[Theorem 2.8]{MRW}
that 
$C^*(G_\phi^{s,u}\rtimes\Z^2)$ 
is stably isomorphic to 
$C^*((G_\phi^s \times G_\phi^u)\rtimes\Z^2)$, so that 
The extended Ruelle algebra $\R_\phi^{s,u}$
is stably isomorphic to the tensor product $\R^s_\phi \otimes \R^u_\phi$.
\end{proof}

\section{Asymptotic continuous orbit equivalence in topological Markov shifts}
In the first part of this section, we will deal with
topological Markov shifts, which are often called shifts of finite type,
 as examples of Smale spaces.
They have been studied by Ruelle, Putnam and Putnam-Spielberg, etc.
from the view point of Smale spaces.
The following description follows from Putnam's lecture note
\cite[Section 1]{Putnam2}.

Let
 $A =[A(i,j)]_{i,j=1}^N$ be an $N \times N$ matrix with entries $A(i,j)$ in $\{0,1\}$
for $i,j=1,\dots,N$ such that none of its rows or columns is zero.
We assume that $N \ge 2$ and the matrix $A$ is irreducible and not any permutation matrix.
Let us denote by
$\bar{X}_A$  the shift space of the two-sided topological Markov shift 
$(\bar{X}_A, \bar{\sigma}_A)$, which is defined by 
\begin{equation*}
\bar{X}_A =\{ (x_n)_{n \in \Z} \in \{1,\dots,N\}^{\Z} \mid A(x_n, x_{n+1}) =1 \text{ for all } n \in \Z\} 
\end{equation*}
with shift transformation $\bar{\sigma}_A$ defined by
$\bar{\sigma}_A((x_n)_{n\in \Z}) = (x_{n+1})_{n\in \Z}.$
We note that the assumption that $A$ is irreducible and not a permutation matrix
implies the shift space $\bar{X}_A$ is infinite and hence homeomorphic to a Cantor discontinuum.

Take and fix an arbitrary real number $\lambda_0$ with $0<\lambda_0<1$.
The space $\bar{X}_A$ is endowed with the following metric $d$ defined  by 
\begin{equation*}
d((x_n)_{n\in \Z}, (y_n)_{n\in \Z}) =
\begin{cases}
0 & \text{ if } (x_n)_{n\in \Z} = (y_n)_{n\in \Z}, \\
1 & \text{ if } x_0 \ne y_0,\\
(\lambda_0)^{k+1} & \text{ if } k = \Max\{|n| \mid x_i = y_i \text{ for all } i ; |i| \le n\}.
\end{cases}
\end{equation*}
With the above metric $d$,
 the space $\bar{X}_A$ is a compact Hausdorff space such that 
the topological dynamical system  $(\bar{X}_A, \bar{\sigma}_A)$
is called the two-sided topological Markov shift
defined by $A$. 
For $k \in \Zp$, we set
\begin{equation*}
B_k(\bar{X}_A)
=
\{(x_n)_{n=1}^k \in \{1,\dots,N\}^k \mid A(x_n, x_{n+1}) =1, n=1,\dots,k-1\}
\end{equation*}
and
$B_*(\bar{X}_A) = \cup_{k=0}^\infty B_k(\bar{X}_A),$
where 
$B_0(\bar{X}_A)$ denotes the empty word $\emptyset$.
Each member of $B_k(\bar{X}_A)$ is called an admissible word of length $k$.

 We will view the topological Markov shift
as a Smale space in the following way. 
Take $\epsilon_0 = 1$,
so that we have
$(x,y) \in \Delta_{\epsilon_0}$ if and only if 
$x_0 = y_0$.
Hence
the bracket $[x,y] =([x,y]_n)_{n \in \Z} \in \bar{X}_A$ for $(x,y) \in \Delta_{\epsilon_0}$ 
may be defined by
\begin{equation*}
[x,y]_n  = 
\begin{cases}
x_n & \text{ if } n \le 0, \\
y_n & \text{ if } n\ge 0.
\end{cases}
\end{equation*}
Since $x_0 = y_0$, 
$([x,y]_n)_{n \in \Z}$ defines an element of $\bar{X}_A.$
We then have
\begin{align*}
\bar{X}_A^s(x,\epsilon_0) 
&= \{y \in \bar{X}_A \mid y_n = x_n \text{ for } n=0,1,2,\dots \}, \\
\bar{X}_A^u(x,\epsilon_0)
&= \{y \in \bar{X}_A \mid y_n = x_n \text{ for } n=0,-1,-2,\dots \}. 
\end{align*}

As in  Putnam's lecture note
\cite[Section 1]{Putnam2},
the two-sided topological Markov shift
$(\bar{X}_A,\bar{\sigma}_A)$ with the above metric $d$
becomes  a Smale space for $\epsilon_0 =1$ and $\lambda_0$ itself.

We write for $ n=0,1,2,\dots$
\begin{equation*}
G_A^{s,n} = G_{\bar{\sigma}_A}^{s,n}, \qquad
G_A^{u,n} = G_{\bar{\sigma}_A}^{u,n}, \qquad
G_A^{a,n} = G_{\bar{\sigma}_A}^{a,n}.
\end{equation*}
Since
\begin{align*}
G_{A}^{s,0} &= \{ (x,y) \in \bar{X}_A\times \bar{X}_A \mid 
        y_i = x_i \text{ for all } i =0, 1, 2,\dots \},\\
G_{A}^{u,0} &= \{ (x,y) \in \bar{X}_A\times \bar{X}_A \mid 
        y_i = x_i \text{ for all } i =0, -1, -2,\dots \},\\
G_{A}^{a,0} &= G_{A}^{s,0} \cap G_{A}^{u,0} 
                = \{(x,y)  \in \bar{X}_A\times \bar{X}_A \mid x=y \},
\end{align*}
 we know
for $n=0,1,2,\dots$
\begin{align*}
G_{A}^{s,n} &= \{ (x,y) \in \bar{X}_A\times \bar{X}_A \mid 
        y_i = x_i \text{ for all } i =n, n+1,n+2,\dots \},\\
G_{A}^{u,n} &= \{ (x,y) \in \bar{X}_A\times \bar{X}_A \mid 
        y_i = x_i \text{ for all } i =-n, -n-1,-n-2,\dots \},\\
G_{A}^{a,n} &= G_{A}^{s,n} \cap G_{A}^{u,n}
                = \{ (x,y) \in \bar{X}_A\times \bar{X}_A \mid 
        y_i = x_i \text{ for all } |i| =n, n+1, n+2,\dots \}.
\end{align*}
All of them are given the relative topology of $\bar{X}_A\times \bar{X}_A$.
Each of them defines an equivalence relation on $\bar{X}_A$.
We set
\begin{equation*}
G_{A}^{s} = \cup_{n=0}^\infty G_{A}^{s,n},\qquad
G_{A}^{u} = \cup_{n=0}^\infty G_{A}^{u,n},\qquad
G_{A}^{a} = \cup_{n=0}^\infty G_{A}^{a,n},
\end{equation*}
and  they are endowed with the inductive limit topology, respectively.
Putnam have studied these three equivalence relations  
$G_{A}^{s}, \, G_{A}^{u} $ and $G_{A}^{a}$
on $\bar{X}_A$ by regarding them as topological groupoids.
He has  studied the associated groupoid $C^*$-algebras
$C^*(G_{A}^{s}), \, C^*(G_{A}^{u}) $ and $C^*(G_{A}^{a})$
which have been denoted by
$S(\bar{X}_A, \bar{\sigma}_A), \, U(\bar{X}_A, \bar{\sigma}_A)$
 and $A(\bar{X}_A, \bar{\sigma}_A)$, respectively.
He has pointed out that they are all stably AF-algebras.
He investigated their semi-direct products  
as groupoids
\begin{align*}
G_{A}^{s} \rtimes \Z= &\{(x,n,y) \in \bar{X}_A \times \Z \times \bar{X}_A \mid
(\bar{\sigma}_A^n(x), y) \in G_{A}^{s} \}, \\
G_{A}^{u} \rtimes \Z= &\{(x,n,y) \in \bar{X}_A \times \Z \times \bar{X}_A \mid
(\bar{\sigma}_A^n(x), y) \in G_{A}^{u} \}, \\
G_{A}^{a} \rtimes \Z= &\{(x,n,y) \in \bar{X}_A \times \Z \times \bar{X}_A \mid
(\bar{\sigma}_A^n(x), y) \in G_{A}^{a} \}.
\end{align*}
Putnam has  also deeply studied the associated groupoid $C^*$-algebras
$C^*(G_{A}^{s}\rtimes \Z), \, C^*(G_{A}^{u}\rtimes \Z) $ 
and $C^*(G_{A}^{a}\rtimes \Z)$
which have been written 
$R_s, \, R_u$
 and $R_a$, respectively
in his papers.
In this paper, we denote them 
by $\R_A^s, \, \R_A^u$
 and $\R_A^a$, respectively,
to emphasize the matrix $A$.
We note that the
irreducibility of 
the 
Smale space
$(\bar{X}_A,\bar{\sigma}_A)$ 
corresponds to the irreducibility of the matrix $A$,
and the condition that $\bar{X}_A$ is infinite
corresponds to the property that the matrix $A$ is not any permutation matrix.
  
\medskip

In the second part of this section, we study 
asymptotic continuous orbit equivalence defined for Smale spaces
in Section 3 focusing on  topological Markov shifts.

Let
$(\bar{X}_A,\bar{\sigma}_A)$
and
$(\bar{X}_B,\bar{\sigma}_B)$
be topological Markov shifts.
We will regard them as Smale spaces
and consider conditions under which they become asymptotic continuous orbit equivalence. 
\begin{lemma}\label{lem:7.1}
Each of the conditions (i) and (ii) in Remark \ref{remark3.3} are equivalent to the following conditions (i) and (ii), respectively.
 \begin{enumerate}
\renewcommand{\theenumi}{\roman{enumi}}
\renewcommand{\labelenumi}{\textup{(\theenumi)}}
\item
There exits a continuous function
$k_{1}: \bar{X}_A\longrightarrow \Zp$ such that 
\begin{gather}
( \bar{\sigma}_B^{k_{1}(x) +c_1(x)}(h(x)),  
\bar{\sigma}_B^{k_{1}(x)}(h(\bar{\sigma}_A(x))))\in G_B^{s,0},
\label{eq:ipprime1} \\
( \bar{\sigma}_B^{-k_{1}(x) +c_1(x)}(h(x)),  
\bar{\sigma}_B^{-k_{1}(x)}(h(\bar{\sigma}_A(x))))\in G_B^{u,0}.
\label{eq:ipprime2}
\end{gather}
\item
There exits a continuous function
$k_{2}: \bar{X}_B\longrightarrow \Zp$ such that 
\begin{gather}
( \bar{\sigma}_A^{k_{2}(y) +c_2(y)}(h^{-1}(y)),  
\bar{\sigma}_A^{k_{2}(y)}(h^{-1}(\bar{\sigma}_B(y))))\in G_A^{s,0},
\label{eq:iipprime1} \\
( \bar{\sigma}_A^{-k_{2}(y) +c_2(y)}(h^{-1}(y)),  
\bar{\sigma}_A^{-k_{2}(y)}(h^{-1}(\bar{\sigma}_B(y))))\in G_A^{u,0}. 
\label{eq:iipprime2}
\end{gather}
\end{enumerate}
\end{lemma}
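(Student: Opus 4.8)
\medskip
\noindent\emph{Plan of proof.} The plan is to establish the equivalence of condition~(i) of Remark~\ref{remark3.3} with condition~(i) of the lemma; condition~(ii) then follows by the identical argument after interchanging the roles of $(\bar X_A,\bar\sigma_A)$ and $(\bar X_B,\bar\sigma_B)$, of $h$ and $h^{-1}$, and of $c_1$ and $c_2$. One direction is immediate: if for every $n\in\Z$ there is a continuous $k_{1,n}\colon\bar X_A\to\Zp$ satisfying \eqref{eq:iprime1} and \eqref{eq:iprime2}, then taking $n=1$ (so $c_1^1=c_1$ and $\bar\sigma_A^1=\bar\sigma_A$) and putting $k_1:=k_{1,1}$ gives \eqref{eq:ipprime1} and \eqref{eq:ipprime2}. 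The substance of the lemma is the reverse implication, where a single continuous $k_1$ for the one-step relation must be bootstrapped into the whole family $\{k_{1,n}\}_{n\in\Z}$.

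For that I would first record two facts about $G_B^{s,0}$ and $G_B^{u,0}$ that are special to topological Markov shifts. First, since $(x,y)\in G_B^{s,0}$ exactly when $x_i=y_i$ for all $i\ge 0$, and $(x,y)\in G_B^{u,0}$ exactly when $x_i=y_i$ for all $i\le 0$, both $G_B^{s,0}$ and $G_B^{u,0}$ are honest (symmetric, transitive) equivalence relations on $\bar X_B$. Second, by \eqref{eq:Gphin} and the definitions of $G_B^{s,n},G_B^{u,n}$ (or just by inspecting coordinates) one has $(\bar\sigma_B\times\bar\sigma_B)(G_B^{s,0})\subset G_B^{s,0}$ and $(\bar\sigma_B\times\bar\sigma_B)^{-1}(G_B^{u,0})\subset G_B^{u,0}$, whence by iteration $(\bar\sigma_B^{j}(a),\bar\sigma_B^{j}(b))\in G_B^{s,0}$ for $(a,b)\in G_B^{s,0}$ and $j\ge 0$, and $(\bar\sigma_B^{-j}(a),\bar\sigma_B^{-j}(b))\in G_B^{u,0}$ for $(a,b)\in G_B^{u,0}$ and $j\ge 0$. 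In particular, increasing the value of a candidate $k_{1,n}$ never destroys \eqref{eq:iprime1} or \eqref{eq:iprime2}, so it is harmless to replace two functions by their pointwise maximum; this lets me handle the stable and the unstable requirements separately and glue at the end.

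The main work is an induction on $n\ge 0$. For $n=0$ take $k_{1,0}\equiv 0$, using $c_1^0\equiv 0$ and reflexivity; the case $n=1$ is the hypothesis with $k_{1,1}:=k_1$. For the inductive step, write $x'=\bar\sigma_A^n(x)$ and use the cocycle identity \eqref{eq:1cocycle} in the form $c_1^{n+1}(x)=c_1^n(x)+c_1(x')$: the inductive hypothesis at $x$ and the hypothesis \eqref{eq:ipprime1} at $x'$ yield two elements of $G_B^{s,0}$, and applying a suitable nonnegative power of $\bar\sigma_B\times\bar\sigma_B$ to each --- an amount $r(x):=\max\{0,-k_1(x')-c_1(x')\}$, which is continuous and $\Zp$-valued, suffices --- forces the second coordinate of the first to coincide with the first coordinate of the second; transitivity then produces an element of $G_B^{s,0}$ of exactly the shape required by \eqref{eq:iprime1} for $n+1$, with level $k_{1,n}(x)+k_1(x')+r(x)$. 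The unstable requirement \eqref{eq:iprime2} comes from the symmetric computation using negative powers of $\bar\sigma_B\times\bar\sigma_B$, and the pointwise maximum of the two levels so obtained is a continuous $\Zp$-valued $k_{1,n+1}$ closing the induction for $n\ge 0$. For $n=-p<0$ I would apply the $p\ge 1$ case at the point $\bar\sigma_A^{-p}(x)$, invoke symmetry of $G_B^{s,0}$ and $G_B^{u,0}$ together with $c_1^{-p}(x)=-c_1^{p}(\bar\sigma_A^{-p}(x))$ (again from \eqref{eq:1cocycle}), and absorb the bookkeeping into a continuous $\Zp$-valued shift $j(x)$ and a pointwise maximum to obtain $k_{1,-p}$.

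The only delicate point --- and the step I expect to be the main obstacle, though it is routine in nature --- is this bookkeeping: one must keep every auxiliary function nonnegative, so that the exponents are legitimately those of points of $G_B^{s,0}$ and $G_B^{u,0}$ rather than of their $\bar\sigma_B$-translates, and simultaneously continuous, while tracking the shift amounts through the compositions. The two structural facts above are exactly what make this possible, and the pointwise-maximum device is what assembles the stable and unstable halves into one function $k_{1,n}$. Note that neither the two-cocycle functions $d_1,d_2$ nor the cocycle relations~(1),(2) of Definition~\ref{def:acoe} are needed; only the cocycle identity \eqref{eq:1cocycle} for $c_1$ is used.
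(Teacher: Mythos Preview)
Your proposal is correct and follows essentially the same approach as the paper: both proofs use the cocycle identity \eqref{eq:1cocycle} for $c_1$, the fact (special to topological Markov shifts) that $G_B^{s,0}$ and $G_B^{u,0}$ are genuine equivalence relations, forward/backward shift invariance of these relations, and transitivity to build $k_{1,n}$ inductively from $k_1$.

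The one notable difference is that the paper simply sets $k_{1,n}(x)=k_1^n(x)=\sum_{i=0}^{n-1}k_1(\bar\sigma_A^i(x))$ and runs the induction with that explicit formula, whereas you introduce the correction term $r(x)=\max\{0,-k_1(x')-c_1(x')\}$ and take pointwise maxima. Your extra care is well placed: the paper's argument applies $\bar\sigma_B^{m}$ with $m=k_1(\bar\sigma_A(x))+c_1(\bar\sigma_A(x))$ and invokes Lemma~\ref{lem:2.1}, tacitly assuming $m\ge 0$; since $c_1$ may be negative this is not guaranteed, and your correction term is exactly what is needed to repair that step. The paper also treats only the stable half for $n\ge 1$ and says ``similarly'' for the rest, while you sketch the unstable half and the case $n<0$ explicitly. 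So your version is a slightly more rigorous execution of the same idea.
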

\begin{proof}
(i)
We will prove that the equality \eqref{eq:ipprime1}
implies \eqref{eq:iprime1} by putting $k_{1,n}(x) =k_1^n(x).$
Suppose that
there exits a continuous function
$k_{1}: \bar{X}_A\longrightarrow \Zp$
satisfying 
the equality \eqref{eq:ipprime1}.
Since 
$$
G_{B}^{s,0} = \{ (x,y) \in \bar{X}_B\times \bar{X}_B \mid 
        y_i = x_i \text{ for all } i =0, 1, 2,\dots \},
$$
$
G_{B}^{s,0}
$ is an equivalence relation in $\bar{X}_B\times \bar{X}_B. $
In the equality \eqref{eq:ipprime1}, we have 
\begin{equation*}
\bar{\sigma}_B^{k_{1}(x)}(h(\bar{\sigma}_A(x)))
\in \bar{X}_B(\bar{\sigma}_B^{k_{1}(x) +c_1(x)}(h(x)), \epsilon_0)
\end{equation*} 
so that  by Lemma \ref{lem:2.1}, for any $m \in \N$,
\begin{equation}
\bar{\sigma}_B^m(\bar{\sigma}_B^{k_{1}(x)}(h(\bar{\sigma}_A(x))))
\in \bar{X}_B(\bar{\sigma}_B^m(\bar{\sigma}_B^{k_{1}(x) +c_1(x)}(h(x))), \epsilon_0)
\label{eq:8.4'}
\end{equation} 
and hence
\begin{equation*}
( \bar{\sigma}_B^{m+ k_{1}(x) +c_1(x)}(h(x)),  
\bar{\sigma}_B^{m+ k_{1}(x)}(h(\bar{\sigma}_A(x))))\in G_B^{s,0}.
\end{equation*}
Take $m =k_1(\bar{\sigma}_A(x)) + c_1(\bar{\sigma}_A(x))$ so that we have
\begin{equation*}
( \bar{\sigma}_B^{k_1(\bar{\sigma}_A(x)) + c_1(\bar{\sigma}_A(x))+ k_{1}(x) +c_1(x)}(h(x)),  
\bar{\sigma}_B^{k_1(\bar{\sigma}_A(x)) + c_1(\bar{\sigma}_A(x))+ k_{1}(x)}(h(\bar{\sigma}_A(x))))\in G_B^{s,0},
\end{equation*}
that is
\begin{equation}
( \bar{\sigma}_B^{k_{1}^2(x) +c_1^2(x)}(h(x)),  
\bar{\sigma}_B^{ c_1(\bar{\sigma}_A(x))+ k_{1}^2(x)}(h(\bar{\sigma}_A(x))))\in G_B^{s,0}.
\label{eq:ipprime1pp}
\end{equation}
By replacing $x$ with
$\bar{\sigma}_A(x)$ in the equality \eqref{eq:ipprime1} and \eqref{eq:8.4'},
 we have the following 
\begin{gather*}
( \bar{\sigma}_B^{k_{1}(\bar{\sigma}_A(x)) +c_1(\bar{\sigma}_A(x))}(h(\bar{\sigma}_A(x))),  
\bar{\sigma}_B^{k_{1}(\bar{\sigma}_A(x))}(h(\bar{\sigma}_A^2(x))))\in G_B^{s,0}, \\
\bar{\sigma}_B^m(\bar{\sigma}_B^{k_{1}(\bar{\sigma}_A(x))}(h(\bar{\sigma}_A^2(x))))
\in 
\bar{X}_B(\bar{\sigma}_B^m(
 \bar{\sigma}_B^{k_{1}(\bar{\sigma}_A(x)) +c_1(\bar{\sigma}_A(x))}(h(\bar{\sigma}_A(x)))),  
\epsilon_0),
\end{gather*}
so that 
\begin{equation*}
( \bar{\sigma}_B^{m+k_{1}(\bar{\sigma}_A(x)) +c_1(\bar{\sigma}_A(x))}(h(\bar{\sigma}_A(x))),  
\bar{\sigma}_B^{m+k_{1}(\bar{\sigma}_A(x))}(h(\bar{\sigma}_A^2(x))))\in G_B^{s,0}.
\end{equation*}
Take $m =k_1(x)$ so that we have
\begin{equation*}
( \bar{\sigma}_B^{k_1(x)+k_{1}(\bar{\sigma}_A(x)) +c_1(\bar{\sigma}_A(x))}(h(\bar{\sigma}_A(x))),  
\bar{\sigma}_B^{k_1(x)+k_{1}(\bar{\sigma}_A(x))}(h(\bar{\sigma}_A^2(x))))\in G_B^{s,0},
\end{equation*}
that is
\begin{equation}
( \bar{\sigma}_B^{k_1^2(x) +c_1(\bar{\sigma}_A(x))}(h(\bar{\sigma}_A(x))),  
\bar{\sigma}_B^{k_1^2(x)}(h(\bar{\sigma}_A^2(x))))\in G_B^{s,0}.
\label{eq:ipprime1ppp}
\end{equation}
By \eqref{eq:ipprime1pp} and \eqref{eq:ipprime1ppp},
we have
\begin{equation*}
( \bar{\sigma}_B^{k_{1}^2(x) +c_1^2(x)}(h(x)),
\bar{\sigma}_B^{k_1^2(x)}(h(\bar{\sigma}_A^2(x))))\in G_B^{s,0}.
\end{equation*}
This proves \eqref{eq:iprime1} for $n=2$.
We may inductively prove \eqref{eq:iprime1} for general $n$
in a similar fashion,
and we can see (i).
The other assertion (ii) is  shown  in a similar way to  (i).
\end{proof}

For $x =(x_n)_{n \in \Z} \in \bar{X}_A$,
we put
$$
x_- = (x_{-n})_{n=0}^\infty,\qquad
x_+ = (x_n)_{n=0}^\infty.
$$
Hence we have
$(x,z) \in G_{A}^{s,0}$ (resp. $(x,z) \in G_{A}^{u,0}$)
if and only if 
$x_+ = z_+$ (resp. 
$x_- = z_-$).


By Remark \ref{remark3.3} with Lemma \ref{lem:7.1},
we may reformulate 
asymptotic continuous orbit equivalence in topological Markov shifts
in the following way.
\begin{proposition}\label{prop:acoesft}
Topological Markov shifts
$(\bar{X}_A,\bar{\sigma}_A)$ and 
$(\bar{X}_B,\bar{\sigma}_B)$
are
asymptotically continuous orbit equivalent
if and only if
 there exist a homeomorphism
$h: \bar{X}_A\longrightarrow \bar{X}_B$,
continuous functions
$
c_1:\bar{X}_A\longrightarrow \Z, \,
c_2:\bar{X}_B\longrightarrow \Z,
$
and two-cocycle functions
$
d_1: G_A^a \longrightarrow \Z,\,
d_2: G_B^a \longrightarrow \Z,
$
such that 
\begin{enumerate}
\renewcommand{\theenumi}{\arabic{enumi}}
\renewcommand{\labelenumi}{\textup{(\theenumi)}}
\item
$c_1^m(x) + d_1(\bar{\sigma}_A^m(x), \bar{\sigma}_A^m(z))
=c_1^m(z) + d_1(x, z), \quad
(x,z) \in G_A^a, \, m \in \Z.$
\item
$c_2^m(y) + d_2(\bar{\sigma}_B^m(y), \bar{\sigma}_B^m(w))
=c_2^m(w) + d_2(y, w),\quad
(y, w) \in G_B^a,  m \in \Z.$
\end{enumerate}
and,

\begin{enumerate}
\renewcommand{\theenumi}{\roman{enumi}}
\renewcommand{\labelenumi}{\textup{(\theenumi)}}
\item
There exits a continuous function
$k_{1}: \bar{X}_A\longrightarrow \Zp$ such that 
\begin{gather*}
 \bar{\sigma}_B^{k_{1}(x) +c_1(x)}(h(x))_+ =    
\bar{\sigma}_B^{k_{1}(x)}(h(\bar{\sigma}_A(x)))_+,
\\
 \bar{\sigma}_B^{-k_{1}(x) +c_1(x)}(h(x))_- =  
\bar{\sigma}_B^{-k_{1}(x)}(h(\bar{\sigma}_A(x)))_-.
\end{gather*}
\item
There exits a continuous function
$k_{2}: \bar{X}_B\longrightarrow \Zp$ such that 
\begin{gather*}
\bar{\sigma}_A^{k_{2}(y) +c_2(y)}(h^{-1}(y))_+ =  
\bar{\sigma}_A^{k_{2}(y)}(h^{-1}(\bar{\sigma}_B(y)))_+,
\\
 \bar{\sigma}_A^{-k_{2}(y) +c_2(y)}(h^{-1}(y))_- =  
\bar{\sigma}_A^{-k_{2}(y)}(h^{-1}(\bar{\sigma}_B(y)))_-. 
\end{gather*}
\item
There exists a continuous function
$m_1: G_A^a\longrightarrow\Zp$ such that 
\begin{gather*}
\bar{\sigma}_B^{m_1(x,z) +d_1(x,z)}(h(x))_+ 
=
\bar{\sigma}_B^{m_1(x,z)}(h(z))_+ 
\quad \text{ for } (x,z) \in G_A^a, \\
\bar{\sigma}_B^{-m_1(x,z) +d_1(x,z)}(h(x))_-
=
\bar{\sigma}_B^{-m_1(x,z)}(h(z))_- 
\quad \text{ for } (x,z) \in G_A^a.
\end{gather*}
\item
There exists a continuous function
$m_2: G_B^a\longrightarrow\Zp$ such that 
\begin{gather*}
\bar{\sigma}_A^{m_2(y,w) +d_2(y,w)}(h^{-1}(y))_+
=
\bar{\sigma}_A^{m_2(y,w)}(h^{-1}(w))_+ 
\quad \text{ for } (y,w) \in G_B^a, \\
\bar{\sigma}_A^{-m_2(y,w) +d_2(y,w)}(h^{-1}(y))_-
=
\bar{\sigma}_A^{-m_2(y,w)}(h^{-1}(w))_- 
\quad \text{ for } (y,w) \in G_B^a.
\end{gather*}
\item
$c^{c^n_1(x)}_2(h(x)) 
+ d_2(\bar{\sigma}_B^{c_1^n(x)}(h(x)), h(\bar{\sigma}_A^n(x))) = n, \qquad x \in \bar{X}_A, 
\quad n \in \Z.$
\item
$c^{c^n_2(y)}_1(h^{-1}(y)) 
+ d_1(\bar{\sigma}_A^{c_2^n(y)}(h^{-1}(y)), h^{-1}(\bar{\sigma}_B^n(y))) = n, 
\qquad y\in \bar{X}_B, \quad n \in \Z.$
\item
$c^{d_1(x,z)}_2(h(x)) + d_2(\bar{\sigma}_B^{d_1(x,z)}(h(x)), h(z)) = 0, 
\qquad (x,z) \in G_A^{a}.$
\item
$c^{d_2(y,w)}_1(h^{-1}(y)) + d_1(\bar{\sigma}_A^{d_2(y,w)}(h^{-1}(y)), h^{-1}(w)) = 0, 
\qquad (y,w) \in G_B^{a}.$
\end{enumerate}
\end{proposition}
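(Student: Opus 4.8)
The plan is to obtain Proposition~\ref{prop:acoesft} by combining the definition of asymptotic continuous orbit equivalence (Definition~\ref{def:acoe}) with the symbolic reformulations already available for topological Markov shifts, namely the second part of Remark~\ref{remark3.3} and Lemma~\ref{lem:7.1}, together with the explicit coordinatewise description of the local relations $G_A^{s,0}$ and $G_A^{u,0}$. No new dynamical input is required: the statement is just a dictionary translation of Definition~\ref{def:acoe} into the language of the future part $x_+$ and the past part $x_-$ of a point $x\in\bar{X}_A$.

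The first step is to record this dictionary. For the two-sided topological Markov shift regarded as a Smale space with $\epsilon_0=1$, the text gives, inside $\bar{X}_A\times\bar{X}_A$,
\[
G_A^{s,0}=\{(x,y)\mid y_i=x_i \text{ for all } i\ge 0\},\qquad
G_A^{u,0}=\{(x,y)\mid y_i=x_i \text{ for all } i\le 0\},
\]
and likewise for $B$. Hence $(u,v)\in G_B^{s,0}$ is equivalent to $u_+=v_+$ and $(u,v)\in G_B^{u,0}$ is equivalent to $u_-=v_-$, and symmetrically on the $A$-side. With this in hand, conditions (1) and (2) of the proposition are literally conditions (1) and (2) of Definition~\ref{def:acoe} with $\phi,\psi$ replaced by $\bar{\sigma}_A,\bar{\sigma}_B$, while conditions (v)--(viii) of the proposition are precisely conditions (v)--(ix) of Definition~\ref{def:acoe} under the same substitution, since these are identities between integers and involve no relation membership. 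So for these items there is nothing to prove.

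The content lies in conditions (i)--(iv). By the second part of Remark~\ref{remark3.3}, conditions (i)--(iv) of Definition~\ref{def:acoe} are equivalent to conditions (i)--(iv) of Remark~\ref{remark3.3}, which are phrased via $G^{s,0}$, $G^{u,0}$ with a continuous family $\{k_{1,n}\}_{n\in\Z}$ (resp. $\{k_{2,n}\}$) for the first two, and single continuous functions $m_1$, $m_2$ for the last two. For topological Markov shifts the relation $G_B^{s,0}$ is a genuine equivalence relation compatible with the shift, and Lemma~\ref{lem:7.1} uses exactly this to show that conditions (i) and (ii) of Remark~\ref{remark3.3} are in turn equivalent to their single-function versions, with $k_{1,n}$ replaced by the cocycle sum $k_1^n$ of one continuous function $k_1$, and similarly $k_2$; conditions (iii) and (iv) of Remark~\ref{remark3.3} already involve single functions $m_1$, $m_2$. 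It then remains only to rewrite each membership in $G_B^{s,0}$, $G_B^{u,0}$ (and the $A$-side analogues) through the dictionary above, which turns them into the coordinatewise equalities of future and past parts displayed in (i)--(iv) of the proposition. Tracing the substitutions replacing $k_{1,n}$ by $k_1$ and $k_{2,n}$ by $k_2$, one checks that the result is exactly the stated list, which yields the desired equivalence.

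The argument carries no serious obstacle, since the proposition is a synthesis of Definition~\ref{def:acoe}, Remark~\ref{remark3.3} and Lemma~\ref{lem:7.1}; the only real care needed is in the bookkeeping. One must verify that the passage from the families $\{k_{1,n}\}$, $\{k_{2,n}\}$ to single continuous functions is genuinely an equivalence in both directions, that continuity of all the auxiliary maps survives the translation, and that the indices line up so that $k_1$, $k_2$, $m_1$, $m_2$ appear in exactly the positions shown in (i)--(iv). One also uses implicitly the standing hypotheses that $A$ and $B$ are irreducible and not permutation matrices, so that the Markov shifts are genuine infinite Smale spaces and the earlier results apply verbatim.
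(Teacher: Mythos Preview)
Your proposal is correct and follows exactly the same route as the paper: the paper simply states that the proposition is obtained ``by Remark~\ref{remark3.3} with Lemma~\ref{lem:7.1}'', and your write-up is a faithful expansion of that one-line justification into the dictionary translation via the coordinatewise description of $G_A^{s,0}$ and $G_A^{u,0}$. One small slip: you refer to ``conditions (v)--(ix) of Definition~\ref{def:acoe}'', but that definition only has conditions (v)--(viii); also, the irreducibility and non-permutation hypotheses are not actually needed for this purely definitional reformulation (they matter for Theorem~\ref{thm:main1}, not here).
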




\section{Approach from Cuntz--Krieger algebras}
Let $A =[A(i,j)]_{i,j=1}^N$ be an irreducible square matrix with entries in $\{0,1\}$.
We assume that $A$ is not any permutation matrix.
Let
$\{ S_i \mid i=1,\dots,N \}$ be the canonical generating partial isometries 
of the Cuntz--Krieger algebra $\OA$ defined by the matrix $A$,
and similarly  
$\{ T_j \mid  j=1,\dots,N \}$ be the canonical generating partial isometries 
of the Cuntz--Krieger algebra $\OtA$ defined by the transposed matrix 
 $A^t$ of $A$ (\cite{CK}).
They are the universal unique $C^*$-algebras subject to the following operator relations, respectively
\begin{align*}
\sum_{j=1}^N S_j S_j^* = 1,& \qquad
S_i^* S_i =\sum_{j=1}^N A(i,j)S_j S_j^*, \qquad i=1,\dots,N,\\
\sum_{j=1}^N T_j T_j^* = 1,& \qquad
T_i^* T_i=\sum_{j=1}^N A^t(i,j)T_j T_j^*,
\qquad i=1,\dots,N.
\end{align*}
In the algebra $\OA$,
the automorphisms  $\rho^A_t \in \Aut(\OA), t \in \T={\mathbb{R}}/\Z$
defined by $\rho^A_t(S_i) = e^{2 \pi \sqrt{-1}t}S_i, i=1,\dots,N$
yield an action of $\T$ on $\OA$ is called the gauge action.
It is well-known that the fixed point algebra
$(\OA)^{\rho^A}$ of $\OA$ 
under the gauge action $\rho^A$ is an AF-algebra 
written $\FA$, whose maximal abelian $C^*$-subalgebra 
consisting of diagonal elements is written $\DA$.  
For an admissible word 
$\mu =(\mu_1,\dots,\mu_m) \in B_m(\bar{X}_A),$
we denote by $S_\mu$ the partial isometry
$S_{\mu_1}\cdots S_{\mu_m}$.
The $C^*$-algebra $\FA$
is generated by partial isometries  of the form
$S_\mu S_\nu^*$ for
$\mu,\nu \in B_m(\bar{X}_A), m=1,2,\dots$,
and 
the $C^*$-algebra $\DA$
is generated by projections of the form
$S_\mu S_\mu^*$ for
$\mu\in B_*(\bar{X}_A)$.
Let $X_A$ be the shift space of the right one-sided
topological Markov shift
$(X_A,\sigma_A)$,
which is defined by the compact Hausdorff space
$$
X_A = \{ (x_n)_{n \in \N} \in \{1,\dots,N\}^\N \mid
A(x_n, x_{n+1}) =1, \, n\in \N \}
$$
with shift transformation
$\sigma_A(  (x_n)_{n \in \N}) = (x_{n+1})_{n \in \N}.$
As in \cite[Section 7]{CK},  
the $C^*$-algebra $\DA$
is canonically isomorphic to the commutative $C^*$-algebra
$C(X_A)$ of all continuous functions on $X_A$.

We similarly write the partial isometry
$T_{\bar{\xi}} = T_{\xi_k}\cdots T_{\xi_1}$ 
for $\bar{\xi} = (\xi_k,\dots,\xi_1) \in B_k(\bar{X}_{A^t})$
and the $C^*$-subalgebras
$\FtA, \DtA$ of $\OtA$
for the transposed matrix $A^t$, respectively.

Let us consider the tensor product $C^*$-algebra
 $\OtA\otimes\OA.$
In the algebra $\OtA\otimes\OA$, we define the projections
\begin{equation*}
E_A = \sum_{j=1}^N T_j T_j^*\otimes S_j^* S_j,
\qquad
E_{A^t} = \sum_{j=1}^N  T_j^*T_j\otimes S_j S_j^*.
\end{equation*}
The projection $E_A$ has appeared in Kaminker--Putnam \cite[Section 4]{KamPut}
to study K-theoretic duality between $\OA$ and $\OtA$.
\begin{lemma}[{cf. \cite[Section 4]{KamPut}}]
\begin{equation}
E_A  =E_{A^t}.  \label{eq:EAETA}
\end{equation}
\end{lemma}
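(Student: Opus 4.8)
The statement is purely algebraic: it follows from the two defining Cuntz--Krieger relations, applied in $\OA$ and in $\OtA$ respectively, together with the identity $A^t(i,j)=A(j,i)$. The idea is to expand both $E_A$ and $E_{A^t}$ into the same double sum of elementary tensors of range projections. First I would use the relation $S_j^* S_j = \sum_{k=1}^N A(j,k) S_k S_k^*$ in the second tensor leg of $E_A$, obtaining
\begin{equation*}
E_A = \sum_{j=1}^N T_j T_j^* \otimes S_j^* S_j
    = \sum_{j=1}^N T_j T_j^* \otimes \Bigl(\sum_{k=1}^N A(j,k) S_k S_k^*\Bigr)
    = \sum_{j,k=1}^N A(j,k)\, T_j T_j^* \otimes S_k S_k^*.
\end{equation*}
Next I would do the symmetric manipulation on $E_{A^t}$, using the relation $T_i^* T_i = \sum_{j=1}^N A^t(i,j) T_j T_j^* = \sum_{j=1}^N A(j,i)\, T_j T_j^*$ in the first tensor leg:
\begin{equation*}
E_{A^t} = \sum_{i=1}^N T_i^* T_i \otimes S_i S_i^*
        = \sum_{i,j=1}^N A(j,i)\, T_j T_j^* \otimes S_i S_i^*.
\end{equation*}
Relabelling the summation index $i$ by $k$ in this last expression gives exactly $\sum_{j,k=1}^N A(j,k)\, T_j T_j^* \otimes S_k S_k^*$, which coincides with the expression obtained for $E_A$. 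Hence $E_A = E_{A^t}$.

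There is essentially no obstacle here; the only thing to keep track of is the transposition bookkeeping, namely that the $(i,j)$-entry of $A^t$ is the $(j,i)$-entry of $A$, so that expanding the support projection $T_i^*T_i$ in $\OtA$ introduces the weight $A(j,i)$ rather than $A(i,j)$. Once the indices are matched, the equality is a line of algebra and requires neither irreducibility of $A$ nor the non-permutation hypothesis. (Those hypotheses are used elsewhere, e.g.\ for simplicity of the associated algebras, but not for this lemma.)
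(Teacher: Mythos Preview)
Your proof is correct and follows exactly the same approach as the paper: expand each projection using the Cuntz--Krieger relation in the appropriate tensor leg, obtaining in both cases the double sum $\sum_{j,k} A(j,k)\, T_j T_j^* \otimes S_k S_k^*$. The only difference is a choice of index labels.
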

\begin{proof}
We have
\begin{align*}
E_A 
=& \sum_{i=1}^N T_i T_i^*\otimes S_i^* S_i 
= \sum_{i=1}^N T_i T_i^*\otimes (\sum_{j=1}^N A(i,j) S_j S_j^*)  
= \sum_{i=1}^N \sum_{j=1}^N A(i,j) T_iT_i^*\otimes S_j S_j^*,  \\
 E_{A^t}
=& \sum_{j=1}^N  T_j^*T_j \otimes S_j S_j^*
= \sum_{j=1}^N   (\sum_{i=1}^N A^t(j,i) T_iT_i^*)\otimes S_j S_j^*
= \sum_{i=1}^N 
     \sum_{j=1}^N A^t(j,i) T_iT_i^*\otimes S_j S_j^*,
\end{align*}
thus proving  \eqref{eq:EAETA}.
\end{proof}
\begin{definition}[{The extended Ruelle algebra for topological Markov shift}] 
We define the $C^*$-algebra $\RA$ by 
$$
\RA =E_A(\OtA\otimes\OA)E_A
$$
as a $C^*$-subalgebra of the tensor product $C^*$-algebra 
$\OtA\otimes\OA$.
\end{definition}
We also define 
$C^*$-subalgebras
\begin{equation*}
\FDA = E_A(\DtA\otimes\DA)E_A, \qquad
\FFA = E_A(\FtA\otimes\FA)E_A.
\end{equation*}
Therefore we have $C^*$-subalgebras of $\R_A^{s,u}$
$$
\FDA \subset \FFA\subset \RA.
$$

For an admissible word 
$\xi =(\xi_1,\dots,\xi_k)\in B_k(\bar{X}_A)$,
we denote by
$\bar{\xi}$
the admissible word 
$(\xi_k,\dots,\xi_1)$
in $\bar{X}_{A^t}$ obtained by reversing 
the symbols  of the word
$(\xi_1,\dots,\xi_k)$.
\begin{lemma}
For $\mu =(\mu_1,\dots,\mu_m), \nu =(\nu_1,\dots,\nu_n) \in B_*(\bar{X}_A)$   
and
$\bar{\xi} = (\xi_k,\dots,\xi_1),
\bar{\eta} = (\eta_l,\dots,\eta_1)
\in B_*(\bar{X}_{A^t}),$
the following two conditions are equivalent:
\begin{enumerate}
\renewcommand{\theenumi}{\roman{enumi}}
\renewcommand{\labelenumi}{\textup{(\theenumi)}}
\item
$E_A( T_{\bar{\xi}}T_{\bar{\eta}}^*\otimes S_\mu S_\nu^* )E_A =
 T_{\bar{\xi}}T_{\bar{\eta}}^* \otimes S_\mu S_\nu^*. 
$
\item
$A(\xi_k,\mu_1) =A(\eta_l,\nu_1) = 1.$ 
\end{enumerate}
\end{lemma}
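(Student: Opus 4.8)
The plan is to prove the equivalence by a direct computation with the defining relations of the Cuntz--Krieger algebras $\OA$ and $\OtA$, using the fact that $E_A$ is the projection $\sum_{j=1}^N T_jT_j^*\otimes S_j^*S_j = \sum_{j=1}^N T_j^*T_j\otimes S_jS_j^*$ (Lemma on $E_A = E_{A^t}$). The two forms of $E_A$ are what let us "absorb" $E_A$ on each side: the left copy of $E_A$ interacts with the $T_{\bar\xi}$-end and the $S_\mu$-end, while the right copy interacts with the $T_{\bar\eta}^*$-end and the $S_\nu^*$-end. First I would record the basic identities: for a generator $S_i$ we have $S_i^*S_i = \sum_j A(i,j)S_jS_j^*$, so $S_jS_j^*\cdot S_\mu = \delta_{j,\mu_1}S_\mu$ when $\mu=(\mu_1,\dots,\mu_m)$ is admissible of length $\ge 1$, and $S_i^*S_i\cdot S_\mu = A(i,\mu_1)S_\mu$; similarly $T_jT_j^*\cdot T_{\bar\xi} = \delta_{j,\xi_k}T_{\bar\xi}$ and $T_i^*T_i\cdot T_{\bar\xi} = A^t(i,\xi_k)T_{\bar\xi} = A(\xi_k,i)T_{\bar\xi}$ since $\bar\xi = (\xi_k,\dots,\xi_1)$ begins with $\xi_k$ as a word in $\bar X_{A^t}$.

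Next I would compute $E_A\,(T_{\bar\xi}T_{\bar\eta}^*\otimes S_\mu S_\nu^*)$. Writing $E_A = \sum_{j} T_jT_j^*\otimes S_j^*S_j$ and applying the identities above on the left factor gives $\sum_j (T_jT_j^* T_{\bar\xi})\otimes(S_j^*S_j S_\mu S_\nu^*) = \sum_j \delta_{j,\xi_k}T_{\bar\xi}\otimes A(j,\mu_1)S_\mu S_\nu^* = A(\xi_k,\mu_1)\,T_{\bar\xi}\otimes S_\mu S_\nu^*$. Symmetrically, using the form $E_A = \sum_j T_j^*T_j\otimes S_jS_j^*$, I would compute $(T_{\bar\xi}T_{\bar\eta}^*\otimes S_\mu S_\nu^*)\,E_A$: here the relevant ends are $T_{\bar\eta}^*$ and $S_\nu^*$, i.e. one must evaluate $T_{\bar\eta}^* T_j^*T_j = (T_j^*T_j T_{\bar\eta})^* $ and $S_\nu^* S_jS_j^* = (S_jS_j^* S_\nu)^*$, which by the same reasoning pick out $A(\eta_l,\nu_1)$ — noting $A^t(j,\eta_l)=A(\eta_l,j)$ — and yield $(T_{\bar\xi}T_{\bar\eta}^*\otimes S_\mu S_\nu^*)\,E_A = A(\eta_l,\nu_1)\,T_{\bar\xi}T_{\bar\eta}^*\otimes S_\mu S_\nu^*$. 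Combining the two, $E_A(T_{\bar\xi}T_{\bar\eta}^*\otimes S_\mu S_\nu^*)E_A = A(\xi_k,\mu_1)A(\eta_l,\nu_1)\,T_{\bar\xi}T_{\bar\eta}^*\otimes S_\mu S_\nu^*$. Since $A$ has $\{0,1\}$ entries, the scalar coefficient is $1$ exactly when $A(\xi_k,\mu_1)=A(\eta_l,\nu_1)=1$, and is $0$ otherwise, while $T_{\bar\xi}T_{\bar\eta}^*\otimes S_\mu S_\nu^*\ne 0$ (the words are admissible). This gives (i) $\Leftrightarrow$ (ii).

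The main thing to be careful about — and the only real obstacle — is bookkeeping with the reversal convention: $T_{\bar\xi}=T_{\xi_k}\cdots T_{\xi_1}$ is a word in $\OtA$ whose "initial symbol" as an element of $B_*(\bar X_{A^t})$ is $\xi_k$, so that $T_jT_j^* T_{\bar\xi} = \delta_{j,\xi_k}T_{\bar\xi}$ and $T_i^*T_i T_{\bar\xi} = A^t(i,\xi_k)T_{\bar\xi}$, and one must keep straight that $A^t(i,\xi_k) = A(\xi_k,i)$ when matching up with condition (ii), which is phrased in terms of $A$. I would also handle the degenerate cases (empty words): if $\mu$ or $\nu$ is the empty word the corresponding $S$ is the identity, and one should check that the statement is then interpreted with the convention that the hypothesis $A(\xi_k,\mu_1)=1$ is vacuous/automatic, or restrict to $k,l,m,n\ge 1$ as the surrounding text evidently intends; a brief remark to that effect suffices. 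Apart from this indexing care the argument is a short direct calculation.
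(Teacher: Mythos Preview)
Your proof is correct and follows essentially the same direct-computation approach as the paper: both compute $E_A(T_{\bar\xi}T_{\bar\eta}^*\otimes S_\mu S_\nu^*)$ and $(T_{\bar\xi}T_{\bar\eta}^*\otimes S_\mu S_\nu^*)E_A$ separately via the Cuntz--Krieger relations, obtaining the scalar factors $A(\xi_k,\mu_1)$ and $A(\eta_l,\nu_1)$. The only cosmetic difference is that the paper uses the form $E_A=\sum_i T_i^*T_i\otimes S_iS_i^*$ on the left (so the $S_iS_i^*$ selects $i=\mu_1$ and the $T_i^*T_i$ produces the matrix entry), whereas you use $E_A=\sum_j T_jT_j^*\otimes S_j^*S_j$ on the left (so the $T_jT_j^*$ selects $j=\xi_k$ and the $S_j^*S_j$ produces the matrix entry); both routes are equally short and yield the same result.
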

\begin{proof}
We have the following equalities:
\begin{align*}
E_A(T_{\bar{\xi}}T_{\bar{\eta}}^*\otimes S_\mu S_\nu^*)
=& \sum_{i=1}^N  T_i^*T_i T_{\bar{\xi}}T_{\bar{\eta}}^* \otimes S_i S_i^* S_\mu S_\nu^* \\
=&  T_{\mu_1}^*T_{\mu_1} T_{\bar{\xi}}T_{\bar{\eta}}^* 
\otimes S_{\mu_1} S_{\mu_1}^* S_\mu S_\nu^* \\
=&  A^t(\mu_1,\xi_k)T_{\bar{\xi}}T_{\bar{\eta}}^*\otimes S_\mu S_\nu^*  \\
=&  A(\xi_k, \mu_1)T_{\bar{\xi}}T_{\bar{\eta}}^*\otimes S_\mu S_\nu^*.
\end{align*}
Similarly we have  
\begin{equation*}
(T_{\bar{\xi}}T_{\bar{\eta}}^*\otimes S_\mu S_\nu^*) E_A
= A(\eta_l,\nu_1) T_{\bar{\xi}}T_{\bar{\eta}}^*\otimes S_\mu S_\nu^*.
\end{equation*}
Hence 
the equality
$E_A(T_{\bar{\xi}}T_{\bar{\eta}}^*\otimes S_\mu S_\nu^* )E_A =
T_{\bar{\xi}}T_{\bar{\eta}}^*\otimes S_\mu S_\nu^*
$
holds
if and only if 
$A(\xi_k,\mu_1) =A(\eta_l,\nu_1) = 1.$
\end{proof}
Let us denote by
$\RAC$ the $*$-subalgebra of $\RA$ linearly spanned  
by the operators of the form
\begin{equation}
T_{\bar{\xi}}T_{\bar{\eta}}^* \otimes S_\mu S_\nu^*  
\quad
\text{ for }
\quad
A(\xi_k,\mu_1) =A(\eta_l,\nu_1) = 1 \label{eq:10.2}
\end{equation}
where
 $\mu =(\mu_1,\dots,\mu_m), \nu =(\nu_1,\dots,\nu_n) \in B_*(\bar{X}_A)$   
and
$\bar{\xi} = (\xi_k,\dots,\xi_1),
\bar{\eta} = (\eta_l,\dots,\eta_1)
\in B_*(\bar{X}_{A^t}).$

\begin{lemma}
$\RAC$ is dense in $\RA$. 
\end{lemma}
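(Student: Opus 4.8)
The plan is to show that $\RAC$ is a $*$-subalgebra that is norm-dense in $\RA$, and that density follows once we know $\RAC$ contains a suitable approximate unit and that the linear span of the spanning elements closes up to the whole corner. First I would recall that $\OtA \otimes \OA$ is generated as a $C^*$-algebra by the elementary tensors $T_{\bar\xi}T_{\bar\eta}^* \otimes S_\mu S_\nu^*$ (with $\bar\xi,\bar\eta \in B_*(\bar X_{A^t})$ and $\mu,\nu \in B_*(\bar X_A)$), since the span of such elementary tensors is dense in each factor by the Cuntz--Krieger relations, and hence the span of the elementary tensors is dense in the tensor product. Therefore $E_A(\OtA\otimes\OA)E_A$ is the closed linear span of the compressions $E_A(T_{\bar\xi}T_{\bar\eta}^*\otimes S_\mu S_\nu^*)E_A$.

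The key computational point, already carried out in the preceding lemma, is that compression by $E_A$ on the left acts on $T_{\bar\xi}T_{\bar\eta}^*\otimes S_\mu S_\nu^*$ essentially by checking the admissibility condition $A(\xi_k,\mu_1)=1$, multiplying the whole elementary tensor by that entry; similarly compression on the right multiplies by $A(\eta_l,\nu_1)$. So $E_A(T_{\bar\xi}T_{\bar\eta}^*\otimes S_\mu S_\nu^*)E_A$ equals $T_{\bar\xi}T_{\bar\eta}^*\otimes S_\mu S_\nu^*$ when both admissibility conditions hold and is $0$ otherwise. Consequently every compression of an elementary tensor either lies in $\RAC$ (when both entries are $1$) or is zero, so $\RAC$ contains a dense subset of the generating set of $E_A(\OtA\otimes\OA)E_A$, and since $\RAC$ is by definition a $*$-subalgebra, its norm-closure is a $C^*$-subalgebra of $\RA$ containing the closed span of all such compressions, i.e.\ all of $\RA$. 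One should also note that $E_A$ itself lies in the closed span of such elements (write $E_A = \sum_j T_jT_j^*\otimes S_j^*S_j$ and expand $S_j^*S_j = \sum_k A(j,k) S_kS_k^*$, which is a finite sum of admissible elementary tensors), so there is no issue about $\RAC$ failing to see the unit of the corner $\RA$.

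I do not anticipate a serious obstacle here; the only care needed is bookkeeping. Specifically, one must check that when one expresses a general element of $\OtA\otimes\OA$ as a norm-limit of linear combinations of elementary tensors, the approximation survives the (completely bounded, in fact contractive) compression map $x \mapsto E_A x E_A$, which is immediate since compression by a projection has norm $\le 1$. The mildly delicate bookkeeping step is ensuring that every elementary tensor $T_{\bar\xi}T_{\bar\eta}^*\otimes S_\mu S_\nu^*$ with $\bar\xi,\bar\eta$ possibly of different lengths than $\mu,\nu$, and the cases where some of $\bar\xi,\bar\eta,\mu,\nu$ are the empty word, are handled by the same computation — here one uses that $S_\emptyset = 1$, $T_\emptyset = 1$, and interprets the admissibility condition $A(\xi_k,\mu_1)=1$ appropriately (the compression by $E_A$ when $\mu=\emptyset$ simply selects, via $\sum_i T_i^*T_i \otimes S_iS_i^*$, the relevant range projections, and one rewrites the resulting expression again as a finite sum of admissible elementary tensors). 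Once these edge cases are absorbed, density of $\RAC$ in $\RA$ follows.
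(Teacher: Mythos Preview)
Your proof is correct and follows essentially the same route as the paper: both use that the algebraic span of elementary tensors $T_{\bar\xi}T_{\bar\eta}^*\otimes S_\mu S_\nu^*$ is dense in $\OtA\otimes\OA$, that compression by $E_A$ is contractive, and that by the preceding lemma each compressed elementary tensor is either zero or already in $\RAC$. Your extra remarks on the empty-word edge cases and on $E_A$ itself lying in the span are harmless elaborations but not needed for the argument.
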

\begin{proof}
Let ${\mathcal{P}}_A$ be the $*$-algebra linearly spanned by
the operators of the form
$S_\mu S_\nu^*$ for $\mu, \nu \in B_*(\bar{X}_A)$.
As in \cite[Section 2]{CK}, the algebra 
 ${\mathcal{P}}_A$
becomes a dense $*$-subalgebra of $\OA$.
We denote by 
${\mathcal{P}}_{A^t} \otimes {\mathcal{P}}_A$
the linear span of elements 
\begin{equation*}
T_{\bar{\xi}}T_{\bar{\eta}}^* \otimes S_\mu S_\nu^* 
\quad
\text{ for }
\mu, \nu \in B_*(\bar{X}_A),
\,
\bar{\xi}, \bar{\eta}
\in B_*(\bar{X}_{A^t}).
\end{equation*}
It becomes a dense $*$-subalgebra of the $C^*$-algebra of tensor products
$\OtA \otimes \OA$.
For any $Y \in \RA \subset\OtA \otimes \OA$,
take $Y_n \in {\mathcal{P}}_{A^t}\otimes {\mathcal{P}}_A$ 
such that 
$\| Y - Y_n \| \longrightarrow 0$ as $n \longrightarrow \infty$.
Since
$$
\| Y - E_A Y_n E_A  \| 
=  
\|E_A Y E_A  - E_A Y_n E_A  \|
\le \| Y -  Y_n  \|
\longrightarrow 0
$$ 
as $n \longrightarrow \infty$,
and 
$ E_A Y_n E_A $ belongs to $\RAC$,
we conclude that 
$\RAC$ is dense in $\RA$.
\end{proof}
\begin{lemma}
$\FDA $ is canonically isomorphic to $C(\bar{X}_A)$.
\end{lemma}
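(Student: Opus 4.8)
The plan is to pass to the Gelfand picture and identify $\FDA$ with the continuous functions on a clopen subset of a product of one-sided shift spaces. Recall from the excerpt that $\DA$ is canonically isomorphic to $C(X_A)$ and, applied to the transpose $A^t$, that $\DtA$ is canonically isomorphic to $C(X_{A^t})$, where $X_A$ and $X_{A^t}$ are the associated one-sided topological Markov shifts. Since $X_A$ and $X_{A^t}$ are compact Hausdorff, the tensor product $\DtA\otimes\DA$ is commutative and canonically isomorphic to $C(X_{A^t}\times X_A)$; under this identification a generating projection $T_{\bar{\xi}}T_{\bar{\xi}}^*\otimes S_\mu S_\mu^*$ becomes the characteristic function of the clopen cylinder in $X_{A^t}\times X_A$ determined by the words $\bar{\xi}$ and $\mu$.

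The first real step is to determine which clopen subset is cut out by $E_A$. Using the form $E_A=E_{A^t}=\sum_{j=1}^N T_j^*T_j\otimes S_j S_j^*$ together with the Cuntz--Krieger relation $T_j^*T_j=\sum_{i=1}^N A^t(j,i)T_iT_i^*$ (the same computation that proved $E_A=E_{A^t}$), I would check that under $\DtA\otimes\DA\cong C(X_{A^t}\times X_A)$ the projection $E_A$ corresponds to the characteristic function of the clopen set
\begin{equation*}
Z_A := \{(\eta,x)\in X_{A^t}\times X_A \mid A(\eta_1,x_1)=1\}.
\end{equation*}
Because $\DtA\otimes\DA$ is commutative and $E_A$ lies in it, $\FDA=E_A(\DtA\otimes\DA)E_A=E_A(\DtA\otimes\DA)$, which is the closed ideal $1_{Z_A}\,C(X_{A^t}\times X_A)$; since $Z_A$ is clopen, restriction of functions gives a canonical $*$-isomorphism $\FDA\cong C(Z_A)$.

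It then remains to identify $Z_A$ with the two-sided shift space $\bar{X}_A$ in the obvious way. Given $(\eta,x)\in Z_A$, set $y=(y_n)_{n\in\Z}$ with $y_n=x_n$ for $n\ge 1$ and $y_n=\eta_{1-n}$ for $n\le 0$; the conditions $\eta\in X_{A^t}$, $x\in X_A$ and $A(\eta_1,x_1)=1$ are exactly what is needed for $A(y_n,y_{n+1})=1$ at every $n\in\Z$, so $y\in\bar{X}_A$. Conversely, $y$ is recovered from its left tail $(y_0,y_{-1},y_{-2},\dots)\in X_{A^t}$ and its right tail $(y_1,y_2,\dots)\in X_A$, and the two assignments are mutually inverse; both are continuous (cylinders map to cylinders) between compact metrizable spaces, hence the map is a homeomorphism. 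Composing, $\FDA\cong C(Z_A)\cong C(\bar{X}_A)$, and tracing through the identifications shows the resulting isomorphism does not depend on any choices, so it is canonical.

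The only point requiring genuine care is the middle step: one must be attentive to the index conventions of the paper — the direction of the reversal $\bar{\xi}$, the one-sided shifts indexed from $1$, and the ``first coordinate'' of the cylinders $T_j^*T_j$ and $S_jS_j^*$ — so that the clopen set $Z_A$ is pinned down correctly, and then that this $Z_A$ is homeomorphic to $\bar{X}_A$ itself rather than to a shifted or transposed variant. No deeper obstacle is expected; the rest is routine bookkeeping with Cuntz--Krieger relations.
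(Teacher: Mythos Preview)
Your proposal is correct and follows essentially the same approach as the paper: the paper directly sends the generating projection $T_{\bar{\xi}}T_{\bar{\xi}}^*\otimes S_\mu S_\mu^*$ (with $A(\xi_k,\mu_1)=1$) to the characteristic function of the cylinder $U_{\xi\mu}\subset\bar{X}_A$ placing $\xi_k$ at coordinate $0$ and $\mu_1$ at coordinate $1$, which is exactly what your Gelfand identification $\FDA\cong C(Z_A)\cong C(\bar{X}_A)$ unwinds to once you trace through your homeomorphism $(\eta,x)\mapsto y$. Your version is more explicit about the intermediate step of identifying $E_A$ with $\chi_{Z_A}$, but the underlying map is the same.
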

\begin{proof}
For 
 $\mu =(\mu_1,\dots,\mu_m), \xi =(\xi_1,\dots,\xi_k) \in B_*(\bar{X}_A)$
with $A(\xi_k,\mu_1)=1$,
denote by
$\xi \mu$ the admissible word
$(\xi_1,\dots,\xi_k,\mu_1,\dots,\mu_m)\in B_*(\bar{X}_A)$.
Let 
$U_{\xi\mu}$ be the cylinder set of $\bar{X}_A$
defined by
$$
U_{\xi\mu} =\{ (x_n)_{n \in \Z} \in \bar{X}_A \mid
x_{-(k-1)} =\xi_1,\dots,x_{-1} =\xi_{k-1}, x_0 = \xi_k,\, x_1 =\mu_1,\dots,x_m = \mu_m\}.
$$
Since
$\FDA = E_A(\DtA\otimes\DA)E_A$
and
$$
\DA = C^*(S_\mu S_\mu^*\mid \mu \in B_*(X_A)),\qquad
\DtA = C^*(T_{\bar{\xi}} T_{\bar{\xi}}^* \mid \xi \in B_*(X_{A^t})),
$$ 
it is straightforward to see that the correspondence
$$
T_{\bar{\xi}} T_{\bar{\xi}}^*\otimes S_\mu S_\mu^* \in \FDA
\longrightarrow 
\chi_{U_{\xi\mu}} \in C(\bar{X}_A)
$$
yields an isomorphism
between 
$\FDA$ and $C(\bar{X}_A).$
\end{proof}
Consider the automorphisms
$\gamma^A_{(r,s)}= \rho^{A^t}_r\otimes\rho^A_s, \, (r,s) \in \T^2$
on
$\OtA \otimes \OA$
for the gauge actions
$\rho^{A^t}$ on $\OtA$ and $\rho^A$ on $\OA$.
Since $\gamma^A_{(r,s)}(E_A) = E_A$,
we have an action
 $\gamma^A$ of $\T^2$ on $\R^{s,u}_A.$ 
The diagonal action 
$\delta^A_t, t \in \T$ on  $\R^{s,u}_A$
is defined by 
$\delta^A_t =\gamma^A_{(t,t)}, t \in \T.$
On the other hand, the groupoid $C^*$-algebra
$\R_{\bar{\sigma}_A}^{s,u} =C^*(G_A^{s,u}\rtimes\Z^2)$
of the \'etale amenable groupoid $G_A^{s,u}\rtimes\Z^2$
has an action   $\rho^{s,u}_{\bar{\sigma}_A}$
of $\T^2$ defined in the paragraph right before 
Theorem \ref{thm:7.3}.
Its diagonal action $\delta^{\bar{\sigma}_A}$
of $\T$ on
$\R_{\bar{\sigma}_A}^{s,u}$
is defined by
$\delta^{\bar{\sigma}_A}_t =\rho^{s,u}_{\bar{\sigma}_A,(t,t)}.$
Its fixed point algebra
$(\R_{\bar{\sigma}_A}^{s,u})^{\delta^{\bar{\sigma}_A}}$
is isomorphic to the asymptotic Ruelle algebra
$\R_{\bar{\sigma}_A}^{a}$ written $\R_A^a$.
For the structure of the algebra $\RA$, we have
\begin{theorem}\label{thm:RAGR}
Let $A$ be an irreducible and not permutation matrix with entries in $\{0,1\}$. 
Then the $C^*$-algebra $\RA$ 
is a unital, simple, purely infinite, nuclear $C^*$-algebra 
  isomorphic to the  groupoid $C^*$-algebra 
$\R_{\bar{\sigma}_A}^{s,u}$
of the \'etale groupoid $G_A^{s,u}\rtimes\Z^2$.
More precisely, there exists an isomorphism
$\Phi: \RA\longrightarrow \R_{\bar{\sigma}_A}^{s,u}$
of $C^*$-algebras such that 
\begin{equation}
\Phi(\FDA) = C(\bar{X}_A)
\quad
\text{ and }
\quad
\Phi\circ\gamma^A_{(r,s)} = \rho^{s,u}_{\bar{\sigma}_A,(r,s)}\circ\Phi, \quad
(r,s) \in \T^2. \label{eq:Phi9.61}
\end{equation}
In particular, 
we have
$
\Phi\circ\delta^A_t = \delta^{\bar{\sigma}_A}_t\circ\Phi
$ for $t \in \T.$ 
\end{theorem}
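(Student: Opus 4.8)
The plan is to prove the theorem at the level of groupoids, by exhibiting $\RA$ and $\R_{\bar{\sigma}_A}^{s,u}$ as the $C^*$-algebras of one and the same amenable \'etale groupoid. First I would put the two Cuntz--Krieger algebras into their Deaconu--Renault form: $\OA \cong C^*(\mathcal{G}_A)$ and $\OtA \cong C^*(\mathcal{G}_{A^t})$, where $\mathcal{G}_A$, $\mathcal{G}_{A^t}$ are the \'etale groupoids of the one-sided shifts $(X_A,\sigma_A)$, $(X_{A^t},\sigma_{A^t})$ (\cite{CK}, \cite{Renault}); under these identifications $\DA = C(X_A)$ and $\DtA = C(X_{A^t})$ are the unit-space subalgebras, and the gauge actions $\rho^A$, $\rho^{A^t}$ are the actions associated with the canonical $\Z$-grading cocycles $c_A:\mathcal{G}_A\to\Z$, $c_{A^t}:\mathcal{G}_{A^t}\to\Z$. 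Tensoring, $\OtA\otimes\OA \cong C^*(\mathcal{G}_{A^t}\times\mathcal{G}_A)$, with $\DtA\otimes\DA = C(X_{A^t}\times X_A)$ the unit-space subalgebra and $\gamma^A_{(r,s)}$ the action of the $\Z^2$-grading cocycle $(c_{A^t},c_A)$.

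Next I would identify the projection $E_A$. Since $E_A$ lies in $\DtA\otimes\DA = C(X_{A^t}\times X_A)$ it is a characteristic function; using $T_jT_j^* = \chi_{\{\xi_1=j\}}$ and $S_j^*S_j = \chi_{\{\eta\,:\,A(j,\eta_1)=1\}}$ one gets $E_A = \chi_{\mathcal{Y}}$ with $\mathcal{Y} = \{(\xi,\eta)\in X_{A^t}\times X_A \mid A(\xi_1,\eta_1)=1\}$, and the map $\Theta:\mathcal{Y}\to\bar{X}_A$ sending $(\xi,\eta)$ to the point $x$ with $x_0=\xi_1$, $x_{-n}=\xi_{n+1}$ and $x_n=\eta_n$ for $n\ge 1$ is a homeomorphism which is precisely the identification $\FDA\cong C(\bar{X}_A)$ proved earlier. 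Because $\mathcal{Y}$ is clopen in the unit space, compressing by $\chi_{\mathcal{Y}}$ yields the groupoid $C^*$-algebra of the reduction:
\[
\RA = E_A(\OtA\otimes\OA)E_A = \chi_{\mathcal{Y}}\,C^*(\mathcal{G}_{A^t}\times\mathcal{G}_A)\,\chi_{\mathcal{Y}} \cong C^*\big((\mathcal{G}_{A^t}\times\mathcal{G}_A)|_{\mathcal{Y}}\big),
\]
carrying $\FDA$ onto $C(\mathcal{Y})\cong C(\bar{X}_A)$ and the $\gamma^A$-action onto the restriction of the $(c_{A^t},c_A)$-cocycle action.

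The heart of the proof is then to construct an isomorphism of \'etale groupoids
\[
\varphi:(\mathcal{G}_{A^t}\times\mathcal{G}_A)|_{\mathcal{Y}}\longrightarrow G_A^{s,u}\rtimes\Z^2,\qquad \big((\xi,l,\xi'),(\eta,k,\eta')\big)\longmapsto\big(\Theta(\xi,\eta),\,k,\,-l,\,\Theta(\xi',\eta')\big),
\]
compatible with $\Theta$ on unit spaces. It is well defined and bijective because of the description of $G_A^s$, $G_A^u$ in Lemma \ref{lem:n2.6}: shift-tail equivalence of the forward parts $x_+,y_+$ with lag $k$ is exactly $(\bar{\sigma}_A^{k}(x),y)\in G_A^s$, and shift-tail equivalence of the backward parts read as elements of $X_{A^t}$ with lag $l$ is exactly $(\bar{\sigma}_A^{-l}(x),y)\in G_A^u$; and it is a groupoid homomorphism because both the shift-lags and the $\Z^2$-labels compose additively. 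Under $\varphi$ the $\Z^2$-cocycle $(c_{A^t},c_A)$ corresponds to $(x,p,q,y)\mapsto(-q,p)$, which is the cocycle defining $\rho_{\bar{\sigma}_A}^{s,u}$ once the sign convention $z_2^{-q}$ in its definition is taken into account. Since $\mathcal{G}_{A^t}\times\mathcal{G}_A$ and $G_A^{s,u}\rtimes\Z^2$ are amenable, full and reduced algebras coincide, so $\varphi$ induces an isomorphism $\Phi=\varphi_*:\RA\to\R_{\bar{\sigma}_A}^{s,u}$; it satisfies $\Phi(\FDA)=C(\bar{X}_A)$ and the intertwining relation \eqref{eq:Phi9.61}, hence $\Phi\circ\delta^A_t=\delta^{\bar{\sigma}_A}_t\circ\Phi$. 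The remaining structural assertions are cheap: $E_A$ is a sum of mutually orthogonal projections, hence a projection, so $\RA$ is unital with unit $E_A$; and since $A$ and $A^t$ are irreducible non-permutation matrices, $\OA$ and $\OtA$ are simple, purely infinite and nuclear, so their minimal tensor product is simple, purely infinite and nuclear, and all three properties descend to the full corner $\RA$ (alternatively, combine Proposition \ref{prop:susimple} and Proposition \ref{prop:extendedtensor} with \cite[Theorem 1.4]{PutSp}).

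The step I expect to be the main obstacle is verifying that $\varphi$ is a homeomorphism rather than merely a bijection: $G_A^{s,u}\rtimes\Z^2$ carries on each fibre the inductive-limit topology of $G_A^{s,u}(p,q)=\bigcup_n G_A^{s,u,n}(p,q)$, whereas $(\mathcal{G}_{A^t}\times\mathcal{G}_A)|_{\mathcal{Y}}$ carries the relative product topology of two Deaconu--Renault groupoids, so one must check that the two standard bases of compact open bisections --- pairs of cylinder sets subject to a matching-tail condition --- are interchanged by $\varphi$, keeping careful track of shift indices and of the signs relating a lag to the exponents $p,q$. The accompanying cocycle and gauge bookkeeping, in particular the order in which the two circle parameters enter, is routine but is the other place where it is easy to slip.
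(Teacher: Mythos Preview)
Your proposal is correct and follows essentially the same route as the paper's own proof: realize $\OtA\otimes\OA$ as the groupoid $C^*$-algebra of the product Deaconu--Renault groupoid, identify $E_A$ with the characteristic function of the clopen set $\mathcal{Y}$ in the unit space, recognize the corner as the reduction $(\mathcal{G}_{A^t}\times\mathcal{G}_A)|_{\mathcal{Y}}$, and then write down an explicit \'etale groupoid isomorphism to $G_A^{s,u}\rtimes\Z^2$ by gluing the two one-sided sequences into a two-sided point and reading off $(p,q)$ from the two lags. The paper's map $((x',n',y'),(x,n,y))\mapsto(\pi(x',x),-n,n',\pi(y',y))$ is precisely your $\varphi$ after matching the sign conventions for the Deaconu--Renault lag, and the paper likewise derives simplicity, pure infiniteness and nuclearity from the corresponding properties of $\OA$ and $\OtA$. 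Your write-up is, if anything, more careful than the paper's about the two genuine bookkeeping points---verifying that $\varphi$ is a homeomorphism for the inductive-limit topology on the target, and matching the order and signs of the circle parameters in the cocycle---both of which the paper dispatches with a sentence.
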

\begin{proof}
Since $A$ is irreducible and not permutation matrix,
the Cuntz--Krieger algebras
$\OA, \OtA$ are both unital, simple, purely infinite
and nuclear (\cite[Theorem 2.14]{CK}). 
Hence so is the algebra
$E_A(\OtA\otimes\OA)E_A=\RA$.
We will construct an isomorphism 
$\Phi: \RA\longrightarrow \R_{\bar{\sigma}_A}^{s,u}$
having the desired properties \eqref{eq:Phi9.61}.
As in \cite{MMKyoto}, \cite{Renault}, \cite{Renault2}, \cite{Renault3}, 
the right one-sided
topological Markov shift
$(X_A,\sigma_A)$ 
gives rise to  an \'etale groupoid
$G_A$, which is defined by 
\begin{equation*}
G_A =\{
((x_i)_{i=1}^\infty, n, (y_j)_{j=1}^\infty) \in X_A\times \Z\times X_A \mid
n= l-k, x_{i+k} = y_{i+l}, i=1,2,\dots \}.
\end{equation*}
We have the groupoid 
$G_{A^t}$ for the transposed matrix $A^t$ in a similar way.
It is wel-known that 
the groupoids $G_A, G_{A^t}$ are amenable and \'etale such that 
their $C^*$-algebras 
$C^*(G_A), C^*(G_{A^t})$ are isomorphic to the 
the Cuntz--Krieger algebras
$\OA, \OtA$, respectively.
Let $G_{A^t} \times G_A$ be the direct product of the groupoids
so that 
$C^*(G_{A^t} \times G_A)$
is isomorphic to the tensor product
$C^*(G_{A^t})\otimes C^*(G_A)$
of the groupoid $C^*$-algebras.
Hence we have a natural isomorphism
$\Phi: \OtA\otimes\OA\longrightarrow C^*(G_{A^t} \times G_A).$
For elements
\begin{gather*}
((x_i)_{i=1}^\infty, n, (y_i)_{i=1}^\infty) \in G_A \text{ with }
n=l-k, \, x_{i+k} = y_{i+l} \text{ for } i\in \N, \\
((x'_j)_{j=1}^\infty, n', (y'_j)_{j=1}^\infty) \in G_A \text{ with }
n'=l'-k', \, x'_{j+k'} = y'_{j+l'} \text{ for } j\in \N
\end{gather*}
of the groupoid $G_A$,
we assume that
$A(x'_1, x_1) = A(y'_1,y_1) =1$.
Put 
$x =(x_i)_{i=1}^\infty, y=(y_i)_{i=1}^\infty$
and
$x' =(x'_j)_{j=1}^\infty, y'=(y'_j)_{j=1}^\infty.$
We define a bi-infinite sequence
$\pi(x',x)=(\pi(x',x)_i)_{i\in \Z}$ by setting
\begin{equation*}
\pi(x',x)_i
= 
\begin{cases}
x_i & \text{ if } i\ge 1, \\
x'_{-i+1} & \text{ if } i\le 0.
\end{cases}
\end{equation*}
Then  
$\pi(x',x)$ and similarly 
$\pi(y',y)$ belong to $\bar{X}_A$.
Put $N = \Max\{l+1, l'\}$ and
$p=-n,q = n'$.
Since 
\begin{align*}
\bar{\sigma}_A^p(\pi(x',x))_i 
=& \pi(y',y)_i, \qquad i\ge N, \\  
\bar{\sigma}_A^q(\pi(x',x))_i 
=& \pi(y',y)_i, \qquad i\le -N, 
\end{align*}
we have
\begin{equation*}
(\pi(x',x), p,q,\pi(y',y)) \in G_A^{s,u}\rtimes\Z^2.
\end{equation*}
Define the subgroupoid
$G_{A^t}\times_{A}G_A$ 
of $G_{A^t}\times G_A$
by
\begin{equation*}
G_{A^t}\times_{A}G_A =
\{((x',n',y'),(x,n,y)) \in G_{A^t} \times G_A \mid
A(x'_1,x_1)=A(y'_1,y_1) = 1\}.
\end{equation*}
It is easy to see that the correspondence
\begin{equation*}
((x',n',y'),(x,n,y)) \in G_{A^t}\times_{A}G_A 
\longrightarrow
(\pi(x',x), -n, n',\pi(y',y)) \in G_A^{s,u}\rtimes\Z^2
\end{equation*}
yields an isomorphism of \'etale groupoids, so that 
we may identify
$G_{A^t}\times_{A}G_A$ and $G_A^{s,u}\rtimes\Z^2$
as \'etale groupoids through the above correspondence.
Since
$G_{A^t}\times_{A}G_A$
is a clopen subset of $G_{A^t}\times G_A,$
the characteristic function
$\chi_{G_{A^t}\times_{A}G_A}$ of
$G_{A^t}\times_{A}G_A$
on
$G_{A^t}\times G_A$
belongs to the $C^*$-algebra
$C^*(G_{A^t} \times G_A)$,
which is denote by $P_A$.
It then follows that 
the isomorphism
$\Phi: \OtA\otimes\OA\longrightarrow C^*(G_{A^t} \times G_A)$
satisfies
$\Phi(E_A) = P_A$.
Hence the restriction of $\Phi$
to the subalgebra
$E_A(\OtA\otimes\OA)E_A$
gives rise to an isomorphism
$E_A(\OtA\otimes\OA)E_A\longrightarrow P_A C^*(G_{A^t} \times G_A)P_A$
which is still denoted by $\Phi$.
As 
 $P_A C^*(G_{A^t} \times G_A)P_A$
 is identified with
 $C^*(G_A^{s,u}\rtimes\Z^2)$,
 we have an isomorphism
$\Phi: \RA \longrightarrow \R_{\bar{\sigma}_A}^{s,u}.$
It is also described in the following way. 
For 
 $\mu =(\mu_1,\dots,\mu_m), \nu =(\nu_1,\dots,\nu_n) \in B_*(\bar{X}_A)$   
and
$\bar{\xi} = (\xi_k,\dots,\xi_1),
\bar{\eta} = (\eta_l,\dots,\eta_1)
\in B_*(\bar{X}_{A^t})$
with 
$A(\xi_k,\mu_1) = A(\eta_l,\nu_1) =1$,
we know that
\begin{equation*}
(\phi^{m-n}(x),y) \in G_A^{s,|m-n|},
\qquad
(\phi^{l-k}(x),y) \in G_A^{u,|l-k|} 
\quad \text{ for } 
x \in U_{\xi\mu}, \, y \in U_{\eta\nu}.
\end{equation*}
Let
$\chi_{\xi\mu,\eta\nu}
\in C_c(G_A^{s,u}\rtimes\Z^2)
$
be the characteristic function
of the clopen set
\begin{align*}
U_{\xi\mu,\eta\nu}
=\{& (x,m-n, l-k,y) \in G_A^{s,u}\rtimes\Z^2 \mid 
x \in U_{\xi\mu},  y \in U_{\eta\nu}, \\
& (\bar{\sigma}_A^m(x),\bar{\sigma}_A^n(y)) \in G_A^{s,0},
(\bar{\sigma}_A^{-k}(x),\bar{\sigma}_A^{-l}(y)) \in G_A^{u,0}
\}.
\end{align*}
It is not difficult to see that the correspondence
\begin{equation}
T_{\bar{\xi}} T_{\bar{\eta}}^* \otimes S_\mu S_\nu^* \in \RA 
\longrightarrow 
\chi_{\xi\mu,\eta\nu} \in C_c(G_A^{s,u}\rtimes\Z^2) \label{eq:Phi9.62}
\end{equation}
gives rise to the 
isomorphism $\Phi: \RA\longrightarrow 
C^*(G_A^{s,u}\rtimes\Z^2)
(=\R_{\bar{\sigma}_A}^{s,u}).$
By \eqref{eq:Phi9.62},
we easily know that
$\Phi$ satisfies 
\eqref{eq:Phi9.61}.
\end{proof}

\begin{corollary}\label{cor:mains10.9}
The fixed point algebra $(\RA)^{\delta^A}$
of $\RA$ under the diagonal gauge action $\delta^A$
is isomorphic to the asymptotic Ruelle algebra $\R_A^a$.
\end{corollary}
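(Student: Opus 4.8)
The plan is to read the corollary straight off Theorem~\ref{thm:RAGR} together with Theorem~\ref{thm:7.3}, since both the isomorphism and the fixed-point description are already available. By Theorem~\ref{thm:RAGR} there is an isomorphism $\Phi\colon \RA \longrightarrow \R_{\bar{\sigma}_A}^{s,u}$ of $C^*$-algebras with $\Phi\circ\gamma^A_{(r,s)} = \rho^{s,u}_{\bar{\sigma}_A,(r,s)}\circ\Phi$ for all $(r,s)\in\T^2$, and in particular, setting $r=s=t$, one has $\Phi\circ\delta^A_t = \delta^{\bar{\sigma}_A}_t\circ\Phi$ for $t\in\T$. The first step is the elementary observation that a $\ast$-isomorphism intertwining two circle actions carries the fixed point algebra of one onto that of the other: if $a\in\RA$ and $\delta^A_t(a)=a$ for all $t$, then $\delta^{\bar{\sigma}_A}_t(\Phi(a))=\Phi(\delta^A_t(a))=\Phi(a)$, and the converse follows by the same argument applied to $\Phi^{-1}$. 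Hence $\Phi$ restricts to an isomorphism $(\RA)^{\delta^A}\cong(\R_{\bar{\sigma}_A}^{s,u})^{\delta^{\bar{\sigma}_A}}$.

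The second step is to recall that $(\bar{X}_A,\bar{\sigma}_A)$ is an irreducible Smale space with infinite underlying space, precisely because $A$ is irreducible and not a permutation matrix, so Theorem~\ref{thm:7.3} applies and identifies $(\R_{\bar{\sigma}_A}^{s,u})^{\delta^{\bar{\sigma}_A}}$ with the asymptotic Ruelle algebra $\R_{\bar{\sigma}_A}^a$, which by our conventions is denoted $\R_A^a$. Composing the restriction of $\Phi$ with this identification gives $(\RA)^{\delta^A}\cong\R_A^a$, which is the assertion. If one wishes to be explicit, one can track the identifications concretely: the gauge conditional expectation $\int_{\T}\delta^A_t(\,\cdot\,)\,dt$ onto $(\RA)^{\delta^A}$ corresponds under $\Phi$ to the conditional expectation $\mathcal{E}_{\bar{\sigma}_A}$ of the proof of Theorem~\ref{thm:7.3}, so that $\Phi$ carries the diagonal part of the dense $\ast$-subalgebra $\RAC$ (i.e.\ those spanning operators $T_{\bar{\xi}}T_{\bar{\eta}}^*\otimes S_\mu S_\nu^*$ whose associated $\Z^2$-index lies on the diagonal) onto $C_c(G_A^a\rtimes\Z)$.

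I expect no genuine obstacle here: the argument is pure bookkeeping once Theorem~\ref{thm:RAGR} and Theorem~\ref{thm:7.3} are in hand. The only points that really need checking are that the diagonal gauge action on the Cuntz--Krieger side, $\delta^A_t=\gamma^A_{(t,t)}=\rho^{A^t}_t\otimes\rho^A_t$ restricted to $E_A(\OtA\otimes\OA)E_A$, is exactly the action that Theorem~\ref{thm:RAGR} transports to $\delta^{\bar{\sigma}_A}$ --- immediate from the displayed intertwining relation by setting $r=s=t$ --- and that the hypotheses of Theorem~\ref{thm:7.3} are met by $(\bar{X}_A,\bar{\sigma}_A)$, which is the standard fact that irreducibility plus the non-permutation property of $A$ makes the two-sided topological Markov shift an irreducible Smale space on a Cantor set.
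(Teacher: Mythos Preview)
Your proof is correct and follows exactly the same route as the paper: combine the equivariant isomorphism of Theorem~\ref{thm:RAGR} with the fixed-point identification of Theorem~\ref{thm:7.3}. The paper's own proof is the same two-line argument, only less detailed; your added verification of the hypotheses and the explicit intertwining of fixed-point algebras are fine but not strictly necessary.
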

\begin{proof}
The fixed point algebra
$(\R_{\bar{\sigma}_A}^{s,u})^{\delta^{\bar{\sigma}_A}}$
of 
$\R_{\bar{\sigma}_A}^{s,u}$
under
$
\delta^{\bar{\sigma}_A}
$ 
is isomorphic 
to the asymptotic Ruelle algebra
$\R_A^a$ by Theorem \ref{thm:7.3}.
Hence the assertion follows from Theorem \ref{thm:RAGR}.
\end{proof}
%
Put
$U_i =  T_i^*\otimes S_i $ in $\OtA \otimes \OA$
for $i=1,\dots,N$.
We set
$U_A = \sum_{i=1}^N U_i$ in 
$\OtA \otimes \OA$.
\begin{lemma}
$U_A$ is a unitary in $\RA$, that is,
$U_A U_A^* = U_A^* U_A =E_A$. 
\end{lemma}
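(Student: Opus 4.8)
The plan is to verify the two operator identities $U_AU_A^{*}=E_A$ and $U_A^{*}U_A=E_A$ by a direct computation inside the tensor product $\OtA\otimes\OA$, using only the Cuntz--Krieger relations and the identity $E_A=E_{A^t}$ recorded in \eqref{eq:EAETA}. First I would isolate the one elementary fact that makes everything collapse: the canonical generators satisfy $S_i^{*}S_j=0$ and $T_i^{*}T_j=0$ whenever $i\neq j$. Indeed the range projections $S_iS_i^{*}$ are pairwise orthogonal, so $S_iS_i^{*}S_j=(S_iS_i^{*})(S_jS_j^{*})S_j=0$, and therefore $S_i^{*}S_j=(S_i^{*}S_iS_i^{*})S_j=S_i^{*}(S_iS_i^{*}S_j)=0$; the same argument applies to the $T_i$ in $\OtA$.

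With this in hand I would expand
\begin{align*}
U_AU_A^{*}
&=\sum_{i,j=1}^{N}(T_i^{*}\otimes S_i)(T_j\otimes S_j^{*})
=\sum_{i,j=1}^{N}T_i^{*}T_j\otimes S_iS_j^{*}
=\sum_{i=1}^{N}T_i^{*}T_i\otimes S_iS_i^{*},
\end{align*}
where the off-diagonal terms vanish because $T_i^{*}T_j=0$ for $i\neq j$. The surviving sum is $E_{A^t}$ by definition, and $E_{A^t}=E_A$ by \eqref{eq:EAETA}, so $U_AU_A^{*}=E_A$. Symmetrically,
\begin{align*}
U_A^{*}U_A
&=\sum_{i,j=1}^{N}(T_i\otimes S_i^{*})(T_j^{*}\otimes S_j)
=\sum_{i,j=1}^{N}T_iT_j^{*}\otimes S_i^{*}S_j
=\sum_{i=1}^{N}T_iT_i^{*}\otimes S_i^{*}S_i=E_A,
\end{align*}
the off-diagonal terms now dropping because $S_i^{*}S_j=0$ for $i\neq j$, and the remaining sum being exactly $E_A$ by definition.

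Finally I would note that, since $U_A^{*}U_A=E_A$ is a projection, $U_A$ is a partial isometry, so $U_A=U_AU_A^{*}U_A$; combined with $U_AU_A^{*}=E_A$ this gives $U_AE_A=E_AU_A=U_A$ and hence $U_A=E_AU_AE_A\in\RA$. As $E_A$ is the unit of the corner algebra $\RA=E_A(\OtA\otimes\OA)E_A$, the two identities $U_AU_A^{*}=U_A^{*}U_A=E_A$ say precisely that $U_A$ is a unitary of $\RA$. I do not expect any genuine obstacle here; the only point deserving care is that the relation $S_i^{*}S_j=\delta_{ij}S_i^{*}S_i$, together with its analogue for the $T_i$, is not literally among the displayed Cuntz--Krieger relations and must be deduced from the orthogonality of the range projections before the cancellations above are justified.
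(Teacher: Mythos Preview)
Your proof is correct and follows essentially the same approach as the paper: both arguments reduce to the orthogonality $S_i^{*}S_j=0$, $T_i^{*}T_j=0$ for $i\neq j$ coming from the Cuntz--Krieger relations, and both identify the diagonal sums with $E_A$ and $E_{A^t}=E_A$. The only organizational difference is that the paper first checks $E_AU_i=U_iE_A=U_i$ for each $i$ to place the summands in $\RA$, and then uses the orthogonality of the projections $U_iU_i^{*}$ and $U_i^{*}U_i$, whereas you expand the double sum directly and deduce $U_A\in\RA$ at the end from $U_A=E_AU_AE_A$; neither route offers a substantive advantage over the other.
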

\begin{proof}
We have 
\begin{equation*}
E_A U_i 
 = (\sum_{j=1}^N T_j^*T_j \otimes S_j S_j^*)U_i 
 = \sum_{j=1}^N  T_j^* T_jT_i^*\otimes S_j S_j^* S_i  
 =   T_i^*\otimes S_i  
\end{equation*}
and similarly
$U_i E_A = U_i$,
so that 
we have $U_i \in \RA$.
Since we have
$U_i U_i^* =  T_i^*T_i\otimes S_i S_i^* $
and
$U_i^* U_i =  T_i T_i^*\otimes S_i^* S_i$,
we see that
\begin{equation*}
U_i U_i^* \cdot U_j U_j^* =
U_i^* U_i \cdot U_j^* U_j = 0 \quad \text{ if } i \ne j. 
\end{equation*}
It then follows that
\begin{equation*}
U_A^* U_A = \sum_{i=1}^N U_i^* U_i 
= \sum_{i=1}^N  T_i T_i^*\otimes  S_i^* S_i 
=E_A.
\end{equation*}
We have 
$U_A U_A ^*= E_A$
similarly.
\end{proof}
Define the inner automorphism $\alpha_A$ of $\RA$ by setting
$\alpha_A = \Ad(U_A)$.
\begin{proposition}
Let $\Phi: \RA\longrightarrow 
\R_{\bar{\sigma}_A}^{s,u}(=C^*(G_A^{s,u}\rtimes\Z^2))$
be the isomorphism defined in Theorem \ref{thm:RAGR}.
Then the restriction $\Phi|_{\FDA}: \FDA\rightarrow C(\bar{X}_A)$
of $\Phi$ to the commutative $C^*$-subalgebra 
$\FDA$ satisfies the relation:
\begin{equation*}
\Phi\circ \alpha_A = \bar{\sigma}_A^*\circ \Phi
\end{equation*}
where 
$\bar{\sigma}_A^*(f) = f\circ\bar{\sigma}_A$
 for $f \in C(\bar{X}_A)$.
\end{proposition}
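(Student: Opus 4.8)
The plan is to verify the identity on a generating set of $\FDA$ and then pass to the closure. Recall from the lemma identifying $\FDA$ with $C(\bar{X}_A)$ that $\FDA$ is the closed linear span of the projections $T_{\bar{\zeta}}T_{\bar{\zeta}}^*\otimes S_\rho S_\rho^*$, where $\zeta=(\zeta_1,\dots,\zeta_k)\in B_*(\bar{X}_A)$, $\rho=(\rho_1,\dots,\rho_m)\in B_*(\bar{X}_A)$ and $A(\zeta_k,\rho_1)=1$ (cf.\ \eqref{eq:10.2}); moreover, by Theorem \ref{thm:RAGR}, more precisely by the formula \eqref{eq:Phi9.62} together with the diagonal‑support computation in its proof, $\Phi$ carries such a projection to $\chi_{U_{\zeta\rho}}$, so that $\Phi|_{\FDA}$ coincides with the canonical isomorphism $\FDA\cong C(\bar{X}_A)$. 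Since $\Phi|_{\FDA}$ and $\bar{\sigma}_A^*$ are $*$-isomorphisms, once we know that $\alpha_A$ maps $\FDA$ into itself it suffices to check $\Phi(\alpha_A(P))=\bar{\sigma}_A^*(\Phi(P))$ on each generating projection $P$.

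First I would compute $\alpha_A(P)=U_A P U_A^*$ directly from $U_A=\sum_{i=1}^N T_i^*\otimes S_i$ and the Cuntz--Krieger relations $T_i^* T_j=\delta_{ij}T_i^* T_i$, $S_i^* S_j=\delta_{ij}S_i^* S_i$, using the admissibility $A^t(\zeta_k,\zeta_{k-1})=1$ of $\bar{\zeta}$ and the relation $A(\zeta_k,\rho_1)=1$. For $P=T_{\bar{\zeta}}T_{\bar{\zeta}}^*\otimes S_\rho S_\rho^*$ with $k\ge 2$ this yields
\begin{equation*}
\alpha_A\bigl(T_{\bar{\zeta}}T_{\bar{\zeta}}^*\otimes S_\rho S_\rho^*\bigr)
= T_{\overline{\zeta'}}\,T_{\overline{\zeta'}}^*\otimes S_{\zeta_k\rho}\,S_{\zeta_k\rho}^*,
\end{equation*}
where $\zeta'=(\zeta_1,\dots,\zeta_{k-1})$ and $\zeta_k\rho=(\zeta_k,\rho_1,\dots,\rho_m)$; for $k=1$ one instead obtains $\sum_{j:\,A(j,\zeta_1)=1}T_jT_j^*\otimes S_{\zeta_1\rho}S_{\zeta_1\rho}^*$, because $T_{\zeta_1}^*T_{\zeta_1}=\sum_{j=1}^N A^t(\zeta_1,j)T_jT_j^*$. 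In either case the result lies in $\FDA$, so $\alpha_A(\FDA)\subseteq\FDA$; applying the symmetric computation to $\alpha_A^{-1}=\Ad(U_A^*)$ (which transfers one symbol from the $\rho$-block back to the $\zeta$-block) gives $\alpha_A^{-1}(\FDA)\subseteq\FDA$, so $\alpha_A$ restricts to a $*$-automorphism of $\FDA$ and the left-hand side of the assertion is well defined on $\FDA$.

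Next I would transport this through $\Phi$ and compare with $\bar{\sigma}_A^*$. By the description of $\Phi|_{\FDA}$ above, $\Phi\bigl(T_{\overline{\zeta'}}T_{\overline{\zeta'}}^*\otimes S_{\zeta_k\rho}S_{\zeta_k\rho}^*\bigr)=\chi_{U_{\zeta'(\zeta_k\rho)}}$, the characteristic function of
\begin{equation*}
U_{\zeta'(\zeta_k\rho)}=\{\,x\in\bar{X}_A\mid x_{-(k-2)}=\zeta_1,\dots,x_0=\zeta_{k-1},\ x_1=\zeta_k,\ x_2=\rho_1,\dots,x_{m+1}=\rho_m\,\},
\end{equation*}
while for $k=1$ the sum of characteristic functions collapses to $\chi_{\{x\in\bar{X}_A\,:\,x_1=\zeta_1,\,x_2=\rho_1,\dots,x_{m+1}=\rho_m\}}$, since for $x\in\bar{X}_A$ the constraint $x_1=\zeta_1$ already forces $A(x_0,\zeta_1)=1$, so the admissible indices $j$ run over exactly the possible values of $x_0$. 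On the other hand $\bar{\sigma}_A^*(\Phi(P))=\chi_{U_{\zeta\rho}}\circ\bar{\sigma}_A=\chi_{\bar{\sigma}_A^{-1}(U_{\zeta\rho})}$, and from $\bar{\sigma}_A(x)_n=x_{n+1}$ one reads off at once that $\bar{\sigma}_A^{-1}(U_{\zeta\rho})$ is precisely the cylinder $U_{\zeta'(\zeta_k\rho)}$ (respectively the shorter cylinder above when $k=1$). Hence $\Phi(\alpha_A(P))=\bar{\sigma}_A^*(\Phi(P))$ for every generating projection $P$, and by linearity and continuity $\Phi\circ\alpha_A=\bar{\sigma}_A^*\circ\Phi$ holds on all of $\FDA$.

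The only genuinely delicate point is the generator-level computation of $\alpha_A$: one must keep track of the word-reversal convention $\bar{\zeta}\leftrightarrow\zeta$ and of the fact that the first tensor leg is governed by $A^t$ and the second by $A$, and in the degenerate case $k=1$ one has to resolve the range projection $T_{\zeta_1}^*T_{\zeta_1}$ through the Cuntz--Krieger relation and recognise the resulting union of cylinders as a single cylinder, using that $\bar{X}_A$ consists of $A$-admissible bi-infinite sequences. Everything else---the reduction to generators, the $*$-homomorphism property of the two composites, and the identification of images under $\Phi$---is routine once Theorem \ref{thm:RAGR} is available.
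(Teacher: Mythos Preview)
Your proof is correct and follows essentially the same approach as the paper's: compute $\alpha_A=\Ad(U_A)$ on the generating projections $T_{\bar{\zeta}}T_{\bar{\zeta}}^*\otimes S_\rho S_\rho^*$ of $\FDA$, obtain $T_{\overline{\zeta'}}T_{\overline{\zeta'}}^*\otimes S_{\zeta_k\rho}S_{\zeta_k\rho}^*$, and observe that under $\Phi$ this is $\chi_{U_{\zeta\rho}}\circ\bar{\sigma}_A$. You are in fact more careful than the paper, which does not separate out the degenerate case $k=1$ and does not explicitly note that $\alpha_A$ preserves $\FDA$; your treatment of these points is correct and makes the argument more complete.
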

\begin{proof}
For 
 $\mu =(\mu_1,\dots,\mu_m), \xi =(\xi_1,\dots,\xi_k) \in B_*(\bar{X}_A)$
with $A(\xi_k, \mu_1)=1$,
We have
\begin{align*}
U_A( T_{\bar{\xi}} T_{\bar{\xi}}^*\otimes S_\mu S_\mu^* ) U_A^*
=& 
\sum_{i,j=1}^N  T_i^* T_{\bar{\xi}} T_{\bar{\xi}}^* T_j\otimes S_i S_\mu S_\mu^* S_j^*  \\
=&  T_{\xi_1}^* T_{\xi_1} 
T_{(\xi_{k-1},\dots,\xi_1)} T_{(\xi_{k-1},\dots,\xi_1)}^* 
T_{\xi_1}^* T_{\xi_1}\otimes S_{\xi_k\mu} S_{\xi_k\mu}^*  \\
=&    
T_{(\xi_{k-1},\dots,\xi_1)} T_{(\xi_{k-1},\dots,\xi_1)}^*
\otimes S_{\xi_k\mu} S_{\xi_k\mu}^*. 
\end{align*}
This shows that the equality 
$\Phi\circ \alpha_A = \bar{\sigma}_A^*\circ \Phi$
holds on $\FDA$.
\end{proof}
We note that 
the unitary $\Phi(U_A)$ in $\R_{\bar{\sigma}_A}^{s,u}$
belongs to the asymptotic Ruelle algebra $\R_{\bar{\sigma}_A}^a$
and it is nothing but the unitary
$U_{\bar{\sigma}_A}$ for $(X,\phi) = (\bar{X}_A,\bar{\sigma}_A)$
defined in \eqref{eq:Uphi}.

\begin{remark}
In \cite[Proposition 6.7]{Holton},
C. G. Holton proved that if two primitive matrices $A$ and $B$ are shift equivalent
(cf. \cite{LM}), 
then the asymptotic Ruelle algebras 
$\R_A^a$ and $\R_B^a$ are isomorphic by showing that 
 the automorphism $\alpha_A$ induced by the original transformation $\bar{\sigma}_A$
 on the AF-algebra $C^*(G_A^a)$ has the Rohlin property.
\end{remark}

\section{K-theory for the  asymptotic Ruelle algebras for full shifts}
In this final section,
we will compute  the K-groups and its trace values  of the asymptotic 
Ruelle algebras $\R_A^a$
for some topological Markov shifts.
In \cite{Putnam1}(cf. \cite{KilPut}), the K-theory formula for the  asymptotic 
Ruelle algebras $\R_A^a$ for the topological Markov shift
$(\bar{X}_A,\bar{\sigma}_A)$
has been provided.
In particular, ring and module structure of the K-groups have been 
deeply studied in \cite{KilPut}.
We will see the K-groups of the $C^*$-algebra
$\R_A^a$ in a concrete way for full shifts by using the Putnam's formula in \cite{Putnam1} 
which we will describe below.
Let $A$ be an $N\times N$ irreducible matrix  with entries in $\{0,1\}$.
Let us consider the abelian group
$H(A)$ of the inductive limit
\begin{equation}
\Z^N\otimes\Z^N \quad \overset{A^t\otimes A}{\longrightarrow} \quad
\Z^N\otimes\Z^N \quad \overset{A^t\otimes A}{\longrightarrow}\quad\cdots.
\label{eq:inductiveA}
\end{equation}
Under a natural identification
between $\Z^N\otimes\Z^N $ and
the $N\times N$ matrices $M_N(\Z)$ over $\Z$, 
we set
$H_k(A) =M_N(\Z)$ for $k=1,2,\dots.$
Then the map $A^t\otimes A$ in \eqref{eq:inductiveA}
goes to the map
$\iota_k:H_k(A)\longrightarrow H_{k+1}(A)$ 
defined by
$\iota_k([T,k]) = [ATA,k+1]$ for $[T,k] \in H_k(A)$ 
with $T \in M_N(\Z)$.
Define the homomorphism
$\alpha_k:H_k(A)\longrightarrow H_{k+1}(A)$  by
$\alpha_k([T,k]) = [A^2T,k+1]$ for $[T,k] \in H_k(A)$,
which extends to an endomorphism $\alpha:H(A)\longrightarrow H(A)$.
Putnam showed the following K-theory formula by using the six-term exact sequence
for K-theory of the $C^*$-algebra $\R_A^a$: 
\begin{proposition}[{Putnam \cite[p. 192]{Putnam1}}]\label{prop:Kformula}
\begin{align*}
K_0(\R_A^a) = & \Coker(\id -\alpha: H(A)\longrightarrow H(A)),\\
K_1(\R_A^a) = & \Ker(\id -\alpha: H(A)\longrightarrow H(A)).
\end{align*}
\end{proposition}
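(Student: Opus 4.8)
The plan is to realize $\R_A^a$ as the crossed product of an AF algebra by $\Z$ and to feed this into the Pimsner--Voiculescu six-term exact sequence. Recall from Section~2 that $\R_A^a=C^*(G_A^a\rtimes\Z)$ is canonically isomorphic to the crossed product $C^*(G_A^a)\rtimes_\beta\Z$, where $\beta$ is the automorphism of the groupoid $C^*$-algebra $C^*(G_A^a)=A(\bar{X}_A,\bar{\sigma}_A)$ induced by the shift $\bar{\sigma}_A$; by Lemma~\ref{lem:amenable} the full and reduced crossed products coincide, so the Pimsner--Voiculescu sequence is available and reads, for $i=0,1$,
\begin{equation*}
K_i(C^*(G_A^a))\ \overset{\id-\beta_*}{\longrightarrow}\ K_i(C^*(G_A^a))\ \longrightarrow\ K_i(\R_A^a)\ \longrightarrow\ K_{i+1}(C^*(G_A^a))\ \overset{\id-\beta_*}{\longrightarrow}\ K_{i+1}(C^*(G_A^a)),
\end{equation*}
these combining to the usual cyclic six-term sequence. (Alternatively one could start from the realization $\R_A^a\cong(\RA)^{\delta^A}$ of Corollary~\ref{cor:mains10.9} and analyse the fixed point algebra of the circle action $\delta^A$ on $E_A(\OtA\otimes\OA)E_A$, but the crossed-product route is more direct.)

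The first block of work is to identify $K_*(C^*(G_A^a))$. Using the filtration $G_A^a=\bigcup_n G_A^{a,n}$ by open subgroupoids one gets $C^*(G_A^a)=\varinjlim_n C^*(G_A^{a,n})$, and each $C^*(G_A^{a,n})$ is a continuous field of matrix algebras over a zero-dimensional leaf space, hence AF; thus $C^*(G_A^a)$ is AF and $K_1(C^*(G_A^a))=0$. For $K_0$ one computes $K_0(C^*(G_A^{a,n}))$ from the block structure (fibre sizes are counts of admissible bridging words, governed by powers of $A$) together with the connecting maps, and observes that because the leaf spaces are zero-dimensional the integer-valued continuous functions occurring are locally constant and get averaged away in the inductive limit; the net effect is that $K_0(C^*(G_A^a))$ is isomorphic to the inductive limit $H(A)$ of \eqref{eq:inductiveA}, under which the connecting maps become $A^t\otimes A$, i.e.\ $[T,k]\mapsto[ATA,k+1]$. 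Setting up these identifications cleanly, keeping track of the transpose in the two tensor factors coming from the stable and unstable directions, is somewhat technical but routine.

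The second block, which I expect to be the main obstacle, is to compute the automorphism $\beta_*$ that $\bar{\sigma}_A$ induces on $K_0(C^*(G_A^a))=H(A)$ and to check it is conjugate to Putnam's endomorphism $\alpha$, $[T,k]\mapsto[A^2T,k+1]$. The key observation is that $\bar{\sigma}_A\times\bar{\sigma}_A$ carries $G_A^{a,n}$ into $G_A^{a,n+1}$ — it lowers the stable level by one and raises the unstable level by one — so $\beta$ sends the level-$n$ subalgebra into the level-$(n+1)$ subalgebra; tracing the induced map on $K_0\cong M_N(\Z)$ and reconciling this shift of level with the connecting map $A^t\otimes A$ is exactly what produces the factor $A^2$ in $\alpha$. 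Getting this bookkeeping right — namely $A^2$ rather than $A$, $A^tA$, or the identity, together with the correct shift of index — is delicate because one must track simultaneously how $\bar{\sigma}_A$ moves the frozen tails and the central block of a point.

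Finally, since $K_1(C^*(G_A^a))=0$ the six-term sequence breaks into the exact sequences
\begin{equation*}
K_0(C^*(G_A^a))\ \overset{\id-\beta_*}{\longrightarrow}\ K_0(C^*(G_A^a))\ \longrightarrow\ K_0(\R_A^a)\ \longrightarrow\ 0
\end{equation*}
and
\begin{equation*}
0\ \longrightarrow\ K_1(\R_A^a)\ \longrightarrow\ K_0(C^*(G_A^a))\ \overset{\id-\beta_*}{\longrightarrow}\ K_0(C^*(G_A^a)),
\end{equation*}
so $K_0(\R_A^a)=\Coker(\id-\beta_*)$ and $K_1(\R_A^a)=\Ker(\id-\beta_*)$. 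Since $\beta_*$ is an automorphism of $H(A)$, the maps $\id-\beta_*$ and $\id-\beta_*^{-1}=-\beta_*^{-1}(\id-\beta_*)$ have the same kernel and cokernel, so $\beta_*$ may be replaced by $\alpha$, giving $K_0(\R_A^a)=\Coker(\id-\alpha\colon H(A)\to H(A))$ and $K_1(\R_A^a)=\Ker(\id-\alpha\colon H(A)\to H(A))$, as claimed. As a consistency check, for the full $N$-shift ($A$ the $N\times N$ all-ones matrix) one finds $H(A)\cong\Z[\frac{1}{N}]$ and, using $A^2=NA$, that $\alpha=\id$ on $H(A)$; the formula then returns $K_0(\R_N^a)\cong K_1(\R_N^a)\cong\Z[\frac{1}{N}]$, in agreement with Proposition~\ref{prop:K}.
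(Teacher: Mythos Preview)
Your approach is correct and is precisely the one underlying Putnam's result; the paper does not give its own proof of this proposition but simply cites \cite[p.~192]{Putnam1}. In fact the paper later invokes exactly the Pimsner--Voiculescu six-term sequence you wrote down (see the proof of Lemma~\ref{lem:dimensionrange}, where the diagram for the crossed product $\R_N^a=C^*(G_N^a)\rtimes\Z$ with $K_1(C^*(G_N^a))=0$ appears explicitly), so your reconstruction matches both Putnam's argument and the paper's implicit use of it.
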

We will compute the groups
$K_*(\R_A^a)$ for the $N\times N$ matrix
$
A=
\begin{bmatrix} 
1& \cdots & 1 \\
\vdots &   & \vdots \\
1& \cdots & 1 \\
\end{bmatrix}
$
with all entries being $1$'s,
so that the topological Markov shift
$(\bar{X}_A,\bar{\sigma}_A)$
is the full $N$-shift
written 
$(\bar{X}_N,\bar{\sigma}_N)$.
Let us denote by
$\R_N^a$ the asymptotic Ruelle algebra $\R_A^a$ 
for the matrix $A$.
For a natural number $n$,  
$\Z[\frac{1}{n}]$ means the subgroup
$\{ \frac{m}{n^k} \in \mathbb{R}\mid m,k\in \Z\}$ of the additive group $\mathbb{R}.$
We  provide the following lemma.
\begin{lemma}
There exists an isomorphism
$\xi: H(A) \longrightarrow \Z[\frac{1}{N^2}]$ of abelian groups
 such that the diagram
$$
\begin{CD}
H(A) @>\alpha >> H(A) \\
@V{\xi }VV  @VV{\xi}V \\
\Z[\frac{1}{N^2}] @> \id >> \Z[\frac{1}{N^2}] 
\end{CD}
$$
is commutative.
Hence $\alpha = \id $ on $H(A)$.
\end{lemma}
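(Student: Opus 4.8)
The plan is to make the inductive limit $H(A)$ completely explicit when $A$ is the all-ones $N\times N$ matrix, and to identify the endomorphism $\alpha$ with the identity through that explicit description. First I would observe that for the all-ones matrix, $A^2 = NA$, so the defining map $A^t\otimes A$ of the inductive system \eqref{eq:inductiveA}, read on $M_N(\Z)$ as $T\mapsto ATA$, satisfies $ATA = A(TA)$, and more usefully $A M_N(\Z) A$ is the rank-one subgroup $\Z\cdot J$ where $J = A$ is the all-ones matrix (since $ATA$ has all entries equal to $\sum_{i,j}T(i,j)$). So after one step of the inductive limit, every group in the system has collapsed onto the subgroup $\Z J \subset M_N(\Z)$, and on $\Z J$ the connecting map $T\mapsto ATA$ acts as multiplication by $N^2$ (because $A(mJ)A = m A J A = m N^2 J$). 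Hence $H(A)$ is the inductive limit of $\Z \xrightarrow{N^2} \Z \xrightarrow{N^2}\cdots$, which is exactly $\Z[\frac{1}{N^2}]$; I would write down the isomorphism $\xi$ sending the class $[mJ,k]$ to $m/N^{2(k-1)}$ and check it is well-defined and bijective using the collapse just described.

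Next I would compute $\alpha$ under $\xi$. Recall $\alpha_k([T,k]) = [A^2 T, k+1]$. On the subgroup $\Z J$ we have $A^2(mJ) = N A (mJ) = N m A J = N^2 m J$ (using $AJ = NJ$), so $\alpha([mJ,k]) = [N^2 m J, k+1]$. But in the inductive limit, $[N^2 m J, k+1] = [mJ, k]$ by the very definition of the connecting maps (the class of $T$ at level $k$ equals the class of $ATA = $ the level-$(k+1)$ image; and for $T=mJ$ that image is $N^2 mJ$). Therefore $\alpha$ is the identity on $H(A)$, which under $\xi$ corresponds to the identity on $\Z[\frac{1}{N^2}]$, giving the commutativity of the stated diagram.

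The only genuinely delicate point is bookkeeping with the indices $k$ and making sure that "$H(A)$ collapses onto $\Z J$ after one step" is stated at the right level: strictly, $H_1(A) = M_N(\Z)$ does not sit inside $\Z J$, but its image in $H_2(A)$ under $\iota_1$ does, and the inductive limit only sees eventual images, so $H(A)\cong \varinjlim(\Z J \xrightarrow{\times N^2}\Z J\xrightarrow{\times N^2}\cdots)$. I would phrase this carefully, perhaps by first noting $\iota_1(H_1(A)) = \Z J \subseteq H_2(A)$ and then observing the subsystem $\{\Z J\}_{k\ge 2}$ with connecting maps $\times N^2$ is cofinal. I would also double-check the normalization of $\xi$ on a base point (e.g. $\xi([J,1]) = 1$) so that the square with the bottom map $\id$ — rather than some multiplication — is the correct one; the key identity making the bottom map the identity is precisely that $\alpha$ and the connecting map $\iota$ agree on $\Z J$, both being multiplication by $N^2$, which is the conceptual heart of the lemma. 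Everything else is routine verification of inductive-limit axioms.
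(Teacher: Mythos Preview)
Your proposal is correct and follows essentially the same route as the paper: both proofs use the identity $ATA = s_N(T)\,A$ (where $s_N(T)=\sum_{i,j}T_{ij}$) to collapse the inductive system onto a copy of $\Z$ with connecting map $\times N^2$, and then observe that $\alpha_k$ coincides with $\iota_k$ on this subsystem. The paper packages this via the functional $s_N$ and the explicit formula $\xi([T,k]) = s_N(T)/N^{2(k-1)}$ valid for arbitrary representatives, whereas you pass to the cofinal subsystem $\{\Z J\}_{k\ge 2}$ and define $\xi$ there; these are the same argument with different bookkeeping.
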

\begin{proof}
For a matrix $T =[t_{ij}]_{i,j=1}^N\in M_N(\Z)$, define
$s_N(T) = \sum_{i,j=1}^N t_{ij}.$
As $ATA = s_N(T)A,$
the map
$s_N: M_N(\Z)(=H_k(A)) \longrightarrow \Z$
defines a homomorphism
such that 
$
\iota_k([T,k]) =[s_N(T)A, k+1]$ for $T \in M_N(\Z)$.
For $[T,k], [S,k] \in H_k(A)$,
$[T,k]$ and $[S,k]$ define the same element in $H(A)$
if and only if $s_N(T) =s_N(S)$.
Define 
$\tilde{s}_N:H_k(A)\longrightarrow \Z$
by setting
$
\tilde{s}_N([T,k]) = s_N(T)
$
for $[T,k] \in H_k(A)$.
Since
$$
\tilde{s}_N(\iota_k([T,k])) =s_N(s_N(T)A) 
=  s_N(T)N^2
=N^2\tilde{s}_N([T,k]),
$$ 
we have  the sequences of commutative diagrams:
$$
\begin{CD}
H_1(A) @>\iota_1 >> H_2(A) @>\iota_2 >> H_3(A) @>\iota_3 >> \cdots @>>> H(A) \\
@V{\tilde{s}_N }VV  @V{\tilde{s}_N}VV@V{\tilde{s}_N}VV @. @VVV\\
\Z @> \times N^2 >> \Z@> \times N^2 >> \Z @> \times N^2 >> \cdots @>>>
\Z[\frac{1}{N^2}]
\end{CD}
$$
and 
$$
\begin{CD}
H_1(A) @>\iota_1 >> H_2(A) @>\iota_2 >> H_3(A) @>\iota_3 >> \cdots @>>> H(A) \\
@V{\tilde{s}_N }VV  @V{\frac{1}{N^2}\tilde{s}_N}VV@V{\frac{1}{N^4}\tilde{s}_N}VV @. @VVV\\
\bbR @> \id >> \bbR@> \id >> \bbR @> \id >> \cdots @>>>
\bbR.
\end{CD}
$$
Hence we may define an isomorphism
$\xi:H(A)\longrightarrow \Z[\frac{1}{N^2}] \subset \bbR
$
by setting
\begin{equation*}
\xi([T,k]) = \frac{1}{(N^2)^{k-1}}\tilde{s}_N([T,k]) 
            = \frac{1}{N^{2k-2}}s_N(T)
\in \Z[\frac{1}{N^2}]
\quad\text{ for } 
[T,k]\in H_k(A).
\end{equation*}
Since $\alpha([T,k]) = [A^2T, k+1]$
and
$s_N(A^2T) = N^2s_N(T),$
 we have
\begin{equation*}
\xi(\alpha([T,k])) 
=\frac{1}{(N^2)^{k}}s_N(A^2T) 
=\frac{1}{(N^2)^{k-1}}s_N(T) 
=\xi([T,k]),
\end{equation*}
so that the isomorphism
$\xi: H(A) \longrightarrow \Z[\frac{1}{N^2}]$
satisfies 
$\xi\circ\alpha= \xi,$
and hence we have $\alpha = \id$ on $H(A)$.
\end{proof}
As  $\id -\alpha$ is the zero map on $H(A)$
with $\Z[\frac{1}{N^2}] =\Z[\frac{1}{N}]$ in $\mathbb{R}$,
we thus have the  following proposition by the formula of Proposition \ref{prop:Kformula}.
\begin{proposition}[{cf. \cite[Section 3.3]{KilPut}}]\label{prop:K}
$
K_0(\R_N^a) \cong K_1(\R_N^a) \cong H(A) \cong \Z[\frac{1}{N}].
$
\end{proposition}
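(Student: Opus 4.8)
The plan is to apply Putnam's K-theory formula from Proposition \ref{prop:Kformula} together with the preceding lemma, which identifies the abelian group $H(A)$ with $\Z[\frac{1}{N^2}]$ and shows that the endomorphism $\alpha$ acts as the identity on $H(A)$. Since $\alpha = \id$ on $H(A)$, the map $\id - \alpha$ is the zero endomorphism of $H(A)$, so that its cokernel is all of $H(A)$ and its kernel is all of $H(A)$ as well. Feeding this into the formula
\begin{align*}
K_0(\R_A^a) &= \Coker(\id - \alpha : H(A) \longrightarrow H(A)), \\
K_1(\R_A^a) &= \Ker(\id - \alpha : H(A) \longrightarrow H(A))
\end{align*}
immediately yields $K_0(\R_A^a) \cong K_1(\R_A^a) \cong H(A)$.

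It then remains only to observe that $H(A) \cong \Z[\frac{1}{N^2}] = \Z[\frac{1}{N}]$ as subgroups of $\mathbb{R}$. The equality $\Z[\frac{1}{N^2}] = \Z[\frac{1}{N}]$ is elementary: every element $\frac{m}{N^k}$ lies in $\Z[\frac{1}{N^2}]$ since $\frac{m}{N^k} = \frac{mN^k}{N^{2k}}$, and conversely $\Z[\frac{1}{N^2}] \subseteq \Z[\frac{1}{N}]$ trivially. Combining this with the isomorphism $\xi$ of the preceding lemma gives the chain $K_0(\R_N^a) \cong K_1(\R_N^a) \cong H(A) \cong \Z[\frac{1}{N}]$, where $\R_N^a = \R_A^a$ for $A$ the $N\times N$ all-ones matrix.

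Since all the substantive work has already been done in Proposition \ref{prop:Kformula} (Putnam's formula) and in the lemma immediately preceding this statement (the identification $H(A) \cong \Z[\frac{1}{N^2}]$ and $\alpha = \id$), there is essentially no obstacle remaining: the proof is a two-line deduction. The only point requiring a moment's care is to record explicitly that $\id - \alpha$ being the zero map forces both $\Coker(\id - \alpha) = H(A)$ and $\Ker(\id - \alpha) = H(A)$, and that $\Z[\frac{1}{N^2}] = \Z[\frac{1}{N}]$; neither is difficult. Thus the proof I would write simply invokes these two results and concludes.

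\begin{proof}
By the preceding lemma, $\alpha = \id$ on $H(A)$, so that $\id - \alpha$ is the zero endomorphism of $H(A)$. Hence $\Coker(\id - \alpha) = H(A)$ and $\Ker(\id - \alpha) = H(A)$. By Proposition \ref{prop:Kformula} we therefore obtain $K_0(\R_N^a) = K_0(\R_A^a) \cong H(A)$ and $K_1(\R_N^a) = K_1(\R_A^a) \cong H(A)$. Finally, the preceding lemma gives $H(A) \cong \Z[\frac{1}{N^2}]$, and since $\Z[\frac{1}{N^2}] = \Z[\frac{1}{N}]$ as subgroups of $\mathbb{R}$, we conclude $K_0(\R_N^a) \cong K_1(\R_N^a) \cong H(A) \cong \Z[\frac{1}{N}]$.
\end{proof}
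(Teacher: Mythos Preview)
Your proof is correct and follows exactly the same approach as the paper: the paper simply notes that since $\id - \alpha$ is the zero map on $H(A)$ by the preceding lemma and $\Z[\frac{1}{N^2}] = \Z[\frac{1}{N}]$, the result follows from Putnam's formula in Proposition~\ref{prop:Kformula}. Your write-up is slightly more explicit about the cokernel/kernel computation and the equality $\Z[\frac{1}{N^2}] = \Z[\frac{1}{N}]$, but the argument is identical.
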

C. G. Holton proved that if an $N\times N$ matrix $A$ is aperiodic,
then the shift $\bar{\sigma}_N^*$ on the AF-algebra $C^*(G_A^a)$ has the Rohlin property \cite[Theorem 6.1]{Holton}.
For the $N\times N$ matrix 
$A=
\begin{bmatrix} 
1& \cdots & 1 \\
\vdots &   & \vdots \\
1& \cdots & 1 \\
\end{bmatrix},
$
the algebra
$C^*(G_N^a)$ which is the $C^*$-algebra of the groupoid $G_A^a$ is 
the UHF algebra of type $N^\infty$, 
so that the crossed product 
$\R_N^a =C^*(G_N^a)\rtimes_{\bar{\sigma}_N^*}\Z$
is a simple A${\mathbb{T}}$-algebra of real rank zero
with a unique tracial state by \cite[Theorem 1.1]{BKRS}, \cite[Theorem 1.3]{Kishimoto}.  
The unique tracial state on 
$\R_N^a$ is denoted by $\tau_N$.
It arises from the Parry measure on the full $N$-shift
$(\bar{X}_N, \bar{\sigma}_N)$ (Putnam \cite[Theorem 3.3]{Putnam1}).
We may determine the trace values of the $K_0$-group in the following way.
\begin{lemma}\label{lem:dimensionrange}
$\tau_{N*}(K_0(\R_N^a)) = \Z[\frac{1}{N}]$ in $\mathbb{R}$.
\end{lemma}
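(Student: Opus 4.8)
The plan is to identify the range of the unique trace on $K_0$ by tracking a single $K_0$-generator through the isomorphisms already established, and evaluating $\tau_N$ on it. First I would recall from Proposition~\ref{prop:K} that $K_0(\R_N^a)\cong H(A)\cong \Z[\frac1N]$, where the isomorphism $H(A)\to\Z[\frac1N]$ is the map $\xi$ of the preceding lemma, and that $\R_N^a\cong C^*(G_N^a)\rtimes_{\bar\sigma_N^*}\Z$ with $C^*(G_N^a)$ the UHF algebra of type $N^\infty$. The key structural input is the six-term exact sequence in K-theory for the crossed product by $\Z$ (the Pimsner--Voiculescu sequence), which is exactly what Putnam uses to derive Proposition~\ref{prop:Kformula}; since $\id-\alpha=0$ on $H(A)$, the boundary maps vanish and the inclusion $C^*(G_N^a)\hookrightarrow\R_N^a$ induces a surjection $K_0(C^*(G_N^a))\twoheadrightarrow K_0(\R_N^a)$ after passing to the coinvariants, and in fact $K_0(C^*(G_N^a))=\Z[\frac1N]$ maps isomorphically onto the $\alpha$-coinvariants $H(A)$ (because $\alpha$ on $H(A)$ corresponds under $\xi$ to the identity, so no collapsing occurs).

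Next I would use the fact that $\tau_N$ restricts on $C^*(G_N^a)$ to the canonical (unique) trace on the UHF algebra $M_{N^\infty}$, whose $K_0$ is $\Z[\frac1N]$ with the trace inducing the standard inclusion $\Z[\frac1N]\hookrightarrow\mathbb R$; this is precisely the statement that $\tau_N$ arises from the Parry measure, cited to Putnam~\cite[Theorem 3.3]{Putnam1}. Concretely, the minimal projections in the $N^k\times N^k$ matrix block of $M_{N^\infty}$ --- which in the groupoid picture are the characteristic functions of sets $G_A^{a,k}$-classes, or in the Cuntz--Krieger picture (via Theorem~\ref{thm:RAGR} and Corollary~\ref{cor:mains10.9}) projections of the form $T_{\bar\xi}T_{\bar\xi}^*\otimes S_\mu S_\mu^*$ with $|\xi|=|\mu|=k$, suitably cut down by $E_A$ --- have trace $\frac1{N^{2k}}$ up to the identification, and as $k$ ranges over $\N$ these generate $\Z[\frac1N]$ inside $\mathbb R$. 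Since the composite $K_0(C^*(G_N^a))\to K_0(\R_N^a)\xrightarrow{\tau_{N*}}\mathbb R$ equals $\tau_N$ restricted to $K_0(C^*(G_N^a))$, and the first arrow is onto, we get $\tau_{N*}(K_0(\R_N^a))=\tau_{N*}(K_0(C^*(G_N^a)))=\Z[\frac1N]$.

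The main obstacle I anticipate is bookkeeping the normalization: one must check that the identification $H(A)\cong\Z[\frac1N]$ coming from $\xi$ (which carries the factor $N^{2k-2}$ in its denominators, reflecting the $A^t\otimes A$ connecting maps, i.e.\ the \emph{two-sided} nature of the groupoid) is compatible with the trace normalization coming from the \emph{one-sided} UHF picture of $C^*(G_N^a)=M_{N^\infty}$, where denominators are powers of $N$, not $N^2$. The reconciliation is that $G_A^{a,k}$ partitions $\bar X_A$ into cylinders specified by $2k+1$ coordinates, so a single class has Bowen/Parry measure $N^{-2k}$, matching $\xi$; tracing this through requires care but no new ideas. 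A clean alternative that sidesteps some of this is to invoke directly that $\R_N^a$ is a simple unital A$\mathbb T$-algebra of real rank zero with unique trace, so by the classification theory its trace pairing with $K_0$ is determined, and the ordered $K_0$-group with trace can be read off from the UHF subalgebra; I would present the argument via the Pimsner--Voiculescu surjection as the primary route and remark on this alternative.
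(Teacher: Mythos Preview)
Your approach is essentially the same as the paper's: both use the Pimsner--Voiculescu six-term exact sequence (with $\alpha=\id$ and $K_1(C^*(G_N^a))=0$) to see that $K_0(C^*(G_N^a))\to K_0(\R_N^a)$ is surjective, and then read off the trace range from the Parry measure on the AF/UHF subalgebra via the explicit projections $T_{\bar\xi}T_{\bar\xi}^*\otimes S_\mu S_\mu^*$, whose trace the paper records as $N^{-(k+m)}$. Your anticipated normalization worry is harmless since $\Z[\tfrac1N]=\Z[\tfrac1{N^2}]$, and the minor slip ``$2k+1$ coordinates'' (it is $k+m$) does not affect the argument.
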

\begin{proof}
By Corollary \ref{cor:mains10.9}, the algebra
$\R_N^a$ is realized as the fixed point algebra 
of $\R_N^{s,u}$ under the diagonal gauge action. 
It is easy to see that  $\R_N^a$  is generated by linear span of operators
of the form
$
T_{\bar{\xi}}T_{\bar{\eta}}^* \otimes S_\mu S_\nu^* 
$
for
$\mu =(\mu_1,\dots,\mu_m), 
  \nu =(\nu_1,\dots,\nu_n) \in B_*(\bar{X}_A),
\,
\bar{\xi} =(\xi_k,\dots,\xi_1), 
\bar{\eta}=(\eta_l,\dots,\eta_1)
\in B_*(\bar{X}_{A^t})
$
such that $k+m= l+n$.
Since the tracial state $\tau_N$ on $\R_N^a$ comes from the Parry measure on
$\bar{X}_N$, we have
\begin{equation*}
\tau_N(T_{\bar{\xi}}T_{\bar{\eta}}^* \otimes S_\mu S_\nu^*)
=
\begin{cases}
\frac{1}{N^{k+m}} & \text{ if } \bar{\xi} = \bar{\eta},\, \mu = \nu, \\
0 & \text{ otherwise.} 
\end{cases}
\end{equation*}
Through the six-term exact sequence 
$$
\begin{CD}
K_0(C^*(G_N^a)) @>\id -\alpha >> K_0(C^*(G_N^a)) @>\id >> K_0(\R_N^a) \\
@AAA  @.  @VVV\\
K_1(\R_N^a) @< \id<< K_1(C^*(G_N^a)) @< \id -\alpha<< K_1(C^*(G_N^a))
\end{CD}
$$
for the crossed product
$\R_N^a = C^*(G_N^a)\rtimes\Z$
with the fact $ K_1(C^*(G_N^a)) =0$ and $\alpha = \id$,
all elements of $K_0(\R_N^a)$ come from  those of $K_0(C^*(G_N^a)) = H(A)$.
We thus conclude that 
$\tau_{N*}(K_0(\R_N^a)) = \Z[\frac{1}{N}].$
\end{proof}
For two natural numbers $1<M, N \in \N,$ let 
$M =p_1^{k_1}\cdots p_m^{k_m}, \, N =q_1^{l_1}\cdots q_n^{l_n}$
be the prime factorizations of $M, N$ such that 
$p_1<\cdots < p_m,\, q_1<\cdots<q_n$ and $k_1,\dots,k_m, l_1,\dots, l_n \in \N,$ 
respectively.
\begin{proposition}\label{prop:classRuelle}  
Keep the above notation. 
The following assertions are equivalent.
\begin{enumerate}
\renewcommand{\theenumi}{\roman{enumi}}
\renewcommand{\labelenumi}{\textup{(\theenumi)}}
\item
The Ruelle algebras $\R_M^a$ and $\R_N^a$ are isomorphic.
\item
$\Z[\frac{1}{M}] =\Z[\frac{1}{N}]$ as subsets of values of $\mathbb{R}.$
\item
$\{p_1,\dots,p_m\} = \{q_1,\dots,q_n\},$
that is, $m=n$ and $p_1 = q_1,\dots, p_m = q_n$.
\end{enumerate}
\end{proposition}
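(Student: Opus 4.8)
The plan is to prove the cycle of implications (i)$\Rightarrow$(iii)$\Rightarrow$(ii)$\Rightarrow$(i), using the K-theoretic invariant computed in Proposition \ref{prop:K} and the trace computation of Lemma \ref{lem:dimensionrange}. First I would observe that (i)$\Rightarrow$(iii) is immediate: if $\R_M^a \cong \R_N^a$, then their $K_0$-groups together with the images of the unique tracial states under $\tau_{M*}$, $\tau_{N*}$ form an isomorphism invariant, so Proposition \ref{prop:K} and Lemma \ref{lem:dimensionrange} force $\Z[\frac{1}{M}] = \Z[\frac{1}{N}]$ as subgroups of $\mathbb{R}$ (one must note that the pairing of $K_0$ with the trace is respected by any $C^*$-isomorphism, since the tracial state is unique on each algebra). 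The implication (iii)$\Rightarrow$(ii) is an elementary number-theoretic fact about the groups $\Z[\frac{1}{n}]$: an element $\frac{1}{p} \in \Z[\frac{1}{N}]$ if and only if $p \mid N^k$ for some $k$, i.e. if and only if $p \in \{q_1,\dots,q_n\}$ when $p$ is prime; so $\Z[\frac{1}{M}] = \Z[\frac{1}{N}]$ holds exactly when the two sets of prime divisors coincide. I would spell this out by showing each inclusion $\Z[\frac{1}{M}] \subseteq \Z[\frac{1}{N}]$ is equivalent to $\{p_1,\dots,p_m\} \subseteq \{q_1,\dots,q_n\}$.

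The substantive implication is (ii)$\Rightarrow$(i) (equivalently (iii)$\Rightarrow$(i)), where one must actually construct an isomorphism $\R_M^a \cong \R_N^a$ from the equality $\Z[\frac{1}{M}] = \Z[\frac{1}{N}]$. Here I would invoke the classification machinery: by the discussion preceding Lemma \ref{lem:dimensionrange}, each $\R_N^a$ is a simple, unital A$\mathbb{T}$-algebra of real rank zero with a unique tracial state, obtained as a crossed product $C^*(G_N^a)\rtimes_{\bar{\sigma}_N^*}\Z$ of the UHF algebra of type $N^\infty$ by an automorphism with the Rohlin property (Holton \cite[Theorem 6.1]{Holton}). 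Such algebras fall within the scope of the Elliott classification of simple A$\mathbb{T}$-algebras of real rank zero, so they are classified by their scaled ordered $K$-theory together with the trace pairing. The invariant is: $(K_0, K_0^+, [1], K_1)$ with the trace map $\tau_*$. From Proposition \ref{prop:K} we have $K_0(\R_N^a)\cong K_1(\R_N^a)\cong \Z[\frac{1}{N}]$, and the order structure and position of the unit, together with $\tau_{N*}(K_0(\R_N^a)) = \Z[\frac{1}{N}]$ from Lemma \ref{lem:dimensionrange}, are all determined by the single number $\Z[\frac{1}{N}]\subset\mathbb{R}$ (the order being inherited from $\mathbb{R}$, the unit corresponding to $1$). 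Hence if $\Z[\frac{1}{M}] = \Z[\frac{1}{N}]$ the two algebras have isomorphic Elliott invariants and are therefore isomorphic. Finally, Corollary \ref{coro:acoefull} (the non-equivalence of the full $2$-shift and full $3$-shift) follows by taking $M=2$, $N=3$, noting $\Z[\frac{1}{2}]\neq\Z[\frac{1}{3}]$, and combining with Theorem \ref{thm:main3} (or Theorem \ref{thm:1.1}): asymptotic continuous orbit equivalence would yield an isomorphism of the Ruelle algebras carrying the diagonals onto each other, in particular a $C^*$-isomorphism $\R_2^a\cong\R_3^a$, contradicting (i).

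The main obstacle I anticipate is making the invocation of Elliott classification airtight: one needs to confirm that $\R_N^a$ genuinely lies in a classifiable class and that the ordered $K_0$-group with its trace pairing is exactly as claimed — in particular that $K_0^+$ is the strictly positive part (plus zero) under the embedding $K_0(\R_N^a)\hookrightarrow\mathbb{R}$ induced by $\tau_{N*}$, and that $[1_{\R_N^a}]$ maps to $1\in\Z[\frac{1}{N}]$. The unital case of the classification of simple A$\mathbb{T}$-algebras of real rank zero with unique trace reduces the isomorphism question precisely to matching $(\tau_{N*}(K_0), 1) \cong (\tau_{M*}(K_0), 1)$ as ordered groups with distinguished order unit together with $K_1$; since all of these are governed by the equality $\Z[\frac{1}{M}]=\Z[\frac{1}{N}]$, the proof closes. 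I would cite \cite[Theorem 1.1]{BKRS} and \cite[Theorem 1.3]{Kishimoto} for membership in the A$\mathbb{T}$ real-rank-zero class (already used in the excerpt) and the Elliott classification theorem for the final step.
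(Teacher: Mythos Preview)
Your proposal is correct and follows essentially the same route as the paper: the implication (i)$\Rightarrow$(ii) via the unique tracial state and Lemma~\ref{lem:dimensionrange}, the implication (ii)$\Rightarrow$(i) via the Elliott classification of simple A$\mathbb{T}$-algebras of real rank zero (using Proposition~\ref{prop:K} and Lemma~\ref{lem:dimensionrange} to match the invariants), and the elementary equivalence (ii)$\Leftrightarrow$(iii). One small slip: you announce the cycle (i)$\Rightarrow$(iii)$\Rightarrow$(ii)$\Rightarrow$(i), but your argument for the first step actually yields (i)$\Rightarrow$(ii) (the trace-image equality $\Z[\tfrac{1}{M}]=\Z[\tfrac{1}{N}]$), not (iii) directly; since you then establish (ii)$\Leftrightarrow$(iii) anyway, nothing is lost, but you should relabel the steps accordingly.
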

\begin{proof}
(i) $\Longrightarrow$ (ii):
Since the Ruelle algebras $\R_M^a$ and $\R_N^a$  have unique tracial state, respectively,
the assertion follows from the preceding lemma.

(ii) $\Longrightarrow$ (i):
The algebras $\R_M^a, \R_N^a$ are both A${\mathbb{T}}$-algebras of real rank zero with unique tracial state.
The condition $\Z[\frac{1}{M}] =\Z[\frac{1}{N}]$
implies that their $K$-theoretic dates 
$$
(K_0(\R_M^a), K_0(\R_M^a)_+, [1], K_1(\R_M^a))
=
(K_0(\R_N^a),K_0(\R_N^a)_+, [1], K_1(\R_N^a))
$$
coincide because of Proposition \ref{prop:K} and Lemma \ref{lem:dimensionrange}.
By a general classification theory of simple A$\mathbb{T}$-algebras of real rank zero,
we conclude that the Ruelle algebras $\R_M^a$ and $\R_N^a$ are isomorphic.
 
The equivalence (ii) $\Longleftrightarrow$ (iii) is easy.
\end{proof}
We have the following corollary.
\begin{corollary}\label{coro:acoefull}
Let $M =p_1^{k_1}\cdots p_m^{k_m}$ and $N =q_1^{l_1}\cdots q_n^{l_n}$
be the prime factorizations of $ M, N$ as in the above proposition.
If the sets   
$\{p_1,\dots,p_m\}$ and $\{q_1,\dots,q_n\}$
do not coincide with each other, 
then the two-sided full shifts
$(\bar{X}_M, \bar{\sigma}_M)$ and 
$(\bar{X}_N,\bar{\sigma}_N)$
are not asymptotically continuous orbit equivalent.  
\end{corollary}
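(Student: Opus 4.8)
The plan is to derive Corollary \ref{coro:acoefull} as an immediate consequence of the chain of implications already established in this section together with Theorem \ref{thm:1.1} (equivalently Theorem \ref{thm:main1}). First I would recall that the two-sided full $N$-shift $(\bar{X}_N,\bar{\sigma}_N)$ is the Smale space associated to the $N\times N$ matrix $A$ with all entries equal to $1$; this matrix is irreducible and (for $N\ge 2$) not a permutation matrix, so $(\bar{X}_N,\bar{\sigma}_N)$ is an irreducible Smale space with infinite underlying space, and Theorem \ref{thm:main1} applies to it.

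The core of the argument is the contrapositive of Theorem \ref{thm:main1}: if $(\bar{X}_M,\bar{\sigma}_M)$ and $(\bar{X}_N,\bar{\sigma}_N)$ were asymptotically continuous orbit equivalent, then the \'etale groupoids $G_{\bar{\sigma}_M}^a\rtimes\Z$ and $G_{\bar{\sigma}_N}^a\rtimes\Z$ would be isomorphic, hence their groupoid $C^*$-algebras would be isomorphic, i.e.\ $\R_M^a\cong\R_N^a$. By Proposition \ref{prop:classRuelle}, $\R_M^a\cong\R_N^a$ forces $\Z[\frac{1}{M}]=\Z[\frac{1}{N}]$, which by the equivalence (ii)$\Longleftrightarrow$(iii) of that proposition means that the sets of prime divisors $\{p_1,\dots,p_m\}$ and $\{q_1,\dots,q_n\}$ coincide. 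Thus, if those prime sets do \emph{not} coincide, $(\bar{X}_M,\bar{\sigma}_M)$ and $(\bar{X}_N,\bar{\sigma}_N)$ cannot be asymptotically continuous orbit equivalent. This is exactly the assertion of the corollary.

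Concretely, I would write: assume for contradiction that $(\bar{X}_M,\bar{\sigma}_M)\underset{ACOE}{\sim}(\bar{X}_N,\bar{\sigma}_N)$. By Theorem \ref{thm:main1} the groupoids $G_{\bar{\sigma}_M}^a\rtimes\Z$ and $G_{\bar{\sigma}_N}^a\rtimes\Z$ are isomorphic as \'etale groupoids, so $\R_M^a=C^*(G_{\bar{\sigma}_M}^a\rtimes\Z)$ and $\R_N^a=C^*(G_{\bar{\sigma}_N}^a\rtimes\Z)$ are isomorphic $C^*$-algebras. Proposition \ref{prop:classRuelle} then yields $\{p_1,\dots,p_m\}=\{q_1,\dots,q_n\}$, contradicting the hypothesis. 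Hence no such equivalence exists.

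I do not expect any genuine obstacle here, since every ingredient is already in place: the only points requiring a word of care are (a) checking that the full $N$-shift genuinely satisfies the standing hypotheses of Theorem \ref{thm:main1} (irreducibility of $A$ and that $A$ is not a permutation matrix, which hold for $N\ge2$), and (b) noting that an isomorphism of \'etale groupoids induces an isomorphism of the associated (full, equivalently reduced by amenability via Lemma \ref{lem:amenable}) groupoid $C^*$-algebras. Both are routine, so the corollary follows formally from the preceding results. If one wanted to be maximally explicit, one could also remark that the hypothesis ``$\{p_1,\dots,p_m\}$ and $\{q_1,\dots,q_n\}$ do not coincide'' is precisely the negation of condition (iii) in Proposition \ref{prop:classRuelle}, hence equivalent to $\Z[\frac{1}{M}]\neq\Z[\frac{1}{N}]$, which by (i) of that proposition is equivalent to $\R_M^a\not\cong\R_N^a$; combined with Theorem \ref{thm:main1} this gives the non-equivalence directly.
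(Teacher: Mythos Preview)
Your argument is correct and essentially identical to the paper's: both deduce from the hypothesis that $\R_M^a\not\cong\R_N^a$ via Proposition \ref{prop:classRuelle}, and then conclude non-ACOE because ACOE forces the Ruelle algebras to be isomorphic. The only cosmetic difference is that the paper cites Theorem \ref{thm:main3} directly for the $C^*$-algebra invariance, whereas you pass through Theorem \ref{thm:main1} and the groupoid-to-$C^*$-algebra functoriality; both routes are equivalent here.
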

\begin{proof}
Suppose that 
$\{p_1,\dots,p_m\} \ne \{q_1,\dots,q_n\}.$
By the above proposition,  
the Ruelle algebras
$\R_N^a, \R_M^a$ are not isomorphic.
Since the isomorphism class of the Ruelle algebra is invariant 
under asymptotic continuous orbit equivalence by Theorem \ref{thm:main3},
we know that 
$(\bar{X}_M,\bar{\sigma}_M)$ and 
$(\bar{X}_N,\bar{\sigma}_N)$
are not asymptotically continuous orbit equivalent.  
\end{proof}

\section{Concluding remarks}
Before ending the paper,
we refer to differences among
asymptotic continuous orbit equivalence,   asymptotic  conjugacy and  topological conjugacy 
of Smale spaces.
It is may be proved that 
topological conjugacy  implies asymptotic  conjugacy, which implies 
asymptotic continuous orbit equivalence.
For an irreducible  Smale space $(X,\phi)$, 
its inverse system $(X,\phi^{-1})$
automatically becomes  an irreducible Smale space by definition.
We then see the following
\begin{proposition}
An irreducible Smale space $(X,\phi)$ is asymptotically continuous orbit equivalent to
its inverse $(X,\phi^{-1}).$
\end{proposition}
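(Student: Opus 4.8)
The plan is to invoke Theorem~\ref{thm:main1}. Since $(X,\phi)$ is an irreducible Smale space, so is its inverse $(X,\phi^{-1})$ (with the flipped bracket $[x,y]':=[y,x]$ and the same constants $\epsilon_0,\lambda_0$); hence by Theorem~\ref{thm:main1} it is enough to exhibit an isomorphism of \'etale groupoids $G_\phi^a\rtimes\Z\cong G_{\phi^{-1}}^a\rtimes\Z$.

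First I would record that the asymptotic equivalence relations of $\phi$ and $\phi^{-1}$ coincide, topology included. With the flipped bracket one has $X^{s}(x,\epsilon)=X^{u,\phi^{-1}}(x,\epsilon)$ and $X^{u}(x,\epsilon)=X^{s,\phi^{-1}}(x,\epsilon)$, so that $G_{\phi^{-1}}^{s,n}=G_\phi^{u,n}$ and $G_{\phi^{-1}}^{u,n}=G_\phi^{s,n}$ for every $n$, whence $G_{\phi^{-1}}^{a,n}=G_\phi^{a,n}$ as topological spaces and therefore $G_{\phi^{-1}}^{a}=G_\phi^{a}$ with their inductive limit topologies. The equality of the underlying sets is also immediate from the description in Lemma~\ref{lem:n2.6}, which is visibly symmetric under $\phi\leftrightarrow\phi^{-1}$.

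Next I would define $\varphi\colon G_\phi^a\rtimes\Z\longrightarrow G_{\phi^{-1}}^a\rtimes\Z$ by $\varphi(x,n,z)=(x,-n,z)$. It is well defined, since $(x,n,z)\in G_\phi^a\rtimes\Z$ means $(\phi^n(x),z)\in G_\phi^a$ and then $((\phi^{-1})^{-n}(x),z)=(\phi^n(x),z)\in G_{\phi^{-1}}^a$. A direct check shows that $\varphi$ fixes the unit space pointwise, intertwines the range and source maps, and is compatible with the multiplication and inversion of the two groupoids. For continuity one passes through the homeomorphisms $\gamma$ of \eqref{eq:gamma}: conjugating $\varphi$ by $\gamma$ on source and target turns it into the map $((x,z),n)\mapsto((x,z),-n)$ of $G_\phi^a\times\Z$ onto itself, which is a homeomorphism; since $\varphi^{-1}$ has the same form, $\varphi$ is an isomorphism of \'etale groupoids. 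Theorem~\ref{thm:main1} then gives $(X,\phi)\underset{ACOE}{\sim}(X,\phi^{-1})$.

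There is no real obstacle here. The only point requiring a little care is the topological bookkeeping---that $G_{\phi^{-1}}^{a}$ is literally the same topological groupoid as $G_\phi^{a}$, and that $\varphi$ is a homeomorphism for the topology coming from \eqref{eq:gamma} rather than merely a bijective groupoid morphism. Alternatively one could avoid Theorem~\ref{thm:main1} and verify Definition~\ref{def:acoe} directly with $h=\id_X$, $c_1=c_2\equiv-1$ and $d_1=d_2\equiv0$, but routing through Theorem~\ref{thm:main1} is shorter.
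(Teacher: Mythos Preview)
Your argument is correct. Via $\varphi(x,n,z)=(x,-n,z)$ you build an explicit isomorphism of \'etale groupoids and then invoke Theorem~\ref{thm:main1}; the topological bookkeeping ($G_{\phi^{-1}}^{a,n}=G_\phi^{a,n}$ and the conjugation by $\gamma$ reducing $\varphi$ to $((x,z),n)\mapsto((x,z),-n)$) is handled carefully.

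The paper takes precisely the route you flag as an alternative at the end: it verifies Definition~\ref{def:acoe} directly with $h=\id$, $c_1\equiv c_2\equiv-1$, $d_1\equiv d_2\equiv0$, and simply asserts that all the conditions are then immediate. Your groupoid approach is conceptually cleaner and avoids the (admittedly trivial here) checklist of conditions (i)--(viii); on the other hand, the paper's direct verification makes the cocycles $c_1,c_2$ explicit from the outset, which feeds into the subsequent $C^*$-algebraic discussion in Section~11 (where the author reads off $\Ad(U_t(c_{\phi^{-1}}))=\rho^{\phi^{-1}}_{-t}$). Of course your isomorphism $\varphi$ encodes exactly the same cocycle data via the dictionary in the proof of Theorem~\ref{thm:main1}, so the two arguments are really the same fact viewed from opposite sides of that theorem.
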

\begin{proof}
In Definition \ref{def:acoe}, 
we set $Y = X, \psi = \phi^{-1}$ 
and take $ h =\id, \, c_1 \equiv -1, \, c_2 \equiv -1, \, d_1 \equiv 0, \, d_2\equiv 0.$
We then see that 
 $c_1^n(x) = -n$
for all $x\in X$ and
$c_2^n(y) = -n$ for all $ y \in Y.$
It is direct to see that 
all conditions in Definition \ref{def:acoe} hold for these $c_1, c_2, d_1, d_2$,
so that 
$(X,\phi)$ is asymptotically continuous orbit equivalent to
its inverse $(X,\phi^{-1}).$
\end{proof}
We may easily explain the above situation in terms of $C^*$-algebras.
We actually see that the identity map $\id: X\longrightarrow X$ 
induces an isomorphism $\Phi: \R^a_\phi \longrightarrow \R^a_{\phi^{-1}}$
of $C^*$-algebras such that 
$$
\Phi(C(X)) = C(X)\quad \text{ and } \quad 
\Phi\circ \rho^\phi_t = \rho^{\phi^{-1}}_{-t} \circ \Phi,
$$
because in Theorem \ref{thm:1.1} (iii),
we may have.  
$$
\Ad(U_t(c_{\phi^{-1}})) = \rho_{-t}^{\phi^{-1}}, \qquad
\Ad(U_t(c_{\phi})) = \rho_{t}^{\phi}.
$$
\begin{corollary}\label{cor:11.2} 
There exists a pair $(X,\phi)$ and $(Y,\psi)$ 
of irreducible Smale spaces such that they are asymptotically continuous orbit equivalent
but not topologically  conjugate. 
\end{corollary}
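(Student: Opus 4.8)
The plan is to combine the preceding proposition with a careful choice of example. By that proposition, for every irreducible Smale space $(X,\phi)$ one has $(X,\phi)\underset{ACOE}{\sim}(X,\phi^{-1})$, while (as already observed) topological conjugacy implies asymptotic conjugacy, which in turn implies asymptotic continuous orbit equivalence. Hence it suffices to exhibit a single irreducible Smale space $(X,\phi)$ which is \emph{not} topologically conjugate to its inverse $(X,\phi^{-1})$; then the pair $(X,\phi)$ and $(Y,\psi):=(X,\phi^{-1})$ has the required properties.

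For the example I would take a hyperbolic toral automorphism. Let $M\in SL_2(\Z)$ be hyperbolic, i.e.\ $|\operatorname{tr}M|>2$, acting on $\T^2=\mathbb{R}^2/\Z^2$. Splitting $\mathbb{R}^2$ into the stable and unstable eigenlines of $M$ and defining, for $x,y$ close, $[x,y]$ as the intersection of the local stable leaf through $x$ with the local unstable leaf through $y$, makes $(\T^2,M)$ a Smale space in the sense of Definition~\ref{def:smalespace} (the bracket is jointly continuous and $\phi$-equivariant since $M$ is linear, and \textbf{SS2} holds with the eigenvalue bounds); as $M$ is hyperbolic the automorphism is mixing, so $(\T^2,M)$ is irreducible and $\T^2$ is infinite. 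A topological conjugacy $(\T^2,M)\cong(\T^2,M^{-1})$ would induce an isomorphism of $H_1(\T^2)=\Z^2$ intertwining $M$ and $M^{-1}$, so $M$ and $M^{-1}$ would be conjugate in $GL_2(\Z)$. By the Latimer--MacDuffee correspondence, $GL_2(\Z)$-conjugacy classes of matrices with the (irreducible) characteristic polynomial $\chi_M$ correspond to ideal classes of the quadratic order $\mathcal{O}=\Z[x]/(\chi_M)$, and under this correspondence $M^{-1}$ is sent to the inverse of the class attached to $M$; hence $M\sim M^{-1}$ in $GL_2(\Z)$ if and only if that class has order at most $2$ in $\operatorname{Pic}(\mathcal{O})$. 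Choosing $M$ so that the attached order carries an ideal class of order $\ge 3$ (such orders exist by elementary algebraic number theory) therefore gives $(\T^2,M)\not\cong(\T^2,M^{-1})$, which finishes the argument. An alternative, purely symbolic example is a two-sided topological Markov shift $(\bar{X}_A,\bar{\sigma}_A)$: the map $(x_n)_n\mapsto(x_{-n})_n$ is a topological conjugacy $(\bar{X}_A,\bar{\sigma}_A^{-1})\cong(\bar{X}_{A^t},\bar{\sigma}_{A^t})$, so it suffices to take an irreducible non-permutation $A$ with $(\bar{X}_A,\bar{\sigma}_A)\not\cong(\bar{X}_{A^t},\bar{\sigma}_{A^t})$, equivalently with non-isomorphic Krieger dimension triples; such matrices are classical.

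The delicate point --- and the only real work --- is the non-conjugacy. Any $(X,\phi)$ and $(X,\phi^{-1})$ automatically have the same topological entropy, the same Artin--Mazur zeta function (identical periodic-point counts $|\Per_n|$), and, in the Markov case, $\BF(A)\cong\BF(A^t)$ since $I-A$ and $I-A^t$ have the same Smith normal form; likewise $\OA$ and $\OtA$ have isomorphic K-groups, and indeed $A$ need not be similar to $A^t$ even over $\Z$. Consequently none of the ``numerical'' invariants distinguishes $(X,\phi)$ from $(X,\phi^{-1})$, and one must invoke an invariant sensitive to the orientation of the stable/unstable structures --- the ordered Krieger dimension triple, or equivalently the ideal-class-versus-its-inverse dichotomy above. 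Pinning down the explicit $M$ (or $A$) and checking this finer invariant is the substance of the proof; the reduction itself is immediate from the preceding proposition together with Theorem~\ref{thm:main3}.
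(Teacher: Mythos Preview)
Your overall strategy---use the preceding proposition to get $(X,\phi)\underset{ACOE}{\sim}(X,\phi^{-1})$ and then exhibit an irreducible Smale space not conjugate to its inverse---is exactly what the paper does. The paper, however, takes your \emph{alternative} route rather than your primary one: it chooses the two-sided topological Markov shift for the matrix
\[
A=\begin{bmatrix}19&5\\4&1\end{bmatrix},
\]
cites \cite[Example~7.4.19]{LM} for the fact that $A$ is not shift equivalent to $A^t$, observes that $(\bar{X}_{A^t},\bar{\sigma}_{A^t})$ is conjugate to $(\bar{X}_A,\bar{\sigma}_A^{-1})$, and then invokes Williams' theorem (strong shift equivalence $\Leftrightarrow$ conjugacy, with shift equivalence a weaker relation) to conclude non-conjugacy. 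This is concrete and off-the-shelf: no new computation is needed.

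Your primary approach via a hyperbolic toral automorphism is a genuinely different construction and is conceptually attractive, but the step ``under Latimer--MacDuffee, $M^{-1}$ corresponds to the inverse of the ideal class of $M$'' is not quite right as stated and would need care (in dimension~$2$ the relation between $M$, $M^{-1}$, $M^t$ and the narrow class group involves the Galois involution rather than simple inversion, and one must be careful that the resulting non-conjugacy actually occurs for some explicit $M\in SL_2(\Z)$). Such examples do exist, but justifying one costs more than simply citing the Lind--Marcus matrix. So: your alternative symbolic example is essentially the paper's proof; your toral example is a valid different idea but, as you yourself flag, the ``substance'' lies in verifying the finer invariant, and the paper sidesteps that work by quoting a known SFT example.
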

\begin{proof}
As in \cite[Example 7.4.19]{LM},
the matrix 
$A =
\begin{bmatrix}
19 & 5 \\
4 & 1
\end{bmatrix}
$
is not shift equivalent to
its transpose 
$A^t =
\begin{bmatrix}
19 &4 \\
5 & 1
\end{bmatrix}.
$
Let 
$(X,\phi)$ and $(Y,\psi)$
be the shifts of finite type defined by the matrices $A$ and $A^t$, respectively.
Since $(Y,\psi)$ is naturally topologically conjugate to $(X,\phi^{-1}),$
the Smale spaces
$(X,\phi)$ and $(Y,\psi)$ are asymptotically continuous orbit equivalent by the preceding proposition.
As shift equivalence relation of matrices is weaker than strong shift equivalence,
by Williams' theorem \cite{Williams} the shifts of finite type  $(X,\phi)$ and $(Y,\psi)$ are not topologically conjugate.
\end{proof}
In the recent paper:

K. Matsumoto, Topological conjugacy of topological Markov shifts and Ruelle algebras,
\, arXiv:1706.07155,

\noindent
which is a continuation of this paper, the author shows that two-sided topological Markov shifts 
are topologically conjugate if and only if they are asymptotically conjugate.
Hence the example in the proof of Corollary \ref{cor:11.2} shows us that 
there exists a pair $(X,\phi)$ and $(Y,\psi)$ 
of irreducible Smale spaces such that they are asymptotically continuous orbit equivalent
but not asymptotically conjugate. 
For a general irreducible Smale space, however,
we do not know whether or not the asymptotic conjugacy implies topological conjugacy.
This is an open question probably being affirmative.

We finally remark the following.
We know that if two irreducible topological Markov shifts are asymptotically continuous orbit equivalent,
then their asymptotic Ruelle algebras are isomorphic by Theorem \ref{thm:main3}.
Since these asymptotic Ruelle algebras
$\R^a_A$ have unique tracial states $\tau_A$ coming from 
the Parry measures on the shift spaces.
Hence the trace values 
$\tau_{A*}(K_0(\R^a_A))$ are invariant under asymptotic continuous orbit equivalence.
For two matrices
$
A = \begin{bmatrix}
1 & 1 \\
1 & 1 
\end{bmatrix},
\,
B = \begin{bmatrix}
1 & 1 \\
1 & 0 
\end{bmatrix}
$
it is straightforward to see that 
$
\tau_{A*}(K_0(\R^a_A))\ne \tau_{B*}(K_0(\R^a_B))
$      
as subsets of $\mathbb{R},$
because 
$\tau_{B*}(K_0(\R^a_B))$ contains the trace values of the dimension group
of the AF-algebra defined by the matrix $B$.
Hence we know that the two-sided topological Markov shifts
$(\bar{X}_A,\bar{\sigma}_A)$ and 
$(\bar{X}_B,\bar{\sigma}_B)$
are not asymptotically continuous orbit equivalent,
whereas their one-sided topological Markov shifts
$({X}_A,{\sigma}_A)$ and 
$({X}_B,{\sigma}_B)$
are continuous orbit equivalent
as in \cite[Section 5]{MaPacific}.

\bigskip

{\it Acknowledgments:}
The author would like to deeply thank 
Ian F. Putnam for his comments and suggestions on the earlier version of the paper.
He kindly pointed out an error of the earlier version and call the author's  attention
to the papers \cite{Holton}, \cite{KilPut} and \cite{Putnam3}.  
The author also deeply thanks the referees for their careful reading of the paper and many helpful suggestions and useful advices.
Especially Lemma \ref{lem:irreessfree} with its detailed proof 
and
Lemma \ref{lem:irresuessfree}
were due to the referees.
The formulation of Proposition \ref{prop:extendedtensor} was  also suggested by the referees.
The referee's question made Section 11 come up.   
This work was supported by JSPS KAKENHI Grant Number 15K04896.


\end{document}